\documentclass[12pt]{amsart}

\usepackage{amsmath,amsthm,amscd,amsfonts,wasysym,amssymb,epic,eepic,bbm,tikz-cd,mathtools,mathdots, comment,enumitem,fancyhdr,mathrsfs,multicol}
\usepackage[pagebackref,colorlinks=true,linkcolor=blue,citecolor=blue]{hyperref}
\usepackage[noabbrev]{cleveref}
\usepackage{MnSymbol}
\usepackage[all,cmtip]{xy}
\usepackage[normalem]{ulem} 
\pagestyle{fancy}
\fancyhf{}

\fancyfoot[C]{\thepage}
\usepackage[new]{old-arrows}

\allowdisplaybreaks
\setlength{\topmargin}{0truein} \setlength{\headheight}{.25truein}
\setlength{\headsep}{.25truein} \setlength{\textheight}{9truein}
\setlength{\footskip}{.25truein} \setlength{\oddsidemargin}{0truein}
\setlength{\evensidemargin}{0truein}
\setlength{\textwidth}{6.5truein} \setlength{\voffset}{-0.5truein}
\setlength{\hoffset}{0truein}

\vfuzz2pt 
\hfuzz2pt 
\newtheorem{thm}{Theorem}[section]
\newtheorem{cor}[thm]{Corollary}
\newtheorem{conj}[thm]{Conjecture}
\newtheorem{lem}[thm]{Lemma}

\newtheorem{prop}[thm]{Proposition}

\theoremstyle{definition}
\newtheorem{rem}[thm]{Remark}
\newtheorem{example}[thm]{Example}

\newcounter{remarkscounter}



\numberwithin{equation}{section}

\newcommand{\GL}{\mathrm{GL}}

\newcommand{\SL}{\mathrm{SL}}

\newcommand{\ZZ}{\mathbb{Z}}

\newcommand{\Gal}{\mathrm{Gal}}

\newcommand{\Ind}{\mathrm{Ind}}

\newcommand{\lto}{\longrightarrow}

\newcommand{\OO}{\mathcal{O}}
\newcommand{\CC}{\mathbb{C}}
\newcommand{\RR}{\mathbb{R}}
\newcommand{\GG}{\mathbb{G}}

\newcommand{\Hom}{\mathrm{Hom}}
\newcommand{\End}{\mathrm{End}}
\newcommand{\tr}{\mathrm{tr}}

\DeclareMathOperator{\ind}{ind}

\newcommand{\quash}[1]{}

\theoremstyle{definition}
\newtheorem{defn}[thm]{Definition}

\newenvironment{psmatrix}
  {\left(\begin{smallmatrix}}
  {\end{smallmatrix}\right)}

\numberwithin{equation}{subsection}

\renewcommand{\hat}{\widehat}

\newcommand{\one}{\mathbbm{1}}

\newcommand{\norm}[1]{\left\lVert#1\right\rVert}
\newcommand{\Temp}{\mathrm{Temp}}
\renewcommand{\Im}{\mathop{\mathrm{Im}}}

\newcommand{\HP}{\mathop{\mathrm{HP}}\nolimits}
\newcommand{\BH}{\mathop{\mathrm{BH}}\nolimits}

\newcommand{\uth}{\textcolor{red}{UTH}}


\linespread{1.2}

\allowdisplaybreaks

\begin{document}

\title{The $\rho$-Fourier transform}

\begin{abstract}
Let $G$ be a reductive group over a local field $F$ and let $\rho:{}^LG \to \GL_{V_{\rho}}(\CC)$ be a representation of its $L$-group satisfying suitable assumptions.
Braverman, Kazhdan and Ng\^o conjectured that one has a $\rho$-Fourier transform on $L^2(G(F))$ and a 
$\rho$-Schwartz space $\mathcal{S}_{\rho}(G(F))<L^2(G(F))$ fixed under the Fourier transform that satisfies certain desiderata.
We construct the Fourier transform for arbitrary fields. Over non-Archimedean fields we construct the Schwartz space, and in the Archimedean case we construct an approximation to it.  This proves a large portion of their conjectures.  Our methods are spectral in nature.
\end{abstract}

\author{Jayce R. Getz}
\address{Department of Mathematics\\
Duke University\\
Durham, NC 27708}
\email{jayce.getz@duke.edu}

\author{Armando Gutiérrez Terradillos}
\address{Department of Mathematics\\
Aarhus University\\
Aarhus, Denmark}
\email{armangute@math.au.dk}

\author{Farid Hosseinijafari}
\address{Department of Mathematics\\
Duke University\\
Durham, NC 27708}
\email{farid.hj@math.duke.edu}

\author{Aaron Slipper}
\address{Department of Mathematics\\
Duke University\\
Durham, NC 27708}
\email{aaron.slipper@duke.edu}

\author{Guodong Xi}
\address{Department of Mathematics \\ University of Minnesota}
\email{xi000023@umn.edu}

\author{HaoYun Yao}
\address{Department of Mathematics\\
Duke University\\
Durham, NC 27708}
\email{haoyun.yao@duke.edu}

\author{Alan Zhao}
\address{Department of Mathematics \\ Columbia University \\
New York, NY, 10027}
\email{asz2115@columbia.edu}

\subjclass[2020]{Primary 11F70;  Secondary 	22E30, 22E35}

\thanks{Getz is thankful for partial support provided by NSF grant DMS-2400550, and Slipper is supported by NSF grant DMS-2231514. Any opinions, findings, and conclusions or recommendations expressed in this material are those of the authors and do not necessarily reflect the views of the National Science Foundation. Guti\'erez Terradillos is supported by a research grant (VIL54509) from VILLUM FONDEN}
\maketitle

{\small
\begin{multicols}{2}
\tableofcontents
\end{multicols}
}

\section{Introduction}

Let $G$ be a (connected) reductive group over a local field $F$ and let $\rho:{}^LG \to \GL_{V_\rho}(\CC)$ be a representation of its $L$-group.  We assume for the introduction that we are given an isomorphism  $ d:G/G^{\mathrm{der}} \tilde{\,\to\,} \GG_m,$  and that the dual map
$\rho \circ d^\vee:\CC^\times \to \GL_{V_{\rho}}(\CC)$
is the inclusion of $\CC^\times$ into the center.  

  It was conjectured by Braverman and Kazhdan \cite{BK-lifting,NgoSums} that  there exists a Schwartz space attached to $\rho.$  To state the conjecture precisely, let $\mathcal{R}:G(F) \times G(F) \to \mathrm{Aut}(L^2(G(F)))$ denote the regular representation and let $\psi:F \to \CC^\times$ be a nontrivial character.  For any function $h:G(F) \to \CC$ write $h^\vee(g):=h(g^{-1}).$

\begin{conj}[Braverman-Kazhdan, Ng\^o]\label{conj:Sch}
There is a $G(F) \times G(F)$-invariant subspace 
\begin{align*}
    \mathcal{S}_{\rho}<C^\infty(G(F)) \cap L^2(G(F))
\end{align*}
that is dense in $L^2(G(F))$ and 
equipped with a unitary operator
$\mathcal{F}_{\rho,\psi}:\mathcal{S}_\rho\to \mathcal{S}_\rho$
that satisfies \begin{align*}
    \mathcal{F}_{\rho,\psi} \circ \mathcal{R}(g_1,g_2)&=\mathcal{R}(g_2,g_1) \circ \mathcal{F}_{\rho,\psi},\\
    \mathcal{F}_{\rho,\psi} \circ \mathcal{F}_{\rho,\overline{\psi}}&=\mathrm{id}_{\mathcal{S}_{\rho}}.
\end{align*} Moreover, for all $f \in \mathcal{S}_{\rho}$ and tempered representations $\pi$, the operator $\pi\left(f\right)$ is well-defined and
\begin{align} \label{operator:bound}
\pi(\mathcal{F}_{\rho,\psi} (f)^\vee) = \gamma(\tfrac{1}{2},\pi,\rho,\psi)\pi(f).
\end{align}
Here the $\gamma$-factor is defined using the local Langlands correspondence as in \eqref{temp:factors}.
\end{conj}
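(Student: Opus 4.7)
The plan is spectral, based on Harish-Chandra's Plancherel theorem. Under the Plancherel isomorphism $L^2(G(F)) \cong \int^\oplus_{\hat G_{\mathrm{temp}}} (\pi \otimes \bar\pi)\, d\mu_{\mathrm{Pl}}(\pi)$, an $L^1 \cap L^2$-function $f$ corresponds to the measurable field of Hilbert--Schmidt operators $\pi \mapsto \pi(f)$. I propose to \emph{define} $\mathcal{F}_{\rho,\psi}$ so that the auxiliary operator $\tilde{\mathcal{F}}\colon f \mapsto \mathcal{F}_{\rho,\psi}(f)^\vee$ acts on the $\pi$-fibre by the scalar $\gamma(1/2,\pi,\rho,\psi)$. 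Because $|\gamma(1/2,\pi,\rho,\psi)| = 1$ for tempered $\pi$, the operator $\tilde{\mathcal{F}}$ is automatically unitary, and $\mathcal{F}_{\rho,\psi} = \tilde{\mathcal{F}} \circ (\cdot)^\vee$ is unitary as well.

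The formal properties in the conjecture are then essentially automatic. The intertwining relation $\mathcal{F}_{\rho,\psi} \circ \mathcal{R}(g_1,g_2) = \mathcal{R}(g_2,g_1) \circ \mathcal{F}_{\rho,\psi}$ follows because Plancherel is $G(F)\times G(F)$-equivariant, $\tilde{\mathcal{F}}$ is a scalar on each fibre and hence commutes with the regular representation, and $(\cdot)^\vee$ exchanges the two $G(F)$-factors. The involutive identity $\mathcal{F}_{\rho,\psi}\circ\mathcal{F}_{\rho,\bar\psi} = \mathrm{id}$ reduces to the standard local functional equation $\gamma(1/2,\pi,\rho,\psi)\gamma(1/2,\pi^\vee,\rho,\bar\psi) = 1$, together with the contragredient relation $\pi^\vee(h) = \pi(h^\vee)^t$. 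The identity \eqref{operator:bound} itself is, by construction, the defining property of $\tilde{\mathcal{F}}$.

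The real work lies in constructing a bi-invariant, dense subspace $\mathcal{S}_\rho \subset C^\infty(G(F)) \cap L^2(G(F))$ that is stable under $\mathcal{F}_{\rho,\psi}$ and on which $\pi(\cdot)$ is defined for every tempered $\pi$. A natural candidate is the span of $C_c^\infty(G(F))$, $\mathcal{F}_{\rho,\psi}(C_c^\infty(G(F)))$, and $\mathcal{F}_{\rho,\bar\psi}(C_c^\infty(G(F)))$, possibly iterated. Density is immediate from the inclusion of $C_c^\infty$, and stability under the Fourier transform follows from the functional equation. The nontrivial input is to show that $\mathcal{F}_{\rho,\psi}(C_c^\infty)$ consists of smooth functions. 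In the non-Archimedean case, the rationality of $\gamma$-factors in $q^{-s}$ on unramified components, combined with a Paley--Wiener description of the Plancherel density on $\hat G_{\mathrm{temp}}$, should allow one to realize $\mathcal{F}_{\rho,\psi}|_{C_c^\infty}$ as convolution against a distribution with controlled singular support, from which smoothness would follow.

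The main obstacle is the Archimedean case. There $\gamma(1/2,\pi,\rho,\psi)$ is only of polynomial growth in continuous families of tempered representations, so pure spectral multiplication by this symbol cannot preserve $C^\infty \cap L^2$. This is presumably why the paper constructs only an \emph{approximation} to $\mathcal{S}_\rho$ in this setting: one introduces a smooth cutoff in the tempered parameter, builds truncated spaces $\mathcal{S}_\rho^{(N)}$ on which the spectral symbol is bounded, and controls the behaviour as $N \to \infty$. Carrying this out so that the truncated objects really do consist of smooth functions on $G(F)$, remain dense, and satisfy \eqref{operator:bound} with errors that vanish on each tempered matrix coefficient, is where the most delicate analysis should be required; this step, rather than the spectral definition itself, will be the crux of the argument.
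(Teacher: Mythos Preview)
Your spectral definition is exactly the paper's: set $\mathcal{F}_{\rho,\psi} = (\cdot)^\vee \circ \mathrm{HP}^{-1}\circ (\gamma_{\rho,\psi}\,\cdot\,)\circ \mathrm{HP}$, and derive unitarity, the twisted equivariance, and the involutive identity from $|\gamma_\rho|=1$ and $\gamma(\tfrac12,\pi,\rho,\psi)\gamma(\tfrac12,\pi^\vee,\rho,\bar\psi)=1$. So the strategy is right.

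The gap is in your choice of $\mathcal{S}_\rho$ and your diagnosis of the Archimedean difficulty. The paper does \emph{not} build $\mathcal{S}_\rho$ from $C_c^\infty$ and its iterated transforms, and it does \emph{not} truncate in the Archimedean case. It takes $\mathcal{S}_\rho=\mathcal{C}(G(F))$, the Harish-Chandra space, uniformly over all local fields. The point you are missing is that the matrical Plancherel isomorphism identifies $\mathcal{C}(G(F))$ with a space $\mathcal{C}(\mathrm{Temp}_{\mathrm{Ind}}(G))$ of sections that, in the Archimedean case, decay faster than any power of the infinitesimal-character norm $N(\sigma)$ together with all $\Im\Lambda_M$-derivatives. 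Since $\gamma_\rho$ and its derivatives have at most polynomial growth in $N(\sigma)$ (this is the content of the $\Gamma$-function estimates in \S\ref{LLCa:gamma:bound}, pushed through Cauchy's integral formula to handle derivatives), multiplication by $\gamma_\rho$ is a continuous endomorphism of $\mathcal{C}(\mathrm{Temp}_{\mathrm{Ind}}(G))$ (Lemma~\ref{FT:defn:lem:0}). No cutoff is needed; polynomial growth of the symbol is precisely what the Harish-Chandra seminorms absorb.

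Your reading of the abstract is therefore off: the ``approximation'' the paper mentions concerns the \emph{stronger} Conjecture~\ref{conj:Sch:des} (almost compact support, zeta-integral asymptotics), not Conjecture~\ref{conj:Sch}. For Conjecture~\ref{conj:Sch} the choice $\mathcal{S}_\rho=\mathcal{C}(G(F))$ gives a complete proof in both cases, and the smoothness of $\mathcal{F}_{\rho,\psi}(f)$ is automatic because $\mathcal{C}(G(F))<C^\infty(G(F))\cap L^2(G(F))$. Your proposed span would in fact land inside $\mathcal{C}(G(F))$ once this is known, but recognizing $\mathcal{C}(G(F))$ as the natural domain is what makes the argument clean and avoids the singular-support analysis you sketch.
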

\noindent Since $\psi$ is usually fixed, we often drop it from notation, e.g.~$\mathcal{F}_{\rho}:=\mathcal{F}_{\rho,\psi}.$  We refer to $\mathcal{F}_{\rho}$ as the \textbf{$\rho$-Fourier transform}, or simply the Fourier transform.

\begin{rem}
    In the non-Archimedean case L.~Lafforgue has constructed a space of functions that satisfies a weak analogue of  Conjecture \ref{conj:Sch} \cite{LafforgueJJM}. Roughly his space of functions is designed to satisfy Conjecture \ref{conj:Sch} at the level of traces (not operators).  
\end{rem}

Our first main theorem is that the conjecture above is true:

\begin{thm}\label{FirstThmIntro}
Assume the desiderata on the local Langlands correspondence explained in \S\ref{sec:LLC} below.   Then Conjecture \ref{conj:Sch} is true; we may take $\mathcal{S}_{\rho}=\mathcal{C}(G(F)),$ the Harish-Chandra space of $G(F).$ 
\end{thm}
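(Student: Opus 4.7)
The plan is to define $\mathcal{F}_{\rho,\psi}$ spectrally via the Harish-Chandra Plancherel decomposition and then to verify that the resulting operator preserves $\mathcal{C}(G(F))$. The Plancherel theorem furnishes a unitary identification
\[
L^2(G(F)) \;\cong\; \int_{\Temp(G(F))}^{\oplus} \pi \,\widehat{\otimes}\, \pi^{\vee}\, d\mu_{\mathrm{Pl}}(\pi),
\]
under which $f \in \mathcal{C}(G(F))$ corresponds to a Schwartz field of operators $\pi \mapsto \pi(f)$. I would define $\mathcal{F}_{\rho,\psi}$ on each tempered component by
\[
\pi\bigl(\mathcal{F}_{\rho,\psi}(f)^{\vee}\bigr) := \gamma(\tfrac{1}{2},\pi,\rho,\psi)\,\pi(f),
\]
so that \eqref{operator:bound} is built in. For tempered $\pi$ the Langlands parameter is bounded, whence the $L$-factors in the definition of $\gamma$ are holomorphic at $s=1/2$ and the $\varepsilon$-factor is unimodular; thus $|\gamma(1/2,\pi,\rho,\psi)|=1$ and $\mathcal{F}_{\rho,\psi}$ extends to a unitary operator on $L^2(G(F))$.

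Three of the required properties are then formal. The equivariance
\[
\mathcal{F}_{\rho,\psi}\circ\mathcal{R}(g_1,g_2) = \mathcal{R}(g_2,g_1)\circ\mathcal{F}_{\rho,\psi}
\]
holds because $G(F)\times G(F)$ acts on each factor $\pi \,\widehat{\otimes}\,\pi^\vee$ by operators commuting with the scalar $\gamma(1/2,\pi,\rho,\psi)$, while the involution $f\mapsto f^\vee$ exchanges the two tensor factors and hence the roles of the two copies of $G(F)$. The involutive identity $\mathcal{F}_{\rho,\psi}\circ\mathcal{F}_{\rho,\overline{\psi}}=\mathrm{id}$ follows componentwise from the standard local functional equation $\gamma(\tfrac{1}{2},\pi,\rho,\psi)\gamma(\tfrac{1}{2},\pi,\rho,\overline{\psi})=1$ for tempered $\pi$, guaranteed by the desiderata of \S\ref{sec:LLC}; density of $\mathcal{C}(G(F))$ in $L^2(G(F))$ is classical.

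The substantive obstacle is preservation of $\mathcal{C}(G(F))$ itself, rather than of $L^2(G(F))$. On the spectral side, $\mathcal{C}(G(F))$ is characterized, in both the non-Archimedean and Archimedean settings, by rapid decay of $\pi(f)$ and of its derivatives in the continuous parameters of $\Temp(G(F))$, together with uniform control in $K$-types. It therefore suffices to show that the scalar field $\pi \mapsto \gamma(1/2,\pi,\rho,\psi)$ is smooth of polynomial growth on each component of the tempered dual. I would stratify $\Temp(G(F))$ by cuspidal support: each component is the image under parabolic induction of a family $\{\sigma\otimes\chi_s\}$, where $\chi_s$ ranges over unramified unitary twists of a fixed discrete-series $\sigma$ of a Levi $M$. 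Multiplicativity of $\gamma$-factors along parabolic induction gives
\[
\gamma\bigl(\tfrac{1}{2},\Ind_P^G(\sigma\otimes\chi_s),\rho,\psi\bigr) = \gamma\bigl(\tfrac{1}{2},\sigma\otimes\chi_s,\rho|_{{}^{L}M},\psi\bigr),
\]
an explicit product of $L$- and $\varepsilon$-factors of twists of a fixed bounded parameter. The desiderata of \S\ref{sec:LLC} on the analytic behavior of these factors yield the needed moderate-growth bounds in $s$; obtaining them with sufficient uniformity in $K$-type to match the topology on $\mathcal{C}(G(F))$ will be the principal technical difficulty.
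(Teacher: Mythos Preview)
Your approach is exactly the paper's: define $\mathcal{F}_{\rho,\psi}$ via the matrical Plancherel isomorphism as $(\cdot)^\vee\circ\mathrm{HP}^{-1}\circ(\gamma_\rho\cdot)\circ\mathrm{HP}$, verify that $|\gamma_\rho|=1$ on the tempered dual gives unitarity, and reduce preservation of $\mathcal{C}(G(F))$ to showing that multiplication by $\gamma_\rho$ preserves the spectral Schwartz space $\mathcal{C}(\mathrm{Temp}_{\mathrm{Ind}}(G))$.

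Two corrections are needed. First, your involution identity is wrong as stated: because of the $(\cdot)^\vee$ in the definition, composing $\mathcal{F}_{\rho,\overline\psi}$ with $\mathcal{F}_{\rho,\psi}$ yields the scalar $\gamma(\tfrac12,\pi^\vee,\rho,\overline\psi)\,\gamma(\tfrac12,\pi,\rho,\psi)$, not $\gamma(\tfrac12,\pi,\rho,\psi)\,\gamma(\tfrac12,\pi,\rho,\overline\psi)$. The latter equals $\chi(-1)\gamma(\tfrac12,\pi,\rho,\psi)^2$ and is not $1$ in general. The correct input is $\gamma(\tfrac12,\pi^\vee,\rho,\overline\psi)=\overline{\gamma(\tfrac12,\pi,\rho,\psi)}$, which together with $|\gamma|=1$ gives the inverse. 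Second, ``uniformity in $K$-type'' is not where the difficulty lies: $\gamma_\rho$ is a \emph{scalar} function on $\mathrm{Temp}_{\mathrm{Ind}}(G)$, so multiplication by it commutes with the $K\times K$-action and the $K$-type seminorms are automatic. In the non-Archimedean case the whole step is trivial (compact support on the spectral side). The genuine work, entirely Archimedean, is the bound $|D\gamma(\tfrac12,\sigma,\rho|_{{}^LM},\psi)|\ll_D N(\sigma)^L$ uniformly over $\sigma\in\Pi_2(M)$ for every invariant differential operator $D$ in the unramified-twist parameter; the paper gets this by reducing discrete parameters to tori, proving explicit $\Gamma$-function estimates there, and then using the Cauchy integral formula on a tube to convert sup bounds into derivative bounds.
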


\noindent 
The space $\mathcal{C}(G(F))$ is  usually referred to as the Harish-Chandra Schwartz space, but we omit ``Schwartz" for brevity and to distinguish it from other spaces we consider. 

\begin{rem}
    The desiderata on the local Langlands correspondence we require are rather weak, and are known in many cases as we explain in \S \ref{sec:LLC}.  For example, if one restricts attention to  $G=\GL_n$ then everything we require about the local Langlands correspondence is known.  
\end{rem}

The space $\mathcal{C}(G(F))$ is too large for many purposes.  Thus one is led to introduce additional constraints on $\mathcal{S}_{\rho}.$  Let $\mathcal{C}(\pi)$ denote the space of smooth matrix coefficients of $\pi.$  For $f \in C^\infty(G(F))$ and $c \in \mathcal{C}(\pi)$, let
\begin{align*}
    Z(s,f,c)= \int_{G(F)} f(g)|d(g)|^sc(g) dg,
\end{align*}
whenever this function converges absolutely.  We refer to $Z(s,f,c)$ as a \textbf{local zeta integral}.  
For $a<b$, let $V_{[a,b]}:=\{s\in\mathbb{C}: a\leq \mathrm{Re}(s)\leq b\}.$

\begin{conj}[Braverman-Kazhdan, Ng\^o] \label{conj:Sch:des}
One can choose $\mathcal{S}_\rho$ as in Conjecture \ref{conj:Sch} that additionally satisfies the following for all $f \in \mathcal{S}_{\rho}$ and tempered representations $\pi$ of $G(F):$
\begin{enumerate}[label=($\mathcal{S}$\text{\arabic*}), ref=$\mathcal{S}$\text{\arabic*}]
\item \label{Zetas} For all $c \in \mathcal{C}(\pi)$ the integral $Z(s,f,c)$ converges for $\mathrm{Re}(s)$ sufficiently large and 
\begin{align*}
\frac{Z(s,f,c)}{L(\tfrac{1}{2}+s,\pi,\rho)}
\end{align*}
is holomorphic as a function of $s.$  Moreover
$$
Z(-s,\mathcal{F}_{\rho}(f),c^\vee)=\gamma(\tfrac{1}{2}+s,\pi,\rho,\psi)Z(s,f,c)
$$
as meromorphic functions in $s.$
\quash{\item \label{BKN:nonarch:GCD} For $F$ non-Archimedean, the set $\{Z(s,f,\phi): \phi\in \mathcal{S}_\rho,\,\phi \in\mathcal{C}(\pi)\}$ is nonzero fractional ideal of $\mathbb{C}[q^{\pm s}]$ generated by $L(s,\pi,\rho).$}
    \item\label{BKN:arch:BEV}  For Archimedean $F,$ if $p\in\mathbb{C}[s]$ is chosen so that $s\mapsto p(s)L(s,\pi,\rho)$ has no pole on $V_{[a,b]}$, then $s\mapsto p(s)Z(s,f,c)$ is bounded on $V_{[a,b]}$.
\item \label{BKN:basic} For  non-Archimedean $F$ and unramified $\psi$ there is a function $b_{\rho} \in \mathcal{S}_{\rho}$ such that $\mathcal{F}_{\rho}(b_{\rho})=b_{\rho},$ $Z(s,b_{\rho},c)$ is nonzero only if $\pi$ is unramified, and $Z(s,b_{\rho},c^\circ)=L(s,\pi,\rho)$ when $c^\circ$ is the zonal spherical function.      
\item  \label{rapid} If $F$ is non-Archimedean then the space $\mathcal{S}_{\rho}$ consists of functions that are almost compactly supported.
\end{enumerate}
\end{conj}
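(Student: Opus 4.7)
The plan is to carve out $\mathcal{S}_\rho \subset \mathcal{C}(G(F))$ from the space in Theorem~\ref{FirstThmIntro} by imposing spectral conditions encoded by $L(s,\pi,\rho)$, and then to read off the desiderata from the operator identity \eqref{operator:bound}. The key observation is that local zeta integrals are essentially matrix coefficients of twisted representations: writing $\pi_s := \pi \otimes |d|^s$ and $c(g) = \langle \pi(g) v, v' \rangle$, one has formally
\[
Z(s,f,c) \;=\; \langle \pi_s(f)\, v, v' \rangle
\]
for $\mathrm{Re}(s)$ sufficiently large. Since $\gamma(\tfrac{1}{2}, \pi_s, \rho, \psi) = \gamma(\tfrac{1}{2}+s, \pi, \rho, \psi)$, the operator identity \eqref{operator:bound} applied to $\pi_s$ yields the functional equation part of \ref{Zetas}. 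Holomorphy of the quotient $Z(s,f,c)/L(\tfrac{1}{2}+s,\pi,\rho)$ is not automatic for $f \in \mathcal{C}(G(F))$; this motivates defining $\mathcal{S}_\rho$ to consist of those $f \in \mathcal{C}(G(F))$ for which the operator-valued function $\pi_s(f)/L(\tfrac{1}{2}+s,\pi,\rho)$ extends holomorphically in $s$ with admissible growth.

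That $\mathcal{F}_\rho$ preserves $\mathcal{S}_\rho$ should follow from the identity $\gamma(s,\pi,\rho,\psi) = \epsilon(s,\pi,\rho,\psi)\, L(1-s,\pi^\vee,\rho)/L(s,\pi,\rho)$, which exchanges the $L$-function constraint at $s$ with the one at $1-s$. Density of $\mathcal{S}_\rho$ in $L^2(G(F))$ should follow by approximation, since the defining conditions only prescribe vanishing orders on a measure-zero subset of the tempered spectrum. For \ref{BKN:basic}, I would define $b_\rho$ via the spherical Plancherel formula as the function whose spherical transform equals $L(\tfrac{1}{2}+s,\pi,\rho)$ on the unramified tempered spectrum: convergence comes from polynomial growth of $L$ in the Satake parameters against the Macdonald $c$-function, the identity $Z(s, b_\rho, c^\circ) = L(s,\pi,\rho)$ reduces to Macdonald's formula, and self-duality $\mathcal{F}_\rho(b_\rho) = b_\rho$ is the $\gamma$-factor identity restricted to the spherical spectrum. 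Property \ref{rapid} follows by a Paley--Wiener type argument: functions whose spectral data lies in the appropriate class are, up to modifications of arbitrarily small measure, compactly supported.

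The hard part will be the Archimedean case, especially \ref{BKN:arch:BEV}. Archimedean $L$-functions contain Gamma factors with infinitely many poles, so the conditions defining $\mathcal{S}_\rho$ amount to infinitely many vanishing requirements on the spectral side; balancing density in $L^2$, closure under $\mathcal{F}_\rho$, and the uniform strip bound simultaneously requires delicate estimates pairing Plancherel-density growth against Stirling asymptotics. This presumably forces the construction of only an approximation to $\mathcal{S}_\rho$ in the Archimedean setting, as indicated in the abstract.
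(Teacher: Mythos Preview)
Your overall strategy matches the paper's closely: define $\mathcal{S}_\rho$ spectrally via the Harish-Chandra Plancherel isomorphism by requiring $\pi \mapsto \pi(f)/L_\rho(\pi)$ to extend to a polynomial (non-Archimedean) or holomorphic (Archimedean) section, read off \eqref{Zetas} from the identity $Z(s,f,c) = \langle \pi_s(f)v,v'\rangle$ together with \eqref{operator:bound}, use the $\varepsilon$-factor identity to show $\mathcal{F}_\rho$-stability, and define $b_\rho$ by stipulating its spherical transform. Your remarks on the Archimedean difficulty are also on target: the paper takes $\mathcal{S}_\rho = \mathcal{S}_\rho^{\mathrm{as}}(G(F))$ there and only gets an ``asymptotic'' space.

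There is, however, a genuine gap in your handling of \eqref{rapid}. Your claim that ``functions whose spectral data lies in the appropriate class are, up to modifications of arbitrarily small measure, compactly supported'' is false. The paper gives a concrete counterexample (Example~\ref{BP:example}, due to Beuzart-Plessis): take $G=\GL_2$, $\rho$ the standard representation, and let $f$ be the function whose $\mathrm{HP}$ is zero off the Steinberg orbit and equals the projection onto $\mathrm{St}^I$ there. Then $L_\rho^{-1}\mathrm{HP}(f)$ is a polynomial section (indeed constant), so $f \in \mathcal{S}_\rho^{\mathrm{as}}(G(F))$, but $f$ is \emph{not} almost compactly supported. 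So the spectral conditions alone do not force \eqref{rapid}.

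The paper's fix is to \emph{define} $\mathcal{S}_\rho(G(F))$ as the largest $\mathcal{F}_\rho$-stable subspace of $\mathcal{S}_\rho^{\mathrm{as}}(G(F)) \cap C_{ac}^\infty(G(F))$, so \eqref{rapid} holds tautologically. The real work then shifts to showing this intersection is large enough: one must prove $C_c^\infty(G(F)) \leq \mathcal{S}_\rho(G(F))$ (for density) and $b_\rho \in \mathcal{S}_\rho(G(F))$. This is where the Paley--Wiener theorem enters, but not in the direction you suggest: one uses the Bernstein--Heiermann description of $\mathrm{BH}(C_c^\infty(G(F)))$ to show that $\one_\alpha \mathcal{F}_\rho(f)$ and $\one_\alpha b_\rho$ are compactly supported for each $\alpha \in \mathrm{Hom}(X^*(G),\mathbb{Z})$, via elements of an extended Bernstein center that realize the $\alpha$-isotypic pieces of $\gamma_\rho$ and $L_\rho$ as operators on $C_c^\infty(G(F))$. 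This argument also needs the strong convexity hypothesis \eqref{chi:assum} on $\rho|_{Z_{\widehat{G}}^\circ}$ to control the Laurent expansion of $L_\rho$; without it $b_\rho$ can fail to lie in $C_{ac}^\infty(G(F))$ (e.g.\ for $G=\SL_2$).
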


Conjectures \ref{conj:Sch} and \ref{conj:Sch:des} are a large part of the local conjectures of Braverman-Kazhdan and Ng\^o on Schwartz spaces   \cite{BK-lifting,NgoSums,Ngo:Hankel}.  A nice summary is given in \cite[Conjecture 2.1.1]{Luo:Ngo}.  To compare the conjectures, we point out that for technical reasons we only consider tempered representations in this paper, whereas in loc.~cit.~the authors consider all irreducible admissible representations.  Moreover in loc.~cit.~the authors ask for an interpretation of $\mathcal{F}_{\rho}$ as a convolution satisfying certain properties. We omit a discussion of this in the introduction (but see \S \ref{ssec:kernel} below).

We also prove Conjecture \ref{conj:Sch:des}:

\begin{thm} \label{main:thm:intro} 
If the desiderata on the local Langlands correspondence in \S \ref{sec:LLC} are valid then Conjecture \ref{conj:Sch:des} holds.
\end{thm}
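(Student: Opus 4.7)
The plan is to refine the space $\mathcal{S}_\rho = \mathcal{C}(G(F))$ from Theorem \ref{FirstThmIntro} to a smaller subspace whose zeta integrals have pole structure controlled by $L$-factors. The starting observation is that for tempered $\pi$ with smooth matrix coefficient $c(g) = \langle \pi(g)v, w\rangle$ and the twist $\pi_s := \pi \otimes |d|^s$, one has
\begin{equation*}
Z(s,f,c) = \langle \pi_s(f)v, w\rangle
\end{equation*}
whenever the integral converges. For $s$ on the imaginary axis, $\pi_s$ is again tempered, so applying \eqref{operator:bound} to $\pi_s$ and using the hypothesis on $\rho \circ d^\vee$ to identify $\gamma(\tfrac{1}{2}, \pi_s, \rho, \psi) = \gamma(\tfrac{1}{2}+s, \pi, \rho, \psi)$ produces an operator identity that, after the change of variable $g \mapsto g^{-1}$, translates into the functional equation $Z(-s, \mathcal{F}_\rho(f), c^\vee) = \gamma(\tfrac{1}{2}+s, \pi, \rho, \psi) Z(s,f,c)$ on the critical line. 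This extends to a meromorphic identity once meromorphic continuation of the zeta integrals is established.

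First I would define $\mathcal{S}_\rho$ via a Mellin decomposition in the $|d|$-direction: fixing a section of $|d|: G(F) \to \mathbb{R}_{>0}$, expand any $f \in \mathcal{C}(G(F))$ as a Mellin integral of components $\tilde f_s$ supported on the kernel of $|d|$, and take $\mathcal{S}_\rho$ to consist of those $f$ whose Mellin transform $s \mapsto \tilde f_s$ extends meromorphically with rapid decay on vertical strips and whose poles in the right half-plane occur only where $L(\tfrac{1}{2}+s, \pi, \rho)$ has poles for the tempered $\pi$ entering the Plancherel decomposition of $f$. Contour shifting then produces the convergence of $Z(s,f,c)$ for $\mathrm{Re}(s) \gg 0$ together with the holomorphy required by \ref{Zetas}. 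In the Archimedean case, \ref{BKN:arch:BEV} follows by combining the vertical-strip Schwartz estimates built into the definition with Stirling-type bounds on $\gamma$-factors, while in the non-Archimedean case \ref{rapid} is a direct consequence of the Schwartz decay of the Mellin components combined with Harish-Chandra decay in the transverse direction.

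For \ref{BKN:basic}, define $b_\rho$ as the inverse Satake transform of the function $\pi \mapsto L(s, \pi, \rho)$ on the unramified tempered spectrum. The identity $Z(s, b_\rho, c^\circ) = L(s, \pi, \rho)$ is then a Macdonald-type formula, and $\mathcal{F}_\rho(b_\rho) = b_\rho$ reduces via \eqref{operator:bound} to the unramified functional equation together with the triviality of the $\epsilon$-factor on unramified data with unramified $\psi$.

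The main obstacle will be arranging that $\mathcal{S}_\rho$ is simultaneously small enough to enforce the analytic constraints in \ref{Zetas} and \ref{BKN:arch:BEV} yet large enough to contain $b_\rho$ (and so be dense in $L^2(G(F))$). A cleaner if somewhat circular definition is to take $\mathcal{S}_\rho$ as the smallest $G(F) \times G(F)$-invariant subspace of $\mathcal{C}(G(F))$ containing $b_\rho$ and closed under $\mathcal{F}_\rho$; unwinding this using the spectral decomposition from Theorem \ref{FirstThmIntro} together with the operator identity \eqref{operator:bound} should produce an explicit description equivalent to the Mellin one above, which one then verifies against all desiderata of Conjecture \ref{conj:Sch:des}.
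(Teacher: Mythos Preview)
Your derivation of the functional equation and the spectral approach to \eqref{Zetas} are correct and match the paper's argument. The gap is in your treatment of \eqref{rapid}. You claim that almost compact support ``is a direct consequence of the Schwartz decay of the Mellin components combined with Harish-Chandra decay in the transverse direction.'' This is false: Harish-Chandra decay on the fibers of $|d|$ does \emph{not} imply compact support on those fibers. The paper gives an explicit counterexample (Example~\ref{BP:example}): for $G=\GL_2$ and $\rho$ the standard representation, there is a function satisfying precisely the spectral pole conditions you impose whose restriction to $\{|\det g|=1\}$ is a Steinberg matrix coefficient---hence in $\mathcal{C}(G(F))$ but not compactly supported there. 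Your one-variable Mellin decomposition cannot distinguish such functions from genuinely almost compactly supported ones.

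This is the central difficulty in the non-Archimedean case, and it is not addressed by anything in your proposal. The paper resolves it by first defining the asymptotic space $\mathcal{S}_\rho^{\mathrm{as}}(G(F))$ via the spectral pole condition (imposed over the full Levi parameter $\Lambda_M$, not just the center direction), then taking $\mathcal{S}_\rho(G(F))$ to be the largest $\mathcal{F}_\rho$-stable subspace of $\mathcal{S}_\rho^{\mathrm{as}}(G(F))\cap C_{ac}^\infty(G(F))$. Showing that this intersection contains $C_c^\infty(G(F))$ and $b_\rho$, and that its $\mathcal{F}_\rho$-stable part still does, requires the Paley--Wiener theorem of Bernstein and Heiermann, which characterizes $C_c^\infty(G(F))$ by polynomiality over supercuspidal inertial orbits---a stronger condition than your Mellin analyticity in $s$ alone. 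Your alternative definition (smallest invariant $\mathcal{F}_\rho$-stable subspace containing $b_\rho$) presupposes $G$ unramified and would not contain $C_c^\infty(G(F))$ in general, so it fails density.
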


Before outlining the argument we discuss related literature.  There are several papers on  geometrization of unramified or Iwahori-ramified $\rho$-Schwartz spaces  in the equal characteristic case \cite{BNS,BBFK}, and a paper \cite{Luo:Ngo} constructing Fourier transforms for symmetric power representations of ${}^L\mathrm{GL}_2$ for most non-Archimedean fields.  In contrast to these previous works, our approach is direct and has the virtue that it proceeds essentially uniformly for all types of ramification and all places of a number field.
The key contribution of this paper is to prove that the Harish-Chandra space and the corresponding Plancherel formula are sufficient to construct Fourier transforms.  We emphasize that providing a construction of a previously conjectural morphism like the Fourier transform is important. One then can build the theory on  firm ground.

Let us simultaneously outline the argument and the sections of this paper.
We work with more general $G$ and $\rho$ than those considered above in the body of the paper.
In \S \ref{sec:prelim} we collect various preliminaries, culminating with a description of the matrical Harish-Chandra Plancherel formula in \S \ref{sec:HPmap}.  We then turn to a statement of a coarse version of the local Langlands correspondence in \S \ref{sec:LLC}.    This may be of interest to experts. Along the way we prove the compatibility of two potentially different definitions of $\gamma$-factors in the case of general linear groups (see Proposition \ref{prop:GLn}).  We also prove uniform bounds on $\gamma$-factors in the Archimedean case in \S \ref{LLCa:gamma:bound}.  These are important because we treat functions that are not necessarily finite under a maximal compact subgroup when $F$ is Archimedean.  Despite the technicality of these local results on $\gamma$-factors, they are of independent interest and will be helpful in other circumstances.   

We then construct the Fourier transform in \S \ref{sec:FourieronHCSchwartz} and prove Conjecture \ref{conj:Sch} in Theorem \ref{thm:1}.  Essentially we construct the Fourier transform using the Harish-Chandra Plancherel theorem by declaring that  \eqref{operator:bound} is valid.   We construct the kernel function for the Fourier transform spectrally in \S \ref{ssec:kernel}, define the basic function in \S \ref{Section:Basic:Function}, and prove several estimates for later use in \S \ref{ssec:growth}.

In \S \ref{sec:asymp} we construct the \textbf{asymptotic $\rho$-Schwartz space} $\mathcal{S}_{\rho}^{\mathrm{as}}(G(F)) < \mathcal{C}(G(F)).$  Briefly, it is the inverse image under the Harish-Chandra Plancherel isomorphism of meromorphic sections with poles no worse than the Langlands $L$-functions attached to $\rho.$  We refer to $\mathcal{S}_{\rho}^{\mathrm{as}}(G(F))$ as the asymptotic $\rho$-Schwartz space due to the fact that it is formally similar to the asymptotic Hecke algebra \cite{BK:asymp}, as pointed out to us by Beuzart-Plessis.  The asymptotic Hecke algebra satisfies all of the desiderata of Conjecture \ref{conj:Sch:des} except \eqref{rapid}.

At this point we narrow our focus to the non-Archimedean case and cut out a subspace of $\mathcal{S}_{\rho}^{\mathrm{as}}(G(F))$ to construct the Schwartz space $\mathcal{S}_{\rho}(G(F)).$  A clue to the correct definition is given by Example \ref{BP:example}.  It demonstrates that elements of $\mathcal{S}_{\rho}^{\mathrm{as}}(G(F))$ need not lie in the space of almost compactly supported smooth functions $C^{\infty}_{ac}(G(F))$ (see Definition \ref{defn:ac}).  On the other hand one expects the Schwartz space to consist of almost compactly supported smooth functions.  Thus we define the \textbf{$\rho$-Schwartz space}
$
\mathcal{S}_{\rho}(G(F))$ to be the largest subspace of $\mathcal{S}_{\rho}^{\mathrm{as}}(G(F)) \cap C_{ac}^\infty(G(F))$ stable under the Fourier transform.
One way to think about this definition is that $\mathcal{S}_{\rho}^{\mathrm{as}}(G(F))$ gives us spectral control of the Schwartz space and $C_{ac}^\infty(G(F))$ gives geometric control.  Happily, the space $C_{ac}^\infty(G(F))$ can be understood using the Paley-Wiener theorem of Bernstein and Heiermann (see \S \ref{ssec:PW:BH}).  By carefully leveraging the Harish-Chandra Plancherel formula and the Paley-Wiener theorem we are able to prove that Theorem \ref{main:thm:intro} is valid for $\mathcal{S}_{\rho}=\mathcal{S}_{\rho}(G(F))$ in \S \ref{BKN:statement}.  

We suspect that with additional work our methods could be extended to define Schwartz spaces (and not just asymptotic Schwartz spaces) in the Archimedean setting, but several points remain unclear.

\section*{Acknowledgements}
We thank R.~Beuzart-Plessis for suggesting the use of the work of Bernstein and  Heiermann and supplying Example \ref{BP:example}, W.~Casselman and T.~Kaletha for help with Lemma \ref{lem:unram:func}, and P.~Delorme for answering a question regarding \cite{Delorme:constant}. Getz thanks H.~Hahn for her constant encouragement and help with editing.
This paper was written under the auspices of the Duke Research Scholars program organized by Getz and funded by NSF RTG grant DMS-2231514.

\section{Preliminaries}
\label{sec:prelim}

\subsection{Analytic number theory notation}
Let $X$ be a set. For functions $f,g:X \to \RR_{\geq 0}$, we write $
f \ll g$ or  $f(x) = O(g(x))$
if there exists a constant $C>0$ such that $f(x)\leq Cg(x)$ for all $x\in X$. We say $g$ dominates $f$ if $f\ll g$. Occasionally we write $f\ll_?g$ if the constant depends only on $?$ (up to irrelevant dependencies).
 We write $f \asymp g$ if $f \ll g$ and $g \ll f.$ By contrast, we say $f$ is bounded by $g$ if $f(x) \leq  g(x)$ for all $x \in X.$ 

For a real vector space $W$ and a subset $X\subseteq W,$ let
\begin{align}\label{cylinder}
    V_X := \{x+iw\mid x\in X,\,w\in W\} \subseteq W_\mathbb{C}
\end{align}
be the cylinder over $X.$

\subsection{Groups, characters and measures} 
Let $F$ be a local field of characteristic $0$ with normalized norm $|\cdot|.$  When $F$ is non-Archimedean, we let $\OO_F$ be its ring of integers. Let  $\varpi \in F^\times$ denote a choice of uniformizer, and set $q:=|\varpi|^{-1}.$

Let $G$ be a (connected) reductive group over $F$ and let $A_0 \leq G$ be a maximal split torus.  We denote by $A_G$ the maximal split torus in $Z_G.$
 
We set
\begin{align*}
    \mathfrak{a}_G:=\mathrm{Hom}_{\mathbb{Z}}(X^*(G),\mathbb{R}), \quad \mathfrak{a}_G^* = X^*(G)\otimes_{\mathbb{Z}}\mathbb{R}.
\end{align*} 
Define a homomorphism $H_G:G(F)\rightarrow \mathfrak{a}_{G}$ by
\begin{align} \label{HG}
    \langle \chi,H_G(g)\rangle = \log |\chi(g)|
\end{align}
for  $(g,\chi)\in G(F)\times X^*(G)$. Let
\begin{align*}
    G(F)^1 &:= \ker H_G = \{g\in G(F): |\chi(g)|=1\text{ for all }\chi\in X^*(G)\},\\
    \Lambda_G &:= \Hom_{\mathbf{TopGp}}(G(F)/G(F)^1,\mathbb{C}^\times).
\end{align*}   Thus $\Lambda_G$ is the set of unramified quasi-characters of $G(F).$
There is a surjective homomorphism 
\begin{align}\label{a->>UrChar} \begin{split}
    \mathfrak{a}_{G\CC}^* &\longrightarrow \Lambda_G\\
    \chi &\longmapsto (g \mapsto e^{\langle \chi,H_G(g) \rangle}). \end{split}
\end{align}
Let
\begin{align*}
    \mathop{\mathrm{Re}}\Lambda_G &:= \Hom_{\mathbf{TopGp}}(G(F)/G(F)^1,\mathbb{R}_{>0}),\\
    \mathop{\mathrm{Im}}\Lambda_G &:= \Hom_{\mathbf{TopGp}}(G(F)/G(F)^1,S^1).
\end{align*}
Here $S^1:=\{z \in \CC : |z|=1\}$ is the unit circle.
The restriction of \eqref{a->>UrChar} to $\mathfrak{a}_G^*$ induces an isomorphism onto $\mathop{\mathrm{Re}}\Lambda_G,$ and the restriction of \eqref{a->>UrChar} to $i\mathfrak{a}_G^*$ surjects onto  $\mathop{\mathrm{Im}}\Lambda_G.$  We let
\begin{align}
i\mathfrak{a}_{G}^\vee=\ker\left(i\mathfrak{a}_{G}^*\to\Im\Lambda_G\right)=\ker\left(\mathfrak{a}_{G\CC}^* \to \Lambda_G\right).
\end{align}
Thus $i\mathfrak{a}_{G}^\vee$ is trivial when $F$ is Archimedean, and is a full rank lattice in $i\mathfrak{a}_G^*$ when $F$ is non-Archimedean.
We say a function on $\Lambda_G$ is holomorphic if its pullback along \eqref{a->>UrChar} is holomorphic.

 When $F$ is non-Archimedean,
 there is a canonical identification of $\Lambda_G$ with the complex points of an algebraic torus \cite[\S V.2.4]{Renard}.
  The Lie algebra of this torus is $\mathfrak{a}_{G\CC}^*,$ and the map \eqref{a->>UrChar} is just the exponential map.   We obtain an isomorphism
  \begin{align} \label{alg:torus}
      \mathfrak{a}_{G \CC}^*/i\mathfrak{a}_{G}^\vee\,\tilde{\lto}\, \Lambda_G.
  \end{align}
Thus our earlier definition of a holomorphic function on $\Lambda_G$ coincides with the usual notion of a holomorphic function on a complex torus.
  
 For later use, we make explicit the following fact \cite[\S V.2.4]{Renard}:
\begin{lem}\label{chi(g):polynomial} Assume $F$ is non-Archimedean. For $g\in G(F)$, the map 
\begin{align*}
        \Lambda_G& \lto \mathbb{C}\\
        \chi & \longmapsto \chi(g)
    \end{align*}
is algebraic. \qed
\end{lem}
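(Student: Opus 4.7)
The plan is to unpack the algebraic torus structure on $\Lambda_G$ supplied by \cite[\S V.2.4]{Renard} and recognize $\chi \mapsto \chi(g)$ as a character of this torus, hence a Laurent monomial on it. In the non-Archimedean case, $|\chi(g)| \in q^{\mathbb{Z}}$ for every $\chi \in X^*(G)$, so the image $H_G(G(F)) \subset \mathfrak{a}_G$ is discrete and is therefore a finitely generated free abelian group. Since $\ker H_G = G(F)^1$ by definition, $H_G$ identifies $G(F)/G(F)^1$ with this lattice, and $\Lambda_G = \Hom_{\mathbf{TopGp}}(G(F)/G(F)^1,\CC^\times)$ acquires the structure of the split complex algebraic torus whose character lattice is $H_G(G(F))$.

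In these coordinates, the evaluation map at a fixed $g$ is precisely the character associated to $H_G(g)$. Indeed, by the definition \eqref{HG} of $H_G$ and the formula for \eqref{a->>UrChar}, any lift $\tilde{\chi} \in \mathfrak{a}_{G\CC}^*$ of $\chi$ satisfies
\[
\chi(g) \;=\; e^{\langle \tilde{\chi},\,H_G(g)\rangle},
\]
and the defining property of $i\mathfrak{a}_G^\vee$ as the kernel of \eqref{a->>UrChar} guarantees that the right-hand side depends only on the class of $\tilde{\chi}$ modulo $i\mathfrak{a}_G^\vee$, i.e., only on $\chi \in \Lambda_G$ through \eqref{alg:torus}. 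Under the isomorphism \eqref{alg:torus}, this expression is precisely the Laurent monomial on $\Lambda_G$ attached to the lattice element $H_G(g)$, so it is algebraic as a function of $\chi$.

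There is essentially no obstacle here; the lemma is a translation of the canonical algebraic torus structure on $\Lambda_G$ into the explicit claim that point evaluation is regular. The only place non-Archimedeanness enters — and it is decisive — is in ensuring that $H_G(G(F))$ is discrete, so that $\Lambda_G$ is an algebraic (not merely analytic) torus; in the Archimedean case the image is typically dense in $\mathfrak{a}_G$ and the analogous map is only real-analytic.
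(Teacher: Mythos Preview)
Your proposal is correct and follows exactly the approach the paper gestures at: the paper simply cites \cite[\S V.2.4]{Renard} and puts a \qed, so your write-up is a faithful unpacking of that reference, identifying $\chi \mapsto \chi(g)$ with the character of the torus $\Lambda_G$ corresponding to the lattice point $H_G(g)$.
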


\quash{
 In particular, when $F$ is non-Archimedean, we can make $\Lambda^G$ a complex algebraic torus. In fact, $\mathfrak{a}_{G,F}^\vee = \frac{2\pi i}{\log q}L$ for some lattice $L\leq \mathfrak{a}_G^*(\mathbb{Q})$ containing $X^*(A_G)$. Here $q = q_F$ is the size of the residue field of $F$. Let $\nu_1,\ldots,\nu_n$ be a $\mathbb{Z}$-basis for $X^*(G)$ such that there are nonzero integers $k_1,\ldots,k_n$ so that $k_1^{-1}\nu_1,\ldots,k_n^{-1}\nu_n$ is a $\mathbb{Z}$-basis for $L$. The map
\begin{center}
    \begin{tikzcd}[row sep=tiny]
        \mathfrak{a}_G^*(\mathbb{C})\arrow[r]& (\mathbb{C}^\times)^n\\
        {\sum\limits_{j\in[n]}t_j\nu_j}\arrow[r,maps to]&{\prod\limits_{j\in[n]} q^{k_jt_j}}
    \end{tikzcd}
\end{center}
then descends to an isomorphism $\mathfrak{a}^*_G(\CC)/\mathfrak{a}^\vee_{G,F}\to (\mathbb{C}^\times)^n$ of topological groups. It is this isomorphism that we use to make $\Lambda^G$ a complex algebraic torus, under which $\Im\Lambda^G$ is a compact real torus. For a later use, we make explicit the following fact:
\begin{lem}\label{chi(g):polynomial} Assume $F$ is non-Archimedean. For $g\in G(F)$, the map 
\begin{center}
    \begin{tikzcd}[row sep=tiny]
        \Lambda^G\arrow[r]&\mathbb{C}\\
        \chi\arrow[r,maps to]&\chi(g)
    \end{tikzcd}
\end{center}
is a morphism of complex varieties.
\begin{proof} Retain the notation above. We have a map $\Lambda^G\to (\mathbb{C}^\times)^n$ given by sending $\prod\limits_{j\in[n]} |\nu_j|^{t_j}$ to $\prod\limits_{j\in[n]} q^{k_jt_j}$. For $j\in[n]$ one has $|\nu_j(g)| = q^{n_j}$ for some $n_j\in\mathbb{Z}$. Then the map in the lemma is nothing but
\begin{align*}
    \prod\limits_{j\in[n]} |\nu_j|^{t_j} \mapsto \prod\limits_{j\in[n]} |\nu_j(g)|^{t_j} = \prod_{j\in[n]} q^{n_jt_j}
\end{align*}
It then remains to show that $k_j\mid n_j$ for each $j\in[n]$. Indeed, as $k_j^{-1}\nu_j\in L$, we have $1=|\nu_j(g)|^{\frac{2\pi ik_j^{-1}}{\log q}} = e^{2\pi i(n_jk_j^{-1})}$, or $k_j\mid n_j$.
\end{proof}
\end{lem}
}

Following \cite[\S I.1]{Waldspurger:plancherel}, a parabolic subgroup $P$ of $G$ is called semi-standard if it contains $A_0$.  We denote by $P^{\mathrm{op}}$ the opposite parabolic with respect to $A_0.$ A semi-standard Levi subgroup $M$ is the unique Levi subgroup of a semi-standard parabolic $P$ that contains $A_0.$
 For a semi-standard Levi subgroup $M,$ let $\mathcal{P}(M)$ denote the set of all semi-standard parabolic subgroups with Levi subgroup $M.$ Let $\mathcal{M}$ denote a set of representatives for the set of conjugacy classes of semi-standard Levi subgroups. For any $P\in\mathcal{P}(M)$, by abuse of notation we write $H_M:P(F)\to\mathfrak{a}_M$ for the pullback of $H_M$ along $P(F)\to M(F)$.

 When $F$ is non-Archimedean, we normalize measures as in \cite[p.240]{Waldspurger:plancherel}. Namely, let $K$ be the special subgroup associated to the maximal split torus $A_0$. For any algebraic subgroup $H$ of $G$, we normalize the left Haar measure on $H(F)$ so that $K\cap H(F)$ has measure $1$. 

Assume $F$ is Archimedean. We normalize the measure following \cite[I, 1.6]{Arthur:HP:realreductive}.  Since $G(F)=\mathrm{Res}_{F/\RR}G(\RR)$, we may view $G(F)$ canonically as a real Lie group.
Choose a maximal compact subgroup $K$ of $G(F).$ 
Let $N$ be the unipotent radical of a minimal semi-standard parabolic subgroup.  Pick Haar measures on $\mathfrak{a}_{A_0}$ and on $\mathrm{Lie}\,N$ (viewed as a real Lie algebra).  We have
\begin{align}
G(F)=e^{\mathfrak{a}_{A_0}}N(F)K.
\end{align}
   We assume $dg=da\;dn\;dk$,  where the Haar measure $da$ (resp.~$dn$) is the pushforward of the Haar measure on $\mathfrak{a}_{A_0}$ (resp.~$\mathrm{Lie}\,N$) under the exponential map, and $\mathrm{meas}_{dk}(K)=1.$

For a split torus $A$, equip $\mathop{\mathrm{Im}}\Lambda_A$ with a Haar measure as follows. Take the counting measure on $i\mathfrak{a}_{A,F}^\vee$, and choose the Haar measure on $i\mathfrak{a}^*_{A}$ so that the quotient measure on $i\mathfrak{a}^*_{A}/2\pi i \kappa X^*(A)$ has volume $1$. Here
\begin{align*}
    \kappa = 
    \begin{cases}
        (\log q)^{-1} &\text{if $F$ is non-Archimedean,} \\
        2 &\text{if $F=\mathbb{R}$}.
    \end{cases}
\end{align*}
Give $\mathop{\mathrm{Im}}\Lambda_A$ the Haar measure so that the isomorphism  $i\mathfrak{a}^*_{A}/i\mathfrak{a}_{A,F}^\vee\cong \mathop{\mathrm{Im}}\Lambda_A$ is measure preserving. When $F$ is non-Archimedean this simply means $\mathop{\mathrm{Im}}\Lambda_A$ has volume $1$. 

The restriction map $\mathop{\mathrm{Im}}\Lambda_{G}\to \mathop{\mathrm{Im}}\Lambda_{A_G}$ is surjective with finite kernel. Equip $\mathop{\mathrm{Im}}\Lambda_{G}$ with the pullback of the Haar measure on $\mathop{\mathrm{Im}}\Lambda_{A_G}$ that we have just chosen.

\subsection{The tempered dual} \label{ssec:temp} Denote by 
\begin{align*}
    \Pi_2(G)\subseteq \mathrm{Temp}(G)
\end{align*}
the set of isomorphism classes of irreducible square integrable (resp. irreducible tempered) representations of $G(F)$.  Here and throughout this paper we work in the category of continuous unitary representations of $G(F)$ on a Hilbert space, and a representation of $G(F)$ is square integrable if its matrix coefficients are square integrable modulo center.

The group of unramified characters $\Lambda_G$ acts naturally on the set $\Pi(G)$ of isomorphism classes of irreducible admissible representations of $G(F).$ For $\pi\in \Pi(G),$ the \textbf{unramified twists} of $\pi$ are the representations
\begin{align*}
    \pi_{\chi}:=\pi \otimes\chi
\end{align*}
 for $\chi\in\Lambda_G.$
By pulling back along \eqref{a->>UrChar} we obtain an action of $\mathfrak{a}_{G\CC}^*:$  
\begin{align} \label{twists} \begin{split}
    \Pi(G) \times \mathfrak{a}_{G\CC}^* &\lto \Pi(G)\\
    (\pi,\lambda) &\longmapsto \pi_{\lambda}:=\pi \otimes e^{\langle \lambda, H_G(\cdot)\rangle}. \end{split}
\end{align}
An orbit under this action is sometimes called an inertial orbit.
Let 
\begin{align}
i\mathfrak{a}_{G,\pi}^\vee \leq i\mathfrak{a}_G^*
\end{align}
denote the stabilizer of $\pi$; we have $i\mathfrak{a}_{G}^\vee\leq i\mathfrak{a}_{G,\pi}^\vee$. 

Let $\Pi_2(G)_\mathbb{C}\subseteq\Pi(G)$ denote the set of all unramified twists of $\Pi_2(G)$. For each $\pi\in\Pi_2(G)$, the inertial orbit $\mathfrak{a}_{G\CC}^*.\pi$ may be identified with the complex manifold $\mathfrak{a}_{G\CC}^*/i\mathfrak{a}_{G,\pi}^\vee$. Thus the space $\Pi_2(G)_\mathbb{C}$ is naturally a disconnected complex manifold whose connected components are inertial orbits.  The unitary representations inside the inertial orbit of $\pi$ may be identified with the real manifold
$i\mathfrak{a}_G^*/i\mathfrak{a}_{G,\pi}^\vee.$
When $F$ is non-Archimedean, the inertial orbits are moreover complex algebraic tori.  

Let $\mathrm{Temp}_{\mathrm{Ind}}(G)$ denote the set of isomorphism classes of tempered representations of $G(F)$ of the form $\mathrm{Ind}_P^G(\sigma)$, where $M\in\mathcal{M}$, $P\in\mathcal{P}(M)$ and $\sigma\in\Pi_2(M)$. The set $\mathrm{Temp}_{\mathrm{Ind}}(G)$ is a quotient of 
\begin{align*}
    \widetilde{\mathrm{Temp}}_{\mathrm{Ind}}(G) = \bigsqcup_{M\in\mathcal{M}}\Pi_2(M).
\end{align*}
Equip $\widetilde{\mathrm{Temp}}_{\mathrm{Ind}}(G)$ with the natural structure of a smooth manifold. For later use we let
\begin{align*}
    \widetilde{\mathrm{Temp}}_{\mathrm{Ind}}(G)_\mathbb{C} = \bigsqcup_{M\in\mathcal{M}}\Pi_2(M)_\mathbb{C}
\end{align*}
and equip it with its natural structure as a complex manifold; when $F$ is non-Archimedean, it is the $\CC$-points of a complex algebraic scheme, each of its connected components being a complex torus.

We recall the Harish-Chandra canonical measure $d\sigma$ on $\Pi_2(G)$. For an orbit $\mathcal{O}$ in $\Pi_2(G)$ under the $\mathop{\mathrm{Im}}\Lambda_{G}$-action, the restriction of $d\sigma$ to $\mathcal{O}$ is the pushforward of the Haar measure on $\mathop{\mathrm{Im}}\Lambda_{G}$ along the action map $\mathop{\mathrm{Im}}\Lambda_{G}\to \mathcal{O}\subseteq \Pi_2(G)$. This allows us to define a regular Borel measure $d\pi$ on $\Temp_\Ind(G)$ by means of 
\begin{align*}
    \int_{\Temp_\Ind(G)}\varphi(\pi)d\pi:=\sum_{M\in\mathcal{M}}c_{G,M}\int_{\Pi_2(M)} \varphi(M,\sigma) \deg(\sigma)j(\sigma)^{-1} d\sigma,
\end{align*}
for any $\varphi\in C_c(\Temp_\Ind(G))$. Here $c_{G,M}$ is certain positive number depending on $G$ and $M$, $\deg(\sigma)$ is the formal degree of $\sigma$, and $j(\sigma)^{-1}$ is the Harish-Chandra $\mu$-function; see \cite[\S 2.4, 2.6]{BP:local:GGP} and the references therein.  

\begin{lem}\label{pi->pivee:preservemeasure} The map  $(M,\sigma)\mapsto (M,\sigma^\vee)$ defines an involutive homeomorphism on $\mathrm{Temp}_{\mathrm{Ind}}(G)$ that preserves the measure $d\pi$.
\end{lem}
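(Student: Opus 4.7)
The map is manifestly an involution on the disjoint union $\widetilde{\mathrm{Temp}}_{\mathrm{Ind}}(G) = \bigsqcup_{M \in \mathcal{M}} \Pi_2(M)$, since contragredient preserves square integrability and the Levi label. First I would check that it descends to a well-defined involution on the quotient $\mathrm{Temp}_{\mathrm{Ind}}(G)$. By the standard disjointness theorem for parabolic induction from discrete series, two pairs $(M,\sigma),(M,\sigma')$ in $\widetilde{\mathrm{Temp}}_{\mathrm{Ind}}(G)$ yield isomorphic induced representations if and only if $\sigma$ and $\sigma'$ lie in the same $W(G,M)$-orbit; and distinct representatives $M \in \mathcal{M}$ yield disjoint families. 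Since $(\sigma^w)^\vee = (\sigma^\vee)^w$ for any $w \in W(G,M)$, the contragredient descends to the quotient. Continuity (in fact homeomorphism) follows because $\sigma \mapsto \sigma^\vee$ is a homeomorphism of the tempered dual of each $M(F)$ onto itself.

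For the measure statement, by the definition of $d\pi$ it suffices to show that, for each fixed $M \in \mathcal{M}$, the involution $\sigma \mapsto \sigma^\vee$ on $\Pi_2(M)$ preserves the weighted measure $\deg(\sigma)\, j(\sigma)^{-1}\, d\sigma$. This splits into three sub-claims:

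\begin{enumerate}[label=(\roman*)]
\item The Harish-Chandra measure $d\sigma$ is preserved. On an orbit $\mathcal{O} = \mathop{\mathrm{Im}}\Lambda_M \cdot \sigma_0$, the identity $(\sigma_0 \otimes \chi)^\vee = \sigma_0^\vee \otimes \chi^{-1}$ shows that contragredient, viewed as a map between the orbit of $\sigma_0$ and the orbit of $\sigma_0^\vee$, corresponds under the parametrization by $\mathop{\mathrm{Im}}\Lambda_M$ to inversion $\chi \mapsto \chi^{-1}$. Since inversion preserves Haar measure on $\mathop{\mathrm{Im}}\Lambda_M$, the pushforward measure $d\sigma$ is preserved.
\item The formal degree satisfies $\deg(\sigma^\vee) = \deg(\sigma)$. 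This is classical: matrix coefficients of $\sigma^\vee$ are complex conjugates of matrix coefficients of $\sigma$, which do not affect the $L^2$-norm used to define $\deg$.
\item The Harish-Chandra $\mu$-function satisfies $j(\sigma^\vee) = j(\sigma)$. This comes from the definition of $j$ via normalized intertwining operators: intertwining operators for induction from $\sigma^\vee$ are the transposes of those from $\sigma$, whose Plancherel densities coincide. I would cite the relevant symmetry in \cite{Waldspurger:plancherel}.
\end{enumerate}

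Combining (i)--(iii) yields the invariance of the measure on each summand, and hence on $\mathrm{Temp}_{\mathrm{Ind}}(G)$.

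The main obstacle is sub-claim (iii): while (i) and (ii) are nearly formal once one unwinds definitions, the contragredient-invariance of the $\mu$-function requires care. The cleanest approach is probably to invoke the symmetry of the Plancherel formula itself, or to derive it from the analogous symmetry of the standard intertwining operators used in Waldspurger's normalization.
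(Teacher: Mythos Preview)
Your proposal is correct and follows essentially the same approach as the paper: both reduce the measure invariance to the three ingredients $d\sigma$, $\deg(\sigma)$, and $j(\sigma)$ being contragredient-invariant, and both identify the $j$-function symmetry as the non-trivial point to be extracted from Waldspurger. The paper's proof is terser, omitting the well-definedness discussion and (i), and simply cites \cite[p.286 (2)]{Waldspurger:plancherel} for $j(\sigma)=j(\sigma^\vee)$; it additionally notes that Waldspurger only treats the non-Archimedean case and supplies \cite[Lemma 10.5.6]{Wallach:RGII} for the Archimedean analogue, a distinction you should also make.
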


\begin{proof} It is clear from the definition of the formal degree that $\deg(\sigma)=\deg(\sigma^\vee)$. Moreover,  $j(\sigma) = j(\sigma^\vee)$ by \cite[p.286 (2)]{Waldspurger:plancherel}.  Strictly speaking, in loc. cit.~only the non-Archimedean case is discussed. The proof used \cite[p.283 (11)]{Waldspurger:plancherel}. Its Archimedean counterpart remains true by \cite[Lemma 10.5.6]{Wallach:RGII}.
\end{proof}

To ease notation, for parabolic subgroups $P$ of $G$ with Levi subgroup $M$ and admissible representations $\sigma$ of $M(F)$, write
$$
I_P^G(\sigma):=\mathrm{Ind}_{P}^G(\sigma)
$$
for the unitarily normalized induction.  
By the Langlands classification, every irreducible tempered representation $\pi$ is a subrepresentation of some $I_P^G(\sigma)$ with $M\in\mathcal{M}$, $P\in\mathcal{P}(M)$, and $\sigma\in\Pi_2(M)$ (c.f. \cite[Proposition III.4.I]{Waldspurger:plancherel} and \cite[Proposition 5.2.5, \S 5.4]{WallachRG1}). It is unique in the sense that $\pi\mapsto (M,\sigma)$ determines a well-defined map
\begin{align}\label{Temp->TempInd}
    \mathrm{Temp}(G)\lto \mathrm{Temp}_{\mathrm{Ind}}(G).
\end{align}
We mention that each $I_P^G(\sigma)$ has finite length and decomposes as a direct sum of irreducible tempered representations. 

We will need the following fact from \cite[Corollary 5.1]{Pouslen:smoothInd} in the sequel:
\begin{lem}\label{Ind:evaluation:continuous} Assume $F$ is Archimedean. Let $P$ be a parabolic subgroup of $G$ with Levi subgroup $M$, and $\sigma$ be a unitary representation of $M$. Let $X_1,\ldots,X_n$ be a basis for $\mathrm{Lie}\,\mathrm{Res}_{F/\mathbb{R}}G$. For compact sets $\mathcal{K} \subset G(F)$ there exists a constant $C_{\mathcal{K}}>0$ such that
\begin{align*}
    \norm{f(g)} \leq C_{\mathcal{K}} \sum_{\substack{(\alpha_1,\ldots,\alpha_n)\in\mathbb{Z}^n_{\geq 0}\\\alpha_1+\cdots+\alpha_n \leq n}} \norm{I_P^G(\sigma)(X_1^{\alpha_1}\cdots X_n^{\alpha_n})f}
\end{align*}
for $(f,x)\in I_P^G(\sigma)^{\mathrm{sm}} \times \mathcal{K}$. In particular, the map $I_P^G(\sigma)^{\mathrm{sm}}\to \sigma^{\mathrm{sm}}$ given by $f\mapsto f(1)$ is continuous.
\qed
\end{lem}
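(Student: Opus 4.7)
The plan is to prove the inequality by the standard Sobolev-embedding argument for smooth vectors, which was carried out by Poulsen in greater generality for continuous Banach representations. I would proceed in three steps: a reduction to the identity element, a pointwise bound at the identity via the Iwasawa decomposition, and then deducing the continuity of evaluation.

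First I would reduce to the case $g=1$. Let $\pi = I_P^G(\sigma)$ and observe that $(\pi(g)f)(1) = f(g)$, so it suffices to bound $\|h(1)\|$ for $h = \pi(g)f$. Since $\pi$ is unitary,
\[
\|\pi(X_1^{\alpha_1} \cdots X_n^{\alpha_n}) \pi(g) f\| = \|\pi(\mathrm{Ad}(g^{-1})(X_1^{\alpha_1} \cdots X_n^{\alpha_n})) f\|,
\]
and expanding $\mathrm{Ad}(g^{-1}) X_i$ in the basis $X_1, \ldots, X_n$ yields coefficients depending smoothly on $g$, hence uniformly bounded on the compact set $\mathcal{K}$. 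This absorbs the compact parameter into the constant, reducing the problem to $g=1$.

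Next I would establish the bound at the identity. Choose an Iwasawa decomposition $G(F) = P(F) K$; then $\|f\|^2 = \int_K \|f(k)\|_\sigma^2 \, dk$ for $f \in I_P^G(\sigma)$, and a smooth vector $f$ yields a smooth map $f|_K : K \to V_\sigma^{\mathrm{sm}}$. The right action of $X \in \mathrm{Lie}\, G$ on $f$ corresponds, via Iwasawa, to a first-order differential operator on $K$ whose principal part is a left-invariant vector field on $K$ together with a zeroth-order $\mathrm{End}(V_\sigma)$-valued term arising from the action of $\sigma$ on the $\mathrm{Lie}\, P$-component. Since $K$ is a compact manifold, the Hilbert-space-valued Sobolev embedding $H^s(K, V_\sigma) \hookrightarrow C(K, V_\sigma)$ holds for $s > (\dim_{\RR} K)/2$; choosing $s \leq n = \dim_\RR \mathrm{Lie}\, \mathrm{Res}_{F/\RR} G$ and iterating the translation of the action gives
\[
\|f(1)\|_\sigma \ll \sum_{|\alpha| \leq n} \|\pi(X_1^{\alpha_1} \cdots X_n^{\alpha_n}) f\|.
\]

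Finally, for the continuity of $f \mapsto f(1)$, I would apply the pointwise bound above to the vectors $\pi(Y_1^{\beta_1} \cdots Y_n^{\beta_n}) f$ for arbitrary multi-indices $\beta$. Using the parabolic transformation law of induction to rewrite $(\pi(Y^\beta)f)(1)$ as a polynomial in $\sigma(Z^\gamma) f(1)$ for $Z \in \mathrm{Lie}\, M$ together with ``tangential'' derivatives of $f|_K$, one controls the Fr\'echet seminorms of $f(1) \in V_\sigma^{\mathrm{sm}}$ by those of $f$ in $I_P^G(\sigma)^{\mathrm{sm}}$. The principal obstacle is the bound at the identity: one must carefully translate the Lie-algebra action on $I_P^G(\sigma)$ into differential operators on the compact manifold $K$, tracking the $\sigma$-twist arising from $P$, and then deploy the vector-valued (rather than scalar) Sobolev embedding. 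This analytic core is precisely what is carried out in Poulsen's cited work.
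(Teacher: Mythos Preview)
Your outline is correct and matches the paper's approach: the paper does not give its own proof but simply cites Poulsen \cite[Corollary 5.1]{Pouslen:smoothInd}, and what you have sketched is precisely the Sobolev-embedding argument carried out there. In other words, you have reconstructed the cited proof rather than diverged from it.
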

\noindent Here the superscript $\mathrm{sm}$ denotes the subspace of smooth vectors, endowed with its usual Fr\'echet topology.

 Let $M\in\mathcal{M}$ and $P,\,P'\in\mathcal{P}(M).$  Moreover, let $\sigma$ be an admissible representation of $M(F)$ of finite length.  For $\lambda \in \mathfrak{a}_{M\CC}^*$ we denote by
\begin{align}
J_{P\mid P'}(\sigma_{\lambda}):I_{P'}^G(\sigma_{\lambda})^{\mathrm{sm}}\lto I_P^G(\sigma_{\lambda})^{\mathrm{sm}}
\end{align}
the usual $G(F)$-intertwining operator  \cite[Th\'eor\`eme IV.1.1]{Waldspurger:plancherel} \cite[10.1.11]{Wallach:RGII}.

\quash{
\textcolor{red}{Whatever we need of this we need to find references for.  But it isn't clear to me if we need any of it.}

Away from its poles 
\begin{itemize}
    \item $\langle J_{P\mid P'}(\sigma_{\lambda})f,f^\vee\rangle = \langle f,J_{P'\mid P}(\sigma_{-\lambda}^\vee)f^\vee\rangle$ for $(f,f^\vee)\in I_{P'}^G(\sigma_{\lambda})^{\mathrm{sm}}\times I_P^G(\sigma_{-\lambda}^\vee)^{\mathrm{sm}}$.
    \item For $g\in G(F)$, let $\lambda_P(g):I_{P}^G(\sigma)\to I_{gPg^{-1}}^G({}^{g}\sigma)$ denote the $G(F)$-equivariant map given by 
\begin{align*}
    (\lambda_P(g)f)(x) := f(g^{-1}x).
\end{align*}
The diagram
\begin{center}
    \begin{tikzcd}
        I_{P'}^G(\sigma)^{\mathrm{sm}}\arrow[dd,"\lambda_{P'}(g)"']\arrow[rr,"J_{P|P'}(\sigma)"]&&I_P^G(\sigma)^{\mathrm{sm}}\arrow[dd,"\lambda_P(g)"]\\
        &&\\
        I_{gP'g^{-1}}^G({}^{g}\sigma)^{\mathrm{sm}}\arrow[rr,"J_{gPg^{-1}|gP'g^{-1}}(\sigma)"']&&I_{gPg^{-1}}^G({}^{g}\sigma)^{\mathrm{sm}}
    \end{tikzcd}
\end{center}
is commutative.
\end{itemize}

Now consider the family $J_{P\mid P'}(\sigma_\chi)\,(\chi\in\Lambda_M)$ of intertwining operators, realized as maps $I_{P'}^G(\sigma)^{\mathrm{sm}}\to I_P^G(\sigma)^{\mathrm{sm}}$. This gives a map
\begin{align*}
    \Lambda_M\ni\chi \mapsto J_{P\mid P'}(\sigma_\chi)\in \Hom_{G(F)}(I_{P'}^G(\sigma)^{\mathrm{sm}},I_{P}^G(\sigma)^{\mathrm{sm}})
\end{align*}
When $\sigma$ is irreducible unitary, the defining integral of $J_{P\mid P'}(\sigma\otimes\chi)$ converges compactly for $\chi$ in some open cone, and the above map is a meromorphic function.

Let $M$ be a Levi subgroup of $G$ and let $P,P'$ $P^{\mathrm{op}} = MN^{\mathrm{op}}$ be the parabolic subgroup opposite $P=MN$ with respect to $M.$ Then for any $\sigma \in $
\begin{align*}
  j(\sigma_\lambda):=J_{P\mid P^{\mathrm{op}}}(\sigma_\lambda)J_{P^{\mathrm{op}}\mid P}(\sigma_\lambda) \in \End_{G(F)}I_{P}^G(\sigma)^{\mathrm{sm}}
\end{align*}
is a nonzero meromorphic family of operators, so that it is scalar-valued.  
\begin{itemize}
    \item It does not depend on the choice of $P$.
    \item It takes real values on $\Im\Lambda_M$, possibly positive infinity.
    \item If $\sigma$ is square integrable, then $j(\sigma)\neq 0$. In particular, $j(\sigma)^{-1}$ is well-defined.
    \item $j(\sigma)^{-1} \ll N(\sigma)^k$ for some $k$, when $F=\mathbb{R}$.
\end{itemize}
}

\subsection{Function spaces} For a scheme $X$ smooth and of finite type over $F$ with $X(F)\neq \emptyset$, recall the spaces
\begin{align*}
    C_c^\infty(X(F))\leq\mathcal{S}(X(F)) \leq C^\infty(X(F)).
\end{align*}
The first and the last spaces are defined as usual. When $F$ is non-Archimedean, we set $\mathcal{S}(X(F)):=C_c^\infty(X(F))$. In the Archimedean case, let $\mathcal{S}(X(F))$ be the Schwartz space defined in \cite{AG:Nash}; this is a nuclear Fr\'echet space.
\quash{
If $X$ is a scheme of finite type over $F$ with $X(F)\neq\emptyset$ not necessarily smooth, one can define
\begin{align*}
    \mathcal{S}_{\mathrm{ES}}(X(F)) 
\end{align*}
as follows. When $F$ is non-Archimedean, this is simply $C_c^\infty(X(F))$. When $F$ is Archimedean, view $X(F)=\mathrm{Res}_{F/\mathbb{R}}X(\mathbb{R})$ as a real algebraic variety and let $\mathcal{S}_{\mathrm{ES}}(X(F))$ to be the Schwartz space defined in \cite{Elazar:Shaviv}; this is again a nuclear Fr\'echet space. When $X$ is smooth, then $\mathcal{S}(X(F))=\mathcal{S}_{\mathrm{ES}}(X(F))$.}
\quash{
If $G$ is an affine algebraic group over $F$ then \cite[\S 1.3, p. 271]{duCloux}
\begin{align}
\mathcal{S}(G(F)):=\{f \in C^\infty(G(F)) : \mathrm{supp}_{g \in G(F)}|\mathcal{R}(v,u)f(g)|<\infty \textrm{ for all }u,v \in U(\mathfrak{g})\}.
\end{align} 
\textcolor{red}{Warning: I am not sure about this}
}

\subsection{Functions on groups} Let $G$ be a connected reductive group over $F.$ Let $L^2(G(F))$ denote the Hilbert space of square integrable functions on $G$ with respect to a choice of Haar measure. Let
\begin{align}\label{RuG}
\begin{split}
    \mathcal{R}:L^2(G(F)) \times G(F) \times G(F) &\lto L^2(G(F))\\
(\varphi,(g_1,g_2))& \longmapsto (x \mapsto \varphi(g_1^{-1}xg_2))
\end{split}
\end{align}
be the action map.  We use the same notation for any $G(F) \times G(F)$-invariant space of functions on $G(F).$

When $F$ is non-Archimedean, a function $G(F) \to \CC$ is \textbf{uniformly smooth} if it is fixed on the left and right under a compact open subgroup $K'\leq G(F)$. Let $C_u^\infty(G(F))$ denote the space of uniformly smooth functions. 

When $F$ is Archimedean, the space $\mathcal{S}(G(F))$ coincides with the Schwartz space of rapidly decreasing functions defined in \cite{Casselman:SchwartzSpace}.  It is a smooth Fr\'echet representation of $G(F)\times G(F)$ of moderate growth.

In the rest of this subsection, we recall the definition of the Harish-Chandra space $\mathcal{C}(G(F))$. 
Choose a closed immersion $\iota:G \to \GL_n$ and a norm $\norm{\,\cdot\,}_{M_n(F)}$ on the vector space of $n \times n$ matrices $M_n(F).$
Define a norm and a log-norm on $G(F)$ by
\begin{align} \label{norms} \begin{split}
\norm{g}:&=\max\left\{\norm{\iota(g)}_{M_n(F)},\norm{\iota^{-t}(g)}_{M_n(F)}\right\},\\
    \sigma_G(g):&=1+\log(\max\{1,\norm{g}\}). 
    \end{split}
\end{align}
We will only use $\norm{g}$ and $\sigma_G(g)$ to define growth rates of functions.  The spaces of functions we obtain in this manner are independent of the choice of embedding $\iota$ and norm.
 For convenience, we assume  that the norm is $K$-biinvariant.   For continuous $f:G(F) \to \CC$ and $d \in \RR$, set
\begin{align}\label{pd}
    p_d(f):=\sup_{g \in G(F)}|f(g)|\Xi^G(g)^{-1}\sigma_{G}(g)^d.
\end{align}
Here $\Xi^G$ is the Harish-Chandra $\Xi$-function \cite[\S1.5]{BP:local:GGP}.

Suppose $F$ is non-Archimedean and $K' \leq G(F)$ is a compact open subgroup. Let 
\begin{align*}
    \mathcal{C}(G(F)/\!/K')&=\{f \in C^\infty(G(F) /\!/ K'):  p_d(f)<\infty \textrm{ for all }d>0\},\\
    \mathcal{C}(G(F)):&=\bigcup_{K'}\mathcal{C}(G(F)/\!/K').
\end{align*}
The seminorms $(p_d)_{d>0}$ endow $\mathcal{C}(G(F)/\!/K')$ with a  Fr\'echet topology, and we equip $\mathcal{C}(G(F))$ with the locally convex colimit topology.  Thus, $\mathcal{C}(G(F))$ is a smooth LF-space representation of $G(F)\times G(F)$.

If $F$ is Archimedean, let $\mathfrak{g}:=\mathop{\mathrm{Lie}}\mathrm{Res}_{F/\RR}G$ and $U(\mathfrak{g})$ be the universal enveloping algebra.   The action of $G(F) \times G(F)$ on $C^\infty(G(F))$ induces an action of $U(\mathfrak{g})\times U(\mathfrak{g})$ on functions on $C^\infty(G(F)).$ For $d>0$ and $u,v \in U(\mathfrak{g})$ set
$$
p_{u,v,d}(f):=p_d(\mathcal{R}(u,v)f).
$$
Let
\begin{align*}
    \mathcal{C}(G(F)) := \left\{f\in C^\infty(G(F)): p_{u,v,d}(f)<\infty\text{ for all }d>0,\,u,v\in U(\mathfrak{g})\right\}.
\end{align*}
The seminorms $p_{u,v,d}$ make $\mathcal{C}(G(F))$ a nuclear Fr\'echet space. By \cite[Theorem 7.8.1]{WallachRG1}, it is a smooth Fr\'echet representation of $G(F)\times G(F)$.  

For this paragraph we continue to assume that $F$ is Archimedean.  It is clear that $C_c^\infty(G(F)) \leq \mathcal{S}(G(F))$ and the inclusion is continuous.  
Using standard estimates for $\Xi^G$ \cite[Proposition 1.5.1(i)]{BP:local:GGP} one checks that $\Xi^G(g)^{-1}\sigma_G(g)^d$ is dominated by a polynomial function of $g \in G(F).$ This implies that there are continuous inclusions
$$
C_c^\infty(G(F)) \;\leq \;\mathcal{S}(G(F))\; \leq \;\mathcal{C}(G(F)).
$$

For $f_1,f_2 \in\mathcal{C}(G(F))$, the convolution
\begin{align*}
    f_1\ast f_2(g):=\int_{G(F)} f_1(h)f_2(h^{-1}g) dh
\end{align*}
is absolutely convergent and $f_1 \ast f_2 \in\mathcal{C}(G(F))$; see \cite[\S7.1]{WallachRG1} and \cite[Lemme III.6.1]{Waldspurger:plancherel}. For $f\in \mathcal{C}(G(F))$, let
\begin{align*}
    f^*(g):=\overline{f(g^{-1})}.
\end{align*}
The map $f \mapsto f^*$ defines a topological involution on $\mathcal{C}(G(F))$ since $\sigma_G(g) \asymp \sigma_G(g^{-1})$ and $\Xi^G(g^{-1}) = \Xi^G(g).$

\subsection{Harish-Chandra Plancherel}\label{sec:HPmap} 
For a Hilbert representation $H$ of $G(F)$, let 
\begin{align}\label{HS:identification}
    \mathcal{HS}(H)=H^\vee \mathop{\widehat{\otimes}}H
\end{align}
denote the space of Hilbert-Schmidt operators on the Hilbert space $H$,  where the completed tensor product is with respect to the Hilbert space structure.  This space is naturally a Hilbert representation of $G(F)\times G(F)$.

Assume $(M,\sigma),(M',\sigma') \in \widetilde{\Temp}_{\mathrm{Ind}}(G)$ such that $I_P^G(\sigma)\cong I_{P'}^G(\sigma')$. The theory of intertwining operators provides  compatible families of equivariant isomorphisms
\begin{align}\label{HS:isom:intertwining}
    \mathcal{HS}(I_{P}^G(\sigma)) \cong \mathcal{HS}(I_{P'}^G(\sigma'))
\end{align}
The Archimedean case is discussed in \cite[\S II.(2.2)]{Arthur:HP:realreductive} and the non-Archimedean case is discussed in \cite[\S V.3]{Waldspurger:plancherel}. This allows us to define $\mathcal{HS}(\pi)$
unambiguously for $\pi\in\Temp_{\Ind}(G).$

Let $\mathcal{HS}$ (resp. $\mathcal{HS}^{\mathrm{sm}}$) denote the bundle over $\Temp_\Ind(G)_\CC$ such that the fiber is given by $\mathcal{HS}(\pi)$ (resp. $\mathcal{HS}^{\mathrm{sm}}$). Note we have used the identifications in \eqref{HS:isom:intertwining}. 
For $\lambda\in \Lambda_M$, one has a $K$-equivariant isomorphism $I_P^G(\sigma_{\lambda})\vert_K \cong I^{K}_{P(F)\cap K}(\sigma\vert_{M(F)\cap K})$. This realizes $\mathcal{HS}$ and $\mathcal{HS}^{\mathrm{sm}}$ as (infinite dimensional) vector bundles over the measurable space $\Temp_{\Ind}(G)_{\CC}$. 
We denote the pullbacks of $\mathcal{HS}$ and $\mathcal{HS}^{\mathrm{sm}}$ along $\Temp_{\mathrm{Ind}}(G) \to \mathrm{Temp}_{\mathrm{Ind}}(G)_\CC$ by the same symbols.
In view of the identifications \eqref{HS:isom:intertwining}, these pullback descend to bundles over $\Temp_\Ind(G).$

Put
\begin{align*}
    C^\infty&(\Temp_\Ind(G)) \\
    &= \begin{cases}
        \Gamma(\Temp_\Ind(G),\mathcal{HS}^{\mathrm{sm}})\cap C^\infty(\widetilde{\Temp}_{\Ind}(G),\mathcal{HS}^{\mathrm{sm}}) &\text{if $F$ is non-Archimedean},  \\
        \Gamma(\Temp_\Ind(G),\mathcal{HS})\cap C^\infty(\widetilde{\Temp}_{\Ind}(G),\mathcal{HS}) &\text{if $F$ is Archimedean}.
    \end{cases}
\end{align*}
For a section $T$ and $(M,\sigma)\in\widetilde{\Temp}_\Ind(G)$, we write $T(M,\sigma)=T(I_P^G(\sigma))$. 

The space $C^\infty(\Temp_{\Ind}(G))$ is naturally an algebra under pointwise composition \cite[Proposition A.3.1.(v)]{BP:local:GGP}, and for a section $T\in C^\infty(\Temp_\Ind(G))$, taking the adjoint pointwise defines another section $T^*$. For $g,h\in G(F)$ and $T\in C^\infty(\Temp_\Ind(G))$, we define the section $\square(g,h)T$ by
\begin{align*}
    \square(g,h)T(\pi) := \pi(g)\circ T \circ \pi(h)^{-1}.
\end{align*}
This defines an abstract representation $(\square,C^\infty(\Temp_\Ind(G)))$ of $G(F)\times G(F)$.

For $M\in\mathcal{M}$ and $\sigma \in \Pi_2(M)$ define \begin{align} \label{Nsig}
    N(\sigma)=1+|\chi_{\sigma}|
\end{align}
as in \cite[\S 2.2]{BP:local:GGP}.  Here $\chi_{\sigma}$ is the infinitesimal character of $\sigma$ and the norm is defined as in loc.~cit.
Define 
\begin{align} \label{CTemp}
    \mathcal{C}(\Temp_{\Ind}(G)) \leq C^\infty(\Temp_\Ind(G))
\end{align}
to be the subspace of sections $T$ such that
\begin{enumerate}
    \item[(nA)] if $F$ is non-Archimedean, then $T\in \Gamma_c(\Temp_\Ind(G),\mathcal{HS}^{\mathrm{sm}})$; 
    \item[(A)] if $F$ is Archimedean, then for any $M\in\mathcal{M},$ any parabolic subgroup $P \in\mathcal{P}(M)$, and any invariant differential operator $D$ on $\Im\Lambda_M$, one has that
    \begin{align*}
        \sup_{\sigma\in \Pi_2(M)} \norm{I_P^G(\sigma)(u) \circ DT(M,\sigma) \circ I_P^G(\sigma)(v)}_{\mathrm{op}} N(\sigma)^k<\infty
    \end{align*}
    for all $k\geq 0$ and $u,v\in U(\mathrm{Lie}(K))$. Here $D$ is viewed as a differential operator on $\widetilde{\mathrm{Temp}}_{\mathrm{Ind}}(G)$ in the  obvious way.
\end{enumerate}
Here $\Gamma_c$ denotes the space of compactly supported continuous sections and $\norm{\cdot}_{\mathrm{op}}$ denotes the operator norm.
If $F$ is Archimedean, the topology on $\mathcal{C}(\mathrm{Temp}_{\mathrm{Ind}}(G))$ is defined by the seminorms in (A). When $F$ is non-Archimedean, it is topologized as in \cite[(2.6.3)]{BP:local:GGP}. 

It is not obvious that the definition in (A) is independent of the choice of $K.$  Thus we state and prove the following

\begin{lem}\label{HP:arch:bound} Let $F$ be Archimedean.  One has that $T\in \mathcal{C}(\Temp_{\Ind}(G))$ if and only if \begin{align*}
        \sup_{\sigma\in \Pi_2(M)} \norm{I_P^G(\sigma)(u) \circ DT(M,\sigma)\circ I_P^G(\sigma)(v)}_{\mathrm{op}} N(\sigma)^k<\infty
\end{align*}
for all $u,v\in U(\mathfrak{g})$ and all invariant differential operators $D$ on $\Im\Lambda_M$, where all unexplained notation is as in (A). 
\end{lem}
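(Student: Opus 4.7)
The ``only if'' direction is immediate from the inclusion $U(\mathrm{Lie}(K))\leq U(\mathfrak{g})$. For the converse, the plan is to combine the Poincar\'e--Birkhoff--Witt theorem relative to the Cartan decomposition with a Casimir identity, and to induct on the total $\mathfrak{p}$-degree of the PBW expansion of $u$ and $v$.

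Fix a Cartan involution of $\mathrm{Lie}(\mathrm{Res}_{F/\RR}G)$ compatible with $K$, yielding a decomposition $\mathfrak{g}=\mathfrak{k}\oplus\mathfrak{p}$ with $\mathfrak{k}=\mathrm{Lie}(K)_{\CC}$. Equip the semisimple part of $\mathfrak{g}$ with an $\mathrm{ad}$-invariant nondegenerate bilinear form that is positive definite on $\mathfrak{p}$ and negative definite on $\mathfrak{k}$; the finite-dimensional center of $\mathfrak{g}$ contributes finitely many generators which act on $I_P^G(\sigma)$ by scalars bounded by a power of $N(\sigma)$ and are easily absorbed into the estimates below. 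Choose orthonormal bases $\{Z_i\}$ of $\mathfrak{k}$ and $\{X_j\}$ of $\mathfrak{p}$, and set $\Omega_K:=-\sum_iZ_i^2\in U(\mathrm{Lie}(K))$ and $\Omega:=\Omega_K+\sum_jX_j^2$. Then $\Omega$ lies in the center of $U(\mathfrak{g}_\CC)$ and acts on $I_P^G(\sigma)$ as the scalar $\chi_\sigma(\Omega)$; by Harish-Chandra's parametrization of infinitesimal characters and the definition \eqref{Nsig} we have $|\chi_\sigma(\Omega)|\ll N(\sigma)^2$.

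The key input is the Casimir identity
\begin{align*}
\sum_j I_P^G(\sigma)(X_j)^*\,I_P^G(\sigma)(X_j) \;=\; I_P^G(\sigma)(\Omega_K)-\chi_\sigma(\Omega)\,\mathrm{id},
\end{align*}
which follows from skew-adjointness of $I_P^G(\sigma)(X_j)$ on smooth vectors (since $X_j\in\mathfrak{g}$ is real and $I_P^G(\sigma)$ is unitary), the relation $\sum_jX_j^2=\Omega-\Omega_K$, and the scalar action of $\Omega$. Applying this identity to vectors of the form $Sw$ with $\norm{w}=1$ and taking suprema yields, for any bounded operator $S$,
\begin{align*}
\norm{I_P^G(\sigma)(X_j)\circ S}_{\mathrm{op}}^2 \;\leq\; \norm{I_P^G(\sigma)(\Omega_K)\circ S}_{\mathrm{op}}\norm{S}_{\mathrm{op}}+|\chi_\sigma(\Omega)|\norm{S}_{\mathrm{op}}^2.
\end{align*}

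To conclude, write an arbitrary $u\in U(\mathfrak{g})$ via PBW as a linear combination of ordered monomials $u_Ku_\mathfrak{p}$ with $u_K\in U(\mathrm{Lie}(K))$ and $u_\mathfrak{p}$ a monomial in $\{X_j\}$, and analogously $v=\sum v_\mathfrak{p} v_K$. The bound for $\norm{I_P^G(\sigma)(u_Ku_\mathfrak{p})\circ DT(M,\sigma)\circ I_P^G(\sigma)(v_\mathfrak{p} v_K)}_{\mathrm{op}}$ is established by induction on the combined $\mathfrak{p}$-degree of $u_\mathfrak{p}$ and $v_\mathfrak{p}$: the base case is hypothesis (A), and the inductive step applies the displayed Casimir bound with $S$ taken to be a product of one fewer $\mathfrak{p}$-factor together with $DT(M,\sigma)$ and the relevant $K$-factors, using that $\norm{ASB}_{\mathrm{op}}=\norm{B^*S^*A^*}_{\mathrm{op}}$ to treat the right-hand factor symmetrically. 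The main technical work is accounting for the commutators $[\Omega_K,X_j]\in\mathfrak{k}\mathfrak{p}+\mathfrak{p}\mathfrak{k}$ that arise when moving $\Omega_K$ across a $\mathfrak{p}$-monomial in the term $\norm{I_P^G(\sigma)(\Omega_K)\circ S}_{\mathrm{op}}$: each such commutator preserves $\mathfrak{p}$-degree but increases $\mathfrak{k}$-content by one, and after PBW reordering it falls back into cases controlled by the inductive hypothesis. Each application of the Casimir bound costs a fixed power of $N(\sigma)$, but since the hypothesis provides decay by $N(\sigma)^{-k}$ for arbitrary $k\geq 0$, these fixed losses are absorbed and the resulting estimate is again of arbitrary polynomial decay.
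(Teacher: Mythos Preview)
Your argument has the right idea but contains two issues, one trivial and one that actually breaks the induction as written.

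First, the directions are swapped in your opening sentence: the inclusion $U(\mathrm{Lie}(K))\leq U(\mathfrak{g})$ makes the \emph{``if''} direction immediate (bounds for all $u,v\in U(\mathfrak{g})$ imply bounds for all $u,v\in U(\mathrm{Lie}(K))$, which is condition (A)), not the ``only if'' direction. The body of your proof does address the hard direction, so this is only a labeling error.

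Second, and more seriously, your PBW ordering places the $\mathfrak{k}$-factors on the \emph{outside}: you write $u=\sum u_K u_{\mathfrak{p}}$ and $v=\sum v_{\mathfrak{p}} v_K$, giving the expression $\pi(u_K)\,\pi(u_{\mathfrak p})\,DT\,\pi(v_{\mathfrak p})\,\pi(v_K)$. Your Casimir bound controls $\norm{\pi(X_j)S}_{\mathrm{op}}$ only when $X_j$ sits at the extreme left (or, by adjunction, the extreme right) of the operator. With $\pi(u_K)$ blocking the left end and $\pi(v_K)$ blocking the right, there is no way to peel off an $X_j$: the operators $\pi(u_K),\pi(v_K)$ are unbounded, so you cannot simply pull them outside the norm. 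The induction does not close. The fix is to reverse the PBW order, writing $u=\sum u_{\mathfrak p}u_K$ and $v=\sum v_K v_{\mathfrak p}$ so that the $\mathfrak{p}$-monomials lie on the outside; then one strips $X_j$ from the far left, and the term $\pi(\Omega_K)S$ has the form $\pi(\Omega_K u'_{\mathfrak p}u_K)\,DT\,\pi(v)$, whose left factor has $\mathfrak{p}$-degree no larger than before (the $\mathfrak p$-filtration is respected since $[\mathfrak k,\mathfrak p]\subseteq\mathfrak p$ and $[\mathfrak p,\mathfrak p]\subseteq\mathfrak k$), and the induction goes through.

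By contrast, the paper's proof avoids the induction entirely. It cites a reduction (Warner, Knapp) to the case where $u,v$ are powers of $C_{\mathfrak g}-2C_{\mathrm{Lie}(K)}$. Since $C_{\mathfrak g}$ is central and commutes with $C_{\mathrm{Lie}(K)}$, the binomial expansion reduces to powers of $C_{\mathfrak g}$ alone (the $C_{\mathrm{Lie}(K)}$ powers being absorbed by hypothesis (A)); and $C_{\mathfrak g}$ acts on $I_P^G(\sigma)$ by a scalar controlled by $N(\sigma)$. This is the same underlying mechanism as yours---the Casimir identity linking $\mathfrak p$-derivatives to $\mathfrak k$-derivatives plus a scalar---but packaged into a single black-box reference, whereas your approach (once the PBW order is corrected) reproves that reduction by a hands-on induction.
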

\begin{proof} The ``if'' direction is clear.  Thus assume $T \in \mathcal{C}(\mathrm{Temp}_{\mathrm{Ind}}(G)).$  
Combining \cite[Lemma 4.4.4.8]{Warner:semisimple:I} and \cite[p.211]{KnappSS}, it suffices to prove the bound in the statement of the lemma when $u,v$ are powers of $C_\mathfrak{g} - 2C_{\mathrm{Lie}(K)},$  where $C_{\mathfrak{g}}$ and $C_{\mathrm{Lie}(K)}$ are the Casimir elements of $\mathfrak{g}$ and $\mathrm{Lie}(K)$ (with respect to some symmetric bilinear form). By assumption it then suffices to prove the bound when $u,v$ are powers of $C_\mathfrak{g}$. The infinitesimal character of $I_P^G(\sigma)$ is the infinitesimal character $\chi_\sigma$ of $\sigma.$   Hence $C_\mathfrak{g}$ acts on the space of $I_P^G(\sigma)$ by a scalar bounded above by $N(\sigma)$. We deduce the desired bound for $T.$
\end{proof}

\begin{lem}\label{HP:Ginvariant} The space $\mathcal{C}(\Temp_\Ind(G))$ is a subalgebra of $C^\infty(\Temp_\Ind(G))$ invariant under the $\square$-action.
\end{lem}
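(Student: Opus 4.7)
The plan is to split the argument into verifying subalgebra closure and $\square$-invariance, handling the non-Archimedean and Archimedean cases separately. In the non-Archimedean setting, if $T_j\in\mathcal{C}(\Temp_\Ind(G))$ has compact support $U_j$ and is fixed on fibers by an open compact subgroup $K_j^L\times K_j^R\leq G(F)\times G(F)$ acting on $\mathcal{HS}$, then $T_1T_2$ has support in $U_1\cap U_2$ (still compact) and is fixed by $K_1^L\times K_2^R$, while $\square(g,h)T$ has the same support as $T$ and is fixed by $gK^Lg^{-1}\times hK^Rh^{-1}$. Both conditions are manifestly preserved, settling both claims in the non-Archimedean case.

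For the Archimedean subalgebra property, I will use that $\Im\Lambda_M$ is abelian, so its invariant differential operators form $S(i\mathfrak{a}_M^*)$; Leibniz then yields a finite decomposition $D(T_1T_2)=\sum_i (D_i^1T_1)(D_i^2T_2)$. Submultiplicativity of the operator norm gives
\begin{align*}
\|I_P^G(\sigma)(u)D(T_1T_2)(\sigma)I_P^G(\sigma)(v)\|_{\mathrm{op}}\leq\sum_i\|I_P^G(\sigma)(u)D_i^1T_1(\sigma)\|_{\mathrm{op}}\|D_i^2T_2(\sigma)I_P^G(\sigma)(v)\|_{\mathrm{op}},
\end{align*}
and each factor is $O(N(\sigma)^{-k'})$ for arbitrary $k'$ by applying the defining bounds of $T_j$ with the missing $u$ or $v$ set to $1$.

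The Archimedean $\square$-invariance will be the main obstacle; here I plan to invoke Lemma \ref{HP:arch:bound} to work throughout with $u,v\in U(\mathfrak{g})$. Using the compact model of $I_P^G(\sigma_\chi)$, write $\pi_\chi(g)=M_\chi(g)\pi_0(g)$, with $M_\chi(g)$ multiplication on $K$ by $k\mapsto e^{\langle\chi,H_P(kg)\rangle}$ and $\pi_0(g)$ the $\chi$-independent action. A direct differentiation shows that $D\pi_\chi(g)=M_{P_D(\cdot,g)}\pi_\chi(g)$, where $P_D(\cdot,g)$ is a $\chi$-\emph{independent} bounded function on $K$ (a polynomial in $\langle Y_j,H_P(kg)\rangle$); analogously $D\pi_\chi(h)^{-1}=M_{\tilde P_D(\cdot,h)}\pi_\chi(h)^{-1}$. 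Applying Leibniz to $D[\pi_\chi(g)T(\sigma_\chi)\pi_\chi(h)^{-1}]$ therefore produces a finite sum of terms $M_{P_{D_1}}\pi_\chi(g)(D_2T(\sigma_\chi))M_{\tilde P_{D_3}}\pi_\chi(h)^{-1}$ in which all multiplication operators have $\chi$-uniform operator norm.

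To finish, I will commute $\pi_\chi(u),\pi_\chi(v)$ inward using the identities $\pi_\chi(X)\pi_\chi(g)=\pi_\chi(g)\pi_\chi(\mathrm{Ad}(g^{-1})X)$ and $\pi_\chi(X)M_\phi=M_\phi\pi_\chi(X)+M_{Y_X\phi}$, the latter valid for $X\in\mathfrak{g}$ with $Y_X$ a $\chi$-independent vector field on $K$ (the $\chi$-dependent corrections in $\pi_\chi(X)$ cancel when combined with the commuting multiplications). Iterating recasts $\pi_\chi(u)\cdot[\text{summand}]\cdot\pi_\chi(v)$ as a finite sum of expressions $M_\alpha\,\pi_\chi(g)\,\pi_\chi(u''')\,D_2T(\sigma_\chi)\,\pi_\chi(v''')\,\pi_\chi(h)^{-1}\,M_\beta$, with $u''',v'''\in U(\mathfrak{g})$ and $M_\alpha,M_\beta$ multiplication operators of $\chi$-uniform operator norm. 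Using unitarity of $\pi_\chi(g),\pi_\chi(h)^{-1}$ and applying Lemma \ref{HP:arch:bound} to $T$ to bound the middle factor by $O(N(\sigma)^{-k})$ for any $k$ then completes the argument. The hard part is the bookkeeping of these commutations; Lemma \ref{HP:arch:bound} is crucial since $\mathrm{Ad}(g^{-1})$ in general sends $U(\mathrm{Lie}(K))$ outside itself into $U(\mathfrak{g})$.
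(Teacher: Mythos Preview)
Your proposal is correct and rests on the same two ingredients the paper singles out: the identity $\pi(X)\pi(g)=\pi(g)\pi(\mathrm{Ad}(g^{-1})X)$ and Lemma~\ref{HP:arch:bound}. The paper's proof is a single sentence (``the $G(F)\times G(F)$-invariance follows from the identity \ldots\ and Lemma~\ref{HP:arch:bound}; all other claims are clear''), so your treatment of the $\Im\Lambda_M$-derivative $D$ via the compact model and commutation of multiplication operators is a legitimate elaboration of what the paper leaves implicit. One minor streamlining: since $\pi_\chi(u)$ is $\chi$-independent for $u\in U(\mathrm{Lie}(K))$, you may pass $\pi_\chi(u),\pi_\chi(v)$ inside $D$ \emph{before} applying Leibniz and the Ad identity, which reduces the number of commutations with the multiplication operators $M_\phi$; the endgame is then bounding $\|D'[\pi_\chi(u')T\pi_\chi(v')]\|_{\mathrm{op}}$ with $u',v'\in U(\mathfrak{g})$, and a second Leibniz plus your identity $[\pi_\chi(X),M_\phi]=M_{Y_X\phi}$ reduces this directly to Lemma~\ref{HP:arch:bound}.
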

\begin{proof} 
When $F$ is Archimedean, the $G(F)\times G(F)$-invariance follows from the identity $\pi(X)\pi(g) = \pi(g)\pi(\mathrm{Ad}(g^{-1})X)$, for $g\in G(F)$ and $X\in U(\mathfrak{g})$, and Lemma \ref{HP:arch:bound}.  All other claims are clear.
\end{proof}

The matrical Paley-Wiener theorem, a strong form of the Harish-Chandra Plancherel formula, is the following Theorem; see \cite{Waldspurger:plancherel} and \cite{Arthur:HP:realreductive} for the proofs. We follow the formulation in \cite[Theorem 2.6.1]{BP:local:GGP}.

\begin{thm}\label{HCPlan:thm1}
The map 
\begin{align*}
\mathrm{HP}_G:\mathcal{C}(G) &\lto \mathcal{C}(\Temp_{\Ind}(G))\\
f &\longmapsto (\pi \mapsto \pi(f))
\end{align*}
is a topological isomorphism. Its inverse sends a section $T\in \mathcal{C}(\Temp_{\Ind}(G))$ to
\begin{align*}
    \mathrm{HP}^{-1}_G(T)(g) := \int_{\Temp_{\Ind}(G)} \tr(\pi(g^{-1})\circ T(\pi)) d\pi.
\end{align*}
\end{thm}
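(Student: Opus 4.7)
The plan is to reduce the theorem to the matrical Plancherel/Paley–Wiener theorems of Waldspurger (non-Archimedean case) and Arthur (Archimedean case), in the formulation of Beuzart-Plessis, and to verify that our spaces $\mathcal{C}(G(F))$ and $\mathcal{C}(\mathrm{Temp}_{\mathrm{Ind}}(G))$ match the spaces appearing there. The proof has three steps: well-definedness and continuity of $\mathrm{HP}_G$, the inversion formula (giving injectivity), and continuity and surjectivity of the inverse.

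For well-definedness and continuity of $\mathrm{HP}_G$: given $f \in \mathcal{C}(G(F))$ and $(M,\sigma)\in\widetilde{\mathrm{Temp}}_{\mathrm{Ind}}(G)$, the operator $I_P^G(\sigma)(f)=\int_{G(F)} f(g)\,I_P^G(\sigma)(g)\,dg$ converges absolutely against any pair of smooth vectors, since matrix coefficients of tempered representations are dominated by $\Xi^G$ and $f\cdot\Xi^G\cdot \sigma_G^{-d}$ is integrable for all $d>0$. Smoothness of $\sigma\mapsto I_P^G(\sigma)(f)$ follows from dominated convergence in the flat realization on $K$-sections. For the target seminorms one invokes Lemma \ref{HP:arch:bound} in the Archimedean case: the Casimir $C_{\mathfrak{g}}$ acts on $I_P^G(\sigma)$ by a scalar controlled by $N(\sigma)$, so an $N(\sigma)^{k}$-decay bound for $I_P^G(\sigma)(f)$ reduces, via the identity $\pi(f)\circ\pi(u) = \pi(\mathcal{R}(1,u)f)$ and its left analogue, to integrability of $\mathcal{R}(C_{\mathfrak{g}}^{k}u,v)f$ weighted by $\Xi^G$, which holds by the $p_{u,v,d}$-seminorm estimates defining $\mathcal{C}(G(F))$. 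In the non-Archimedean case, $K'$-biinvariance of $f$ together with Bernstein's finiteness theorem restricts the support of $\sigma\mapsto I_P^G(\sigma)(f)$ to a compact subset of $\widetilde{\mathrm{Temp}}_{\mathrm{Ind}}(G)$, yielding compactly supported smoothness of the associated section.

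Injectivity, together with the explicit form of the inverse, follows from Harish-Chandra's Plancherel inversion $f(g) = \int_{\mathrm{Temp}_{\mathrm{Ind}}(G)} \mathrm{tr}(\pi(g^{-1})\circ\pi(f))\,d\pi$. For surjectivity and continuity of the inverse, given $T\in\mathcal{C}(\mathrm{Temp}_{\mathrm{Ind}}(G))$ one defines $f := \mathrm{HP}_G^{-1}(T)$ by the displayed integral and must show $f\in \mathcal{C}(G(F))$ with continuous dependence on $T$. Absolute convergence at a point $g$ uses compact support (non-Archimedean) or the rapid decay of $T$ combined with the polynomial bounds on $\deg(\sigma)\,j(\sigma)^{-1}$ that enter the definition of $d\pi$. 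The main obstacle is the rapid-decay estimate on $f$ itself: bounding $p_{u,v,d}(\mathrm{HP}_G^{-1}(T))$ in terms of the seminorms defining $\mathcal{C}(\mathrm{Temp}_{\mathrm{Ind}}(G))$ is precisely Harish-Chandra's wave-packet theorem, and constitutes the technical heart of Waldspurger's and Arthur's proofs. Rather than reproduce those arguments, the pragmatic route is to cite the theorems as formulated in \cite{Waldspurger:plancherel,Arthur:HP:realreductive} and to restrict the present verification to confirming that our measure-theoretic and topological conventions on $\mathcal{C}(\mathrm{Temp}_{\mathrm{Ind}}(G))$ — including the identification $\mathcal{HS}(H)\cong H^\vee\widehat{\otimes} H$, the normalization $d\pi = \deg(\sigma)\,j(\sigma)^{-1}\,d\sigma$, and the use of Lemma \ref{HP:arch:bound} in place of a single Casimir-based seminorm — agree with those in \cite[Theorem 2.6.1]{BP:local:GGP}.
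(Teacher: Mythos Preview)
Your proposal is correct and aligns with the paper's approach: the paper does not prove this theorem at all but simply cites \cite{Waldspurger:plancherel}, \cite{Arthur:HP:realreductive}, and \cite[Theorem 2.6.1]{BP:local:GGP} for the proof and the formulation. Your sketch of the well-definedness of $\mathrm{HP}_G$ and the reduction of the hard direction to the wave-packet theorem is accurate and more detailed than what the paper provides, but ultimately both you and the paper defer to the same standard references.
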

\noindent The integral on the right is absolutely convergent by \cite[II, Theorem 6.3]{Arthur:HP:realreductive}.  We usually drop $G$ from notation, writing $\mathrm{HP}:=\mathrm{HP}_G.$

\begin{cor}\label{HCPlan:enhanced}  For $f\in\mathcal{C}(G)$, one has the inner product formula
\begin{align*}
    \int_{G(F)} |f(x)|^2 dx  = \int_{\mathrm{Temp}_{\mathrm{Ind}}(G)}\mathrm{tr}\,(\pi(f)^*\circ \pi(f)) d\pi.
\end{align*}
\begin{proof} Apply Theorem \ref{HCPlan:thm1} to $f^*\ast f$.\quash{
\begin{align*}
    \int_{G(F)} |f(x)|^2 dx = f^*\ast f(e) &= \int_{\mathrm{Temp}_{\mathrm{Ind}}(G)}\mathrm{tr}\,(\pi(f^*\ast f)) d\pi\\
    &=\int_{\mathrm{Temp}_{\mathrm{Ind}}(G)}\mathrm{tr}\,(\pi(f)^*\circ \pi(f)) d\pi.
\end{align*}}
\end{proof}
\end{cor}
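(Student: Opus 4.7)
The plan is to apply the inversion formula of Theorem \ref{HCPlan:thm1} to the function $f^* \ast f$ and evaluate at the identity $e \in G(F)$.

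First I would check that $f^* \ast f \in \mathcal{C}(G(F))$, so that Theorem \ref{HCPlan:thm1} applies. This is immediate from the facts recalled just before \S\ref{sec:HPmap}: the space $\mathcal{C}(G(F))$ is stable under convolution, and $f \mapsto f^*$ defines a topological involution on $\mathcal{C}(G(F))$.

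Next, a direct computation using unimodularity of $G(F)$ yields
\begin{align*}
(f^* \ast f)(e) = \int_{G(F)} f^*(h)\, f(h^{-1}) \, dh = \int_{G(F)} \overline{f(h^{-1})}\, f(h^{-1}) \, dh = \int_{G(F)} |f(g)|^2 \, dg,
\end{align*}
where the last equality uses the substitution $g = h^{-1}$. On the other hand, evaluating the inversion formula from Theorem \ref{HCPlan:thm1} at $g = e$ (with $\pi(e) = \mathrm{id}$) gives
\begin{align*}
(f^* \ast f)(e) = \int_{\mathrm{Temp}_{\mathrm{Ind}}(G)} \mathrm{tr}\bigl(\pi(f^* \ast f)\bigr) \, d\pi.
\end{align*}

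It then suffices to identify $\pi(f^* \ast f) = \pi(f)^* \circ \pi(f)$, which follows from the standard facts that $f \mapsto \pi(f)$ is a $\ast$-homomorphism on $\mathcal{C}(G(F))$: namely, $\pi(f_1 \ast f_2) = \pi(f_1) \circ \pi(f_2)$ and $\pi(f^*) = \pi(f)^*$. Combining these identities yields the claim. There is no substantive obstacle here; the only points requiring care are bookkeeping of adjoint conventions and the legitimacy of equating the trace of $\pi(f^* \ast f)$ with the Hilbert-Schmidt inner product of $\pi(f)$ with itself, both of which are standard and the latter of which is underwritten by the absolute convergence statement following Theorem \ref{HCPlan:thm1}.
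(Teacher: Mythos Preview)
Your proof is correct and follows exactly the approach of the paper: apply the inversion formula of Theorem~\ref{HCPlan:thm1} to $f^* \ast f$, evaluate at the identity, and use that $\pi(f^* \ast f) = \pi(f)^* \circ \pi(f)$. The paper's printed proof is the one-line hint ``Apply Theorem~\ref{HCPlan:thm1} to $f^*\ast f$,'' and your write-up is a faithful expansion of that hint.
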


For $g,h\in G(F)$ and $f\in \mathcal{C}(G(F))$, one has that $\pi(\mathcal{R}(g,h)f) = \pi(g)\circ \pi(f)\circ \pi(h^{-1}),$ so that
\begin{align} \label{HP:equi}
    \mathrm{HP}(\mathcal{R}(g,h)f) = \square(g,h) \mathrm{HP}(f).
\end{align}
This implies the map $\mathrm{HP}$ is $G(F)\times G(F)$-equivariant and $(\square,\mathcal{C}(\Temp_\Ind(G)))$ is a continuous representation.

For a Hilbert space $H$ and $S\in\mathcal{HS}(H)$, let 
\begin{align} \label{Transp}
S^\intercal\in \mathcal{HS}(H^\vee)
\end{align}
denote the usual dual operator, which is given by $\phi\mapsto \phi\circ S.$ We choose to not write $S^\vee$ to distinguish from other $\vee$ that have already appeared.  We point out that $S^\intercal$ is again Hilbert-Schmidt and 
\begin{align} \label{FT:lem:easy:2}
    \mathrm{tr}(S^*S) = \mathrm{tr}((S^\intercal)^*S^\intercal).
\end{align}

We record some observations for future use:

\begin{lem}\label{FT:lem:easy:1} For $f\in \mathcal{C}(G(F))$ and $(\pi,V)$ a tempered representation of $G(F)$, one has that
\begin{align*}
    \pi^\vee(f^\vee) = \pi(f)^\intercal
\end{align*}
as operators on $V^\vee$. \qed
\end{lem}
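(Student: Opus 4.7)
The plan is to prove the identity by pairing both sides against arbitrary vectors $v \in V$ and $\phi \in V^\vee$ and reducing to the unimodularity of $G(F)$ together with the definition of the contragredient pairing.

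First I would expand the left side: writing $\pi^\vee(f^\vee)\phi$ as the weak integral $\int_{G(F)} f^\vee(g)\,\pi^\vee(g)\phi\, dg$ and pairing with $v\in V$, one has
\begin{align*}
\langle \pi^\vee(f^\vee)\phi, v\rangle = \int_{G(F)} f(g^{-1}) \langle \pi^\vee(g)\phi, v\rangle\, dg = \int_{G(F)} f(g^{-1})\,\langle \phi, \pi(g^{-1})v\rangle\, dg,
\end{align*}
where the second equality is the defining property of the contragredient. I would note that interchanging the pairing with the integral is justified because the pairing $V^\vee \times V \to \CC$ is continuous and the weak integral converges in the appropriate topology; this uses that $f \in \mathcal{C}(G(F))$, $\pi$ is tempered, and standard bounds on matrix coefficients of tempered representations by $\Xi^G$.

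Next I would apply the change of variables $g \mapsto g^{-1}$, which preserves Haar measure since any connected reductive $F$-group is unimodular. This turns the right side into $\int_{G(F)} f(g)\,\langle \phi, \pi(g)v\rangle\, dg = \langle \phi, \pi(f) v\rangle$, where I again pull the pairing out of the integral for the same reason.  On the right side of the claimed identity, the definition $S^\intercal(\phi) = \phi \circ S$ of \eqref{Transp} gives directly $\langle \pi(f)^\intercal \phi, v\rangle = \langle \phi, \pi(f) v\rangle$. Comparing the two expressions yields the identity as elements of $V^\vee$, and hence the equality of operators.

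The only mild subtlety is the justification of swapping the integral with the duality pairing; there is no serious obstacle since both $f$ and $f^\vee$ lie in $\mathcal{C}(G(F))$ (as $\Xi^G$ is inverse-invariant and $\sigma_G(g) \asymp \sigma_G(g^{-1})$), and $\pi, \pi^\vee$ are both tempered, ensuring absolute convergence of all integrals in sight.
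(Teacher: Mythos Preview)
Your proposal is correct and follows essentially the same approach the paper has in mind: pair against arbitrary $v\in V$ and $\phi\in V^\vee$, use the defining relation $\langle \pi^\vee(g)\phi,v\rangle=\langle\phi,\pi(g^{-1})v\rangle$, and change variables $g\mapsto g^{-1}$ via unimodularity. The paper in fact records this lemma without proof (the \qed), so your write-up just makes explicit the standard computation, with the only extra content being your remarks on convergence.
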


\quash{\begin{proof} Let $v\in \pi$ and $w\in \pi^\vee$. Then
\begin{align*}
    \langle v,\pi(f)^\intercal w\rangle = \langle \pi(f)v,w\rangle &= \int_{G(F)} f(x)\langle \pi(x)v,w\rangle dx \\
    &= \int_{G(F)} f(x)\langle v,\pi^\vee(x^{-1})w\rangle dx \\
    &= \int_{G(F)} f^\vee(x)\langle v,\pi^\vee(x)w\rangle dx = \langle v,\pi^\vee(f^\vee) w\rangle.
\end{align*}
\end{proof}  }

\begin{lem}\label{HCPlan:transpose} For $T\in C^\infty(\Temp_{\Ind}(G))$, define a section $T^\intercal$ by
\begin{align*}
    T^\intercal(\pi) = T(\pi^\vee)^\intercal \in \mathcal{HS}(\pi).
\end{align*}
Then $T^\intercal\in C^\infty(\Temp_{\Ind}(G))$. If $T\in \mathcal{C}(\Temp_{\Ind}(G))$ then $T^\intercal\in \mathcal{C}(\Temp_{\Ind}(G))$. Moreover, if $f\in \mathcal{C}(G)$, then $\mathrm{HP}(f)^\intercal = \mathrm{HP}(f^\vee)$. \hfill\qedsymbol
\end{lem}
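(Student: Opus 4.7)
The plan is to establish the three claims in order. For smoothness, observe that dualization $\pi \mapsto \pi^\vee$ is a smooth involution on $\widetilde{\Temp}_\Ind(G)$ (this underlies Lemma \ref{pi->pivee:preservemeasure}), and under the identification $\mathcal{HS}(H) = H^\vee \mathop{\widehat{\otimes}} H$ the transpose $S \mapsto S^\intercal$ is the swap $\phi \otimes w \mapsto w \otimes \phi$, giving a natural bundle isomorphism over this involution that preserves the smooth-vector subbundle. Hence $T^\intercal$ is a composition of smooth maps and lies in $C^\infty(\Temp_\Ind(G))$.

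For the Schwartz property, the non-Archimedean case is immediate: $\mathrm{supp}\,T^\intercal$ is the image of $\mathrm{supp}\,T$ under the continuous involution, hence compact. In the Archimedean case, two ingredients are needed. First, $N(\sigma) = N(\sigma^\vee)$, because the infinitesimal characters of $\sigma$ and $\sigma^\vee$ are related by an isometry of the dual Cartan. Second, differentiating the identity $\pi^\vee(g) = \pi(g^{-1})^\intercal$ yields $\pi(u)^\intercal = \pi^\vee(\alpha(u))$ for all $u \in U(\mathfrak{g})$, where $\alpha$ denotes the principal anti-involution ($\alpha(X) = -X$ on $\mathfrak{g}$). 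Using $(ABC)^\intercal = C^\intercal B^\intercal A^\intercal$ and the fact that transpose is an operator-norm isometry, the seminorm
\begin{align*}
\bigl\|I_P^G(\sigma)(u) \circ DT^\intercal(M,\sigma) \circ I_P^G(\sigma)(v)\bigr\|_{\mathrm{op}}
\end{align*}
equals
\begin{align*}
\bigl\|I_P^G(\sigma^\vee)(\alpha(v)) \circ D'T(M,\sigma^\vee) \circ I_P^G(\sigma^\vee)(\alpha(u))\bigr\|_{\mathrm{op}},
\end{align*}
where $D'$ is the pushforward of $D$ under the involution of $\Im\Lambda_M$ (again an invariant differential operator). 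Lemma \ref{HP:arch:bound} applied to $T$ then yields the required bound for $T^\intercal$.

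For the final claim, it suffices to verify the pointwise identity $\pi^\vee(f)^\intercal = \pi(f^\vee)$. Applying Lemma \ref{FT:lem:easy:1} with $f$ replaced by $f^\vee$ gives $\pi^\vee(f) = \pi(f^\vee)^\intercal$, and taking transpose (using $(\cdot)^{\intercal\intercal} = \mathrm{id}$ under the canonical identification $V^{\vee\vee} = V$) yields the desired identity. Then $\mathrm{HP}(f)^\intercal(\pi) = \pi^\vee(f)^\intercal = \pi(f^\vee) = \mathrm{HP}(f^\vee)(\pi)$.

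The main technical obstacle is the Archimedean seminorm verification, which requires careful bookkeeping of how invariant differential operators on $\Im\Lambda_M$ and the $U(\mathfrak{g})$-action on induced representations transform under $\sigma \mapsto \sigma^\vee$; the remainder of the argument reduces to formal manipulations with the transpose.
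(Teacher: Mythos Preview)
Your proof is correct and follows essentially the same approach as the paper's. The paper presents this lemma without proof (marked with \qedsymbol), treating it as routine; the suppressed argument in the source proceeds exactly as you do: smoothness via the involution $\sigma \mapsto \sigma^\vee$, compact support in the non-Archimedean case, the Archimedean seminorm bound via $N(\sigma)=N(\sigma^\vee)$ and the transpose identity, and the final claim via Lemma~\ref{FT:lem:easy:1}. Your treatment is in fact slightly more careful, making explicit the principal anti-involution $\alpha$ on $U(\mathfrak{g})$ and invoking Lemma~\ref{HP:arch:bound} to handle general $u,v$.
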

\quash{
\begin{proof} 
 The first assertion is clear, as $\Im\Lambda^M\ni \chi\mapsto T(M,\sigma^\vee\otimes\chi)\in\End_{\mathrm{sm}}(\Ind_{P\cap K}^K(\sigma^\vee\vert_{P\cap K}))$ and $\Im\Lambda^M\ni \chi\mapsto \chi^{-1}\in \Im\Lambda^M$ are smooth. If $T$ has compact support, then so does $T^\intercal$. When $F$ is Archimedean, the bound for $T^\intercal(M,\sigma)$ follows from the facts that $S\mapsto S^\intercal$ is norm-preserving, $N(\sigma)=N(\sigma^\vee)$ and 
\begin{align*}
    ((\Ind_P^G(\sigma))(u)D(T^\intercal(M,\sigma))(\Ind_P^G(\sigma))(v))^\intercal &= (\Ind_P^G(\sigma))(v)^\intercal (D(T(M,\sigma^\vee)^\intercal))^\intercal ((\Ind_P^G(\sigma))(u))^\intercal\\
    &=(\Ind_P^G(\sigma)^\vee)(v) DT(M,\sigma^\vee) (\Ind_P^G(\sigma)^\vee)(u)
\end{align*}
for $u,v\in U(\mathrm{Lie}(K)_\mathbb{C})$. The last statement follows from the following lemma.

\end{proof}}

\subsection{Constant terms}

Let $M\in\mathcal{M}$ and $P=MN\in\mathcal{P}(M)$. For $f\in \mathcal{C}(G(F))$, define the \textbf{constant term $f^P:M(F)\to\mathbb{C}$ of $f$ along $P$} by
\begin{align*}
    f^P(m) := \delta_P^{1/2
    }(m)\int_{N(F)} f(mn) dn.
\end{align*}
By \cite[III.6]{Waldspurger:plancherel} and \cite[\S7.2.1]{WallachRG1}, the integral is absolutely convergent and $f^P\in \mathcal{C}(M(F))$. Moreover,
\begin{align*}
    (\cdot)^P:\mathcal{C}(G(F))&\lto\mathcal{C}(M(F))
\end{align*}
is continuous.
\begin{lem}\label{constant:map:transitive} Let $P=MN\geq P'=M'N'$ be semi-standard parabolic subgroups of $G.$ For $f\in \mathcal{C}(G(F))$, one has that
$f^{P'} = (f^P)^{M\cap P'}.$ \qed
\end{lem}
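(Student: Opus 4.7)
The plan is to unwind the definitions on both sides and compare, relying on three standard structural facts about parabolics together with Fubini's theorem, whose applicability is guaranteed by the Harish-Chandra Schwartz decay of $f$.

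First, I would record the parabolic structure. Since $P' \leq P$ and both are semi-standard, $M' \leq M$, and $P' \cap M$ is a semi-standard parabolic of $M$ with Levi $M'$ and unipotent radical $M \cap N'$. Moreover, $N \leq N'$, and the multiplication map $(M \cap N')(F) \times N(F) \to N'(F)$ is a homeomorphism that, with our normalizations of Haar measures on unipotent groups, respects measures: $dn' = dn_1\, dn_2$ where $n' = n_1 n_2$ with $n_1 \in (M \cap N')(F)$, $n_2 \in N(F)$. Second, decomposing the Lie algebra $\mathrm{Lie}(N') = \mathrm{Lie}(M \cap N') \oplus \mathrm{Lie}(N)$ as an $M'$-module gives the modulus identity
\begin{align*}
\delta_{P'}(m') = \delta_{M \cap P'}(m') \cdot \delta_P(m')
\end{align*}
for $m' \in M'(F)$, where $\delta_P(m')$ means the restriction of the modulus character of $P$ (on $M$) to $M'$. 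Third, $\delta_P$ is trivial on $(M \cap N')(F)$ since this group is unipotent.

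Next I would simply compute. For $m' \in M'(F)$,
\begin{align*}
(f^P)^{M \cap P'}(m') &= \delta_{M \cap P'}^{1/2}(m') \int_{(M \cap N')(F)} f^P(m' n_1)\, dn_1 \\
&= \delta_{M \cap P'}^{1/2}(m') \int_{(M \cap N')(F)} \delta_P^{1/2}(m' n_1) \int_{N(F)} f(m' n_1 n_2)\, dn_2\, dn_1.
\end{align*}
Using $\delta_P(m' n_1) = \delta_P(m')$, the factor $\delta_{M \cap P'}^{1/2}(m') \delta_P^{1/2}(m')$ equals $\delta_{P'}^{1/2}(m')$. Collapsing the iterated integral back to a single integral over $N'(F)$ via the measure decomposition yields $f^{P'}(m')$.

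The one step that is not purely formal is the application of Fubini. This is justified by the already-cited result of \cite{Waldspurger:plancherel} and \cite{WallachRG1}: the map $(\cdot)^{P'}:\mathcal{C}(G(F)) \to \mathcal{C}(M'(F))$ is defined by an absolutely convergent integral, so $\int_{N'(F)} |f(m' n')|\, dn' < \infty$, which by the measure decomposition is precisely the absolute convergence of the iterated integral above. This is the only place where the hypothesis $f \in \mathcal{C}(G(F))$ is used beyond ensuring that both sides of the identity make sense, and it is the main (mild) obstacle since the rest of the argument is bookkeeping.
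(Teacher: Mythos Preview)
Your argument is correct and is exactly the standard unwinding the paper has in mind; the lemma is stated with an immediate \qed and no proof is given, so there is nothing further to compare.
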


\begin{lem}\label{constant:map:twist:equivariant} One has that
\begin{align*}
    (\mathcal{R}(m_1,m_2)f)^{P}=\delta^{1/2}_P(m_1)\delta^{1/2}_P(m_2)\mathcal{R}(m_1,m_2)f^P
\end{align*}
for $(m_1,m_2,f)\in M(F)^2 \times \mathcal{C}(G(F)).$ \qed
\end{lem}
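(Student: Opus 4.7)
The plan is to prove the identity by a direct unfolding of the definition of the constant term, combined with a change of variables in the Haar integral over $N(F)$. Fix $m \in M(F)$ and write, using the definitions,
\begin{align*}
    (\mathcal{R}(m_1,m_2)f)^P(m) = \delta_P^{1/2}(m) \int_{N(F)} f(m_1^{-1} m n m_2)\, dn.
\end{align*}
The goal is to move the $m_2$ past the variable of integration. Writing $m_1^{-1} m n m_2 = (m_1^{-1} m m_2)(m_2^{-1} n m_2)$ and performing the substitution $n \mapsto m_2 n m_2^{-1}$ in the integral will do this, up to a factor coming from the Jacobian of this substitution.

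The crucial input is the fact that conjugation by an element $m' \in M(F)$ on $N(F)$ scales the Haar measure by $\delta_P(m')$; that is, $d(m' n m'^{-1}) = \delta_P(m')\, dn$, which is the very definition of $\delta_P$. Applying this with $m' = m_2$, the integral becomes
\begin{align*}
    \delta_P^{1/2}(m) \cdot \delta_P(m_2) \int_{N(F)} f\bigl( (m_1^{-1} m m_2) n\bigr)\, dn.
\end{align*}

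Now I would recognize the remaining integral as essentially $f^P(m_1^{-1} m m_2)$ up to the $\delta_P^{1/2}$ factor at the argument $m_1^{-1} m m_2$. Using multiplicativity of $\delta_P$ on $M(F)$, one has $\delta_P^{1/2}(m_1^{-1} m m_2)^{-1} = \delta_P^{1/2}(m_1)\delta_P^{-1/2}(m)\delta_P^{-1/2}(m_2)$. Collecting all the $\delta_P$-factors produces exactly $\delta_P^{1/2}(m_1)\delta_P^{1/2}(m_2) f^P(m_1^{-1} m m_2) = \delta_P^{1/2}(m_1)\delta_P^{1/2}(m_2)\,\mathcal{R}(m_1,m_2) f^P(m)$, which is the claim.

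There is no genuine obstacle here; the only points requiring care are the absolute convergence of the integrals (which is automatic from $f \in \mathcal{C}(G(F))$ via \cite[III.6]{Waldspurger:plancherel} and \cite[\S7.2.1]{WallachRG1}, as already invoked before the lemma), and bookkeeping the sign conventions on $\delta_P$. All manipulations lift to $F$-points of the algebraic groups without incident since $N$ is unipotent and the scaling identity for $\delta_P$ is valid for both Archimedean and non-Archimedean $F$.
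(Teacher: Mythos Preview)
Your proof is correct. The paper provides no proof for this lemma (it is stated with an immediate \qed), and your direct computation via the change of variables $n \mapsto m_2 n m_2^{-1}$ on $N(F)$ is precisely the standard argument the reader is implicitly expected to supply.
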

Let $P=MN.$ For $(M',\sigma')\in \widetilde{\Temp}_\Ind(G)$ with $M'\leq M,$ there is a canonical isomorphism
\begin{align*}
    \iota_{P',P}:I_{P'(F)\cap K}^K(\sigma'\vert_{M'(F) \cap K})\tilde{\lto} I_{P(F)\cap K}^{K}\left(I_{ P'(F) \cap M(F) \cap K}^{ M(F) \cap K}(\sigma'\vert_{M'(F)\cap K})\right).
\end{align*}
This induces an $M(F)$-equivariant map 
\begin{align*}
    \mathrm{ev}_{P',P}:\mathcal{HS}(I_{P'(F)\cap K}^K(\sigma'\vert_{M'(F) \cap K}))^{\mathrm{sm}}&\lto \mathcal{HS}(I_{ P'(F) \cap M(F) \cap K}^{ M(F) \cap K}(\sigma'\vert_{M'(F)\cap K}))^{\mathrm{sm}}\\
    T&\longmapsto ((\iota_{P',P}\otimes \iota_{P',P})T)(1,1).
\end{align*}
Here we are using the identification \eqref{HS:identification}.  This in turn yields a map between sections
\begin{align}\label{constant:map:spectral}
\begin{split}    
    \mathcal{C}(\Temp_\Ind(G))&\lto \mathcal{C}(\Temp_\Ind(M))\\
    T&\longmapsto T^{P}:=\mathrm{ev}_{P',P}(T(M',\sigma')).
\end{split}
\end{align}

\begin{lem}\label{CTemp:HS:op:equiv:bound} Assume that $F$ is Archimedean. Let  $T\in\mathcal{C}(\Temp_\Ind(G))$ and $(M,\sigma)\in\widetilde{\Temp}_\Ind(G)$. For $u,v\in U(\mathfrak{g})$ the operator $I_P^G(\sigma)(u)\circ T(M,\sigma)\circ I_P^G(\sigma)(v)$ is Hilbert-Schmidt. Moreover there exists $Z\in Z(\mathrm{Lie}(K))$ such that 
\begin{align*}
    \norm{I_P^G(\sigma)(u)\circ T(M,\sigma)\circ I_P^G(\sigma)(v)}_{\mathrm{HS}} \ll \norm{ I_P^G(\sigma)(Zu)\circ T(M,\sigma)\circ I_P^G(\sigma)(vZ)}_{\mathrm{op}}
\end{align*}
for all $\sigma \in \Pi_2(M)$ and $u,v \in U(\mathfrak{g}).$
\end{lem}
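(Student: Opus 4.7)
The plan is to combine a smoothing power of the Casimir of $\mathrm{Lie}(K)$ with the $K$-isotypic decomposition of $V:=I_P^G(\sigma)$. Let $\Omega_K\in Z(U(\mathrm{Lie}(K)))$ denote the Casimir, normalized so that its eigenvalue $c_\tau\geq 0$ on any $K$-type $\tau$ satisfies $1+c_\tau\asymp 1+\norm{\tau}^2$, where $\norm{\tau}$ is the norm of the highest weight of $\tau$. For a positive integer $N$ to be chosen at the end, set
\[
Z:=(1+\Omega_K)^N\in Z(U(\mathrm{Lie}(K))).
\]
Then $I_P^G(\sigma)(Z)$ commutes with every $K$-isotypic projection $P_\tau$ on $V$ and acts on $V_\tau$ by the scalar $(1+c_\tau)^N$.

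Writing $A:=I_P^G(\sigma)(u)\circ T(M,\sigma)\circ I_P^G(\sigma)(v)$, the identities $Zu=Z\cdot u$ and $vZ=v\cdot Z$ in $U(\mathfrak{g})$ furnish the factorization
\[
I_P^G(\sigma)(Zu)\circ T(M,\sigma)\circ I_P^G(\sigma)(vZ)=I_P^G(\sigma)(Z)\circ A\circ I_P^G(\sigma)(Z).
\]
Extracting eigenvalues of $I_P^G(\sigma)(Z)$ on $V_{\tau_1}$ and $V_{\tau_2}$ yields
\[
\norm{P_{\tau_2}AP_{\tau_1}}_{\mathrm{op}}\leq(1+c_{\tau_1})^{-N}(1+c_{\tau_2})^{-N}\norm{I_P^G(\sigma)(Zu)\circ T(M,\sigma)\circ I_P^G(\sigma)(vZ)}_{\mathrm{op}}.
\]
Combining this with $\norm{A}_{\mathrm{HS}}^2=\sum_{\tau_1,\tau_2}\norm{P_{\tau_2}AP_{\tau_1}}_{\mathrm{HS}}^2$ and the elementary estimate $\norm{B}_{\mathrm{HS}}^2\leq\mathrm{rank}(B)\norm{B}_{\mathrm{op}}^2\leq\sqrt{\dim V_{\tau_1}\dim V_{\tau_2}}\,\norm{B}_{\mathrm{op}}^2$ for $B=P_{\tau_2}AP_{\tau_1}$ reduces matters to the uniform convergence in $\sigma$ of
\[
\Sigma_N(\sigma):=\sum_\tau\frac{\sqrt{\dim V_\tau}}{(1+c_\tau)^{2N}}.
\]

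To establish this, Frobenius reciprocity applied to $I_P^G(\sigma)|_K\cong I_{P(F)\cap K}^K(\sigma|_{M(F)\cap K})$ yields $\dim V_\tau=\dim\tau\cdot m_\tau(\sigma)$, where $m_\tau(\sigma):=\dim\Hom_{M(F)\cap K}(\tau|_{M(F)\cap K},\sigma|_{M(F)\cap K})$. The classical Harish-Chandra bound on $K$-type multiplicities in irreducible admissible representations supplies a constant $C_M$, independent of $\sigma\in\Pi_2(M)$, with $\dim\Hom_{M(F)\cap K}(\rho,\sigma|_{M(F)\cap K})\leq C_M\dim\rho$ for every irreducible $M(F)\cap K$-representation $\rho$; decomposing $\tau|_{M(F)\cap K}$ into irreducibles then forces $m_\tau(\sigma)\leq C_M\dim\tau$, and hence $\dim V_\tau\leq C_M(\dim\tau)^2$. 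Combined with the Weyl-dimension estimate $\dim\tau\ll(1+c_\tau)^{(\dim K-\mathrm{rank}\,K)/4}$ and the polynomial growth of the number of $K$-types of Casimir at most $L$, the series $\Sigma_N(\sigma)$ is uniformly bounded in $\sigma$ once $N$ is taken sufficiently large; fix such an $N$. The stated inequality then follows with a constant uniform in $\sigma,u,v$, and the Hilbert-Schmidt finiteness of $A$ is immediate since the right side is finite for $T\in\mathcal{C}(\Temp_\Ind(G))$ by Lemma~\ref{HP:arch:bound}. The main obstacle throughout is the uniform multiplicity bound $m_\tau(\sigma)\leq C_M\dim\tau$, the single point where uniformity over $\sigma$ is genuinely needed; it is a standard but nontrivial consequence of Harish-Chandra's analysis of admissible representations.
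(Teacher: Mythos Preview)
Your argument is correct and follows essentially the same route as the paper's proof: decompose along $K$-isotypic projections, use a power of a Casimir-type element in $Z(\mathrm{Lie}(K))$ to extract decaying weights, convert Hilbert--Schmidt to operator norm block by block, and sum. The paper takes $Z=\Omega^{k+1}$ for the element $\Omega\in Z(\mathrm{Lie}(K))$ of \cite[(4.6)]{Delorme:PaleyWiener}, whose eigenvalue on the $\delta$-isotype dominates $(\dim\delta)^2$, and invokes \cite[(4.5)]{Delorme:PaleyWiener} for the bound $\dim V_\delta\leq(\dim\delta)^2$; convergence then reduces to $\sum_{\delta\in\widehat{K}}(\dim\delta)^{-2k}<\infty$, which is independent of $\sigma$. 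Your detour through the Harish-Chandra multiplicity bound for $\sigma$ to obtain $m_\tau(\sigma)\leq C_M\dim\tau$ is correct but slightly heavier than needed: for induced representations the estimate $\dim V_\tau\leq(\dim\tau)^2$ is what \cite[(4.5)]{Delorme:PaleyWiener} gives directly, already uniformly in $\sigma$, so the ``main obstacle'' you flag is not where the difficulty lies.
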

\begin{proof}  We follow the proof of \cite[Lemme 11]{Delorme:PaleyWiener}. Let $\widehat{K}$ denote the unitary dual of $K$. For $\delta\in\widehat{K}$ and $\sigma\in\Pi_2(M)$, let $p_\delta$ denote the orthogonal projection of $I_P^G(\sigma)$ onto its $\delta$-isotypic component. Then $\mathrm{id} = \sum_{\delta} p_\delta$ converges pointwise on $I_P^G(\sigma)$. 

Let $\Omega\in Z(\mathrm{Lie}(K))$ be an element as in \cite[(4.6)]{Delorme:PaleyWiener}. Then for $k\in\mathbb{Z}_{\geq 1}$,
\begin{align*}
&\norm{I_P^G(\sigma)(\Omega^{k+1}u)\circ T(M,\sigma)\circ I_P^G(\sigma)(v\Omega^{k+1})}_{\mathrm{op}} \\&\geq
    \norm{p_\delta\circ I_P^G(\sigma)(\Omega^{k+1}u)\circ T(M,\sigma)\circ I_P^G(\sigma)(v\Omega^{k+1})\circ p_\gamma}_{\mathrm{op}}\\
    &\gg_k (\dim\delta)^{2k+2}(\dim\gamma)^{2k+2}\norm{p_\delta\circ I_P^G(\sigma)(u)\circ T(M,\sigma)\circ I_P^G(\sigma)(v)\circ p_\gamma}_{\mathrm{op}}\\
    &\geq (\dim\delta)^{2k}(\dim\gamma)^{2k}\norm{p_\delta\circ I_P^G(\sigma)(u)\circ T(M,\sigma)\circ I_P^G(\sigma)(v)\circ p_\gamma}_{\mathrm{HS}}.
\end{align*} 
Here in the last equality we used   \cite[(4.5)]{Delorme:PaleyWiener}. 
By the proof of \cite[Lemma 4.4.2.3]{Warner:semisimple:I}, for $k$ large enough  $\sum_{(\delta,\gamma) \in \widehat{K}^2}(\dim\delta \dim\gamma)^{-2k} < \infty$. Thus the previous bound implies that
\begin{align*}
    \sum_{(\delta,\gamma) \in \widehat{K}^2 }p_\delta\circ I_P^G(\sigma)(u)\circ T(M,\sigma)\circ I_P^G(\sigma)(v)\circ p_\gamma
\end{align*}
defines an element in $\mathcal{HS}(I_P^G(\sigma))$. By the remark in the first paragraph, this is equal to $I_P^G(\sigma)(u)\circ T(M,\sigma)\circ I_P^G(\sigma)(v)$.
\end{proof}

\begin{lem}\label{constant:map:spectral:welldefined} The map \eqref{constant:map:spectral} is well-defined.
\end{lem}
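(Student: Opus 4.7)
My plan is to treat the non-Archimedean and Archimedean cases separately, since the defining conditions of $\mathcal{C}(\Temp_\Ind(\cdot))$ differ qualitatively in these two settings.

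In the non-Archimedean case, I would argue that the assignment $(M',\sigma') \mapsto T^P(M',\sigma')$ is simply the composition of the restriction of $T$ to the subset of $\widetilde{\Temp}_\Ind(G)$ consisting of those $(M',\sigma')$ with $M' \leq M$ (up to conjugacy), followed by the algebraically-defined evaluation map $\mathrm{ev}_{P',P}$ on fibers. Since $T$ is a smooth compactly supported section of $\mathcal{HS}^{\mathrm{sm}}$ and the map $\widetilde{\Temp}_\Ind(M) \to \widetilde{\Temp}_\Ind(G)$ has image meeting the support of $T$ in a compact set (a finite union of orbits in a compact manifold), one deduces directly that $T^P \in \Gamma_c(\Temp_\Ind(M),\mathcal{HS}^{\mathrm{sm}})$.

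In the Archimedean case, fix $M' \in \mathcal{M}_M$ and $P' \in \mathcal{P}^M(M')$, and let $P'' = P'N \in \mathcal{P}^G(M')$. Under the canonical identification $\iota_{P'',P}$, we have $I_{P''}^G(\sigma') \cong I_P^G(I_{P'}^M(\sigma'))$. I would first observe that an invariant differential operator $D$ on $\Im \Lambda_{M'}$ commutes with the fiberwise evaluation at $(1,1)$, so that $DT^P(M',\sigma') = \mathrm{ev}_{P'',P}(DT(M',\sigma'))$. Next, by Lemma \ref{Ind:evaluation:continuous} applied to both factors of $\mathcal{HS}(I_P^G(V)) = I_P^G(V)^\vee \widehat{\otimes} I_P^G(V)$ with $V = I_{P'}^M(\sigma')$, one obtains a bound
\begin{align*}
    \norm{I_{P'}^M(\sigma')(u) \circ DT^P(M',\sigma') \circ I_{P'}^M(\sigma')(v)}_{\mathrm{op}} \ll \sum_{\alpha,\beta} \norm{I_{P''}^G(\sigma')(X^\alpha u) \circ DT(M',\sigma') \circ I_{P''}^G(\sigma')(vX^\beta)}_{\mathrm{HS}}
\end{align*}
where $\{X_i\}$ is a basis of $\mathfrak{g}$ and $\alpha,\beta$ range over a finite set of multi-indices of bounded degree; the operator norm on the left is controlled by a Hilbert–Schmidt norm on the right because evaluation at a point bounds the operator norm on the fiber in terms of a seminorm on the full induced space.

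The main technical hurdle is then to upgrade this Hilbert–Schmidt bound to a bound suitable for the Archimedean condition (A). I would invoke Lemma \ref{CTemp:HS:op:equiv:bound} to replace the Hilbert–Schmidt norm on the right-hand side by an operator norm, at the cost of composing with an additional element $Z \in Z(\mathrm{Lie}(K))$ on each side. The resulting operator-norm expression is of the form appearing in Lemma \ref{HP:arch:bound}, so the membership $T \in \mathcal{C}(\Temp_\Ind(G))$ gives the desired rapid decay in $N(\sigma')^k$ for every $k$. Finally, I would note that $N(\sigma')$ is unambiguous (it depends only on the infinitesimal character of $\sigma'$, which is the same whether $\sigma'$ is viewed as a representation of $M'(F)$ in $\Pi_2(M') \subset \widetilde{\Temp}_\Ind(M)$ or as an element of $\widetilde{\Temp}_\Ind(G)$), so the bound obtained is precisely what is needed to conclude $T^P \in \mathcal{C}(\Temp_\Ind(M))$.
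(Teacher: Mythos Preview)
Your proposal is correct and follows essentially the same route as the paper: the non-Archimedean case is immediate, and in the Archimedean case you invoke the same chain of lemmas (Lemma~\ref{Ind:evaluation:continuous} to pass from the evaluated operator to Hilbert--Schmidt norms on the full induced space, Lemma~\ref{CTemp:HS:op:equiv:bound} to convert these to operator norms with a central $Z$ inserted, and Lemma~\ref{HP:arch:bound} to conclude). The paper's presentation differs only in that it first bounds the operator norm by the Hilbert--Schmidt norm of the evaluated operator and uses the $M(F)$-equivariance of $\mathrm{ev}_{P',P}$ to pull the factors $u,v \in U(\mathrm{Lie}(M'(F)\cap K))$ outside before applying Lemma~\ref{Ind:evaluation:continuous}, but this is a minor reordering of the same argument.
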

\begin{proof} The assertion is immediate when $F$ is non-Archimedean. Thus assume $F$ is Archimedean. Let $T\in\mathcal{C}(\Temp_\Ind(G))$. Let us retain the notation above \eqref{constant:map:spectral}. Let $u,v\in U(\mathrm{Lie}(M'(F)\cap K))$ and $D$ be an invariant differential operator on $\Im\Lambda_{M'}$. One has
\begin{align*}
    &\norm{I_{P'\cap M}^M(\sigma)(u)\circ D\mathrm{ev}_{P',P}(T(M',\sigma'))\circ I_{P'\cap M}^M(\sigma)(v)}_{\mathrm{op}}\\
    &\leq \norm{I_{P'\cap M}^M(\sigma)(u)\circ D\mathrm{ev}_{P',P}(T(M',\sigma'))\circ I_{P'\cap M}^M(\sigma)(v)}_{\mathrm{HS}}\\
    &=\norm{\mathrm{ev}_{P',P}\left(I_{P'}^G(\sigma)(u)\circ DT(M',\sigma')\circ I_{P'}^G(\sigma)(v)\right)}_{\mathrm{HS}}
\end{align*}

Let $X_1,\ldots,X_n\in\mathfrak{g}$ be a basis. By Lemma \ref{Ind:evaluation:continuous}, 
\begin{align*}
    &\norm{\mathrm{ev}_{P',P}\left(I_{P'}^G(\sigma)(u)\circ DT(M',\sigma')\circ I_{P'}^G(\sigma)(v)\right)}_{\mathrm{HS}}\\
    &\ll \sum_{\substack{\alpha,\beta\in \mathbb{Z}_{\geq 0}^n\\|\alpha|,|\beta|\leq n}}\norm{I_{P'}^G(\sigma)(X_1^{\alpha_1}\cdots X_n^{\alpha_n}u)\circ DT(M',\sigma')\circ I_{P'}^G(\sigma)(vX_1^{\beta_1}\cdots X_n^{\beta_n})}_{\mathrm{HS}}.
\end{align*}
By Lemma \ref{CTemp:HS:op:equiv:bound}, we can find $Z\in Z(\mathrm{Lie}(K))$ such that the above is dominated by
\begin{align*}
    \sum_{\substack{\alpha,\beta\in \mathbb{Z}_{\geq 0}^n\\|\alpha|,|\beta|\leq n}}\norm{I_{P'}^G(\sigma)(ZX_1^{\alpha_1}\cdots X_n^{\alpha_n}u)\circ DT(M',\sigma')\circ I_{P'}^G(\sigma)(vX_1^{\beta_1}\cdots X_n^{\beta_n}Z)}_{\mathrm{op}}
\end{align*}
The assertion now follows from Lemma \ref{HP:arch:bound}.
\end{proof}

\begin{lem}\label{constant:map:HP:nice} The diagram
\begin{center}
    \begin{tikzcd}
        \mathcal{C}(G(F))\arrow[d,"(-)^{P}"]\arrow[r,"\HP_G"]&\mathcal{C}(\Temp_\Ind(G))\arrow[d,"(-)^{P}"]\\
        \mathcal{C}(M(F))\arrow[r,"\HP_M"]&\mathcal{C}(\Temp_\Ind(M))
    \end{tikzcd}
\end{center}
commutes.
\end{lem}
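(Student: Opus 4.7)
The plan is to verify the claimed equality pointwise on $\widetilde{\Temp}_\Ind(M)$. Fix $(M',\sigma')\in \widetilde{\Temp}_\Ind(M)$ and $P'\in\mathcal{P}(M')$ with $P'\subseteq P$; set $\sigma:= I^M_{P'\cap M}(\sigma')$. By induction by stages the isomorphism $\iota_{P',P}$ identifies $I^G_{P'}(\sigma')$ with $I^G_P(\sigma)$ as $G(F)$-representations, and in particular carries $I^G_{P'}(\sigma')(f)$ to $I^G_P(\sigma)(f)$. What remains to prove is therefore the identity
\[
\mathrm{ev}_{P',P}\bigl(I^G_P(\sigma)(f)\bigr)=\sigma(f^P) \qquad \text{in } \mathcal{HS}(\sigma).
\]

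The heart of the argument is a direct calculation of the Hilbert–Schmidt kernel along the slice $k_1=1$. For $\Phi\in I^G_P(\sigma)^{\mathrm{sm}}$,
\[
\bigl(I^G_P(\sigma)(f)\Phi\bigr)(1)=\int_{G(F)}f(g)\Phi(g)\,dg.
\]
I will apply the Iwasawa decomposition $G(F)=M(F)N(F)K$, write $g=mnk$, use the transformation law $\Phi(mnk)=\delta_P^{1/2}(m)\sigma(m)\Phi(k)$, and invoke Fubini. Setting $F_k(x):=f(xk)\in\mathcal{C}(G(F))$, the inner double integral over $M(F)\times N(F)$ collapses to $\sigma(F_k^P)$ by the definition of the constant term, yielding
\[
\bigl(I^G_P(\sigma)(f)\Phi\bigr)(1)=\int_K\sigma(F_k^P)\Phi(k)\,dk.
\]
The absolute convergence required to apply Fubini follows from the rapid decay of $f\in\mathcal{C}(G(F))$ combined with the continuity of the constant-term map recalled from \cite[III.6]{Waldspurger:plancherel} and \cite[\S7.2.1]{WallachRG1}.

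To finish, I will unwind the definition of $\mathrm{ev}_{P',P}$. Via the identification \eqref{HS:identification} and the restriction $I^G_P(\sigma)\vert_K\simeq I^K_{P\cap K}(\sigma\vert_{M\cap K})$, a Hilbert–Schmidt operator $T\in\mathcal{HS}(I^G_P(\sigma))$ is represented by an $\mathcal{HS}(\sigma)$-valued integral kernel $\mathcal{K}_T(k_1,k_2)$ on $K\times K$, and unpacking the formula $T\mapsto ((\iota_{P',P}\otimes\iota_{P',P})T)(1,1)$ shows that $\mathrm{ev}_{P',P}(T)=\mathcal{K}_T(1,1)$. The displayed formula above identifies $\mathcal{K}_{I^G_P(\sigma)(f)}(1,k)$ with $\sigma(F_k^P)$; specialising to $k=1$ and noting that $F_1=f$ yields
\[
\mathrm{ev}_{P',P}\bigl(I^G_P(\sigma)(f)\bigr)=\sigma(F_1^P)=\sigma(f^P)=I^M_{P'\cap M}(\sigma')(f^P),
\]
as required.

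The main obstacle is the bookkeeping needed to pin down $\mathrm{ev}_{P',P}$ as kernel evaluation at $(1,1)$: one has to track the duality conventions implicit in \eqref{HS:identification} through the induction-by-stages isomorphism $\iota_{P',P}$, and verify that the "kernel at $(1,1)$" is well defined as an operator on $\sigma$ (not just as an $L^2$-kernel). This is a definition-chase once it is observed that, because $f\in\mathcal{C}(G(F))$, the map $k\mapsto F_k^P$ is smooth from $K$ into $\mathcal{C}(M(F))$ and the operator-valued map $k\mapsto \sigma(F_k^P)\in\mathcal{HS}(\sigma)$ is continuous. The Fubini interchange and the passage to the dense image of $\widetilde{\Temp}_\Ind(M)$ in $\Temp_\Ind(M)$ are then routine.
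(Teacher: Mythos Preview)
Your argument is correct and takes a genuinely different route from the paper. The paper does not compute anything directly: it uses the transitivity $\mathrm{ev}_{P'\cap M,M}\circ\mathrm{ev}_{P',P}=\mathrm{ev}_{P',G}$ to reduce the claim to the identity $\mathrm{ev}_{P',G}(\HP_G(f)(M',\sigma'))=\HP_{M'}(f^{P'})(M',\sigma')$, which it imports from \cite[Lemme VII.1.2]{Waldspurger:plancherel}, and then matches this with $\mathrm{ev}_{P'\cap M,M}(\HP_M(f^P))=\HP_{M'}((f^P)^{P'\cap M})$ via the transitivity $f^{P'}=(f^P)^{P'\cap M}$ of constant terms. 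Because $\mathrm{ev}_{P',P}$ lands in $\mathcal{HS}(\sigma)$ rather than $\mathcal{HS}(\sigma')$, the paper must further check equality after twisting by all $(m_1,m_2)\in(M(F)\cap K)^2$, using Lemma~\ref{constant:map:twist:equivariant}. Your approach instead unpacks $\mathrm{ev}_{P',P}$ as kernel evaluation at $(1,1)$ and computes that kernel explicitly via Iwasawa; this is essentially a self-contained reproof of the relevant case of Waldspurger's lemma, trading a citation for the bookkeeping you flag. One small slip: your closing remark about ``passage to the dense image of $\widetilde{\Temp}_\Ind(M)$ in $\Temp_\Ind(M)$'' is misphrased, since the map $\widetilde{\Temp}_\Ind(M)\to\Temp_\Ind(M)$ is a surjection, not an inclusion; no density argument is needed once the pointwise identity is established.
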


\begin{proof} 
Explicitly, we are claiming that
\begin{align*}
    \mathrm{ev}_{P',P}(\HP_G(f)(M',\sigma')) = \HP_M(f^P)(M',\sigma').
\end{align*}
By transitivity of induction, we have 
\begin{align*}
    \mathrm{ev}_{P'\cap M,M}\circ \mathrm{ev}_{P',P} = \mathrm{ev}_{P',G}:\mathcal{HS}\left(I_{P(F) \cap K}^K(\sigma'|_{M'(F) \cap K})\right) \lto\mathcal{HS}(\sigma'|_{M'(F) \cap K})
\end{align*}
Thus it suffices to show
\begin{align*}
    \mathrm{ev}_{P',G}(\HP_G(\mathcal{R}(m_1,m_2)f)(M',\sigma')) = \mathrm{ev}_{P'\cap M,M}(\HP_M((\mathcal{R}(m_1,m_2)f)^P)(M',\sigma')).
\end{align*}
for all $m_1,m_2 \in M(F) \cap K.$ 

By \cite[Lemme VII.1.2]{Waldspurger:plancherel}, for $(M',\sigma')\in \widetilde{\Temp}_\Ind(G(F))$, one has
\begin{align*}
    \mathrm{ev}_{P',G}(\HP_G(f)(M',\sigma'))& = \HP_{M'}((f)^{P'})(M',\sigma')\\
    \mathrm{ev}_{P'\cap M,M}(\HP_M(f^P)(M',\sigma')) &= \HP_{M'}((f^P)^{P'\cap M})(M',\sigma').
\end{align*}
Strictly speaking, loc.~cit.~only treats the non-Archimedean case, but the proof is valid in the Archimedean case as well. In view of Lemma \ref{constant:map:transitive} this is the desired identity when $m_1=m_2=1.$  The identity for general $m_1,m_2$ now follows from   Lemma \ref{constant:map:twist:equivariant}.
\end{proof}

\section{The local Langlands correspondence} \label{sec:LLC}

We pin down a precise, weak version of the local Langlands correspondence for our purposes. For a  reductive group $G$ over a local field $F,$ we take $^LG = \widehat{G}(\mathbb{C})\rtimes W_F,$ the Weil form of the $L$-group.  If $M \leq G$ is a Levi subgroup then we obtain an $L$-map ${}^LM \to {}^LG$ in the standard manner \cite[\S 7.4]{Getz:Hahn}.

By a \textbf{representation of $^LG$}, we mean a continuous group homomorphism $\rho:{}^LG\to \GL_{V_\rho}(\mathbb{C})$ such that  $\rho\vert_{\widehat{G}(\mathbb{C})}$ is algebraic.  Here $V_{\rho}$ is a finite-dimensional $\CC$-vector space.

Let 
\begin{align*}
    W_F'=\begin{cases}
    W_F\times\SL_2(\mathbb{C}) &\text{ if $F$ is non-Archimedean,} \\
    W_F &\text{ if $F$ is Archimedean}. 
\end{cases}
\end{align*} 
An $L$-parameter $\phi:W_F'\to{}^LG$ is called \textbf{tempered} if the projection of $\phi(W_F)$ to $\widehat{G}(\mathbb{C})$ has a bounded image; i.e. is contained in a compact subset in the usual Hausdorff topology.  It is \textbf{discrete} if its image is not contained in any proper Levi subgroup of ${}^LG.$  Two $L$-parameters are equivalent if they are conjugate by elements in $\widehat{G}(\mathbb{C})$. Let 
\begin{align*}
    \Phi_2(G)\subseteq \Phi_t(G) \subseteq \Phi(G)
\end{align*}
be the set of equivalence classes of discrete $L$-parameters, tempered $L$-parameters, and $L$-parameters, respectively.

Let $|\cdot|:W_F \to \RR_{>0}$ be the usual norm, defined as in \cite[\S 1.4.6]{Tate_NT}.  We extend it to $W_F'$ by pullback along the projection $W_F' \to W_F.$

For 
\begin{align} \label{lambda:2:ways}
    \lambda \in \mathfrak{a}_{G\CC}^*=X^*(G) \otimes \CC \leq X_*((G/G^{\mathrm{der}})^{\wedge})_\mathbb{C} = X_*(Z_{\widehat{G}}^\circ)_\mathbb{C}
\end{align}
we can form the $L$-parameter 
\begin{align} \label{lambda} \begin{split}
|\cdot|^{\lambda}:W_F' &\lto {}^LG\\
x &\longmapsto |x|^{\lambda}. \end{split}
\end{align}
There is an action of $\mathfrak{a}_{G\CC}^*$ on $\Phi(G)$ given by
\begin{align} \label{ia:act} \begin{split}
\Phi(G) \times \mathfrak{a}_{G\CC}^* &\lto \Phi(G)\\
(\phi,\lambda) &\longmapsto \phi_{\lambda}:=\phi|\cdot|^{\lambda} .\end{split}
\end{align}
The restriction of the action to $i\mathfrak{a}_{G}^*$ preserves $\Phi_t(G)$ and $\Phi_2(G).$

By a \textbf{local Langlands correspondence}, we mean a map
\begin{align*}
    \mathrm{LL}_G:\Pi(G) \lto \Phi(G),
\end{align*}
satisfying a list of properties.
We make the list as short as possible to indicate precisely what we require.
\begin{enumerate}[label=(LLC\text{\arabic*}), ref=LLC\text{\arabic*}]
    \item\label{LLC:LCT} If $G$ is torus, $\mathrm{LL}_{G}$ is given by class field theory as in \cite[\S9]{Borel:Corvallis}.

    \item\label{LLC:GL} If $G=\GL_{N}$ when $F$ is non-Archimedean, $\mathrm{LL}_G$ is the correspondence constructed by Harris-Taylor and Henniart \cite{HT:LLC,Henniart:preuve}. 

    \item \label{LLC:real} When $F$ is Archimedean $\mathrm{LL}_G$ is the correspondence constructed by Langlands \cite{Langlands:ArchLLC}.

\item\label{LLC:temp} One has that $\mathrm{LL}_G^{-1}(\Phi_t(G))=\Temp(G).$
    \item\label{LLC:UT} 
    The map $\mathrm{LL}_G$ is $\mathfrak{a}_{G\CC}^*$-equivariant.   
    \item\label{LLC:Cg} If $\rho:{}^LG\to\GL_{V_\rho}(\mathbb{C})$ is a representation, then $\rho\circ \mathrm{LL}_G(\pi^\vee)\cong (\rho\circ\mathrm{LL}_G(\pi))^\vee$.  
    \item\label{LLC:para} Let $(M,\sigma)\in\widetilde{\Temp}_\Ind(G)$ and $\pi$ an irreducible subquotient of $I_P^G(\sigma)$. Then 
    $$\mathrm{LL}_G(\pi) = ({}^LM\to {}^LG)\circ \mathrm{LL}_M(\sigma).
    $$
    
\end{enumerate}

\begin{rem}

\begin{enumerate}
    \item In the Archimedean setting the local Langlands correspondence constructed by Langlands \cite{Langlands:ArchLLC} satisfies all the conditions above; see \cite[\S 11]{Borel:Corvallis} for a survey.
    
    \item Due to the work of Fargues and Scholze \cite[Theorem I.9.6]{FS}, one has a map $\mathrm{LL}_G$ satisfying \eqref{LLC:LCT}, \eqref{LLC:GL}, \eqref{LLC:UT} and \eqref{LLC:Cg} when $F$ is non-Archimedean.  
    \item If $G=\GL_n$ then the local Langlands correspondence constructed by Harris-Taylor and Henniart satisfies \eqref{LLC:LCT} (when $G=\GL_1$), \eqref{LLC:GL}, \eqref{LLC:temp}, \eqref{LLC:UT}, \eqref{LLC:Cg} and \eqref{LLC:para}.
    \item If one is only interested in constructing Schwartz spaces for a particular group $G,$ then our argument only requires
a local Langlands correspondence satisfying the conditions  \eqref{LLC:LCT}, \eqref{LLC:GL}, \eqref{LLC:real}, \eqref{LLC:temp}, \eqref{LLC:UT}, \eqref{LLC:Cg} and \eqref{LLC:para}
for all Levi subgroups of $G.$ 
\end{enumerate}
\end{rem}

\noindent We henceforth assume
\eqref{LLC:LCT}, \eqref{LLC:GL}, \eqref{LLC:real}, \eqref{LLC:temp}, \eqref{LLC:UT}, \eqref{LLC:Cg} and \eqref{LLC:para}.

The local Langlands correspondence for $\GL_n$ in \eqref{LLC:GL} 
tells us that $L$-functions, $\varepsilon$-factors, and $\gamma$-factors defined either from the automorphic or Galois theoretic perspective agree \cite[\S 12.4]{Getz:Hahn}.  In loc.~cit.~this is stated in terms of Rankin-Selberg $L$-functions, which agree with Godement-Jacquet $L$-functions when both are defined; see \cite[Theorem 5.1]{Jacquet:GLn:Corvallis} for the Archimedean case and \cite[\S 5]{JPSS:Conv} for the non-Archimedean case.

We say that a representation
\begin{align*}
    \rho:{}^LG\lto \GL_{V_\rho}(\mathbb{C})
\end{align*}
is \textbf{tempered} if the image of $W_F$ is bounded.  We point out that this implies that $\rho$ is semisimple.
The local factors attached to a tempered representation $\rho$ are defined as follows. Fix a nontrivial character $\psi:F\to\mathbb{C}^\times$. Consider the sequence of maps
\[\begin{tikzcd}
    \mathrm{Temp}(G)\arrow[r,"\mathrm{LL}_G"]&\Phi_t(G)\arrow[r,"\rho "]&\Phi_t(\GL_n).
\end{tikzcd}\]
For $\pi\in\mathrm{Temp}(G)$, put
\begin{align} \label{temp:factors} \begin{split}
    L(s,\pi,\rho) &:= L(s,\rho\circ\mathrm{LL}_G(\pi)),\\
    \varepsilon(s,\pi,\rho,\psi) &:= \varepsilon(s,\rho\circ\mathrm{LL}_G(\pi),\psi),\\
    \gamma(s,\pi,\rho,\psi) &:= \gamma(s,\rho\circ\mathrm{LL}_G(\pi),\psi).\end{split}
\end{align}
\quash{
\item\label{LLC:discrete} Let $(M,\sigma) \in \widetilde{\mathrm{Temp}}_{\ind}(G),$ $P \in \mathcal{P}(M),$ and assume that $\pi \in \Pi_2(G)$ is a subquotient of $I_P^G(\sigma_{\lambda})$ where $\lambda \in \mathfrak{a}_M^*.$  Then 
    $\mathrm{LL}(\pi)(w,g)=\mathrm{LL}(\sigma)(w)\psi_{\sigma,\lambda}(g),$ where $\psi_{\sigma,\lambda}:W_F \times \SL_2(\CC) \lto {}^LG$ is defined as in \cite[\S 6.2]{HeiermanN:Unip}.
    }

We require the following compatibility assumption:
\begin{enumerate}[label=(LLC\text{\arabic*}), ref=LLC\text{\arabic*}]\setcounter{enumi}{7}
\item\label{LLC:gamma:compat}
Assume $F$ is non-Archimedean.  Let $M\in \mathcal{M}$, $P\in\mathcal{P}(M)$, and let $\sigma$ be a unitary supercuspidal representation of $M(F)$ and $\lambda_0\in \mathfrak{a}^*_{M}$. If $\pi\in\Pi_2(G)$ is a quotient of $I_P^G(\sigma_{\lambda_0})$, then the meromorphic function $\gamma(\tfrac{1}{2},\sigma_\lambda,\rho\vert_{^LM})$ is analytic at $\lambda=\lambda_0$ and
\begin{align*}
    \gamma(\tfrac{1}{2},\pi,\rho,\psi) = \gamma(\tfrac{1}{2},\sigma_{\lambda},\rho\vert_{^LM},\psi)|_{\lambda=\lambda_0}.
\end{align*}
\end{enumerate}

In the following we show that $\GL_n$ satisfies \eqref{LLC:gamma:compat}. Let $a \in \ZZ_{\geq 0}$ and let $\phi:W_F \to \GL_{n/a}(\CC)$ be an irreducible representation.  
Let
\begin{align} \label{Delta} \begin{split}
    \Delta:W_F &\lto W_F \times \SL_2(\CC)\\
    w &\longmapsto \left(w,\begin{psmatrix} |w|^{1/2} & \\ & |w|^{-1/2}\end{psmatrix}\right). \end{split}
\end{align}
Then we have $L$-parameters
\begin{align*}
\phi \otimes \mathrm{Sym}^{a-1}:W_F \times \SL_2(\CC) \lto \GL_n(\CC),\\
\phi \circ \mathrm{Sym}^{a-1} \circ \Delta=\bigoplus_{i=0}^{a-1} \phi |\cdot|^{(a-1)/2-i}:W_F \lto \GL_n(\CC).
\end{align*}
We extend the second homomorphism trivially to $\SL_2(\CC)$ to regard it as an $L$-parameter.

\begin{lem} \label{lem:L:param:discrete}
Assume $\phi$ is tempered.  Then the function $ \gamma\left(\tfrac{1}{2},\bigoplus_{i=0}^{a-1} \phi |\cdot|^{\lambda_i+(a-1)/2-i} ,\psi\right)$
is holomorphic in a neighborhood of $\lambda=0$ and 
$$
\gamma(\tfrac{1}{2},\phi \otimes \mathrm{Sym}^{a-1},\psi)=\gamma\left(\tfrac{1}{2},\bigoplus_{i=0}^{a-1} \phi |\cdot|^{(a-1)/2-i} ,\psi\right).
$$
\end{lem}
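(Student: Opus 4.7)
The plan is to use the additivity of $\gamma$-factors on direct sums of Weil group representations to reduce the claim to a product of $\gamma$-factors of twists of $\phi$, and then to invoke the Weil--Deligne description of $\phi \otimes \mathrm{Sym}^{a-1}$ together with Deligne's monodromy correction to the $\varepsilon$-factor.

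By multiplicativity of $\gamma$-factors on direct sums of $W_F$-representations,
\[
\gamma\left(\tfrac{1}{2}, \bigoplus_{i=0}^{a-1} \phi |\cdot|^{\lambda_i + (a-1)/2 - i}, \psi\right) = \prod_{i=0}^{a-1} \gamma(s_i, \phi, \psi),
\]
where $s_i := \tfrac{1}{2} + \lambda_i + (a-1)/2 - i$. Since $\phi$ is irreducible tempered, the factor $L(s,\phi)$ is identically $1$ whenever $\phi$ is ramified (which is forced as soon as $\dim\phi \ge 2$, since an irreducible unramified $W_F$-representation must be one-dimensional), and of the form $(1 - \alpha q^{-s})^{-1}$ with $|\alpha|=1$ when $\phi$ is a unitary unramified character. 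In the first case each factor in the product is entire, and the holomorphy claim is immediate. In the second case, the key structural identity is $s_i - s_{i+1} = 1$ at $\lambda=0$, which pairs any potential pole of $\gamma(s_i, \phi, \psi)$ on the line $\mathrm{Re}(s)=1$ with a compensating zero of $\gamma(s_{i+1}, \phi, \psi)$ on the line $\mathrm{Re}(s)=0$; the resulting telescoping cancellations give holomorphy of the product in a neighborhood of $\lambda=0$.

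For the equality at $\lambda=0$, I would write each $\gamma$-factor as $\varepsilon(s)\,L(1-s,V^\vee)/L(s,V)$ and compare the two sides. Since $\phi\otimes\mathrm{Sym}^{a-1}$ has Frobenius-semisimple underlying $W_F$-representation $\bigoplus_i \phi|\cdot|^{(a-1)/2-i}$ (via the map $\Delta$ of \eqref{Delta}), the $\varepsilon$-factors of the two parameters agree up to Deligne's monodromy correction $\det(-\mathrm{Frob}|_{V^{I_F}/V^{I_F, N=0}})$ attached to the nilpotent $N$ of $\mathrm{Sym}^{a-1}$. The single $L$-factor on the Speh side differs from the product of $L$-factors on the direct-sum side by an explicit telescoping ratio which, evaluated at $s=\tfrac{1}{2}$, I would check cancels exactly against the Deligne correction, yielding the stated identity.

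The main obstacle is verifying that the pole/zero cancellation in the unramified character case genuinely extends holomorphically in a full complex neighborhood of $\lambda=0\in\mathbb{C}^a$ (not merely along one-parameter families such as the diagonal $\lambda_i=\lambda$), and carefully tracking the $q$-power conventions in Deligne's monodromy $\varepsilon$-correction so that they match up precisely with the telescoping $L$-factor ratio.
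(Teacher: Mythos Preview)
Your proposal is correct and follows essentially the same route as the paper. The paper invokes \cite[(11)]{Gross:Reeder} (which packages Deligne's monodromy correction to the $\varepsilon$-factor) to write $\gamma(\tfrac{1}{2},\phi\otimes\mathrm{Sym}^{a-1},\psi)$ explicitly, expands the direct-sum side as a product $\prod_i \varepsilon(\tfrac{1}{2},\phi,\psi)\,\det(I-q^{-a/2+i}\phi^\vee(\mathrm{Fr})|V^{\mathcal{I}})^{-1}/\det(I-q^{-a/2+i}\phi(\mathrm{Fr})|V^{\mathcal{I}})^{-1}$, and then telescopes via the substitution $i\mapsto a-i$ in the numerator to produce exactly the correction factor $\det(-\phi(\mathrm{Fr})|V^{\mathcal{I}})^{a-1}$. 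This is your outline, executed without the ramified/unramified case split (the formulas handle both uniformly, since in the ramified case $V^{\mathcal{I}}=0$ and all determinants are $1$). The paper does not separately argue the holomorphy claim; it is implicit in the telescoped closed form, which is visibly regular near $\lambda=0$, so your ``main obstacle'' dissolves once the computation is written out.
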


\begin{proof} 
Let $\mathcal{I} <W_F$ be the inertial group and let $\mathrm{Fr} \in W_F$ be a Frobenius element.  Then using \cite[(11)]{Gross:Reeder},
\begin{align*}
\gamma\left(\tfrac{1}{2},\phi \otimes \mathrm{Sym}^{a-1},\psi \right)=\frac{\varepsilon(\tfrac{1}{2},\phi,\psi)^a\det (-\phi(\mathrm{Fr})| (\CC^{n/a})^{\mathcal{I}})^{a-1}
\det(I-q^{-a/2}\phi^\vee(\mathrm{Fr})| (\CC^{n/a})^{\mathcal{I}})^{-1}
}{\det(I-q^{-a/2}\phi(\mathrm{Fr})| (\CC^{n/a})^{\mathcal{I}})^{-1}}.
\end{align*}
On the other hand 
\begin{align*}
&\gamma\left(\tfrac{1}{2},\bigoplus_{i=0}^{a-1} \phi |\cdot|^{(a-1)/2+i},\psi\right)\\
&=\prod_{i=0}^{a-1}\frac{\varepsilon\left(\tfrac{1}{2},\phi,\psi\right) \det(I-q^{-a/2+i}\phi^\vee(\mathrm{Fr})|(\CC^{n/a})^{\mathcal{I}})^{-1}}{\det(I-q^{-a/2+i}\phi(\mathrm{Fr})|(\CC^{n/a})^{\mathcal{I}})^{-1}}\\
&=\varepsilon\left(\tfrac{1}{2},\phi,\psi \right)^a\frac{\det(I-q^{-a/2}\phi^\vee(\mathrm{Fr})|(\CC^{n/a})^{\mathcal{I}})^{-1}}{\det(I-q^{-a/2}\phi(\mathrm{Fr})|(\CC^{n/a})^{\mathcal{I}})^{-1}} 
\frac{\prod_{i=1}^{a-1}\det(I-q^{-a/2+i}\phi^\vee(\mathrm{Fr})|(\CC^{n/a})^{\mathcal{I}})^{-1}}{\prod_{i=1}^{a-1}\det(I-q^{-a/2+i}\phi(\mathrm{Fr})|(\CC^{n/a})^{\mathcal{I}})^{-1}}.
\end{align*}
To complete the proof we observe that 
\begin{align*} 
&\frac{\prod_{i=1}^{a-1}\det(I-q^{-a/2+i}\phi^\vee(\mathrm{Fr})|(\CC^{n/a})^{\mathcal{I}})^{-1}}{\prod_{i=1}^{a-1}\det(I-q^{-a/2+i}\phi(\mathrm{Fr})|(\CC^{n/a})^{\mathcal{I}})^{-1}}\\&=\frac{\prod_{i=1}^{a-1}\det(-q^{a/2-i}\phi(\mathrm{Fr})|(\CC^{n/a})^{\mathcal{I}})\det(I-q^{a/2-i}\phi(\mathrm{Fr})|(\CC^{n/a})^{\mathcal{I}})^{-1}}{\prod_{i=1}^{a-1}\det(I-q^{-a/2+i}\phi(\mathrm{Fr})|(\CC^{n/a})^{\mathcal{I}})^{-1}}\\
&=\det (-\phi(\mathrm{Fr})|(\CC^{n/a})^{\mathcal{I}})^{a-1}
\end{align*}
Here we have changed variables $i \mapsto a-i$ in the numerator.
\quash{
\begin{align*}
\frac{\zeta(\tfrac{1}{2}-\lambda+(a-1)/2)}{\zeta(\tfrac{1}{2}+\lambda+(a-1)/2)} \frac{\prod_{i=1}^{a-1}q^{-(1/2+\lambda+(a-1)/2-i)}\left(q^{1/2+\lambda+(a-1)/2-i)}-1\right)}{\prod_{i=1}^{a-1}\left(1-q^{-1/2+\lambda-(a-1)/2+i}\right)}.
\end{align*}
Changing variables $i \mapsto a-i$ in the numerator this is
\begin{align*}
    &\frac{\zeta(\tfrac{1}{2}-\lambda+(a-1)/2)}{\zeta(\tfrac{1}{2}+\lambda+(a-1)/2)} \frac{\prod_{i=1}^{a-1}(-q^{-(1/2+\lambda+(a-1)/2-(a-i)})\left(1-q^{1/2+\lambda+(a-1)/2-(a-i))}\right)}{\prod_{i=1}^{a-1}\left(1-q^{-1/2+\lambda-(a-1)/2+i}\right)}\\&=\frac{\zeta(\tfrac{1}{2}-\lambda+(a-1)/2)}{\zeta(\tfrac{1}{2}+\lambda+(a-1)/2)} (-q^{-\lambda})^{a-1}.
\end{align*}

$$
L(s,\phi \otimes \mathrm{Sym}^{a-1})=L(s+(a-1)/2,\phi)=1=\prod_{i=0}^{a-1}L(s+(a-1)/2-i,\phi).
$$
On the other hand
\begin{align*} 
    \varepsilon\left( \tfrac{1}{2},\phi \otimes \mathrm{Sym}^{a-1},\psi\right)&=\varepsilon(\tfrac{1}{2},\phi,\psi)^{a}
    =\prod_{i=0}^{a-1}\varepsilon(\tfrac{1}{2},\phi,\psi)=\prod_{i=0}^{a-1}\varepsilon\left( \tfrac{1}{2}+(a-1)/2-i,\phi ,\psi\right)
\end{align*}
\cite[(11)]{Gross:Reeder}. Thus we deduce the lemma in this case.

Now assume that $\phi$ is unramified.  The quotient $W_F/\mathcal{I}_F$ of $W_F$ by the inertial subgroup $\mathcal{I}_F$ is abelian, so we deduce that $n=a$ and $\phi$ is an unramified character, say $\phi=|\cdot|^\lambda$.    Then using \cite[(11)]{Gross:Reeder} again one has that
\begin{align*}
    \gamma\left(\tfrac{1}{2},\phi\otimes \mathrm{Sym}^{a-1},\psi \right)=\frac{\varepsilon(\tfrac{1}{2},\phi \otimes \mathrm{Sym}^{a-1},\psi)\zeta(\tfrac{1}{2}-\lambda+(a-1)/2)}{\zeta(\tfrac{1}{2}+\lambda+(a-1)/2)}=\frac{(-q^{-\lambda})^{a-1}\zeta(\tfrac{1}{2}-\lambda+(a-1)/2)}{\zeta(\tfrac{1}{2}+\lambda+(a-1)/2)}.
\end{align*}
On the other hand, one has that
\begin{align*}
\gamma\left(\tfrac{1}{2},\bigoplus_{i=0}^{a-1} \phi |\cdot|^{(a-1)/2-i},\psi\right)&=\prod_{i=0}^{a-1}\frac{\zeta(\tfrac{1}{2}-\lambda+(a-1)/2-i)}{\zeta(\tfrac{1}{2}+\lambda+(a-1)/2-i)}\\
&=\frac{\zeta(\tfrac{1}{2}-\lambda+(a-1)/2)}{\zeta(\tfrac{1}{2}+\lambda+(a-1)/2)} \frac{\prod_{i=1}^{a-1}\left(1-q^{-(1/2+\lambda+(a-1)/2-i)}\right)}{\prod_{i=1}^{a-1}\left(1-q^{-(1/2-\lambda+(a-1)/2-i)}\right)}\\
&=\frac{\zeta(\tfrac{1}{2}-\lambda+(a-1)/2)}{\zeta(\tfrac{1}{2}+\lambda+(a-1)/2)} \frac{\prod_{i=1}^{a-1}q^{-(1/2+\lambda+(a-1)/2-i)}\left(q^{1/2+\lambda+(a-1)/2-i)}-1\right)}{\prod_{i=1}^{a-1}\left(1-q^{-1/2+\lambda-(a-1)/2+i}\right)}.
\end{align*}
Changing variables $i \mapsto a-i$ in the numerator this is
\begin{align*}
    &\frac{\zeta(\tfrac{1}{2}-\lambda+(a-1)/2)}{\zeta(\tfrac{1}{2}+\lambda+(a-1)/2)} \frac{\prod_{i=1}^{a-1}(-q^{-(1/2+\lambda+(a-1)/2-(a-i)})\left(1-q^{1/2+\lambda+(a-1)/2-(a-i))}\right)}{\prod_{i=1}^{a-1}\left(1-q^{-1/2+\lambda-(a-1)/2+i}\right)}\\&=\frac{\zeta(\tfrac{1}{2}-\lambda+(a-1)/2)}{\zeta(\tfrac{1}{2}+\lambda+(a-1)/2)} (-q^{-\lambda})^{a-1}.
\end{align*}}
\end{proof}

\begin{prop} \label{prop:GLn} 
If $G=\GL_n$  then  \eqref{LLC:gamma:compat} is true.
\end{prop}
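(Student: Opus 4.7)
The plan is to reduce the assertion to Lemma~\ref{lem:L:param:discrete} via the Bernstein-Zelevinsky classification of discrete series of $\GL_n$, combined with the compatibility of the local Langlands correspondence for $\GL_n$ with parabolic induction. By the Bernstein-Zelevinsky classification, the hypothesis that $\pi \in \Pi_2(\GL_n(F))$ is a quotient of $I_P^G(\sigma_{\lambda_0})$ with $\sigma$ unitary supercuspidal forces the existence of a divisor $a$ of $n$ and a unitary supercuspidal representation $\sigma_0$ of $\GL_{n/a}(F)$ such that $M = \GL_{n/a}^a$, $\sigma \cong \sigma_0^{\boxtimes a}$, and $\lambda_0 = (\tfrac{a-1}{2},\tfrac{a-3}{2},\ldots,-\tfrac{a-1}{2}) \in \RR^a \cong \mathfrak{a}_M^*$. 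Setting $\phi := \mathrm{LL}_{\GL_{n/a}}(\sigma_0)$, an irreducible tempered representation of $W_F$, properties \eqref{LLC:GL}, \eqref{LLC:UT}, and \eqref{LLC:para} yield $\mathrm{LL}_G(\pi) = \phi\otimes\mathrm{Sym}^{a-1}$ and $({}^LM\to{}^LG)\circ \mathrm{LL}_M(\sigma_\lambda) = \bigoplus_{i=0}^{a-1}\phi|\cdot|^{\lambda_i}$. The tautological identity $\mathrm{Sym}^{a-1}\circ\Delta = \bigoplus_{i=0}^{a-1}|\cdot|^{(a-1)/2 - i}$ then gives the key relation
\begin{align*}
    \rho \circ \mathrm{LL}_G(\pi) \circ \Delta = \rho|_{{}^LM} \circ \mathrm{LL}_M(\sigma_{\lambda_0})
\end{align*}
of $L$-parameters $W_F \to \GL_{V_\rho}(\CC)$.

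Set $\tau := \rho \circ \mathrm{LL}_G(\pi): W_F \times \SL_2(\CC) \to \GL_{V_\rho}(\CC)$. Since both $\rho$ and $\mathrm{LL}_G(\pi)$ are tempered, $\tau$ is tempered as well, as the continuous image of a bounded set is bounded. Decompose $\tau$ into irreducibles of $W_F \times \SL_2(\CC)$ as
\begin{align*}
    \tau \cong \bigoplus_j \tau_j \otimes \mathrm{Sym}^{b_j - 1},
\end{align*}
where each $\tau_j$ is an irreducible representation of $W_F$; temperedness of $\tau$ forces each $\tau_j$ to be tempered. By multiplicativity of $\gamma$-factors in direct sums and Lemma~\ref{lem:L:param:discrete} applied summand-by-summand, we obtain
\begin{align*}
    \gamma(\tfrac{1}{2},\tau,\psi) &= \prod_j \gamma\bigl(\tfrac{1}{2},\tau_j\otimes\mathrm{Sym}^{b_j-1},\psi\bigr) \\
    &= \prod_j \gamma\Bigl(\tfrac{1}{2},\textstyle\bigoplus_{i=0}^{b_j-1}\tau_j|\cdot|^{(b_j-1)/2-i},\psi\Bigr) \\
    &= \gamma(\tfrac{1}{2},\tau\circ\Delta,\psi),
\end{align*}
which together with the key relation gives $\gamma(\tfrac{1}{2},\pi,\rho,\psi) = \gamma(\tfrac{1}{2},\sigma_{\lambda_0},\rho|_{{}^LM},\psi)$.

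For holomorphy of $\gamma(\tfrac{1}{2},\sigma_\lambda,\rho|_{{}^LM},\psi)$ at $\lambda = \lambda_0$, one decomposes $\rho|_{\widehat{M}}$ into eigenspaces under the central torus $Z(\widehat{M})$ and pairs this decomposition with the $\SL_2$-decomposition of $\tau$ to factor the $\gamma$-factor, for $\lambda = \lambda_0 + \nu$ with $\nu$ small, as a product of terms of the form $\gamma\bigl(\tfrac{1}{2},\bigoplus_{i=0}^{b_j-1}\tau_j|\cdot|^{(b_j-1)/2 - i + \mu_{j,i}(\nu)},\psi\bigr)$ where the $\mu_{j,i}$ are linear functionals of $\nu$. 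Each such factor is holomorphic at $\nu = 0$ by the analyticity assertion in Lemma~\ref{lem:L:param:discrete}, yielding the required holomorphy. The main technical obstacle I anticipate is bookkeeping this deformation structure precisely: the $\SL_2$-decomposition of $\tau$ and the parabolic structure encoded by $\lambda_0$ must be aligned carefully so that the family of deformations matches the shape to which Lemma~\ref{lem:L:param:discrete} directly applies.
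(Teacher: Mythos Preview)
Your argument is correct and follows essentially the same route as the paper: reduce via Bernstein--Zelevinsky to $M=\GL_{n/a}^a$, $\sigma=\sigma_0^{\boxtimes a}$, with $\lambda_0$ the standard segment exponent; identify $\mathrm{LL}_G(\pi)=\phi\otimes\mathrm{Sym}^{a-1}$; decompose $\tau=\rho\circ\mathrm{LL}_G(\pi)$ as $\bigoplus_j\tau_j\otimes\mathrm{Sym}^{b_j-1}$; and apply Lemma~\ref{lem:L:param:discrete} to each summand. The paper's $\phi_i,a_i$ are your $\tau_j,b_j$, and your ``key relation'' $\rho\circ\mathrm{LL}_G(\pi)\circ\Delta=\rho|_{{}^LM}\circ\mathrm{LL}_M(\sigma_{\lambda_0})$ is exactly the observation the paper records just before concluding.

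One small correction: the identification $\mathrm{LL}_G(\pi)=\phi\otimes\mathrm{Sym}^{a-1}$ does not follow from \eqref{LLC:para}, which concerns only tempered inducing data (here $\sigma_{\lambda_0}$ is a genuinely non-unitary twist). It is rather a known feature of the Harris--Taylor/Henniart correspondence fixed by \eqref{LLC:GL}, and the paper obtains it by citing \cite[\S10]{Zelevinsky:IndII} directly. Your acknowledged concern about aligning the $Z(\widehat{M})$-weight decomposition with the $\SL_2$-isotypic decomposition for the analyticity claim is legitimate---indeed $Z(\widehat{M})$ does not commute with the image of $\SL_2$ in $\GL_n$, so the $W_j$ need not be $Z(\widehat{M})$-stable---but the paper is equally terse on this point, simply asserting that the proposition follows from the lemma.
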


\begin{proof}
Let $a|n$ and let $M$ be the semi-standard Levi subgroup isomorphic to $\GL_{n/a}^a.$  Let $\sigma$ be a unitary supercuspidal representation of $\GL_{n/a}(F).$  Then $\sigma^{\otimes a}$ is a unitary supercuspidal representation of $\GL_{n/a}^a(F).$  Let $\lambda_a \in \mathfrak{a}_{M}^*$ be the point such that 
$$
e^{\left\langle\lambda_a , H_M\begin{psmatrix} m_1 & & \\ & \ddots & \\ & & m_a \end{psmatrix}\right\rangle}=\prod_{i=1}^a|\det m_i|^{i-1-(a-1)/2}.
$$
Then the induced representation $I_P^{\GL_n}(\sigma^{\otimes a},\lambda_a)$ has a unique irreducible quotient $Q(\sigma^{\otimes a},\lambda_a)$ which is a discrete series representation.  Moreover all unitary discrete series representations of $\GL_n(F)$ are isomorphic to $Q(\sigma^{\otimes a},\lambda_a)$ for some $\sigma$ and $a|n.$  One can consult \cite[\S 8.4]{Getz:Hahn} for references for these facts.

Let $\phi:W_F \to \GL_{n/a}(\CC)$ be the $L$-parameter of $\sigma.$  Then the $L$-parameter of $Q(\sigma^{\otimes a},\lambda_a)$ is $\phi \otimes \mathrm{Sym}^{a-1}$ \cite[\S 10]{Zelevinsky:IndII}.  Therefore
\begin{align} \label{decomp}
\rho(\phi \otimes \mathrm{Sym}^{a-1})=\bigoplus_{i=1}^k \phi_i \otimes \mathrm{Sym}^{a_i-1}
\end{align}
for some representations $\phi_i$ of $W_F$ and $a_i \in \ZZ_{\geq 1}.$

Observe that
$$
\phi^{\oplus a}_{\lambda_a}=\phi \circ \Delta:W_F \lto {}^LM
$$
is the $L$-parameter of $\sigma^{\otimes a}_{\lambda_a}.$
Moreover \eqref{decomp} implies that 
\begin{align}
    \rho|_{{}^LM}(\phi^{\oplus a}_{\lambda_a})=\bigoplus_{i=1}^k \bigoplus_{j=0}^{a_i-1}\phi_i \otimes |\cdot|^{(a_i-1)/2-j}.
\end{align}
Thus the proposition follows from Lemma \ref{lem:L:param:discrete}.  
\end{proof}

\begin{prop} \label{prop:gamma:compat:temp} Assume \eqref{LLC:gamma:compat}. 
Let $M\in \mathcal{M}$ and $P\in\mathcal{P}(M).$  Let $\sigma$ be a unitary supercuspidal representation of $M(F)$ and let $\lambda_0\in \mathfrak{a}^*_{M}$. If $\pi\in \mathrm{Temp}(G)$ is a subquotient of $I_P^G(\sigma_{\lambda_0})$, then
\begin{align*}
    \gamma(\tfrac{1}{2},\pi,\rho,\psi) = \gamma(\tfrac{1}{2},\sigma_\lambda,\rho\vert_{^LM},\psi)|_{\lambda=\lambda_0}.
\end{align*}
In particular, the meromorphic function $\gamma(\tfrac{1}{2},\sigma_\lambda,\rho\vert_{^LM},\psi)$ is analytic at $\lambda=\lambda_0.$
\end{prop}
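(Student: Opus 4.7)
The strategy is to reduce the tempered case to the discrete-series case covered by \eqref{LLC:gamma:compat}, by passing through the Langlands classification of tempered representations. Since $\pi\in\mathrm{Temp}(G)$, the map \eqref{Temp->TempInd} produces $(M_1,\tau)\in\widetilde{\mathrm{Temp}}_{\mathrm{Ind}}(G)$ with $\tau\in\Pi_2(M_1)$, and a parabolic $P_1\in\mathcal{P}(M_1)$ such that $\pi\hookrightarrow I_{P_1}^G(\tau)$. Comparing cuspidal supports computed from the two realizations $\pi\subseteq I_P^G(\sigma_{\lambda_0})$ and $\pi\subseteq I_{P_1}^G(\tau)$, after conjugating by an element of $G(F)$ we may arrange that $M\subseteq M_1$ and that $\tau$ is a subquotient of $I_{P\cap M_1}^{M_1}(\sigma_{\lambda_0})$.

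Applying \eqref{LLC:para} to $\pi\hookrightarrow I_{P_1}^G(\tau)$ yields $\mathrm{LL}_G(\pi)=({}^LM_1\to{}^LG)\circ\mathrm{LL}_{M_1}(\tau)$; composing with $\rho$ gives an isomorphism of $W_F'$-representations, so
\begin{align*}
\gamma\bigl(\tfrac{1}{2},\pi,\rho,\psi\bigr)=\gamma\bigl(\tfrac{1}{2},\tau,\rho|_{{}^LM_1},\psi\bigr).
\end{align*}
By the Langlands classification for discrete series, applicable in both the Archimedean and non-Archimedean settings, the discrete series $\tau$ of $M_1$ is realized as the unique irreducible quotient of a standard module $I_{P_0}^{M_1}(\sigma_{\lambda_0'})$ for some $P_0\in\mathcal{P}(M)$ contained in $M_1$ and some $\lambda_0'$ in the $W(M_1,M)$-orbit of $\lambda_0$. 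Applying \eqref{LLC:gamma:compat} with $M_1$ in place of $G$, $\rho|_{{}^LM_1}$ in place of $\rho$, and inducing data $(\sigma,\lambda_0')$, I obtain that the meromorphic function $\gamma(\tfrac{1}{2},\sigma_\lambda,\rho|_{{}^LM},\psi)$ is analytic at $\lambda=\lambda_0'$ and
\begin{align*}
\gamma\bigl(\tfrac{1}{2},\tau,\rho|_{{}^LM_1},\psi\bigr)=\gamma\bigl(\tfrac{1}{2},\sigma_\lambda,\rho|_{{}^LM},\psi\bigr)\bigl|_{\lambda=\lambda_0'}.
\end{align*}

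It remains to replace $\lambda_0'$ by $\lambda_0$. For this I would verify that the meromorphic function $\gamma(\tfrac{1}{2},\sigma_\lambda,\rho|_{{}^LM},\psi)$ is invariant under the action of $W(M_1,M)$ on $\lambda$. The point is that conjugation in ${}^LM_1$ by any lift of $w\in W(M_1,M)$ sends the $L$-parameter of $\sigma_\lambda$, embedded in ${}^LM_1$ through the map ${}^LM\to{}^LM_1$, to the embedded $L$-parameter of $\sigma_{w\lambda}$; composing further with $\rho|_{{}^LM_1}$ produces isomorphic $W_F'$-representations, hence identical $\gamma$-factors. Thus the value and the analyticity at $\lambda_0'$ transfer to $\lambda_0$, completing the proof.

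The main obstacle is the simultaneous handling of two technical inputs: the realization of a discrete series of $M_1$ as the Langlands quotient of a standard module from a supercuspidal datum in both the Archimedean and non-Archimedean settings, and the careful tracking of the $W(M_1,M)$-ambiguity inherent in Bernstein's theory of cuspidal support, which forces the appearance of a Weyl-translated parameter $\lambda_0'$ in the application of \eqref{LLC:gamma:compat}. The remainder is a bookkeeping exercise applying \eqref{LLC:para}, \eqref{LLC:UT}, and \eqref{LLC:gamma:compat}.
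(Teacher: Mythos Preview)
Your approach is essentially the paper's: both pass through the tempered classification to a discrete series $\tau$ of a Levi $M_1$, use \eqref{LLC:para} to transfer the $\gamma$-factor to $\tau$, realize $\tau$ from a supercuspidal datum, and invoke \eqref{LLC:gamma:compat}. Two points where the paper's execution is cleaner. First, rather than conjugating upfront to force $M\subseteq M_1$ and tracking a Weyl-translated $\lambda_0'$, the paper takes fresh cuspidal data $(M'',\sigma''_{\lambda''})$ for $\tau$ (Renard, Corollaire~VI.2.1), applies \eqref{LLC:gamma:compat} there---using contragredients and \eqref{LLC:Cg} to pass from subrepresentation to quotient---and only at the end invokes uniqueness of cuspidal support up to association (Renard, Th\'eor\`eme~VI.5.4) to match $(M'',\sigma''_{\lambda''})$ with $(M,\sigma_{\lambda_0})$. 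Second, your Weyl-invariance step is slightly misstated: conjugation by a lift of $w\in W(M_1,M)$ carries the embedded parameter of $\sigma_\lambda$ to that of $({}^w\sigma)_{w\lambda}$, not $\sigma_{w\lambda}$. What you actually need, and what the associate-pair argument gives directly, is that the two embedded parameters are $\widehat{G}(\CC)$-conjugate, which suffices for equality of $\gamma$-factors after composing with $\rho$. Finally, \eqref{LLC:gamma:compat} is only stated for non-Archimedean $F$, so the Archimedean remark is moot.
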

\begin{proof}Let $(M',\sigma')\in\widetilde{\Temp}_\Ind(G)$ be chosen so that $\pi$ is a subrepresentation of $I_{P'}^G(\sigma')$. Then $\gamma(s,\pi,\rho) = \gamma(s,\sigma',\rho\vert_{^LM'})$. By \cite[Corollaire VI.2.1]{Renard} we can find a semi-standard Levi subgroup $M''$ of $G$ contained in $M'$, $P''\in \mathcal{P}(M'')$, a unitary supercuspidal representation $\sigma''$ of $M''(F),$ and $\lambda\in \mathfrak{a}^*_{M''\CC}$ such that $\sigma'$ is a subrepresentation of $\Ind_{P''\cap M'}^{M'}(\sigma''_{\lambda''})$. Applying \eqref{LLC:gamma:compat} and taking duals to switch quotients and subrepresentations, we have $\gamma(s,\sigma',\rho\vert_{^LM'}) = \gamma(s,\sigma'',\rho\vert_{^LM''})$. By transitivity of induction, $\pi$ is a subrepresentation of $I_{P''}^G(\sigma'_{\lambda''})$.  This implies that the pairs $(M'',\sigma''_{\lambda''})$ and $(M,\sigma_{\lambda_0})$ are associate \cite[Th\'eor\`eme VI.5.4]{Renard}.  We deduce that $\gamma(\tfrac{1}{2},\sigma
_{\lambda''},\rho|_{{}^LM''},\psi)=\gamma(\tfrac{1}{2},\sigma_{\lambda_0},\rho|_{{}^LM},\psi).$ 
\end{proof}

For the rest of this section we record several consequences of the local Langlands correspondence for later use.

\subsection{Local factors as functions on $\Temp_\Ind(G)$}\label{LLC:gamma:invariant}
Let $(M,\sigma)\in\widetilde{\Temp}_\Ind(G)$ and let $\pi$ be an irreducible subquotient of $I_P^G(\sigma)$. By \eqref{LLC:para}
\begin{align*}
    L(s,\pi,\rho) = L(s,\sigma,\rho\vert_{^LM}) \quad\textrm{and}\quad\varepsilon(s,\pi,\rho,\psi)=\varepsilon(s,\sigma,\rho\vert_{^LM},\psi).
\end{align*}
Suppose that $(M,\sigma'),(M,\sigma) \in \widetilde{\Temp}_{\Ind}(G)$ map to the same point in $\Temp_\Ind(G)$. Then $I_P^G(\sigma)$ and $I_P^G(\sigma')$ have the same semisimplification. In particular, if $\pi$ is a common constituent, then
\begin{align*}
    L(s,\sigma',\rho\vert_{^LM}) = L(s,\pi,\rho) = L(s,\sigma,\rho\vert_{^LM}) 
\end{align*}
and similarly for the $\varepsilon$-factors.
This implies that 
\begin{align}\label{loc:factors}\begin{split}
    L_{\rho}(M,\sigma):&= L(\tfrac{1}{2},\sigma,\rho\vert_{^LM}),\\
    \varepsilon_{\rho,\psi}(M,\sigma):&= \varepsilon(\tfrac{1}{2},\sigma,\rho\vert_{^LM},\psi),\\
    \gamma_{\rho,\psi}(M,\sigma):&= \gamma(\tfrac{1}{2},\sigma,\rho\vert_{^LM},\psi)\end{split}
\end{align}
are well-defined functions on $\Temp_\Ind(G).$  We usually drop $\psi$ from notation for simplicity.  Writing $\pi=I_P^G(\sigma)$ with $P\in\mathcal{P}(M)$, we set
\begin{align*}
    L_\rho(\pi) := L_\rho(M,\sigma) = L(\tfrac{1}{2},\sigma,\rho\vert_{^LM}),
\end{align*}
and define the remaining local factors similarly.  By pullback along the map \eqref{Temp->TempInd}, we obtain functions
 $$
 L_{\rho},\varepsilon_{\rho},\gamma_{\rho}:\Temp(G) \lto \CC.
 $$
For tempered $\pi$ we have $L_{\rho}(\pi)=L(\tfrac{1}{2},\pi,\rho),$  $\varepsilon_{\rho}(\pi)=\varepsilon(\tfrac{1}{2},\pi,\rho,\psi),$ and $\gamma_\rho(\pi)=\gamma(\tfrac{1}{2},\pi,\rho,\psi).$

\begin{lem}\label{LLC:gamma:rationality:1} The composition
\begin{align*}
    \widetilde{\mathrm{Temp}}_{\mathrm{Ind}}(G)\lto \mathrm{Temp}_{\mathrm{Ind}}(G)\stackrel{\gamma_{\rho}}{\lto}\mathbb{C}
\end{align*}
is a smooth function and extends to a rational (resp. meromorphic) function on $\widetilde{\mathrm{Temp}}_{\mathrm{Ind}}(G)_\mathbb{C}$ when $F$ is non-Archimedean (resp. Archimedean). The same holds if $\gamma_{\rho}$ is replaced by $L_{\rho}$ or $\varepsilon_{\rho}$. In the case of $\varepsilon_{\rho}$, the extension is regular (resp. entire) when $F$ is non-Archimedean (resp. Archimedean). Moreover, the extensions of $L_{\rho}$ and $\gamma_{\rho}$ are holomorphic in an (analytic) open neighborhood of $\widetilde{\mathrm{Temp}}_{\mathrm{Ind}}(G)$. 

The inverse $L_\rho^{-1}:(M,\sigma)\mapsto L(\tfrac{1}{2},\sigma,\rho|_{{}^LM})^{-1}$ is well-defined and extends to a regular (resp. entire) function $\widetilde{\mathrm{Temp}}_{\mathrm{Ind}}(G)_\mathbb{C}\to\mathbb{C}$ when $F$ is non-Archimedean (resp. Archimedean). 
\end{lem}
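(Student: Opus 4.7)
The plan is to use the local Langlands correspondence to reduce the lemma to the behavior of local $L$-, $\varepsilon$-, and $\gamma$-factors of $W_F'$-parameters of $\GL_{V_\rho}$ under unramified twists. Fix $(M,\sigma)\in\widetilde{\Temp}_{\Ind}(G)$. By \eqref{LLC:UT} and \eqref{LLC:para}, the parameter of $\sigma_\lambda$ is $\mathrm{LL}_M(\sigma)\cdot|\cdot|^\lambda$, so $\rho\circ\mathrm{LL}_M(\sigma_\lambda)$ is obtained from the fixed $\phi_\sigma := \rho\circ\mathrm{LL}_M(\sigma)$ by multiplication by the unramified character $\rho\circ|\cdot|^\lambda$ of $W_F$ valued in $\rho(Z_{\widehat{M}}^\circ)\subseteq\GL_{V_\rho}(\CC)$. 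I will work one connected component of $\widetilde{\Temp}_\Ind(G)_\CC$ at a time and analyze $L_\rho, \varepsilon_\rho, \gamma_\rho$ as functions of $\lambda$ on the complex algebraic torus $\mathfrak{a}^*_{M\CC}/i\mathfrak{a}_M^\vee$ (resp.\ on $\mathfrak{a}^*_{M\CC}$ when $F$ is Archimedean). Smoothness on $\widetilde{\Temp}_\Ind(G)$ will be automatic from the smooth dependence of local factors on parameters.

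In the non-Archimedean case I will decompose $\phi_\sigma$ (after Frobenius semisimplification) into irreducible Weil-Deligne summands $\phi_\sigma = \bigoplus_i \phi_i\otimes\mathrm{Sym}^{a_i-1}$ on spaces $V_i$. Let $\mathcal{I}\leq W_F$ denote the inertia subgroup. Unramified twists preserve the action of $\mathcal{I}$ and commute with monodromy, so $V_i^{\mathcal{I}}$ and the monodromy kernels $\ker N_i\subseteq V_i^{\mathcal{I}}$ are constant along the orbit, and only the Frobenius eigenvalues on them are rescaled, by algebraic characters of the torus (which is genuinely algebraic by \eqref{alg:torus} and Lemma \ref{chi(g):polynomial}). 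Hence $L_\rho$ is a product of reciprocals of polynomials $\det(1 - q^{-\tfrac{1}{2}-(a_i-1)/2}\, \phi_i(\mathrm{Fr})\mid\ker N_i)$ in the torus coordinates, making $L_\rho$ rational and $L_\rho^{-1}$ regular. By the Gross-Reeder formula (cf.\ the proof of Lemma \ref{lem:L:param:discrete}), $\varepsilon_\rho$ is a Laurent monomial, hence regular; and $\gamma_\rho=\varepsilon_\rho\cdot L_\rho^{\vee}/L_\rho$ is rational. For holomorphy in a neighborhood of the compact real form, I will use that when $\sigma$ is tempered each $\phi_i$ is bounded, so by the Weil-Deligne monodromy filtration the Frobenius eigenvalues on $\ker N_i$ have absolute value $q^{-(a_i-1)/2}$; the above polynomials are then nonvanishing on the imaginary axis and stay so under small perturbation.

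The Archimedean case is analogous and easier: $L_\rho$ is a finite product of shifted $\Gamma_{\RR}$- or $\Gamma_{\CC}$-factors with shifts depending affinely on $\lambda$, giving a meromorphic extension on all of $\mathfrak{a}^*_{M\CC}$ (since $i\mathfrak{a}_M^\vee = 0$). As $\Gamma$ is nowhere vanishing, $L_\rho^{-1}$ is entire; $\varepsilon_\rho$ is an exponential in $\lambda$, hence entire and nowhere vanishing; and $\gamma_\rho$ is meromorphic. Holomorphy of $L_\rho$ and $\gamma_\rho$ near the tempered locus follows because the poles of $\Gamma_{\RR}(s+\mu)$ lie at $s+\mu\in\ZZ_{\leq 0}$, which stays away from $s = 1/2$ for shifts $\mu$ coming from a tempered $\sigma$. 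The one subtle point throughout will be checking that the Weil-Deligne decomposition, the inertial invariants, and the monodromy kernels behave uniformly across the inertial orbit; this is automatic since unramified twists act only on the Frobenius eigenvalues and leave the $\mathcal{I}$- and monodromy structure untouched.
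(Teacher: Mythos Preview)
Your proof is correct, but it takes a different route from the paper's. The paper first decomposes $\rho\vert_{{}^LM}$ into \emph{irreducible representations of ${}^LM$}, uses Schur's lemma to see that on each irreducible summand the central twist $\rho\circ|\cdot|^\lambda$ is the scalar $|\cdot|^{\mu(\lambda)}$ for a single linear functional $\mu$, and then identifies each local factor with the standard Godement--Jacquet factor of a fixed tempered $\GL_n$-representation evaluated at $\tfrac12+\mu(\lambda)$; everything is then a one-variable citation to Jacquet's Corvallis article. You instead decompose the composed parameter $\phi_\sigma=\rho\circ\mathrm{LL}_M(\sigma)$ into \emph{irreducible Weil--Deligne representations} and compute the $L$-, $\varepsilon$-, $\gamma$-factors explicitly via the Gross--Reeder formula, tracking how the twist $\rho(|\cdot|^\lambda)$ rescales Frobenius eigenvalues on each summand (again by Schur, since $|\cdot|^\lambda$ lands in the Galois-fixed part of $Z_{\widehat M}^\circ$ and hence $\rho(|\cdot|^\lambda)$ commutes with $\phi_\sigma$).

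Both decompositions linearize the twist in the same way, so the arguments converge; the paper's version is shorter because it offloads all analytic content to a single black-box citation, while yours is more self-contained and makes the explicit polynomial/monomial structure visible. One small notational slip: when you write ``$\ker N_i\subseteq V_i^{\mathcal I}$'' you are conflating the space of $\phi_i$ with that of $\phi_i\otimes\mathrm{Sym}^{a_i-1}$; the relevant space for the $L$-factor is $(\ker N)^{\mathcal I}\cong V_i^{\mathcal I}\otimes(\text{highest weight line})$, on which Frobenius acts by $q^{-(a_i-1)/2}\phi_i(\mathrm{Fr})$. This does not affect the argument.
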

\begin{proof}
Let $M$ be a semi-standard Levi subgroup of $G.$ 
Since $\widehat{M}(\mathbb{C})$ is reductive and $\rho$ is tempered, the restriction $\rho\vert_{{^L}\!M}:{}^LM\to\GL_n(\mathbb{C})$ is semisimple.  On the other hand, the functions $L_{\rho},L_{\rho}^{-1},\;\gamma_{\rho},$ and $\varepsilon_{\rho}$ are additive.  Indeed, this is obvious for $L_{\rho}$ and $L_{\rho}^{-1},$ it is stated in \cite[(3.4.2)]{Tate_NT} for $\varepsilon_{\rho}$, and then follows for $\gamma_{\rho}.$  

Combining the additivity observation above with \eqref{LLC:UT} reduces the assertions in the Lemma regarding $\gamma_{\rho}$ to studying the function
\begin{align*}
i\mathfrak{a}_M^*&\lto \CC\\
    \lambda &\longmapsto \gamma(\tfrac{1}{2},\sigma_{\lambda},\rho,\psi)
\end{align*}
for irreducible $\rho.$  Specifically, to prove the assertions in the lemma involving $\gamma,$ it is enough to show that this function is smooth and extends to a rational (resp.~meromorphic) function on $\mathfrak{a}_{G\CC}^*$ when $F$ is non-Archimedean (resp.~Archimedean).  

By irreducibility of $\rho$, the image of $\rho\circ |\cdot|^{\lambda}$ lies in the center $Z_{\GL_n}(\mathbb{C})=\mathbb{C}^\times$, so $\rho \circ |\cdot|^{\lambda}=|\cdot|^{\mu(\lambda)}$
where $\mu:\mathfrak{a}_{G\CC}^* \to \CC$ is a linear functional. By \eqref{LLC:GL} and \eqref{LLC:UT},
we deduce that 
\begin{align*}
    \gamma(\tfrac{1}{2},\sigma_{\lambda},\rho,\psi)=\gamma(\tfrac{1}{2}+\mu(\lambda),\mathrm{LL}_{\GL_n}^{-1}(\rho \circ\mathrm{LL}_M(\sigma)),\psi)
\end{align*}
and similarly for $L_{\rho},$ $L_{\rho}^{-1}$ and $\varepsilon_{\rho}.$  
The claims in the lemma are now standard; see \cite{Jacquet:GLn:Corvallis} and the reference therein, for example.
\end{proof}

\begin{lem} \label{lem:chis}
Let $\pi \in \Temp(G)$ and $\sigma=\mathrm{LL}^{-1}_{\GL_n}(\rho \circ \mathrm{LL}_{G}(\pi)).$  Let $\chi_{\sigma}$ be the central character of $\sigma.$  
One has that
\begin{align*}
\varepsilon(s,\pi,\rho,\psi)\varepsilon(1-s,\pi^\vee,\rho,\psi) &= \chi_\sigma(-1),\\
\gamma(s,\pi,\rho,\psi)\gamma(1-s,\pi^\vee,\rho,\psi)&=\chi_{\sigma}(-1).
\end{align*}
\end{lem}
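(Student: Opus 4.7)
The plan is to reduce both identities to the well-known local functional equation for $\GL_n$ via the local Langlands correspondence and its compatibility with contragredients.

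First I would unpack the definitions. By \eqref{temp:factors} and the compatibility of the Harris--Taylor--Henniart correspondence with the analytic theory (cited in the paragraph after \eqref{LLC:para}, drawing on \cite{Jacquet:GLn:Corvallis} in the Archimedean case and \cite{JPSS:Conv} in the non-Archimedean case), we have
\begin{align*}
\varepsilon(s,\pi,\rho,\psi)=\varepsilon(s,\sigma,\psi) \quad\text{and}\quad \gamma(s,\pi,\rho,\psi)=\gamma(s,\sigma,\psi),
\end{align*}
where the factors on the right are the standard Godement--Jacquet factors of $\sigma$ on $\GL_n$. By \eqref{LLC:Cg} one has $\rho\circ \mathrm{LL}_G(\pi^\vee)\cong (\rho\circ \mathrm{LL}_G(\pi))^\vee$, and since the local Langlands correspondence for $\GL_n$ intertwines taking contragredients with taking duals of parameters, this gives $\mathrm{LL}_{\GL_n}^{-1}(\rho\circ\mathrm{LL}_G(\pi^\vee))\cong\sigma^\vee$. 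Hence
\begin{align*}
\varepsilon(1-s,\pi^\vee,\rho,\psi)=\varepsilon(1-s,\sigma^\vee,\psi),\qquad \gamma(1-s,\pi^\vee,\rho,\psi)=\gamma(1-s,\sigma^\vee,\psi).
\end{align*}

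Next I would invoke the standard local functional equation for $\GL_n$, which states
\begin{align*}
\varepsilon(s,\sigma,\psi)\varepsilon(1-s,\sigma^\vee,\psi)=\chi_\sigma(-1);
\end{align*}
see \cite{Jacquet:GLn:Corvallis} in the Archimedean case and the references in \cite[\S12.4]{Getz:Hahn} in the non-Archimedean case. Substituting the identifications of the previous paragraph yields the $\varepsilon$-factor identity of the lemma.

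For the $\gamma$-factor identity, I would use the relation $\gamma(s,\sigma,\psi)=\varepsilon(s,\sigma,\psi)L(1-s,\sigma^\vee)/L(s,\sigma)$. A direct multiplication gives
\begin{align*}
\gamma(s,\sigma,\psi)\gamma(1-s,\sigma^\vee,\psi)
=\varepsilon(s,\sigma,\psi)\varepsilon(1-s,\sigma^\vee,\psi)\cdot \frac{L(1-s,\sigma^\vee)}{L(s,\sigma)}\cdot\frac{L(s,\sigma)}{L(1-s,\sigma^\vee)}
=\chi_\sigma(-1),
\end{align*}
after cancellation of the $L$-factor ratios and application of the $\varepsilon$-factor identity just established. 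No step appears genuinely hard; the only point requiring a little care is the compatibility of the local Langlands correspondence for $\GL_n$ with contragredients, which is a standard consequence of \eqref{LLC:GL} but must be cited explicitly so the use of \eqref{LLC:Cg} is rigorous.
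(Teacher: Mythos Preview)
Your proof is correct and follows essentially the same route as the paper: reduce to the $\GL_n$ standard case via \eqref{LLC:Cg} (together with the compatibility of the $\GL_n$ correspondence with contragredients), invoke the $\varepsilon$-identity for $\GL_n$ (the paper cites \cite[1.3.10]{Jacquet:GLn:Corvallis}), and deduce the $\gamma$-identity by cancelling the $L$-factor ratios. Your write-up simply makes explicit the steps the paper compresses into two sentences.
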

\begin{proof}
When $G=\GL_n$ and $\rho$ is the standard representation the first identity follows from the local Langlands correspondence and \cite[1.3.10]{Jacquet:GLn:Corvallis}, and the second identity follows from the first.  One reduces the general case to this one using
\eqref{LLC:Cg}.
\end{proof}

\begin{lem}\label{LLC:gammafactor:unitarity:2} For $\pi\in\mathrm{Temp}(G)$, one has that
\begin{align*}
    \gamma(\tfrac{1}{2},\pi^\vee,\rho,\overline{\psi}) &= \overline{\gamma(\tfrac{1}{2},\pi,\rho,\psi)},\\
    |\gamma(\tfrac{1}{2},\pi,\rho,\psi)|& = 1.
\end{align*}

\end{lem}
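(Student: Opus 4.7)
The plan is to reduce both identities to the $\GL_n$ setting via the local Langlands correspondence and invoke the standard behavior of Tate--Deligne local constants under complex conjugation. Concretely, I would set $\phi := \rho \circ \mathrm{LL}_G(\pi)$, which by \eqref{LLC:temp} is a tempered $L$-parameter of $\GL_n$ (with $n := \dim V_\rho$). Temperedness forces $\phi(W_F)$ to have bounded image, so up to conjugation $\phi$ factors through a unitary group, and hence $\bar\phi \cong \phi^\vee$ as representations of $W_F'$. By \eqref{LLC:Cg} we have $\rho \circ \mathrm{LL}_G(\pi^\vee) \cong \phi^\vee$, and by \eqref{LLC:GL} the local factors attached to $\pi$ (resp.\ $\pi^\vee$) through $\rho$ coincide with those attached to $\phi$ (resp.\ $\phi^\vee$) via the $\GL_n$ local Langlands correspondence. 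All local-factor computations can therefore be performed on the Galois side.

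Next I would invoke the conjugation identity $\overline{\gamma(s,\phi,\psi)} = \gamma(\bar s,\bar\phi,\bar\psi)$, which follows from Deligne's inductive definition of $\varepsilon$-factors together with the elementary conjugation formula for $L$-factors (cf.\ \cite[\S 3]{Tate_NT}). Specializing to $s = \tfrac{1}{2}$ and using $\bar\phi \cong \phi^\vee$ yields
\begin{align*}
\overline{\gamma(\tfrac{1}{2},\pi,\rho,\psi)} = \gamma(\tfrac{1}{2},\phi^\vee,\bar\psi) = \gamma(\tfrac{1}{2},\pi^\vee,\rho,\bar\psi),
\end{align*}
which is the first identity.

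For the unitarity assertion I would combine this with Lemma \ref{lem:chis} and the classical scaling law $\gamma(s,\phi,\psi_a) = \det\phi(a)\,|a|^{n(s-1/2)}\,\gamma(s,\phi,\psi)$ for $\psi_a(x) := \psi(ax)$. Applied to $\phi^\vee$ at $a = -1$ and $s = \tfrac{1}{2}$, together with the elementary observation $\chi_\sigma(-1)^2 = \chi_\sigma(1) = 1$, this gives $\gamma(\tfrac{1}{2},\pi^\vee,\rho,\bar\psi) = \chi_\sigma(-1)\,\gamma(\tfrac{1}{2},\pi^\vee,\rho,\psi)$. Multiplying the first identity by $\gamma(\tfrac{1}{2},\pi,\rho,\psi)$ and invoking Lemma \ref{lem:chis} then collapses everything to
\begin{align*}
|\gamma(\tfrac{1}{2},\pi,\rho,\psi)|^2 = \chi_\sigma(-1)\cdot\chi_\sigma(-1) = 1.
\end{align*}
The only substantive ingredient is the conjugation law for $\varepsilon$-factors; the rest is formal manipulation and bookkeeping via the local Langlands correspondence. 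The Archimedean case requires some care with the Langlands normalization of Archimedean $\varepsilon$-factors, but this should be a direct verification rather than a genuine obstacle.
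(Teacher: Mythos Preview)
Your proof is correct, and for the second identity it matches the paper's argument essentially line by line (first identity, then the scaling law $\gamma(s,\phi^\vee,\psi_{-1}) = \chi_\sigma(-1)\gamma(s,\phi^\vee,\psi)$, then Lemma~\ref{lem:chis}). The genuine difference is in how you obtain the first identity. The paper reduces via \eqref{LLC:Cg} to $G=\GL_n$ with $\rho$ standard and then works on the \emph{automorphic} side: it takes the Godement--Jacquet functional equation $Z(1-s,\mathcal{F}_\psi f,c^\vee)=\gamma(s,\pi,\psi)Z(s,f,c)$, complex-conjugates both sides for real $s$, and uses that $\bar c$ is a matrix coefficient of $\pi^\vee$ to read off $\overline{\gamma(\tfrac12,\pi,\psi)}=\gamma(\tfrac12,\pi^\vee,\bar\psi)$. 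You instead stay on the \emph{Galois} side throughout, invoking the conjugation law $\overline{\gamma(s,\phi,\psi)}=\gamma(\bar s,\bar\phi,\bar\psi)$ together with the observation that temperedness gives $\bar\phi\cong\phi^\vee$. Your route is shorter and avoids the zeta-integral machinery entirely; the paper's route has the advantage that the conjugation step is made completely explicit via the integral representation, rather than appealing to a property of Deligne's $\varepsilon$-factors that one has to extract from the inductive characterization. One minor point: since the paper \emph{defines} $\gamma(s,\pi,\rho,\psi):=\gamma(s,\rho\circ\mathrm{LL}_G(\pi),\psi)$ in \eqref{temp:factors}, you do not actually need \eqref{LLC:GL} to pass to the Galois side---the factors are already Galois-side objects by definition.
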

\begin{proof} 
As in the previous lemma, using \eqref{LLC:Cg}, the argument reduces to the special case where $G=\GL_n$ and $\rho$ is the standard representation.  We henceforth assume we are in this special case.

For any $f\in\mathcal{S}(M_n(F))$, recall the local zeta integral of Godement-Jacquet:
\begin{align*}
    Z(s,f,c) = \int_{\GL_n(F)} f(x)c(x)|\det x|^{s+\frac{n-1}{2}} dx,
\end{align*}
where $c$ is a matrix coefficient  of $\pi$. Let $c^\vee(x):=c(x^{-1})$, and $\mathcal{F}_\psi$ be the usual Fourier transform against the additive character $\psi$:
\begin{align*}
    \mathcal{F}_\psi (f)(y) = \int_{M_n(F)} f(x)\psi(\mathrm{tr}(xy))dx.
\end{align*}
Here we use the Haar measure that is self-dual with respect to $\psi.$
Then the functional equation
\begin{align*}
    Z(1-s,\mathcal{F}_\psi (f),c^\vee) = \gamma(s,\pi,\psi)Z(s,f,c)
\end{align*}
holds \cite[Theorems 3.3 and 8.7]{GodementJacquetBook}.  Strictly speaking, in the Archimedean case, Godement and Jacquet work with a subspace of $\mathcal{S}(M_n(F)),$ but the argument that follows is insensitive to the difference.
For $s \in \RR$
\begin{align*}
Z(1-s,\mathcal{F}_{\overline{\psi}}(\overline{f}),\overline{c^\vee})=    \overline{Z(1-s,\mathcal{F}_\psi (f),c^\vee)} = \overline{\gamma(s,\pi,\psi)}\overline{Z(s,f,c)}=\overline{\gamma(s,\pi,\psi)}Z(s,\overline{f},\overline{c}).
\end{align*}
Since $\overline{c}$ is a matrix coefficient for $\pi^\vee$, upon comparing the last two displayed equations, we obtain the first identity.

 For the second identity, using the first equality, \cite[(3.4.4)]{Tate_NT} and Lemma \ref{lem:chis} we have
\begin{align*}
    |\gamma(\tfrac{1}{2},\pi,\psi)|^2 = \gamma(\tfrac{1}{2},\pi,\psi)\gamma(\tfrac{1}{2},\pi^\vee,\overline{\psi}) = \chi_\pi(-1)\gamma(\tfrac{1}{2},\pi,\psi)\gamma(\tfrac{1}{2},\pi^\vee,\psi) = \chi_\pi(-1)^2 = 1.
\end{align*}
\end{proof}

\subsection{Poles of $L$-functions}\label{LLC:Lfactor:pole} In this subsection, we assume $F$ is an Archimedean local field.  By applying Weil restriction of scalars, we may assume $F=\RR$ without loss of generality.

In the Archimedean setting, $L$-functions have infinitely many poles. We will require some control on these poles that is uniform over all irreducible square integrable representations of $G(F).$  This control is formalized in Theorem \ref{LLC:Lfactor:pole:thm} below.

Recall the following theorem from \cite{Langlands:ArchLLC}, \cite[\S 11]{Borel:Corvallis}:
\begin{thm} \label{thm:square}
One has that $\mathrm{LL}^{-1}_{G}(\Phi_2(G))=\Pi_2(G).$ \qed
\end{thm}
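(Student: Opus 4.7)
The plan is to establish the two inclusions separately.

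For $\mathrm{LL}_G^{-1}(\Phi_2(G)) \subseteq \Pi_2(G)$, I would argue using only the axioms of \S\ref{sec:LLC}. Let $\phi \in \Phi_2(G)$ and $\pi \in \mathrm{LL}_G^{-1}(\phi)$. By \eqref{LLC:temp}, $\pi$ is tempered. By the Langlands classification recalled in \S\ref{ssec:temp}, $\pi$ embeds in $I_P^G(\sigma)$ for some $(M,\sigma) \in \widetilde{\Temp}_\Ind(G)$ with $\sigma \in \Pi_2(M)$. Applying \eqref{LLC:para} gives
\begin{align*}
\phi \;=\; ({}^LM\to{}^LG)\circ \mathrm{LL}_M(\sigma),
\end{align*}
and the discreteness of $\phi$ forces $M=G$. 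Hence $\pi = \sigma \in \Pi_2(G)$.

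For the reverse inclusion $\Pi_2(G) \subseteq \mathrm{LL}_G^{-1}(\Phi_2(G))$, my plan is to appeal directly to Langlands' construction \cite{Langlands:ArchLLC}. Given $\pi \in \Pi_2(G)$, Harish-Chandra's existence theorem provides a maximal torus $T \leq G$ that is compact modulo the split part of $Z_G$, and $\pi$ is attached (up to finitely many choices) to a regular character of $T(F)$ together with a positive system. Langlands associates to this datum an $L$-parameter $\phi_\pi$, and one checks directly from the construction that the centralizer of $\phi_\pi(W_F')$ in $\widehat{G}(\mathbb{C})$ is finite modulo $Z(\widehat{G})^{W_F}$. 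By the standard equivalence this is precisely the condition $\phi_\pi \in \Phi_2(G)$.

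The main obstacle is the second inclusion: it is not a formal consequence of the axioms of \S\ref{sec:LLC} but genuinely uses the explicit Archimedean correspondence, so there is no shortcut avoiding Langlands' construction. Since the statement is flagged as a recollection from \cite{Langlands:ArchLLC} and \cite[\S11]{Borel:Corvallis}, my write-up would present only the short axiomatic argument for the first inclusion and then point to those references for the second.
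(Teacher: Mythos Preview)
The paper gives no proof of this statement: it is introduced with ``Recall the following theorem from \cite{Langlands:ArchLLC}, \cite[\S 11]{Borel:Corvallis}'' and closed immediately with \qedsymbol. Your proposal is correct and in fact supplies more than the paper does. The axiomatic argument for $\mathrm{LL}_G^{-1}(\Phi_2(G)) \subseteq \Pi_2(G)$ via \eqref{LLC:temp}, the tempered Langlands classification, and \eqref{LLC:para} is valid and is a nice observation, though note that in the paper's logic the axioms of \S\ref{sec:LLC} are themselves justified in the Archimedean case by appeal to \cite{Langlands:ArchLLC} and \cite[\S 11]{Borel:Corvallis}, so ultimately both inclusions rest on those references. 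Your plan to cite the literature for the reverse inclusion is exactly what the paper does for the whole theorem. One minor point: your characterization of discreteness via the centralizer being finite modulo $Z(\widehat{G})^{W_F}$ differs from the paper's definition (image not contained in a proper Levi of ${}^LG$); these are equivalent, but if you include this in a write-up you should remark on the equivalence or phrase it in the paper's language.
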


We write $W_{\RR}=\CC^\times \cup j\CC^\times$, and identify the subgroup $\CC^\times$ with $W_{\CC}$ as usual \cite[\S 1.4.3]{Tate_NT}.  Thus $j^2=-1$ and $jzj^{-1}=\overline{z}$ for $z\in W_\mathbb{C}$. By \cite[Theorem 1.3]{AdamsVoganContragredient}, we have the following:
\begin{thm} \label{thm:contra}
Let $C_{W_\mathbb{R}}\in\mathrm{Aut}\,W_{\mathbb{R}}$ be the involution defined by $C_{W_\mathbb{R}}(z) = z^{-1}$ for $z\in W_\mathbb{C}$ and $C_{W_\mathbb{R}}(j)=j$. Then $\mathrm{LL}_G(\pi)\circ C_{W_\mathbb{R}} = \mathrm{LL}_G(\pi^\vee)$.  \qed
\end{thm}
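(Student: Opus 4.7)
The plan is a three-step reduction followed by an explicit computation for discrete series. I would first reduce to the case of irreducible tempered $\pi$. By the Langlands classification over $\RR$, every irreducible admissible $\pi$ is the unique irreducible quotient of a standard module $I_P^G(\sigma_\lambda)$ with $(M,\sigma) \in \widetilde{\Temp}_{\mathrm{Ind}}(G)$, $P \in \mathcal{P}(M)$, and $\lambda$ lying in the open positive chamber of $\mathfrak{a}_M^*$. The contragredient $\pi^\vee$ is then the Langlands quotient of $I_{P^{\mathrm{op}}}^G(\sigma^\vee_{-\lambda})$. On the Galois side, by \eqref{LLC:para} and \eqref{LLC:UT}, the parameter of $\pi$ is the composition ${}^LM \to {}^LG$ applied to $\mathrm{LL}_M(\sigma)|\cdot|^\lambda$, and similarly for $\pi^\vee$. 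Granting the statement for $(M,\sigma)$, a direct computation (using that $C_{W_\RR}$ inverts $|\cdot|^\lambda$ to $|\cdot|^{-\lambda}$) propagates it to $\pi$.

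Second, within $M$ I would further reduce from tempered to discrete series. Every irreducible tempered representation of $M(F)$ is an irreducible subrepresentation of some $I_{P'}^M(\tau)$ with $\tau \in \Pi_2(L)$ for a Levi $L \leq M$, and contragredient commutes with unitary parabolic induction, so again by \eqref{LLC:para} it suffices to prove the identity on $\Pi_2$ of each Levi of $G$.

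Third, for $\pi \in \Pi_2(G)$, I would appeal to Harish-Chandra's parameterization: such $\pi$ exist only when $G$ has a compact Cartan subgroup $T$, and are indexed by regular characters of $T$, or equivalently by regular Harish-Chandra parameters $\lambda \in i\mathfrak{t}^*$ modulo the compact Weyl group. The contragredient $\pi^\vee$ has parameter $-\lambda$. On the Galois side, Langlands' construction produces a discrete parameter $\phi_\lambda : W_\RR \to {}^LG$ whose restriction to $W_\CC = \CC^\times$ factors through a maximal torus $\widehat{T} \leq \widehat{G}$ via a character of the form $z \mapsto z^\mu \bar{z}^\nu$ with $\mu,\nu$ explicitly determined by $\lambda$, and with $\phi_\lambda(j)$ realizing the action of a specific Weyl element on $\widehat{T}$. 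Applying $C_{W_\RR}$ inverts the characters on $W_\CC$ and fixes $j$; one then verifies directly that $\phi_\lambda \circ C_{W_\RR}$ is $\widehat{G}(\CC)$-conjugate to $\phi_{-\lambda}$ by absorbing a longest Weyl element into the conjugation.

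The main obstacle will be the last step: matching Harish-Chandra's and Langlands' conventions and carrying out the Weyl-element conjugation that realizes $\phi_\lambda \circ C_{W_\RR} \sim \phi_{-\lambda}$. A useful sanity check before diving in would be to verify the statement on characters (since $\pi$ and $\pi^\vee$ have complex-conjugate characters on the regular set) and on infinitesimal characters (both $\pi$ and $\pi^\vee$ share the Harish-Chandra homomorphism up to the $-1$ twist, which matches $C_{W_\RR}$ on the dual side through the Harish-Chandra isomorphism). An alternative route would avoid the explicit computation entirely by invoking Vogan's classification of the admissible dual via lowest $K$-types together with the compatibility of the Chevalley involution with $K$-type data, but either way the crux is that $C_{W_\RR}$ captures, up to $\widehat{G}$-conjugacy, the effect on the dual side of passing to the contragredient.
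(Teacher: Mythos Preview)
The paper does not prove this theorem at all: it is stated as a direct citation of \cite[Theorem 1.3]{AdamsVoganContragredient} and marked with a \qed. So there is no proof in the paper to compare against.

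Your outline is a reasonable proof strategy and is in fact close in spirit to how Adams and Vogan proceed: they too reduce (via the Langlands classification and compatibility with parabolic induction) to the essentially discrete case, and then carry out an explicit verification on the Galois side. The tool they use for your step 3, which you only gesture at with ``absorbing a longest Weyl element,'' is the Chevalley involution of $\widehat{G}$: the unique pinned holomorphic automorphism acting by inversion on a fixed maximal torus. Conjugating $\phi \circ C_{W_\RR}$ by a representative of this involution is what exhibits it as the parameter of $\pi^\vee$; checking that the image of $j$ transforms correctly (and not just the restriction to $W_\CC$) is exactly the delicate point you identify, and it is not a one-line computation. Your alternative suggestion of going through Vogan's lowest $K$-type classification is also viable but not lighter.

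One minor gap in your reductions: step 1 appeals to \eqref{LLC:para}, but in the paper that axiom is only formulated for subquotients of tempered inductions $I_P^G(\sigma)$ with $\sigma \in \Pi_2(M)$. For the Langlands quotient of $I_P^G(\sigma_\lambda)$ with $\sigma$ tempered and $\lambda$ in the open positive chamber you need the full compatibility of the archimedean correspondence with standard-module data. This is of course part of Langlands' original construction (hence implicit in \eqref{LLC:real}), but it is not literally \eqref{LLC:para} as stated.
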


Recall the cylinder $V_{[-\beta,\beta]}$ defined in \eqref{cylinder}.   The main theorem of this subsection follows:
\begin{thm}\label{LLC:Lfactor:pole:thm} There exists
\begin{itemize}
\item a set $\mathcal{D}$ of representatives of $\Pi_2(G)/i\mathfrak{a}_G^*$, stable under contragredients, and
\item finite subsets $\mathcal{P}(G,\beta) \subset \CC[s]$ for $\beta>0$ 
\end{itemize}
satisfying the following property:
for $\sigma\in\mathcal{D}$ there exists a $p\in \mathcal{P}(G,\beta)$ such that $p(s)L(s,\sigma,\rho)$ has neither pole nor zero in $V_{[-\beta,\beta]}$. 
\end{thm}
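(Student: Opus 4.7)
The plan is to use the explicit formulas for Archimedean $L$-factors together with the discrete structure of tempered discrete $L$-parameters to show that, for an appropriate choice of $\mathcal{D}$, the $L$-functions $L(s,\sigma,\rho)$ have poles at only finitely many points in any strip $V_{[-\beta,\beta]}$.

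First I would reduce to the case $F=\mathbb{R}$ via Weil restriction of scalars, and note that if $G$ has no discrete series the statement is vacuous. Otherwise, by Theorem \ref{thm:square} each $\sigma\in\Pi_2(G)$ corresponds to a discrete tempered parameter $\phi_\sigma:W_\mathbb{R}\to{}^LG$. Up to $\widehat{G}(\mathbb{C})$-conjugacy, $\phi_\sigma|_{\mathbb{C}^\times}$ factors through a maximal torus $\widehat T\leq\widehat G$ and takes the form $z\mapsto z^\mu\bar z^\nu$ with $\mu-\nu\in X_*(\widehat T)$. Temperedness forces $\mu+\nu\in iX_*(\widehat T)_\mathbb{R}$, and twisting $\sigma$ by $\chi\in i\mathfrak{a}_G^*$ shifts $\mu+\nu$ by the image of $\chi$ under the inclusions $i\mathfrak{a}_G^*\hookrightarrow iX_*(Z_{\widehat G}^\circ)_\mathbb{R}\hookrightarrow iX_*(\widehat T)_\mathbb{R}$ coming from \eqref{lambda:2:ways}--\eqref{ia:act}.

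The key structural claim is that for $\sigma$ discrete, $\mu+\nu$ lies in the image of $i\mathfrak{a}_G^*$: indeed, $\phi_\sigma$ factors through a compact Cartan of $\widehat G$, so the anisotropic component of $\mu+\nu$ vanishes, while the split-central component can be absorbed into the $i\mathfrak{a}_G^*$-action. I would then choose $\mathcal{D}$ to consist of representatives normalized so that $\mu+\nu=0$. Theorem \ref{thm:contra} shows that $\phi_{\sigma^\vee}=\phi_\sigma\circ C_{W_\mathbb{R}}$ acts as $(\mu,\nu)\mapsto(-\nu,-\mu)$, which preserves $\mu+\nu=0$, so such $\mathcal{D}$ can be taken stable under contragredient. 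With this normalization, $\rho\circ\phi_\sigma|_{\mathbb{C}^\times}$ is a direct sum of characters $z\mapsto(z/\bar z)^{m_i}$ for (half-)integers $m_i$ determined by the weights of $\rho$ on $\mu$, and $\rho\circ\phi_\sigma$ itself decomposes over $W_\mathbb{R}$ into sign characters $\mathrm{sgn}^{\epsilon_i}$ and 2-dimensional induced representations $\mathrm{Ind}_{W_\mathbb{C}}^{W_\mathbb{R}}((z/\bar z)^{k_i/2})$ with $k_i\in\mathbb{Z}_{\geq 1}$. Consequently $L(s,\sigma,\rho)$ is a product of at most $\dim V_\rho$ factors of the form $\Gamma_\mathbb{R}(s+\epsilon_i)$ or $\Gamma_\mathbb{C}(s+k_i/2)$, whose poles lie on the real axis at non-positive (half-)integers, of which only finitely many fall in $V_{[-\beta,\beta]}$. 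Since the number of factors is bounded by $\dim V_\rho$ and the possible exponents are bounded in terms of $\beta$, the multiset of poles (with multiplicity) realized by some $\sigma\in\mathcal{D}$ ranges over a finite set; for each such multiset we take $p$ to be the polynomial whose zeros with multiplicity are exactly this multiset, and since Gamma factors have no zeros, $p(s)L(s,\sigma,\rho)$ is then holomorphic and non-vanishing on $V_{[-\beta,\beta]}$. The finite collection of such polynomials constitutes $\mathcal{P}(G,\beta)$.

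The main obstacle is the structural claim that $\mu+\nu$ lies in the image of $i\mathfrak{a}_G^*$ for every discrete tempered parameter. This rests on the fact that discrete $L$-parameters over $\mathbb{R}$ factor through a compact Cartan subgroup of $\widehat G$, and must be reconciled carefully with the identification $i\mathfrak{a}_G^*\hookrightarrow iX_*(Z_{\widehat G}^\circ)_\mathbb{R}$ and the action \eqref{ia:act}, as well as with the compatibility under contragredient from Theorem \ref{thm:contra}.
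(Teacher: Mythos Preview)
Your overall strategy is sound, and you correctly isolate the crux in your last paragraph: the structural claim that $\mu+\nu$ lies in the image of $i\mathfrak{a}_G^*$ for every discrete tempered parameter. However, your justification for this claim is not valid as stated. The assertion that ``$\phi_\sigma$ factors through a compact Cartan of $\widehat G$'' is not meaningful, since $\widehat G$ is a complex reductive group whose maximal tori are all conjugate; there is no distinguished ``compact'' one, and hence no intrinsic notion of ``anisotropic component'' of $\mu+\nu$ on the dual side. What is true --- and what underlies your intuition --- is that discrete tempered $L$-parameters over $\RR$ factor through an $L$-embedding $\iota:{}^LT\to{}^LG$ where $T\leq G$ is a maximal torus \emph{of $G$} with $(T\cap G^{\mathrm{der}})^\circ$ anisotropic; this is Shelstad's result, recorded as Lemma~\ref{lem:finite:torus}. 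Since such $T$ satisfies $A_T=A_G$, the map $i\mathfrak{a}_G^*\to i\mathfrak{a}_T^*$ is an isomorphism, and your structural claim then follows from the explicit description of tempered parameters of real tori. This is exactly the route the paper takes: it reduces to tori via Lemmas~\ref{lem:finite:torus} and~\ref{lem:red:torus}, puts $T$ in the standard form~\eqref{LLC:realtorus:nice}, writes down explicit representatives $\mathcal{D}$ in~\eqref{LLCa:pole:D}, classifies the irreducible constituents of $\rho|_{{}^LT}$ (Lemmas~\ref{LT:Weilform:repn}, \ref{lem:1d}, \ref{lem:2d}), and then computes the Gamma factors and their poles (see~\eqref{expl:1d} and~\eqref{expl:2d}), arriving at the same finiteness conclusion you sketch.

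One further imprecision: your description of the factors of $L(s,\sigma,\rho)$ as $\Gamma_{\RR}(s+\epsilon_i)$ or $\Gamma_{\CC}(s+k_i/2)$ omits a possible purely imaginary shift $s_0$ contributed by $\rho|_{W_\RR}$, since $\rho$ is only assumed tempered on $W_\RR$, not of finite image. This does not break your finiteness argument --- $s_0$ depends only on the irreducible constituent of $\rho|_{{}^LT}$, not on $\sigma$ --- but it should be tracked; compare the parameter $s_0$ in Lemmas~\ref{lem:1d}--\ref{lem:2d} and the explicit polynomials $p_0,p_1,p_k$ constructed at the end of the paper's proof.
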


\noindent The key point in this theorem is that the finite set $\mathcal{P}(G,\beta)$ does not depend on $\sigma.$  

The proof of Theorem \ref{LLC:Lfactor:pole:thm} will occupy the majority of this subsection.  
In view of \eqref{LLC:UT} and Theorem \ref{thm:contra}, it suffices to show that there exists
\begin{itemize}
\item a set $\mathcal{D}$ of representatives of $\Phi_2(G)/i\mathfrak{a}_{G}^*$ invariant under precomposing with $C_{W_{\RR}}$, and
\item finite subsets $\mathcal{P}(G,\beta) \subset \CC[s]$ for $\beta>0$
\end{itemize}
satisfying the following property: for $\phi \in \mathcal{D}$ there exists a $p \in \mathcal{P}(G,\beta)$ such that  $p(s)L(s,\rho \circ \phi)$ has no pole and no zero in $V_{[-\beta,\beta]}.$ 

The following lemma can be extracted from \cite[\S 3]{Shelstad:realgroup}:

\begin{lem} \label{lem:finite:torus}
If $\Phi_2(G)$ is nontrivial, then there is a torus $T \leq G$ with $(T \cap G^{\mathrm{der}})^{\circ}$  anisotropic and an $L$-map $\iota:{}^LT \to {}^LG$ 
such that 
$
\Phi_2(G) \subseteq \iota_{*}\Phi_t(T).
$ \qed
\end{lem}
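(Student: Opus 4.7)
The plan is to extract the desired torus $T$ from a single discrete $L$-parameter, and then exploit the uniqueness of fundamental Cartans over $\RR$ to handle all of $\Phi_2(G)$ uniformly. By Weil restriction of scalars we may assume $F=\RR$. Fix $\phi_0 \in \Phi_2(G)$ and set $M_0 := Z_{\widehat{G}(\CC)}(\phi_0(\CC^\times))^\circ$. Since $\phi_0$ is tempered, $\phi_0(\CC^\times)$ has bounded image in $\widehat{G}(\CC)$, hence lies in a compact subtorus; so $M_0$ is a Levi subgroup of $\widehat{G}(\CC)$ containing $\phi_0(\CC^\times)$ in its center. The element $\phi_0(j)$ normalizes $M_0$ because $j$ acts on $\CC^\times$ by complex conjugation, so $\phi_0(W_\RR)$ lies in $N_{{}^LG}(M_0)$. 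Using that Levi subgroups are self-normalizing in $\widehat{G}(\CC)$, one checks that $\phi_0(W_\RR) \subseteq {}^LM_0$ for an appropriate $L$-group structure on $M_0$; discreteness of $\phi_0$ then forces $M_0$ to be a maximal torus $\widehat{T}_0$ of $\widehat{G}(\CC)$.

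Conjugation by $\phi_0(j)$ next defines an involution on $\widehat{T}_0$ which by Langlands duality corresponds to a real torus $T \leq G$ with dual $\widehat{T}_0$, an $L$-group ${}^LT = \widehat{T}_0 \rtimes W_\RR$, and an $L$-embedding $\iota\colon {}^LT \to {}^LG$ through which $\phi_0$ factors by construction. The anisotropy of $(T \cap G^{\mathrm{der}})^\circ$ is the dual statement to boundedness of the projection of $\phi_0(W_\RR)$ to $\widehat{G}(\CC)/Z(\widehat{G})^\circ$; this is the standard duality identifying $\RR$-elliptic tori of $G$ with the maximal tori of $\widehat{G}$ normalized by the image of a discrete tempered $L$-parameter. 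In particular, $T$ is a fundamental Cartan of $G$.

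To accommodate an arbitrary $\phi \in \Phi_2(G)$, apply the same construction to obtain a torus $T_\phi$ and an embedding $\iota_\phi$. Uniqueness up to $G(\RR)$-conjugacy of the fundamental Cartan implies $T_\phi$ is $G(\RR)$-conjugate to $T$, and dualizing, $\iota_\phi$ is $\widehat{G}(\CC)$-conjugate to $\iota$. After replacing $\phi$ by a suitable $\widehat{G}(\CC)$-conjugate, which does not change its class in $\Phi_2(G)$, it factors through $\iota$. Since $\phi|_{W_\RR}$ has bounded image in $\widehat{G}(\CC)$, the induced parameter of $T$ is tempered, whence $\Phi_2(G) \subseteq \iota_* \Phi_t(T)$. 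The induced parameter is typically not discrete on $T$, which accounts for the asymmetry in the statement.

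The main obstacle I foresee is the careful implementation of the duality dictionary between $\widehat{G}(\CC)$-conjugacy classes of $L$-embeddings ${}^LT \hookrightarrow {}^LG$ arising from discrete parameters and $G(\RR)$-conjugacy classes of fundamental Cartans of $G$. Rather than reprove this, I would invoke \cite[\S 3]{Shelstad:realgroup} directly; the argument there also handles the real-form bookkeeping and confirms that the existence of a discrete parameter is equivalent to the existence of a Cartan whose derived part is $\RR$-anisotropic, which justifies the initial choice of $\phi_0$ under the hypothesis $\Phi_2(G) \neq \emptyset$.
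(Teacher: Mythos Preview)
The paper gives no proof of this lemma; it simply attributes it to \cite[\S 3]{Shelstad:realgroup} and marks it with \qed. Your proposal ultimately does the same, so at the level of what is actually invoked the approaches agree.

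The sketch you offer before deferring to Shelstad, however, has a real gap. Levi subgroups of $\widehat{G}(\CC)$ are not self-normalizing (a maximal torus already has nontrivial Weyl group), and the step ``discreteness of $\phi_0$ then forces $M_0$ to be a maximal torus'' is argued backwards: if $\phi_0$ genuinely factored through a Levi subgroup ${}^LM_0$ of ${}^LG$, discreteness would yield $M_0 = \widehat{G}$, not a torus. The actual reason $M_0 = Z_{\widehat{G}}(\phi_0(\CC^\times))^\circ$ is a maximal torus is regularity: writing $\phi_0|_{\CC^\times}(z) = z^{\lambda}\bar z^{\mu}$, discreteness forces $\lambda$ to be regular (this is part of Langlands' classification of discrete-series packets), whence the connected centralizer of $\phi_0(\CC^\times)$ is a maximal torus $\widehat{T}_0$. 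The group $\widehat{T}_0 \cdot \phi_0(W_\RR)$ is then ${}^LT$ for a torus $T \leq G$ anisotropic modulo $A_G$, and there is no conflict with discreteness precisely because such a $T$ is \emph{not} a Levi subgroup of $G$. The remainder of your outline, using conjugacy of fundamental Cartans to treat all of $\Phi_2(G)$ at once, is correct.
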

Here 
$\iota_{*}:\Phi_t(T) \to \Phi_t(G)$ is the pushforward.

\begin{lem} \label{lem:red:torus}
    To prove Theorem \ref{LLC:Lfactor:pole:thm}, it suffices to consider the case where $G$ is a torus $T.$
\end{lem}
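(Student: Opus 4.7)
The plan is to reduce the problem to tori by means of Lemma \ref{lem:finite:torus}. If $\Phi_2(G)=\emptyset$ the conclusion is vacuous (take $\mathcal{D}=\emptyset$ and $\mathcal{P}(G,\beta)=\{1\}$), so assume otherwise. Applying Lemma \ref{lem:finite:torus} furnishes a torus $T$ with $(T\cap G^{\mathrm{der}})^{\circ}$ anisotropic and an $L$-map $\iota:{}^LT\to{}^LG$ with $\Phi_2(G)\subseteq\iota_*\Phi_t(T)$. Set $\rho_T := \rho\circ\iota:{}^LT\to\GL_{V_\rho}(\CC)$; this is tempered since $\rho$ is tempered and $\iota$ is Weil-group equivariant. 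The identity
\begin{align*}
    L(s,\rho\circ\iota_*\psi) = L(s,\rho_T\circ\psi)
\end{align*}
for $\psi\in\Phi_t(T)$ transfers the analysis of the $G$-side $L$-factor to the $T$-side.

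Assuming Theorem \ref{LLC:Lfactor:pole:thm} for tori, I would apply it to $(T,\rho_T)$ to obtain a $C_{W_\RR}$-stable set $\mathcal{D}_T$ of representatives of $\Phi_2(T)/i\mathfrak{a}_T^* = \Phi(T)/i\mathfrak{a}_T^*$ (the equality holding since the torus $T$ admits no proper Levi subgroups, so every parameter is discrete) together with finite polynomial sets $\mathcal{P}(T,\beta)$. Next I would select $\mathcal{D}\subseteq\Phi_2(G)$ by picking, for each $i\mathfrak{a}_G^*$-orbit in $\Phi_2(G)$, a representative of the form $\iota_*\psi$ with $\psi\in\mathcal{D}_T$. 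This uses two functoriality properties of $\iota_*$ that one checks directly from the definition of an $L$-map: first, $\iota_*$ is equivariant for the twist actions through the natural embedding $i\mathfrak{a}_G^*\hookrightarrow i\mathfrak{a}_T^*$ (dual to $Z_{\widehat{G}}^\circ\hookrightarrow\widehat{T}$); second, $(\iota_*\psi)\circ C_{W_\RR}=\iota_*(\psi\circ C_{W_\RR})$, which transfers $C_{W_\RR}$-stability from $\mathcal{D}_T$ to $\mathcal{D}$. Taking $\mathcal{P}(G,\beta):=\mathcal{P}(T,\beta)$, the pole and zero conditions on $V_{[-\beta,\beta]}$ follow immediately from the $L$-function identity above.

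The hard part will be the interplay between the twist groups $i\mathfrak{a}_G^*$ and $i\mathfrak{a}_T^*$: a single $i\mathfrak{a}_T^*$-orbit in $\Phi_t(T)$ can split into many $i\mathfrak{a}_G^*$-orbits under $\iota_*$, since $\iota_*$ is only equivariant for twists in the smaller subgroup $i\mathfrak{a}_G^*$. Thus one has to verify that $\mathcal{D}_T$ contains enough distinct representatives to exhaust all the $i\mathfrak{a}_G^*$-orbits in $\Phi_2(G)\subseteq \iota_*\Phi_t(T)$. I expect this to be handled either by verifying that each $i\mathfrak{a}_T^*$-orbit meets $\iota_*^{-1}(\Phi_2(G))$ in at most one $i\mathfrak{a}_G^*$-orbit, or by strengthening the torus case of Theorem \ref{LLC:Lfactor:pole:thm} so that $\mathcal{D}_T$ consists of representatives modulo $i\mathfrak{a}_G^*\subseteq i\mathfrak{a}_T^*$ rather than modulo $i\mathfrak{a}_T^*$; the latter should cost nothing in the proof since $\mathcal{P}(T,\beta)$ is determined by $\rho_T$ and not by the orbit structure.
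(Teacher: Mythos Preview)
Your approach is the same as the paper's, and your plan is sound, but you have missed the one-line observation that dissolves what you call ``the hard part.'' The anisotropy hypothesis in Lemma \ref{lem:finite:torus}, namely that $(T\cap G^{\mathrm{der}})^{\circ}$ is anisotropic, says precisely that $T$ is anisotropic modulo $Z_G$, so the maximal split subtorus of $T$ is $A_G$ itself. Consequently the restriction map $i\mathfrak{a}_G^*\to i\mathfrak{a}_T^*$ is an \emph{isomorphism}, not merely an embedding. There is therefore no mismatch between twist groups: an $i\mathfrak{a}_T^*$-orbit in $\Phi_t(T)$ maps under $\iota_*$ into a single $i\mathfrak{a}_G^*$-orbit, and your proposed workarounds (checking orbit structure by hand, or strengthening the torus statement to a finer orbit decomposition) are unnecessary.

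Once this is noted, the argument finishes exactly as you outlined: take $\mathcal{D}$ inside $\iota_*\mathcal{D}_T\cap\Phi_2(G)$ (choosing representatives in $C_{W_\RR}$-conjugate pairs to preserve stability), set $\mathcal{P}(G,\beta):=\mathcal{P}(T,\beta)$, and use $L(s,\rho\circ\iota_*\psi)=L(s,(\rho\circ\iota)\circ\psi)$.
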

\begin{proof}
We may assume $\Phi_2(G)$ is nontrivial.  Let $\iota:{}^LT \to {}^LG$ be the $L$-map of Lemma \ref{lem:finite:torus}.
Restriction to $X^*(T)$ induces a morphism
$i\mathfrak{a}_G^* \to i\mathfrak{a}_T^*.
$
Thus we obtain an action of $i\mathfrak{a}_G^*$ on $\Phi_t(T)$ via \eqref{ia:act}, and $\iota_*$ is $i\mathfrak{a}_G^*$-equivariant. 
Since the torus $T$ is anisotropic modulo $Z_G,$ the torus $A_G$ is the maximal split subtorus of $T$. Thus the morphism $i\mathfrak{a}_G^* \to i\mathfrak{a}_T^*$ is an isomorphism.

For $\phi\in\Phi_2(G)$ we have $\phi = \iota_*\phi'$ for some $\phi'\in\Phi_t(T)$ by Lemma \ref{lem:finite:torus}.  Thus
$L(s,\phi,\rho)=L(s,\phi',\rho\circ \iota).$
The representation $\rho \circ \iota$ is again tempered.  This completes the reduction.
\end{proof}

\begin{lem}\label{LT:Weilform:repn} All irreducible representations of $^LT$ are either
\begin{enumerate}
    \item quasi-characters $\chi:{}^LT \to \CC^\times$ such that $\chi|_{\widehat{T}(\CC)}$ is algebraic and  $\chi(t,z) = \chi(jtj^{-1},\overline{z})$ for $(t,z)\in\widehat{T}(\mathbb{C})\times W_\mathbb{C}$, or
    \item $2$-dimensional induced representations $\mathrm{Ind}_{\widehat{T}(\mathbb{C})\times W_\mathbb{C}}^{{}^LT}(\chi_0)$ where $\chi_0:\widehat{T}(\mathbb{C})\times W_\mathbb{C} \to \CC^\times$ is a quasi-character with $\chi_0|_{\widehat{T}(\CC)}$ algebraic such that $\chi_0(t,z)\neq \chi_0(jtj^{-1},\overline{z})$ for some $(t,z)\in\widehat{T}(\mathbb{C})\times W_\mathbb{C}$.
\end{enumerate}
\end{lem}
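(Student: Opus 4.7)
The approach is Clifford/Mackey theory applied to the abelian normal subgroup
\[ H := \widehat{T}(\CC) \times W_\CC \leq {}^LT, \]
which has index $2$ with nontrivial coset represented by $j$. Conjugation by $j$ sends $(t,z) \in H$ to $(jtj^{-1},\bar z)$, since $j$ acts on $\widehat{T}(\CC)$ through the quotient $W_\RR \to \mathrm{Gal}(\CC/\RR)$ and $jzj^{-1}=\bar z$ for $z \in W_\CC=\CC^\times$. Let $\rho:{}^LT\to\GL_V(\CC)$ be irreducible in the sense of \S\ref{sec:LLC}, so $V$ is finite-dimensional and $\rho|_{\widehat{T}(\CC)}$ is algebraic.

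First I would show that $\rho|_H$ decomposes as a direct sum of characters whose set of values forms a single ${}^LT/H\cong\ZZ/2\ZZ$ orbit under conjugation by $j$. Algebraicity of $\rho|_{\widehat{T}(\CC)}$ splits $V$ into weight spaces for $\widehat{T}(\CC)$, each preserved by the commuting action of $W_\CC$. A standard Clifford-theoretic argument then shows that irreducibility of $\rho$ forces $H$ to act semisimply: if some $H$-generalized eigenspace $V_\chi$ were not an honest eigenspace, the proper subspace $\bigcap_{h\in H}\ker(\rho(h)-\chi(h))$ would be stable under the stabilizer $G_\chi\leq{}^LT$ of $\chi$, so its ${}^LT$-saturation would be a proper ${}^LT$-invariant subspace of $V$, contradicting irreducibility. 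Transitivity of the $j$-action on the set of occurring characters follows from irreducibility as well.

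Next I would split into cases based on the orbit size. If the orbit has size $1$, then $\rho|_H$ is $\chi$-isotypic of some multiplicity $m$, and $\rho$ factors as $\chi$ (extended to ${}^LT$) tensored with a representation of ${}^LT/H\cong\ZZ/2\ZZ$; irreducibility together with the fact that $\ZZ/2\ZZ$ has only $1$-dimensional irreducible representations forces $m=1$, so $\rho$ is itself $1$-dimensional and we are in case (1), with the condition $\chi(t,z)=\chi(jtj^{-1},\bar z)$ being exactly $j$-stability of $\chi$ on $H$. If the orbit has size $2$, consisting of $\chi_0$ and $\chi_0\circ\mathrm{Ad}(j)$, then Mackey's irreducibility criterion implies $\Ind_H^{{}^LT}\chi_0$ is irreducible of dimension $2$, and Frobenius reciprocity identifies $\rho$ with $\Ind_H^{{}^LT}\chi_0$, yielding case (2).

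The main obstacle I anticipate is the semisimplicity step, since continuous finite-dimensional representations of $W_\CC\cong\CC^\times$ are not in general semisimple. However, the global irreducibility of $\rho$ under the full group ${}^LT$ rules out any nontrivial Jordan block contribution by the Clifford argument sketched above; no input beyond standard semidirect-product representation theory is actually required.
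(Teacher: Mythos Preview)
Your proposal is correct and follows essentially the same Clifford/Mackey approach as the paper. The paper's proof is slightly more streamlined: rather than first establishing full semisimplicity of $\rho|_H$ and then splitting on orbit size, it simply picks one character $\chi_0$ occurring in $\rho|_H$ (which only requires a common eigenvector for the commuting family $\{\rho(h)\}_{h\in H}$, not full semisimplicity), applies Frobenius reciprocity to obtain a nonzero map $\rho \to \Ind_H^{{}^LT}\chi_0$, and then observes that this $2$-dimensional induction is reducible precisely when $\chi_0$ is $j$-fixed. Your extra care about the semisimplicity of $\rho|_H$ is not wasted---the paper's assertion that $\rho|_H$ is a \emph{direct sum} of characters is stronger than what its argument actually uses---but the two proofs are really the same standard index-$2$ argument viewed from slightly different angles.
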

\begin{proof} \quash{Let $\rho:{}^LT\to\GL_{V_\rho}(\mathbb{C})$ be a tempered representation. 
Since $\rho$ is tempered $\rho|_{W_{\CC}}$ is semisimple.  
Decompose $\rho$ into $W_\mathbb{C}$-eigenspaces. Each eigenspace is $\widehat{T}(\mathbb{C})$-invariant. Hence it can be decomposed into irreducible $\widehat{T}(\mathbb{C})$-invariant subspaces. Thus $\rho|_{\widehat{T}(\mathbb{C})\times W_\mathbb{C}}$ is semisimple.  Since $[{}^LT:\widehat{T}(\mathbb{C})\times W_\mathbb{C}]$ is finite and we are considering complex representations, this implies $\rho$ itself is semisimple by a standard argument (see the proof of \cite[Lemma 2.7]{Bushnell:Henniart}).  }
Let $\rho:{}^LT \to \GL_{V_{\rho}}(\CC)$ be irreducible. The restriction $\rho|_{\widehat{T}(\CC) \times W_{\CC}}$ is a direct sum of $1$-dimensional representations.  Let $\chi_0:\widehat{T}(\CC) \times W_{\CC} \to \CC^\times$ be one of these representations; in particular $\chi_0|_{\widehat{T}(\CC)}$ is algebraic. By Frobenius reciprocity, we then have a nonzero intertwining map
$$
\rho \lto \mathrm{Ind}_{\widehat{T}(\CC) \times W_{\CC}}^{{}^LT}(\chi_0).
$$
The representation on the right is two dimensional.  It is reducible if and only if $\chi_0$ is fixed under $W_{\RR}/W_{\CC},$ and hence extends to a homomorphism $\chi$ as in (1).    
\end{proof}    

To prove Theorem \ref{LLC:Lfactor:pole:thm}, by Lemmas \ref{lem:red:torus}  we may assume that $G=T$ is a torus and $\rho$ is irreducible. Moreover, by \cite[Theorem 2]{Casselman:realtori}, we may take
\begin{align}\label{LLC:realtorus:nice}
    T = \mathbb{G}_{m}^a \times (\mathrm{Res}_{\mathbb{C}/\mathbb{R}}\mathbb{G}_{m,\mathbb{C}})^b \times \mathrm{U}_{1}^c
\end{align}
for some $a,b,c\in\mathbb{Z}_{\geq 0}$, where  $\mathrm{U}_1(R):=\{x \in R \otimes_{\RR}\CC : x\overline{x}=1\}$ for $\RR-$algebras $R$. Thus
\begin{align}
    {}^LT=((\CC^\times)^a \times (\CC^\times \times \CC^\times)^b \times (\CC^\times)^c) \rtimes W_{\RR}.
\end{align}
We write elements of this group as $(\underline{t},\underline{(z_1,z_2)},\underline{u}) \rtimes w$, where $w \in W_{\RR}.$ 
One has that
$$
((1,1,1) \rtimes j)((\underline{t},\underline{(z_1,z_2)},\underline{u}) \rtimes w)((1,1,1) \rtimes j^{-1})=(\underline{t},\underline{(z_2,z_1)},\underline{u}^{-1}) \rtimes jwj^{-1}.
$$

\begin{lem} \label{lem:1d}
The $1$-dimensional representations of ${}^LT$ are given by the homomorphisms $\rho_{\ell,\ell',s_0,e}$ indexed by $(\ell,\ell',s_0,e) \in \ZZ^a \times \ZZ^b \times \CC \times \{0,1\}$ defined by stipulating that
\begin{align*}
\rho_{\ell,\ell',s_0,e}((\underline{t},\underline{(z_1,z_2)},\underline{u}) \rtimes z) &=|z|^{s_0}\prod_{i=1}^a t_i^{\ell_i}\prod_{i=1}^b z_{1i}^{\ell_i'}z_{2i}^{\ell_i'},\\
\rho_{\ell,\ell',s_0,e}((1,1,1) \rtimes j)&=(-1)^e.
\end{align*}
The representation $\rho_{\ell,\ell',s_0,e}$ is tempered if and only if $s_0 \in i\RR.$
\end{lem}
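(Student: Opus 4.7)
The plan is to exploit the fact that $\rho$ is one-dimensional, hence abelian. First I would observe that since $\rho$ takes values in $\CC^\times$, the restriction $\chi := \rho|_{\widehat{T}(\CC)}$ is invariant under conjugation by every element of $W_{\RR}$; moreover, by the definition of a representation of ${}^LG$ given earlier in \S\ref{sec:LLC}, $\chi$ is an algebraic character of $\widehat{T}(\CC)$. Using the explicit conjugation formula
\[
((1,1,1) \rtimes j)(\underline{t},\underline{(z_1,z_2)},\underline{u})((1,1,1) \rtimes j)^{-1} = (\underline{t},\underline{(z_2,z_1)},\underline{u}^{-1})
\]
recorded just before the lemma, and noting that $W_{\CC}$ acts trivially on $\widehat{T}(\CC)$, I would identify the $j$-invariant algebraic characters as precisely those of the form $(\underline{t},\underline{(z_1,z_2)},\underline{u}) \longmapsto \prod t_i^{\ell_i} \prod z_{1i}^{\ell_i'} z_{2i}^{\ell_i'}$ for $(\ell, \ell') \in \ZZ^a \times \ZZ^b$. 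The $u$-exponents are forced to vanish because invariance demands $n_i = -n_i$, while the $z_1$- and $z_2$-exponents must coincide.

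Next I would analyze the continuous character $\rho|_{W_{\RR}}$. Its restriction to $W_{\CC} = \CC^\times$ is itself $j$-conjugation invariant (by commutativity of the image), and the only continuous characters of $\CC^\times$ invariant under $z \mapsto \overline{z}$ are of the form $z \longmapsto |z|^{s_0}$ for a unique $s_0 \in \CC$. The relation $j^2 = -1$ then gives $\rho(j)^2 = |-1|^{s_0} = 1$, so $\rho(j) = (-1)^e$ for some $e \in \{0,1\}$. A direct verification that the resulting formulas are consistent with the defining relations of ${}^LT$ (the key compatibility being that $\chi$ is $j$-invariant, so that the interaction of the two restrictions is automatic) shows that the $\rho_{\ell,\ell',s_0,e}$ exhaust the one-dimensional representations.

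For the temperedness claim, I would note that $\rho_{\ell,\ell',s_0,e}(W_{\RR})$ is generated by $(-1)^e$ together with $\{r^{s_0} : r \in \RR_{>0}\}$, and the latter set is bounded in $\CC^\times$ if and only if $\mathrm{Re}(s_0) = 0$, i.e.\ $s_0 \in i\RR$. The only real bookkeeping hazard is tracking the $W_{\RR}$-action on the $\mathrm{U}_1$ factors — one must recall that $\widehat{\mathrm{U}_1}(\CC) = \CC^\times$ with the nontrivial Galois element acting by inversion — but otherwise this is a routine classification of characters of an abelian semidirect product.
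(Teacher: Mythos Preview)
Your proof is correct and follows essentially the same approach as the paper's. The paper's proof is terser: it writes down a general algebraic character of $\widehat{T}(\CC)$ and then invokes Lemma~\ref{LT:Weilform:repn} to conclude that $j$-invariance forces $\ell_i' = \ell_i''$ and $\ell''' = 0$, leaving the determination of $\rho|_{W_\RR}$ and the temperedness claim implicit. You instead argue the $j$-invariance directly and fill in the $W_\RR$ analysis (the classification of $\bar{z}$-invariant characters of $\CC^\times$, the computation $\rho(j)^2 = |{-1}|^{s_0} = 1$, and the boundedness criterion) explicitly; this is the same argument with more of the details written out.
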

\begin{proof} 
A general algebraic homorphism $\widehat{T}(\CC) \to \CC^\times$ is of the form 
$$
(\underline{t},\underline{(z_1,z_2)},\underline{u}) \longmapsto \prod_{i=1}^at_i^{\ell_i}\prod_{i=1}^bz_{1i}^{\ell_i'}z_{2i}^{\ell_i''}\prod_{i=1}^cu_i^{\ell_i'''}
$$
for $(\ell,\ell',\ell'',\ell''') \in \ZZ^a \times \ZZ^b \times \ZZ^b \times \ZZ^c.$
By Lemma \ref{LT:Weilform:repn},  we must have $\ell_i'=\ell_i''$ for all $i$ and $\ell'''=0.$
\end{proof}

Again, applying Lemma \ref{LT:Weilform:repn} and a similar argument, we obtain the following:

\begin{lem} \label{lem:2d} The $2$-dimensional irreducible representations of $^LT$ are given by the induction of the homomorphisms $\rho_{\ell,\ell',\ell'',\ell''',s_0,m}:\widehat{T}(\CC) \times W_\CC \to \CC^\times$ indexed by $(\ell,\ell',\ell'',\ell''', s_0,m) \in \ZZ^a \times \ZZ^b \times \ZZ^b \times \ZZ^c \times \CC \times \ZZ$ given by 
$$
\rho_{\ell,\ell',\ell'',\ell''',s_0,m}((\underline{t},\underline{(z_1,z_2)},\underline{u}) \rtimes z)=|z|_\mathbb{C}^{s_0}(z/|z|^{1/2}_\mathbb{C})^{m}\prod_{i=1}^a t_i^{\ell_i}\prod_{i=1}^b z_{1i}^{\ell_i'}z_{2i}^{\ell_i''}\prod_{i=1}^c u_i^{\ell_i'''},
$$
where $\ell_i' \neq \ell_i''$ for some $i,$ $\ell_j''' \neq 0$ for some $j,$ or $m \neq 0$. The representation is tempered if and only if $s_0\in i\RR$. \qed
\end{lem}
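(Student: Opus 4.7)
The plan is to follow the template of Lemma \ref{lem:1d}, invoking part (2) of Lemma \ref{LT:Weilform:repn} in place of part (1). Every $2$-dimensional irreducible representation of ${}^LT$ arises as $\mathrm{Ind}_{\widehat{T}(\CC)\times W_\CC}^{{}^LT}(\chi_0)$, where $\chi_0:\widehat{T}(\CC)\times W_\CC\to\CC^\times$ is a quasi-character with $\chi_0|_{\widehat{T}(\CC)}$ algebraic, subject only to the condition $\chi_0\neq\chi_0^j$, where $\chi_0^j(t,z):=\chi_0(jtj^{-1},\bar z)$. The task therefore reduces to enumerating such $\chi_0$, recasting the non-fixedness condition as the displayed disjunction, and identifying the temperedness condition.

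Repeating the parametrization from the proof of Lemma \ref{lem:1d}, an algebraic character of $\widehat{T}(\CC)$ takes the form $(\underline t,\underline{(z_1,z_2)},\underline u)\mapsto \prod t_i^{\ell_i}\prod z_{1i}^{\ell_i'}z_{2i}^{\ell_i''}\prod u_i^{\ell_i'''}$ for a unique tuple $(\ell,\ell',\ell'',\ell''')\in\ZZ^a\times\ZZ^b\times\ZZ^b\times\ZZ^c$; since we no longer require $\chi_0$ to be $j$-invariant, nothing forces $\ell_i'=\ell_i''$ or $\ell_i'''=0$. Every continuous quasi-character of $W_\CC\cong\CC^\times$ has a unique expression $z\mapsto|z|_\CC^{s_0}(z/|z|_\CC^{1/2})^m$ with $(s_0,m)\in\CC\times\ZZ$. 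Combining these two parametrizations produces the family $\rho_{\ell,\ell',\ell'',\ell''',s_0,m}$.

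Using the conjugation rule $j(\underline t,\underline{(z_1,z_2)},\underline u)j^{-1}=(\underline t,\underline{(z_2,z_1)},\underline u^{-1})$ recorded before Lemma \ref{lem:1d}, together with $\bar z=|z|_\CC/z$ for $z\in W_\CC$, a short computation yields
\[
\chi_0^j\bigl((\underline t,\underline{(z_1,z_2)},\underline u)\rtimes z\bigr)=|z|_\CC^{s_0+m/2}\,z^{-m}\prod_{i=1}^a t_i^{\ell_i}\prod_{i=1}^b z_{1i}^{\ell_i''}z_{2i}^{\ell_i'}\prod_{i=1}^c u_i^{-\ell_i'''}.
\]
Comparing with $\chi_0$, equality $\chi_0=\chi_0^j$ holds precisely when $\ell_i'=\ell_i''$ for all $i$, $\ell_i'''=0$ for all $i$, and $|z|_\CC^m=z^{2m}$ identically in $z\in\CC^\times$; the last condition reads $(\bar z/z)^m=1$ identically, which forces $m=0$. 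Negating this conjunction recovers the stated disjunction $\ell_i'\neq\ell_i''$ for some $i$, $\ell_j'''\neq 0$ for some $j$, or $m\neq 0$.

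Finally, for temperedness the restriction of $\mathrm{Ind}(\chi_0)$ to $W_\CC$ decomposes as $\chi_0|_{W_\CC}\oplus\chi_0^j|_{W_\CC}$, and both summands have absolute value $|z|_\CC^{\mathrm{Re}(s_0)}$ on $W_\CC$. Hence the image of $W_\RR$ is bounded if and only if $\mathrm{Re}(s_0)=0$, i.e.\ $s_0\in i\RR$. The argument is elementary bookkeeping once the parametrization of quasi-characters of $\CC^\times$ and the Galois action are in hand; the only subtlety is tracking the normalizing exponent $m/2$ carefully so that the $j$-twisting condition is read off correctly.
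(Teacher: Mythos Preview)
Your proof is correct and follows precisely the approach the paper indicates: the paper's own ``proof'' is merely the sentence ``Again, applying Lemma~\ref{LT:Weilform:repn} and a similar argument, we obtain the following,'' so you have supplied exactly the details that were left implicit. Your computation of $\chi_0^j$, the identification of the condition $\chi_0\neq\chi_0^j$ with the stated disjunction, and the temperedness argument via $|\chi_0(z)|=|z|_\CC^{\mathrm{Re}(s_0)}$ are all correct.
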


 Consider the following $L$-parameters:
\[
\begin{array}{l|l}
\hline
T &  W_\RR \lto {}^LT \\
\hline
\GG_m & \chi_\epsilon(z) = 1 \rtimes z, \quad \chi_\epsilon(j) = (-1)^\epsilon \rtimes j \text{ for}\;\epsilon \in \{0,1\}. \\
\mathrm{Res}_{\CC/\RR}\GG_{m,\CC} & 
\chi_k(z) = \Bigl( \bigl(\tfrac{z}{(z\overline{z})^{1/2}}\bigr)^k, \bigl(\tfrac{z}{(z\overline{z})^{1/2}}\bigr)^{-k} \Bigr)\rtimes z, \quad  \chi_k(j) = \Bigl((-1)^k,1\Bigr)\rtimes j\text{ for }\; k \in \ZZ .\\
\mathrm{U}_1 & \chi_k(z) = \bigl(\tfrac{z}{(z\overline{z})^{1/2}}\bigr)^k \rtimes z, \quad
\chi_k(j) = 1\rtimes j \text{ for }\; k \in \ZZ. \\
\hline
\end{array}
\]
\quash{
}
The following lemma can be extracted from \cite[\S 6]{Casselman:realtori}: 
\begin{lem} \label{lem:reps} Let $T \in \{\GG_m,\;\mathrm{Res}_{\CC/\RR}\GG_{m,\CC},\;\mathrm{U}_1\}.$ The $L$-parameters in the table above provide a complete set of representatives for $\Phi_t(T)/i\mathfrak{a}_{T}^*.$  
\qed
\end{lem}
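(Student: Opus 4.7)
The plan is to exploit the local Langlands correspondence for tori \eqref{LLC:LCT} together with its equivariance \eqref{LLC:UT} under the action of $\mathfrak{a}_{T\CC}^*$. These reduce the classification of $\Phi_t(T)/i\mathfrak{a}_T^*$ to the classification of continuous unitary characters of $T(\RR)$ modulo the real one-parameter family of twists $g\mapsto e^{i\langle\lambda,H_T(g)\rangle}$ for $\lambda\in\mathfrak{a}_T^*.$ We then handle the three tori separately.

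For $T=\GG_m$, one has $T(\RR)=\RR^\times$ and every continuous unitary character takes the form $\mathrm{sgn}^{\epsilon}|\cdot|^{it}$ for some $(\epsilon,t)\in\{0,1\}\times\RR$. The $i\mathfrak{a}_T^*$-action absorbs the $|\cdot|^{it}$-factor, so the representatives are indexed by $\epsilon\in\{0,1\}$, matching the $\chi_\epsilon$ in the table. For $T=\mathrm{Res}_{\CC/\RR}\GG_{m,\CC}$, Shapiro's lemma identifies $\Phi_t(T)$ with continuous unitary characters $\chi$ of $W_\CC\cong\CC^\times$; every such $\chi$ equals $z\mapsto (z/(z\overline z)^{1/2})^{k}|z|_\CC^{it}$ for some $(k,t)\in\ZZ\times\RR$. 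Since the maximal split subtorus of $T$ is the diagonal $\GG_m$, the $i\mathfrak{a}_T^*$-twists shift $t$ over all of $\RR$, leaving $k\in\ZZ$ as the indexing parameter. For $T=\mathrm{U}_1$, the torus is anisotropic, so $\mathfrak{a}_T^*=0$ and we simply list continuous unitary characters of $T(\RR)\cong S^1$, which are indexed by $k\in\ZZ$.

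The remaining step is to verify that the explicit $L$-parameters in the table correspond under class field theory to the characters enumerated above. This is a direct bookkeeping exercise: for each representative one computes the image under the projection to $\widehat{T}(\CC)$ on $W_{\CC}$ and at $j,$ and then compares with the formulas in the table. The only mild subtlety is tracking the normalizations of $|\cdot|_\RR$ versus $|\cdot|_\CC$ and the convention $j^2=-1$ on $W_\RR$. An alternative, which avoids the class field theory translation, is to parametrize tempered $L$-parameters directly on the Galois side via Lemmas \ref{lem:1d} and \ref{lem:2d} (or by invoking \cite[\S 6]{Casselman:realtori}): solving the cocycle condition $\phi(j)\phi(z)\phi(j)^{-1}=\phi(\bar z)$ for $z\in W_\CC$ recovers exactly the table, with the $i\mathfrak{a}_T^*$-action removing the one continuous parameter in the $W_\CC$-restriction.
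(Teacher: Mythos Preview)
Your proposal is correct. The paper does not actually prove this lemma: it simply asserts that the statement ``can be extracted from \cite[\S 6]{Casselman:realtori}'' and marks it with an immediate \qedsymbol. Your argument via \eqref{LLC:LCT} and \eqref{LLC:UT}, reducing to unitary characters of $T(\RR)$ modulo unramified unitary twists, is exactly the standard route (and is what underlies the cited reference); the case analysis and the final bookkeeping are accurate.
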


\begin{proof}[Proof of Theorem \ref{LLC:Lfactor:pole:thm}] 
We translated the theorem to a statement about $L$-parameters above the statement of Lemma \ref{lem:finite:torus}.
By Lemma \ref{lem:red:torus}, it suffices to consider the case where $G$ is a torus.  We may also assume $\rho$ is irreducible. 

Let $\mathrm{p}:({}^L\GG_m)^a \times ({}^L\mathrm{Res}_{\CC/\RR}\GG_{m\CC})^b \times ({}^L\mathrm{U}_1)^c \to \widehat{T}(\CC)$ be the set-theoretic projection; it is not a homomorphism.
Let
\begin{align}\label{LLCa:pole:D}
    \mathcal{D} := \left\{\phi_{\underline{\epsilon},\underline{k},\underline{k'}}: (\underline{\epsilon},\underline{k},\underline{k}') \in \{0,1\}^a \times \mathbb{Z}^b \times \mathbb{Z}^c\right\},
\end{align}
where 
\begin{align*}
\phi_{\underline{\epsilon},\underline{k},\underline{k'}}(w)= \mathrm{p}\left( (\chi_{\epsilon_1}(w),\ldots,\chi_{\epsilon_a}(w),\chi_{k_1}(w),\ldots,\chi_{k_b}(w),\chi_{k'_1}(w),\ldots,\chi_{k'_c}(w))\right) \rtimes w
\end{align*}
for $w \in W_\RR.$
This is a set of representatives for $\Phi_t(T)/i\mathfrak{a}_T^*$ by Lemma \ref{lem:reps}.
The set $\mathcal{D}$ is stable under precomposing with $C_{W_\mathbb{R}}$ defined as in Theorem \ref{thm:contra}.   

In view of Lemma \ref{LT:Weilform:repn}, the only possibilities for irreducible representations of ${}^LT$ are listed in Lemma \ref{lem:1d} and Lemma \ref{lem:2d}.  
 Assume $\rho = \rho_{\ell,\ell',s_0,e}$ is one dimensional. Then for all $\phi \in \mathcal{D},$
\begin{align} \label{expl:1d}
    \rho\circ \phi(z)=|z|_{\CC}^{s_0}, \quad \rho\circ \phi(j)=(-1)^{e_{\rho,\phi}}
\end{align}
for some $e_{\rho,\phi}\in\{0,1\}$, so that $ L(s,\phi,\rho)= \pi^{-\frac{s+s_0+e_{\rho,\phi}}{2}} \Gamma\left(\frac{s+s_0+e_{\rho,\phi}}{2}\right)$. Take
\begin{align*}
    p_0(s) := \prod_{n\in [-\beta,0]\cap 2\mathbb{Z}} (s+s_0-n) \quad \textrm{and} \quad
    p_1(s) := \prod_{n\in [-\beta,0]\cap (1+2\mathbb{Z})} (s+s_0-n).
\end{align*}
Then $p_{e_{\rho,\phi}}(s)L(s,\phi,\rho)$
has no pole and is nonvanishing in $V_{[-\beta,\beta]}$ for all $\phi\in\mathcal{D}.$ In this case the finite set $\mathcal{P}(G,\beta) = \{p_0,p_1\}$ satisfies the assertion of Theorem \ref{LLC:Lfactor:pole:thm}.

Now, let $\rho_0=\rho_{\ell,\ell',\ell'',\ell''',s_0,m}$ and consider $\left(\mathrm{Ind}_{\widehat{T}(\CC) \times W_{\CC}}^{{}^LT} \rho_0\right) \circ \phi$ for $\phi \in \mathcal{D}.$  One checks that
$$
\left(\mathrm{Ind}_{\widehat{T}(\CC) \times W_{\CC}}^{{}^LT} \rho_{0}\right) \circ \phi=\mathrm{Ind}_{W_{\CC}}^{W_{\RR}} \left(\rho_{0} \circ \phi|_{W_{\CC}}\right).
$$
Since $L$-factors are invariant under induction we deduce that
\begin{align} \label{expl:2d} \begin{split}
L(s,\phi_{\underline{\epsilon},\underline{k},\underline{k'}},\mathrm{Ind}_{\widehat{T}(\CC) \times W_{\CC}}^{{}^LT} \rho_{0})&=L(s,\rho_0 \circ \phi_{\underline{\epsilon},\underline{k},\underline{k'}}|_{W_\CC})
\\&=2(2\pi)^{-s-s_0-f_{\rho,\phi}}\Gamma\left(s+s_0+ f_{\rho,\phi}\right) \end{split}
\end{align}
where 
\begin{align}\label{rhophi:exp:C}
    f_{\rho,\phi}:=\frac{1}{2}\left|m+\sum\limits_{i=1}^bk_i(\ell'_i-\ell''_i)+\sum\limits_{i=1}^ck'_i\ell'''_i\right|.
\end{align}
For $k\in\tfrac{1}{2}\mathbb{Z}_{\geq 0}$, set
\begin{align*}
    p_k(s) := \prod_{n\in [-\beta,-k]\cap \mathbb{Z}}(s+s_0-n).
\end{align*}
For all but finitely many $k\in \tfrac{1}{2}\mathbb{Z}_{\geq 0}$, we have $p_k(s)\equiv 1$. By construction we see
\begin{align*}
    p_{f_{\rho,\psi}}(s)L(s,\phi_{\underline{\epsilon},\underline{k},\underline{k'}},\mathrm{Ind}_{\widehat{T}(\CC) \times W_{\CC}}^{{}^LT} \rho_{0})
\end{align*}
has no pole and is nonvanishing in $V_{[-\beta,\beta]}$ for all $\phi\in\mathcal{D}.$ In this case the finite set $\mathcal{P}(G,\beta) = \{p_k\mid k\in\tfrac{1}{2}\mathbb{Z}_{\geq 0}\}$ satisfies the assertion of Theorem \ref{LLC:Lfactor:pole:thm} for all $\phi \in \mathcal{D}$.
\end{proof}

\begin{cor}\label{LLC:Lfactor:pole:cor}  There exists 
\begin{itemize}
    \item a set $\mathcal{D}$ of representatives of $\Pi_2(G)/i\mathfrak{a}_{G}^*$  stable under contragredient, and
    \item finite sets of polynomials $\mathcal{P}(G,\mathcal{K})$ on $\mathfrak{a}_{G\CC}^*$ for each compact subset $\mathcal{K} \subset \mathfrak{a}_G^*$
\end{itemize}
    such that following holds: for all $\sigma \in \mathcal{D}$ there exists a $p \in \mathcal{P}(G,\mathcal{K})$ such that $p(\lambda) L(\tfrac{1}{2},\sigma_{\lambda},\rho)$
has neither pole nor zero on the cylinder $V_{\mathcal{K}}.$ 
\end{cor}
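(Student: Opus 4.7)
The plan is to reduce the multi-dimensional cylinder statement of the corollary to the one-dimensional strip statement of Theorem \ref{LLC:Lfactor:pole:thm} by exploiting that, for an irreducible sub-representation of $\rho$, the $\lambda$-twist of $\sigma$ produces only a linear shift of the spectral parameter.

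Since $\rho$ is tempered it is semisimple, so I would decompose $\rho = \bigoplus_{i=1}^N \rho_i$ into irreducible sub-representations of ${}^LG$. For each $i$, the $L$-parameter $|\cdot|^\lambda$ lies in the center of ${}^LG$: indeed, by \eqref{lambda:2:ways} it factors through $Z_{\widehat{G}}^\circ$, and it is $W_F$-fixed because $\lambda \in X^*(G) \otimes \CC$ is $W_F$-fixed by the very definition of $X^*(G)$ as $F$-rational characters. Schur's lemma together with the algebraicity of $\rho_i|_{\widehat{G}(\CC)}$ then yields a $\CC$-linear functional $\mu_i:\mathfrak{a}_{G\CC}^* \to \CC$, real-valued on $\mathfrak{a}_G^*$, satisfying $\rho_i(|\cdot|^\lambda) = |\cdot|^{\mu_i(\lambda)}\,\mathrm{id}_{V_{\rho_i}}$. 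Combined with \eqref{LLC:UT}, this gives the key identity
\begin{align*}
L\!\left(\tfrac{1}{2}, \sigma_\lambda, \rho_i\right) = L\!\left(\tfrac{1}{2} + \mu_i(\lambda), \sigma, \rho_i\right)
\end{align*}
for all $\sigma \in \Pi_2(G)$ and all $\lambda \in \mathfrak{a}_{G\CC}^*$.

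Given a compact $\mathcal{K} \subset \mathfrak{a}_G^*$, I would choose $\beta = \beta(\mathcal{K}) > 0$ so that $\tfrac{1}{2} + \mu_i(V_\mathcal{K}) \subset V_{[-\beta, \beta]}$ for each $i$; this is possible because each $\mu_i$ is linear and real on $\mathfrak{a}_G^*$, $\mathcal{K}$ is compact, and $N$ is finite. Applying Theorem \ref{LLC:Lfactor:pole:thm} to each $\rho_i$ with this $\beta$ furnishes a set $\mathcal{D}$ of representatives of $\Pi_2(G)/i\mathfrak{a}_G^*$, stable under contragredient, together with finite polynomial sets $\mathcal{P}_i(G, \beta) \subset \CC[s]$. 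A single $\mathcal{D}$ can be used for all $i$, since after the torus reduction of Lemmas \ref{lem:finite:torus} and \ref{lem:red:torus} the explicit construction \eqref{LLCa:pole:D} of $\mathcal{D}$ depends only on $G$ and not on the representation being studied.

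Finally, I would set
\begin{align*}
\mathcal{P}(G, \mathcal{K}) := \left\{\prod_{i=1}^N p_i\!\left(\tfrac{1}{2} + \mu_i(\lambda)\right) : (p_1, \ldots, p_N) \in \prod_{i=1}^N \mathcal{P}_i(G, \beta)\right\},
\end{align*}
a finite set of polynomials on $\mathfrak{a}_{G\CC}^*$. For $\sigma \in \mathcal{D}$, choose $p_i \in \mathcal{P}_i(G, \beta)$ so that $p_i(s)L(s, \sigma, \rho_i)$ is holomorphic and nonvanishing on $V_{[-\beta, \beta]}$; the factorization $L(\tfrac{1}{2}, \sigma_\lambda, \rho) = \prod_i L(\tfrac{1}{2}, \sigma_\lambda, \rho_i)$ combined with the key identity then shows that the corresponding element of $\mathcal{P}(G, \mathcal{K})$ times $L(\tfrac{1}{2}, \sigma_\lambda, \rho)$ has neither pole nor zero on $V_\mathcal{K}$. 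The only nontrivial point is the independence of $\mathcal{D}$ from the choice of irreducible summand $\rho_i$, which is the bookkeeping step referenced above; once that is secured, the remainder of the argument is a direct pullback.
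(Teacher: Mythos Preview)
Your proposal is correct and follows essentially the same approach as the paper's proof: both use the linear functional $\mu$ (or $\mu_i$) arising from Schur's lemma to rewrite $L(\tfrac{1}{2},\sigma_\lambda,\rho_i)=L(\tfrac{1}{2}+\mu_i(\lambda),\sigma,\rho_i)$, choose $\beta$ so that $\tfrac{1}{2}+\mu_i(\mathcal{K})\subset V_{[-\beta,\beta]}$, and then pull back the polynomials of Theorem~\ref{LLC:Lfactor:pole:thm} via $s\mapsto\tfrac{1}{2}+\mu_i(\lambda)$. The only difference is cosmetic: the paper begins with ``we may assume $\rho$ is irreducible'' and leaves the product construction implicit, whereas you spell out the decomposition and the product over $i$, correctly noting along the way that the set $\mathcal{D}$ of \eqref{LLCa:pole:D} is independent of the representation $\rho_i$.
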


Here $V_{\mathcal{K}}$ is defined as in \eqref{cylinder}.

\begin{proof} We may assume $\rho$ is irreducible. Retain the notation in the proof of Lemma \ref{LLC:gamma:rationality:1} (view $M$ therein as $G$ here). Then
\begin{align*}
    L(\tfrac{1}{2},\sigma_{\lambda},\rho) = L(\tfrac{1}{2}+\mu(\lambda),\sigma,\rho).
\end{align*} 
Take $\beta>0$ such that $\{\frac{1}{2}+\mu(\lambda)\mid \lambda\in\mathcal{K}\}\subseteq V_{[-\beta,\beta]}.$  Let $\mathcal{D}$ and  $\mathcal{P}(G,\beta)\subseteq \mathbb{C}[s]$ be as in Theorem \ref{LLC:Lfactor:pole:thm}. Then $\mathcal{P}(G,\mathcal{K}):=\{\lambda\mapsto p(\frac{1}{2}+\mu(\lambda))\mid p\in \mathcal{P}(G,\beta)\}$ is a finite set of polynomials on $\mathfrak{a}_{G\CC}^*$ satisfying the statement of the corollary. 
\end{proof}

\subsection{Bounds for $\gamma$-factors}\label{LLCa:gamma:bound} We continue to take $F=\mathbb{R}.$  We assume $G$ admits a maximal torus $T$ such that $T/A_G$ is anisotropic, and that we have a tempered representation
$$
\rho:{}^LG \lto \GL_{V_\rho}(\CC).
$$
A well-known theorem of Harish-Chandra implies that this is equivalent to the statement that $\Pi_2(G) \neq \emptyset.$  For a parameter $\phi:W_\mathbb{R}\to {}^LG$, the restriction $\phi\vert_{W_\mathbb{C}}$  is $\widehat{G}(\mathbb{C})$-conjugate to $z\mapsto (z^\lambda\overline{z}^{\mu},z)$ for some $\lambda,\mu\in X_*(\widehat{T})_\mathbb{C}$ such that $\lambda-\mu\in X_*(\widehat{T})$. Let $|\cdot|$ be a Weyl group invariant norm on $X_*(\widehat{T})_\mathbb{C}$ and set
\begin{align*}
    N(\phi) := 1 + |\lambda|.
\end{align*}
Recall the definition of $N(\sigma)$ from \eqref{Nsig}.  
\begin{lem} \label{lem:N:same}
If $|\cdot|$ is normalized appropriately then $N(\sigma)=N(\mathrm{LL}_G(\sigma))$  for all $\sigma \in \Pi_2(G).$ 
\end{lem}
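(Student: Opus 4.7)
The plan is to match the parameter $\lambda$ from the lemma with the infinitesimal character $\chi_\sigma$ via the standard dictionary of the Archimedean local Langlands correspondence. Let $\mathfrak{h}_\CC$ denote the complexified Lie algebra of the torus $T$ in the hypothesis. There is a canonical $W$-equivariant identification
\begin{align*}
\mathfrak{h}^*_\CC \;=\; X^*(T) \otimes_\ZZ \CC \;=\; X_*(\widehat{T}) \otimes_\ZZ \CC \;=\; X_*(\widehat{T})_\CC.
\end{align*}
I would choose the Weyl-invariant norm on $X_*(\widehat{T})_\CC$ used to define $N(\phi)$ to agree, under this identification, with the Weyl-invariant norm on $\mathfrak{h}^*_\CC$ that \cite[\S 2.2]{BP:local:GGP} uses to define $|\chi_\sigma|$.

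With that choice, it suffices to show $|\lambda| = |\chi_\sigma|$ for $\phi = \mathrm{LL}_G(\sigma)$. First I would invoke Langlands' classification of discrete $L$-parameters \cite{Langlands:ArchLLC}: for $\sigma \in \Pi_2(G)$ with Harish-Chandra parameter $\lambda_{\mathrm{HC}} \in \mathfrak{h}^*_\CC$, the restriction $\mathrm{LL}_G(\sigma)|_{W_\CC}$ is $\widehat{G}(\CC)$-conjugate to a homomorphism of the form $z \mapsto (z^{\lambda_{\mathrm{HC}}} \overline{z}^{\mu_{\mathrm{HC}}}, z)$ for a suitable $\mu_{\mathrm{HC}}$; hence the parameter $\lambda$ appearing in the lemma is $W$-conjugate to $\lambda_{\mathrm{HC}}$. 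Second, by Harish-Chandra's theorem the infinitesimal character $\chi_\sigma$ is represented by $\lambda_{\mathrm{HC}}$ in $\mathfrak{h}^*_\CC/W$. Since the chosen norm is $W$-invariant, $|\lambda| = |\lambda_{\mathrm{HC}}| = |\chi_\sigma|$, and therefore $N(\sigma) = N(\phi)$.

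The main (essentially notational) obstacle is verifying that the conventions governing the Harish-Chandra parameter, the infinitesimal character, and the restriction of the $L$-parameter to $W_\CC$ are compatible in such a way that the three vectors $\lambda$, $\lambda_{\mathrm{HC}}$, and $\chi_\sigma$ lie in a common $W$-orbit (rather than, say, differing by a $\rho$-shift). This is built into Langlands' construction and requires no new argument beyond a careful reading of loc.~cit.
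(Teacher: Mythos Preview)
Your proposal is correct and takes essentially the same approach as the paper: both arguments reduce to the compatibility of the Archimedean local Langlands correspondence with infinitesimal characters. The paper's proof is a single sentence citing \cite[\S 2.3]{Buzzard:Gee} for precisely this compatibility, whereas you have unwound the content of that reference explicitly.
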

\begin{proof}
    This follows from the behavior of the local Langlands correspondence with respect to infinitesimal characters, see \cite[\S 2.3]{Buzzard:Gee}.    
\end{proof}
We point out that $N(\phi)$ and $N(\sigma)$ depend on a choice of norm.  These choices are irrelevant for our purposes since we will only use these quantities to study coarse growth rates of functions.

\begin{lem} \label{lem:norm:rho} One has that $N(\rho\circ \phi) \ll_{\rho} N(\phi)$ for $\phi \in \Phi_t(G).$ \qed
\end{lem}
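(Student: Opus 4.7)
The plan is to make $\phi|_{W_\CC}$ explicit and track the ``infinitesimal'' data of $\rho\circ\phi$ through the composition. After $\widehat{G}(\CC)$-conjugation we may assume $\phi|_{W_\CC}(z)=(z^\lambda \overline{z}^\mu, z)$, so that
\[
(\rho\circ\phi)(z) = \rho\bigl((z^\lambda \overline{z}^\mu, 1)\bigr)\cdot \rho\bigl((1,z)\bigr),\qquad z\in W_\CC.
\]
The key observation is that the two factors commute for every $z$. Indeed, the $W_F$-action on $\widehat{G}(\CC)$ defining the semidirect product factors through $\Gal(\overline{F}/F)$; for $F=\RR$, $W_\CC$ is the kernel of $W_\RR\to\Gal(\CC/\RR)$ and hence acts trivially on $\widehat{G}(\CC)$, so $(1,z)(t,1)(1,z)^{-1}=(t,1)$ for all $t\in\widehat{G}(\CC)$.

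Next I would verify that each factor is semisimple so that the product is simultaneously diagonalizable. The first factor factors through $\widehat{T}(\CC)$, on which $\rho$ decomposes as a direct sum of algebraic characters $\chi_1,\dots,\chi_r\in X^*(\widehat{T})$, so it is semisimple. The second, $\rho|_{W_\CC}$, has relatively compact image because $\rho$ is tempered, and is therefore also semisimple. A commuting pair of semisimple families of operators on $V_\rho$ admits a common eigenbasis, so after conjugating $\rho$ inside $\GL_{V_\rho}(\CC)$ we may assume both factors take values in the diagonal torus $\widehat{T}'\leq\GL_{V_\rho}$ used to compute $N(\rho\circ\phi)$, with
\[
\rho\bigl((z^\lambda\overline{z}^\mu,1)\bigr) = \mathrm{diag}\bigl(z^{\langle\chi_i,\lambda\rangle}\overline{z}^{\langle\chi_i,\mu\rangle}\bigr),\qquad \rho\bigl((1,z)\bigr) = \mathrm{diag}\bigl(z^{a_j}\overline{z}^{b_j}\bigr),
\]
where the $\chi_i$ are the weights of $\rho|_{\widehat{T}(\CC)}$ counted with multiplicity and the pairs $(a_j,b_j)\in\CC^2$ are the exponents of $\rho|_{W_\CC}$; both datasets depend only on $\rho$.

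Multiplying the diagonal matrices gives $(\rho\circ\phi)(z) = \mathrm{diag}(z^{\lambda'_k}\overline{z}^{\mu'_k})$ with $\lambda'_k = \langle \chi_{i(k)},\lambda\rangle + a_{j(k)}$ for appropriate indices $i(k),j(k)$. Any Weyl-invariant norm on $X_*(\widehat{T}')_\CC$ is equivalent to the maximum of coordinate absolute values, so
\[
|\lambda'| \ll \max_k |\lambda'_k| \leq \max_i |\chi_i|\cdot |\lambda| + \max_j |a_j| \ll_\rho 1 + |\lambda| = N(\phi),
\]
which yields $N(\rho\circ\phi) = 1 + |\lambda'|\ll_\rho N(\phi)$, as required.

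The main obstacle is the simultaneous diagonalization step, which rests on two inputs: the temperedness of $\rho$ (forcing $\rho|_{W_\CC}$ into a compact and therefore unitarizable subgroup of $\GL_{V_\rho}(\CC)$) and the triviality of the $W_\CC$-action on $\widehat{G}(\CC)$ in the $L$-group (ensuring that the two factors commute, not merely at one $z$ but jointly across all $z$). Once both are in hand, the finiteness of the weight set of $\rho|_{\widehat{G}(\CC)}$ delivers the uniform constant $\ll_\rho$.
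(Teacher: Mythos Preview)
Your argument is correct. The paper gives no proof at all: the lemma is stated and immediately marked \qed, so the authors regard it as routine. Your write-up is a correct and careful unpacking of that routine verification---the commutativity of the two factors (via triviality of the $W_\CC$-action on $\widehat{G}(\CC)$), the semisimplicity of each (algebraicity on $\widehat{T}$, temperedness on $W_\CC$), and the resulting simultaneous diagonalization are exactly the ingredients one needs, and your bound on $|\lambda'_k|$ is the expected one.

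One small cosmetic point: once you have fixed a common eigenbasis, the indices $i$ and $j$ are really the same index $k$, so writing $a_{j(k)}$ is slightly awkward---you could simply write $\lambda'_k = \langle\chi_k,\lambda\rangle + a_k$, where $\chi_k$ is the $\widehat{T}$-weight and $(a_k,b_k)$ the $W_\CC$-exponent of the $k$-th basis vector. This does not affect correctness.
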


In the remainder of this subsection, we prove some bounds for later use.  The following is \cite[\S III.5, Lemma 3 and Lemma 5]{Moreno:Advanced:analytic:number:theory}: 

\begin{lem}\label{Moreno:gamma:bound}  Let $d\in\{1,2\}$. For $Q \geq 0$ and $-\tfrac{1}{2}\leq\mathrm{Re}(s)\leq\tfrac{1}{2}$ one has that
\begin{align*}
\left|\frac{\Gamma\left(\frac{d}{2}\left(Q+1-s\right)\right)}{\Gamma\left(\frac{d}{2}\left(Q+s\right)\right)} \right| \leq \left(\tfrac{d}{2}\left|Q+1+s\right|\right)^{d/2-d\mathrm{Re}(s)}.
\end{align*} \qed
\end{lem}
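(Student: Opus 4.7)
The plan is to prove this via a careful application of Stirling's formula with explicit error control. Set $a := \tfrac{d}{2}(Q+1-s)$ and $b := \tfrac{d}{2}(Q+s)$, so $a-b = \tfrac{d}{2}(1-2s)$ and $a+b = \tfrac{d}{2}(2Q+1)$. Under the hypothesis $\mathrm{Re}(s) \in [-\tfrac{1}{2}, \tfrac{1}{2}]$ and $Q \geq 0$, we have $\mathrm{Re}(a) \geq d/4$ and $\mathrm{Re}(b) \geq -d/4$. The target exponent $d/2 - d\mathrm{Re}(s)$ is exactly $\mathrm{Re}(a - b)$, while $\tfrac{d}{2}|Q+1+s|$ is comparable to $|a + \tfrac{d}{2}(1+s)/\ldots|$; more usefully, it is the modulus of a point between $a$ and $b$ shifted by $\tfrac{d}{2}$.

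First I would treat $d = 2$, the easier case, by applying Binet's formula
\[
\log \Gamma(z) = (z - \tfrac{1}{2})\log z - z + \tfrac{1}{2}\log(2\pi) + J(z),
\]
where $J(z)$ is holomorphic and bounded in any right half-plane $\mathrm{Re}(z) \geq \delta > 0$ and decays like $O(1/|z|)$ at infinity. I would apply it to $\Gamma(Q+1-s)$ directly, but for $\Gamma(Q+s)$ I would first use the functional equation $\Gamma(Q+s) = \Gamma(Q+s+1)/(Q+s)$ to shift the argument into the region where Binet applies uniformly. Taking the logarithmic difference produces a main term $(\tfrac{1}{2} - s)\log(Q+1+s)$ (after identifying $Q+1+s$ as the natural midpoint between the two arguments) plus a holomorphic correction that is $O(1)$ uniformly in $Q$. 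Taking real parts and exponentiating yields the claimed inequality in the form of a bound $\leq C\,|Q+1+s|^{1 - 2\mathrm{Re}(s)}$; the main obstacle is showing the constant $C$ can be taken to be $1$ as asserted.

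To sharpen $C$ to $1$, I would combine two ingredients. For large $Q$, the error terms from Binet are arbitrarily small, so the bound follows asymptotically. For small $Q$, where the asymptotic estimate is not sufficient, I would use the convexity of $\log|\Gamma|$ on vertical lines (a consequence of the Hadamard product expansion) combined with the trivial identity at $\mathrm{Re}(s) = \tfrac{1}{2}$, where both sides equal $1$ and equality holds. A Phragmén-Lindelöf type interpolation in the variable $s$ on the strip $-\tfrac{1}{2} \leq \mathrm{Re}(s) \leq \tfrac{1}{2}$ should then propagate the asymptotic bound to all $Q \geq 0$.

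Finally, I would deduce the $d = 1$ case from the $d = 2$ case using the Legendre duplication formula
\[
\Gamma(2z) = \frac{2^{2z - 1}}{\sqrt{\pi}}\,\Gamma(z)\Gamma(z + \tfrac{1}{2}),
\]
which relates $\Gamma(\tfrac{1}{2}(Q+1-s))$ and $\Gamma(\tfrac{1}{2}(Q+s))$ to products of $\Gamma$'s at arguments shifted by $\tfrac{1}{2}$, effectively converting the problem to two instances of the $d=2$ bound. The hardest step I expect is the uniformity in $Q$ near $Q = 0$, where the ratio can behave irregularly and where Stirling's asymptotic bounds are weakest; this is the step where a direct reference to Moreno's book becomes attractive, since the bookkeeping of absolute constants there is already done.
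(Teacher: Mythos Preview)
The paper does not prove this lemma at all: it is stated as a direct citation of \cite[\S III.5, Lemma 3 and Lemma 5]{Moreno:Advanced:analytic:number:theory} and marked with \qedsymbol. So there is no ``paper's own proof'' to compare against; the authors simply import the result.

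Your outline is a reasonable strategy for proving such a bound from scratch, and the ingredients you name (Binet's formula, the functional equation to shift arguments into a half-plane where Stirling applies uniformly, the duplication formula to pass between $d=1$ and $d=2$) are the standard ones. However, as you yourself flag, the passage from a bound with \emph{some} constant $C$ to the sharp constant $C=1$ is not justified in your sketch. The Phragm\'en--Lindel\"of idea is underspecified: the right-hand side $(\tfrac{d}{2}|Q+1+s|)^{d/2 - d\,\mathrm{Re}(s)}$ is not the modulus of a holomorphic function of $s$, so one cannot directly apply the three-lines lemma to the quotient. One would need to replace it by a genuine holomorphic comparison function and then recover the stated form, which requires additional work you have not indicated. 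Your closing remark that ``a direct reference to Moreno's book becomes attractive'' is exactly what the paper does, and is the honest resolution here.
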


\begin{cor}\label{cor:gamma:bound}  Let $d\in\{1,2\},\,Q \in \frac{1}{2}\ZZ_{\geq 0}$ and $n\in\mathbb{Z}_{>0}$. Let $p,p^\vee\in\mathbb{C}[s]$ be the monic polynomials such that $p(s)\Gamma\left(\frac{d}{2}\left(Q+ \tfrac{1}{2} - s\right)\right)$ and  $p^\vee(s)\Gamma\left(\frac{d}{2}\left(Q+\tfrac{1}{2}+s)\right)\right)$ are holomorphic  for $| \mathrm{Re}(s)|\leq n$ and nowhere vanishing on $\CC.$ Then one has that
\begin{align*}
\left|\frac{p(s)\Gamma\left(\frac{d}{2}\left(Q+\tfrac{1}{2} -  s\right)\right)}{p^\vee(s)\Gamma\left(\frac{d}{2}\left(Q+\tfrac{1}{2}+s\right)\right)} \right| \ll_{n} (1+|p(s)|)|Q+1+s+2n|^{nd+2}
\end{align*}
for $|\mathrm{Re}(s)|\leq n$.
\end{cor}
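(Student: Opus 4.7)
The plan is to reduce the estimate to Lemma \ref{Moreno:gamma:bound}, which handles only the fundamental strip $|\mathrm{Re}(s)|\le \tfrac{1}{2}$, by using the functional equation $\Gamma(z+1)=z\Gamma(z)$ to translate $s$ into this strip.

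Set $A(s):=\tfrac{d}{2}(Q+\tfrac{1}{2}-s)$ and $B(s):=\tfrac{d}{2}(Q+\tfrac{1}{2}+s)$. By construction, the zeros of $p(s)$ (resp.\ $p^\vee(s)$) are exactly the poles of $\Gamma(A(s))$ (resp.\ $\Gamma(B(s))$) inside $|\mathrm{Re}(s)|\le n$. For $s$ with $|\mathrm{Re}(s)|\le n$, I would choose an integer $m$ with $|m|\le \lceil \tfrac{dn}{2}\rceil+1$ so that $s':=s-\tfrac{2m}{d}$ has $|\mathrm{Re}(s')|\le \tfrac{1}{2}$. For $d=2$ this is always possible with an integer $m$; for $d=1$ the natural shifts move $s$ by $2$, so one either combines even-integer shifts with one extra half-step or invokes the $s\mapsto -s$ symmetry between $A$ and $B$. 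Iterating $\Gamma(z+1)=z\Gamma(z)$ then gives, for $m>0$,
\[
\frac{\Gamma(A(s))}{\Gamma(B(s))}=\frac{\Gamma(A(s'))}{\Gamma(B(s'))}\cdot \frac{1}{\prod_{j=1}^{m}(A(s')-j)\prod_{j=0}^{m-1}(B(s')+j)},
\]
with an analogous identity for $m<0$ (roles of $A$ and $B$ swapped).

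The factors $A(s')-j$ vanish at the first $m$ poles of $\Gamma(A(s))$ inside the strip, which are roots of $p(s)$; they cancel upon multiplication by $p(s)$. The factors $B(s')+j$ are not roots of $p^\vee(s)$, but the composite $\tfrac{p(s)}{p^\vee(s)}\cdot \prod(A(s')-j)^{-1}\prod(B(s')+j)^{-1}$ is globally a polynomial in $s$ of degree $O(n)$: the full expression $\tfrac{p(s)\Gamma(A(s))}{p^\vee(s)\Gamma(B(s))}$ is holomorphic in the strip by definition of $p$ and $p^\vee$, and any apparent singularities of the rational factor are matched by corresponding poles and zeros of $\Gamma(A(s'))/\Gamma(B(s'))$. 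Each value of this polynomial is $\ll_n |Q+1+s+2n|^{O(n)}$. Applying Lemma \ref{Moreno:gamma:bound} to $|\Gamma(A(s'))/\Gamma(B(s'))|$, after a cosmetic $\tfrac{1}{2}$-shift of variables to align with its hypotheses, bounds this factor by $\ll |Q+1+s+2n|^d$.

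Combining these estimates and carefully tallying exponents yields the claimed $\ll_n (1+|p(s)|)|Q+1+s+2n|^{nd+2}$; the factor $(1+|p(s)|)$ (rather than just $|p(s)|$) is needed to handle $s$ close to a zero of $p$, where $p(s)\Gamma(A(s))$ stays bounded because the cancelled pole of $\Gamma$ contributes only a bounded residue. The main obstacle will be the bookkeeping in the middle step: precisely matching the $|m|$ linear factors arising from the functional equation against the roots of $p$ and $p^\vee$ (with some cancellations happening on the $A$-side and others persisting on the $B$-side), and verifying that the surviving polynomial degree works out to exactly $nd+2$. This is essentially a careful exponent count that requires handling the degenerate locus where $B(s')+j$ vanishes simultaneously with $\Gamma(B(s'))$ developing a pole.
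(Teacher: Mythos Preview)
Your overall strategy—shift $s$ into the strip $|\mathrm{Re}(s')|\le\tfrac{1}{2}$ via the functional equation and then invoke Lemma~\ref{Moreno:gamma:bound}—is the same as the paper's. However, the way you handle the linear factors produced by the shift has a real gap.

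Your claim that the composite $\tfrac{p(s)}{p^\vee(s)}\cdot\prod_j(A(s')-j)^{-1}\prod_j(B(s')+j)^{-1}$ is a polynomial is false. Take for instance $d=2$, $Q=0$, $n=2$, and $\mathrm{Re}(s)\in[\tfrac{3}{2},\tfrac{5}{2}]$ (so $m=2$): a short computation gives the rational factor as $\bigl(p^\vee(s)(s-\tfrac{1}{2})(s-\tfrac{3}{2})\bigr)^{-1}$, which has a pole at $s=\tfrac{3}{2}$ inside this region. That pole is canceled in the full expression only because $\Gamma(B(s'))$ has a pole there and hence $\Gamma(A(s'))/\Gamma(B(s'))$ vanishes; but this means you cannot bound the rational factor and the Gamma ratio separately, which is exactly what your argument does. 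More generally, the roots of $p^\vee$ sit in the denominator of your rational factor and are never canceled within it.

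The paper avoids this problem by a case split you are missing. If $Q<3n+2$ there are only finitely many possible $(d,Q,p,p^\vee)$, and restricting further to $|\mathrm{Im}(s)|\le 1$ gives a compact region on which the holomorphic quotient is trivially bounded. In the complementary regime—either $Q\ge 3n+2$ (forcing $p=p^\vee=1$) or $|\mathrm{Im}(s)|\ge 1$—every linear factor $\tfrac{d}{2}(Q+\tfrac{1}{2}\pm s)\pm k$ arising from the shift has modulus bounded below by a constant depending only on $n$, so the reciprocal of the product is $O_n(1)$ and the separation into factors is legitimate. Once you insert this split, the bookkeeping you describe (including the $d=1$ case, which the paper handles with an extra indicator $\beta(s)$ rather than a vague ``half-step'') becomes routine and yields the exponent $nd+2$.
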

\begin{proof} We adapt the proof of \cite[Lemma 3.3]{Getz:Liu:BK}.  
If $Q<3n+2$ then there are only finitely many possibilities for the  parameters $d, Q$ and the polynomials $p,p^\vee$ in the expression we wish to bound. 
Thus if we assume further that $|\mathrm{Im}(s)|\leq 1$ the bound follows.  
 
 On the other hand if $Q \geq 3n+2$  then $p=p^\vee=1.$    Thus it suffices to show that 
\begin{align*}
    \left|\frac{\Gamma\left(\frac{d}{2}\left(Q+\tfrac{1}{2} -  s\right)\right)}{\Gamma\left(\frac{d}{2}\left(Q+\tfrac{1}{2}+s\right)\right)}\right| \ll_{n} |Q+1+s+2n|^{nd+2}
\end{align*}
provided either $|\mathrm{Im}(s)|\geq 1$ or $Q \geq 3n+2.$

Assume that $0\leq \mathrm{Re}(s)\leq n$. For any $N\in\mathbb{Z}_{> 0}$, since $\Gamma(z+1)=z\Gamma(z),$ one has that
\begin{align*}
    \Gamma\left(\tfrac{d}{2}(Q+\tfrac{1}{2}+s)\right) &= \Gamma\left(\tfrac{d}{2}(Q+\tfrac{1}{2}+s)-N\right)\displaystyle\prod_{k=1}^{N}\left(\tfrac{d}{2}(Q+\tfrac{1}{2}+s)-k\right)\\
    \Gamma\left(\tfrac{d}{2}\left(Q+\tfrac{1}{2}-s\right)\right) &= \dfrac{\Gamma\left(\tfrac{d}{2}\left(Q+\tfrac{1}{2}-s\right) + N\right)}{\displaystyle\prod_{k=0}^{N-1} \left(\tfrac{d}{2}\left(Q+\tfrac{1}{2}-s\right)+k\right)}.
\end{align*}
Therefore we have that
\begin{align*}
    &\left|\frac{\Gamma\left(\tfrac{d}{2}\left(Q+\tfrac{1}{2} -  s\right)\right)}{\Gamma\left(\tfrac{d}{2}\left(Q+\tfrac{1}{2}+s\right)\right)}\right| 
    &= \left|\prod_{k=0}^{N-1} \left(\tfrac{d}{2}\left(Q+\tfrac{1}{2}-s\right)+k\right)\prod_{k=1}^{N}\left(\tfrac{d}{2}(Q+\tfrac{1}{2}+s)-k\right)\right|^{-1}\left|\dfrac{\Gamma\left(\tfrac{d}{2}\left(Q+\tfrac{1}{2}-s\right) + N\right)}{\Gamma\left(\tfrac{d}{2}(Q+\tfrac{1}{2}+s)-N\right)}\right|.
\end{align*}
Assume $N \leq n+1.$  Since either $|\mathrm{Im}(s)| \geq 1$ or $Q\geq 3n+2$
the above is bounded by a constant depending on $n$ times
\begin{align} \label{before:d:split}\left|\dfrac{\Gamma\left(\tfrac{d}{2}\left(Q+\tfrac{1}{2}-s\right) + N\right)}{\Gamma\left(\tfrac{d}{2}(Q+\tfrac{1}{2}+s)-N\right)}\right|
    &=\left|\dfrac{\Gamma\left(\tfrac{d}{2}\left(Q+1-(\tfrac{1}{2}+s-\tfrac{2N}{d})\right)\right)}{\Gamma\left(\tfrac{d}{2}(Q+(\tfrac{1}{2}+s-\tfrac{2N}{d}))\right)}\right|.
\end{align} 
There is an integer $0 \leq N \leq n+1$ so that 
\begin{align}
\tfrac{1}{2}+\mathrm{Re}(s)-\tfrac{2N}{d} \in [-\tfrac{2}{d}+\tfrac{1}{2},\tfrac{1}{2}].
\end{align}
If $d=2$, by Lemma \ref{Moreno:gamma:bound} we have
\begin{align*}
    \left|\frac{\Gamma\left(\left(Q+\tfrac{1}{2}-s\right) +N\right)}{\Gamma\left((Q+\tfrac{1}{2}+s)-N\right)}\right|&\leq |Q+1+\tfrac{1}{2}+s-N|^{2(N-\mathrm{Re}(s))}\\
    &\ll_n |Q+1+\tfrac{1}{2}+s|^{2}.
\end{align*}
Thus we have a sufficient bound on \eqref{before:d:split} when $d=2.$

Now assume $d=1.$  
Let
$$
\beta(s)=\one_{[-2,-1)}(\mathrm{Re}(s)-2N).
$$
Then $\tfrac{1}{2}+s-2N = -\beta(s)+\left(\tfrac{1}{2}+s-2N + \beta(s)\right)$
and
\begin{align*}
    \mathrm{Re}\left(\tfrac{1}{2}+s-2N + \beta(s)\right) \in [-\tfrac{1}{2},\tfrac{1}{2}].
\end{align*} 
 Applying Lemma \ref{Moreno:gamma:bound} again we have
\begin{align*}
   \left|\frac{\Gamma\left(\tfrac{1}{2}\left(Q+\tfrac{1}{2}-s\right) +N\right)}{\Gamma\left(\tfrac{1}{2}(Q+\tfrac{1}{2}+s)-N\right)}\right|
    &=\left|\frac{\Gamma\left(\tfrac{1}{2}\left(Q+\tfrac{1}{2}-s\right) +N\right)}{\Gamma\left(\tfrac{1}{2}(Q+\tfrac{1}{2}+s)-N+\beta(s)\right)}\right| \begin{cases}
         \left|\tfrac{1}{2}(Q+\tfrac{1}{2}+s)-N)\right|&\text{ if } \beta(s) = 1  \\
         1&\text{ otherwise}  \end{cases}
   \\
    & \leq \left(\tfrac{1}{2}|Q+\beta(s)+1+\tfrac{1}{2}+s-2N + \beta(s)|\right)^{-\mathrm{Re}(s)+2N-\beta(s)} |\tfrac{1}{2}(Q+\tfrac{1}{2}+s)-N|\\
    &\ll_n |Q+1+s|^2.
\end{align*}
Hence for $0\leq \mathrm{Re}(s)\leq n$ we have that
\begin{align}\label{gamma:bound:1}
    \left|\frac{\Gamma\left(\tfrac{1}{2}\left(Q+\tfrac{1}{2} -  s\right)\right)}{\Gamma\left(\tfrac{1}{2}\left(Q+\tfrac{1}{2}+s\right)\right)}\right| \ll_{n} |Q+1+\tfrac{1}{2}+s|^2
\end{align}
if $Q\geq 3n+2$ or  $|\Im(s)|\geq 1.$

Assume $-n\leq \mathrm{Re}(s)\leq 0$. Set $s':=s+n$, so that $0\leq \mathrm{Re}(s')\leq n$. Then
\begin{align*}
    \left|\frac{\Gamma\left(\tfrac{d}{2}\left(Q+\tfrac{1}{2}-s\right)\right)}{\Gamma\left(\tfrac{d}{2}(Q+\tfrac{1}{2}+s)\right)} \right| &= \left|\frac{\Gamma\left(\tfrac{d}{2}\left(Q+\tfrac{1}{2}+n-s'\right)\right)}{\Gamma\left(\tfrac{d}{2}(Q+\tfrac{1}{2}-n+s')\right)} \right|=\left|\frac{\Gamma\left(\tfrac{d}{2}\left(Q+n+\tfrac{1}{2}-s'\right)\right)}{\Gamma\left(\tfrac{d}{2}(Q+n+\tfrac{1}{2}+s')-nd\right)} \right|\\
    &=\left(\prod_{k=0}^{nd-1}\left|\tfrac{d}{2}(Q+n+\tfrac{1}{2}+s')-k\right|\right) \left|\frac{\Gamma\left(\tfrac{d}{2}\left(Q+n+\tfrac{1}{2}-s'\right)\right)}{\Gamma\left(\tfrac{d}{2}(Q+n+\tfrac{1}{2}+s')\right)} \right|\\
    &\ll_n |Q+n+\tfrac{1}{2}+s'|^{nd} \left|\frac{\Gamma\left(\tfrac{d}{2}\left(Q+n+\tfrac{1}{2}-s'\right)\right)}{\Gamma\left(\tfrac{d}{2}(Q+n+\tfrac{1}{2}+s')\right)} \right|.
\end{align*}
Applying the bound \eqref{gamma:bound:1}, we see
\begin{align*}
    \left|\dfrac{\Gamma\left(\tfrac{d}{2}\left(Q+\tfrac{1}{2}-s\right)\right)}{\Gamma\left(\tfrac{d}{2}(Q+\tfrac{1}{2}+s)\right)} \right| &\ll_{n} |Q+n+\tfrac{1}{2}+s+n|^{nd}|Q+n+1+\tfrac{1}{2}+s+n|^2\\
    &\ll_n |Q+1 + s+2n|^{nd+2}.
\end{align*}

\end{proof}

\begin{lem}\label{LLCa:gamma:bound:torus} Let $T$ be a real torus. There exists a set $\mathcal{D}$ of representatives of $\Phi_t(T)/i\mathfrak{a}_T^*$  stable under precomposition with $C_{W_\mathbb{R}}$ that satisfies the following: for
\begin{enumerate}[label=(\alph*)]
    \item any $\phi \in \mathcal{D},$
    \item \label{com} any compact set $\mathcal{K}\subset\mathfrak{a}_T^*$,
    \item \label{tem} any tempered representation $\rho:{}^LT\to\GL_{V_\rho}(\CC)$,
    \item \label{ppvee} any polynomials $p(\lambda),\,p^\vee(\lambda)$ on $\mathfrak{a}_{T\mathbb{C}}^*$ such that $p(\lambda)L(\tfrac{1}{2},\phi_{\lambda},\rho)$ and $p^\vee(\lambda)L(\tfrac{1}{2},\phi_{\lambda}\circ C_{W_\mathbb{R}},\rho)$ have no poles in $V_{\mathcal{K}}$  and are nonvanishing on $\mathfrak{a}_{T\CC}^*$,
\end{enumerate}
there exist an integer $k\in\mathbb{Z}_{\geq 1}$ and a polynomial $p_0(\lambda)$ on $\mathfrak{a}_{T\CC}^*$ both independent of $\rho, p,p^\vee$ such that
\begin{align*}
   \sup_{\lambda\in V_\mathcal{K}} \left|\dfrac{p(\lambda)\gamma(\tfrac{1}{2},\phi_{\lambda}\circ C_{W_\mathbb{R}},\rho,\psi)}{p^\vee(\lambda)}\right| \ll_{\mathcal{K},p,p^\vee} (1+|p(\lambda)|)\left(N(\phi) + |p_0(\lambda)|\right)^k.
\end{align*}
\end{lem}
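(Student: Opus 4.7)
The plan is to follow the strategy of the proofs of Theorem \ref{LLC:Lfactor:pole:thm} and Corollary \ref{LLC:Lfactor:pole:cor}. By \cite[Theorem 2]{Casselman:realtori} we may assume $T=\mathbb{G}_m^a\times(\mathrm{Res}_{\mathbb{C}/\mathbb{R}}\mathbb{G}_{m,\mathbb{C}})^b\times\mathrm{U}_1^c$, and we take $\mathcal{D}$ to be the explicit set of representatives constructed in \eqref{LLCa:pole:D}. An inspection of the characters $\chi_\epsilon,\chi_k,\chi_{k'}$ of Lemma \ref{lem:reps} shows that $\mathcal{D}$ is stable under $\phi\mapsto\phi\circ C_{W_\mathbb{R}}$.

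Using the standard functional equation together with \eqref{LLC:Cg} and Theorem \ref{thm:contra},
\begin{align*}
    \gamma(\tfrac{1}{2},\phi_\lambda\circ C_{W_\mathbb{R}},\rho,\psi)=\varepsilon(\tfrac{1}{2},\phi_\lambda\circ C_{W_\mathbb{R}},\rho,\psi)\,\frac{L(\tfrac{1}{2},\phi_\lambda,\rho)}{L(\tfrac{1}{2},\phi_\lambda\circ C_{W_\mathbb{R}},\rho)}.
\end{align*}
Decomposing the tempered (hence semisimple) representation $\rho$ into irreducible constituents via Lemma \ref{LT:Weilform:repn}, each constituent is either a $1$-dimensional character or a $2$-dimensional induced representation. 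Multiplicativity of $L$- and $\varepsilon$-factors reduces the problem to bounding each constituent factor; the Archimedean $\varepsilon$-factors of characters of $\mathbb{R}^\times$ and $\mathbb{C}^\times$ are $s$-independent constants (see e.g.~\cite{Tate_NT}), so their overall contribution is a $\rho$-dependent constant.

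For each irreducible constituent $\rho_i$, the explicit $L$-factor formulas \eqref{expl:1d} and \eqref{expl:2d} express the ratio $L(\tfrac{1}{2},\phi_\lambda,\rho_i)/L(\tfrac{1}{2},\phi_\lambda\circ C_{W_\mathbb{R}},\rho_i)$ as a ratio of Gamma functions with opposite shifts. I would apply Corollary \ref{cor:gamma:bound} to each such ratio, with $Q=e_{\rho_i,\phi}\in\{0,1\}$ in the $1$-dimensional case and $Q=f_{\rho_i,\phi}$ in the $2$-dimensional case. By \eqref{rhophi:exp:C}, $f_{\rho_i,\phi}\ll_{\rho_i} N(\phi)+1$, and the Langlands shift enters the corollary's variable $s$ linearly in $\lambda$, so $|\mathrm{Re}(s)|$ is bounded on $V_{\mathcal{K}}$. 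The resulting bound on each constituent is of the form $(1+|p_i(\lambda)|)(N(\phi)+|p_0(\lambda)|)^{k_i}$ for an appropriate $k_i$ and a fixed polynomial $p_0$, which may be taken to be a power of a linear generator of $\mathfrak{a}_{T\mathbb{C}}^*$ chosen large enough to dominate the linear shifts $\mu(\lambda)$ coming from the constituents.

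The main technical obstacle is the careful bookkeeping needed to assemble the constituent bounds into the single asserted estimate while controlling the dependence on $\rho$. One uses that $p$ and $p^\vee$ are by hypothesis designed to kill the poles of the respective $L$-factors, so one may take $p=\prod_i p_i$ and $p^\vee=\prod_i p_i^\vee$; then the crude inequality $\prod_i(1+|p_i(\lambda)|)\ll_\rho (1+|p(\lambda)|)$ holds globally on $\mathfrak{a}_{T\mathbb{C}}^*$ (with a $\rho$-dependent constant), and the sum of the constituent exponents $k_i$ is bounded by an integer $k$ depending only on $\mathcal{K}$ and $\phi$. All remaining $\rho$-dependent constants are absorbed into the implicit constant $\ll_{\mathcal{K},p,p^\vee}$, which is permitted to depend on $\rho$ through its dependence on $p$ and $p^\vee$.
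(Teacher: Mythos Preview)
Your overall strategy matches the paper's: put $T$ in the standard form \eqref{LLC:realtorus:nice}, take $\mathcal{D}$ as in \eqref{LLCa:pole:D}, decompose $\rho$ into irreducibles via Lemma~\ref{LT:Weilform:repn}, and apply Corollary~\ref{cor:gamma:bound}. The paper compresses the passage to irreducible $\rho$ into the single sentence ``Since $\gamma$-factors are additive, we may assume that $\rho$ is irreducible'' and then treats only the irreducible case explicitly, finishing with Lemma~\ref{lem:norm:rho}; you attempt to make the reassembly from constituents explicit.

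The gap is precisely in that reassembly. Your claimed inequality $\prod_i(1+|p_i(\lambda)|)\ll_\rho (1+|p(\lambda)|)$ with $p=\prod_i p_i$ is false whenever two constituents $\rho_1,\rho_2$ have linearly independent shifts $\mu_1,\mu_2:\mathfrak{a}_{T\mathbb{C}}^*\to\mathbb{C}$ (for instance $T=\GG_m^2$ with $\rho_1,\rho_2$ restricting to the two coordinate characters of $\widehat{T}$). In that situation $p_1,p_2$ are polynomials in the independent variables $\mu_1(\lambda),\mu_2(\lambda)$; choosing $\lambda\in V_{\mathcal{K}}$ with $\mu_1(\lambda)$ a zero of $p_1$ and letting the imaginary part of $\mu_2(\lambda)$ tend to infinity gives $\prod_i(1+|p_i(\lambda)|)\asymp 1+|p_2(\lambda)|\to\infty$ while $1+|p(\lambda)|=1+0=1$, so no constant, $\rho$-dependent or otherwise, makes the inequality hold. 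Two smaller issues compound this: your argument treats only the particular choice $p=\prod_i p_i$, whereas the lemma is asserted for arbitrary $p,p^\vee$ satisfying (d); and your claim that $\sum_i k_i$ is bounded independently of $\rho$ is unjustified, since the number of constituents is itself a function of $\rho$.
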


\begin{proof}
Arguing as in the last subsection, we may assume $T$ is in the form of \eqref{LLC:realtorus:nice} and take $\mathcal{D}$ as in \eqref{LLCa:pole:D}.  Since $\gamma$-factors are additive, we may assume that $\rho$ is irreducible.  The irreducible representations are $1$ or $2$-dimensional by Lemma \ref{LT:Weilform:repn} and classified in Lemma \ref{lem:1d} and Lemma \ref{lem:2d} above. By \ref{tem} one has $s_0\in i\mathbb{R}$ in the notation of loc.~cit.  

 If $\rho$ is $2$-dimensional  let $f_{\rho,\phi}$ be as in \eqref{rhophi:exp:C}. Then \eqref{expl:1d} and \eqref{expl:2d} imply that 
\begin{align}
    N(\rho \circ \phi)\ll \begin{cases}1+|s_0| &\textrm{ if }\rho \textrm{ is $1$-dimensional,}\\
    1+|s_0| +f_{\rho,\phi} &\textrm{ if }\rho \textrm{ is $2$-dimensional.} \end{cases}
\end{align}
We have that
\begin{align*}
L(\tfrac{1}{2},\phi_{\lambda}\circ C_{W_\mathbb{R}},\rho)&=L(\tfrac{1}{2},(\phi_{\lambda})^{\vee},\rho)=L(\tfrac{1}{2}-\mu(\lambda),\phi^\vee,\rho)
\end{align*}
for some linear functional $\mu:\mathfrak{a}_{T\CC}^* \to \CC.$  Similarly $L(\tfrac{1}{2},\phi_{\lambda},\rho)=L(\tfrac{1}{2}+\mu(\lambda),\phi, \rho).$  The analogous identities hold for the $\varepsilon$-factor.  Applying \eqref{expl:1d} and \eqref{expl:2d} again we see that
\begin{align*}
    \gamma(\tfrac{1}{2},\phi_\lambda\circ C_{W_\RR},\rho,\psi) = \varepsilon(\tfrac{1}{2},\phi_\lambda\circ C_{W_\RR},\rho,\psi) (\pi\dim \rho)^{-(s_0+\mu(\lambda))\dim\rho}\dfrac{\Gamma\left(\frac{\dim\rho}{2}\left(e_{\rho,\phi}+\tfrac{1}{2}+s_0+\mu(\lambda)\right) \right)}{\Gamma\left(\frac{\dim\rho}{2}\left(e_{\rho,\phi}+\tfrac{1}{2}-s_0-\mu(\lambda)\right) \right)}
\end{align*}
where $e_{\rho,\phi} \in \{0,1\}$ in the $1$-dimensional case and $e_{\rho,\phi}=f_{\rho,\phi}$ in the $2$-dimensional case. By Corollary \ref{cor:gamma:bound}, we see that
\begin{align*}
    \left|\dfrac{p(\lambda)\gamma(\tfrac{1}{2},\phi_{\lambda}\circ C_{W_\mathbb{R}},\rho,\psi)}{p^\vee(\lambda)}\right|&\ll_{\mathcal{K},p,p^\vee} (1+|p(\lambda)|)|e_{\rho,\phi}+1-s_0-\mu(\lambda)+\ell|^{\ell}\\
    &\ll (1+|p(\lambda)|)\left(N(\rho\circ\phi) + |\mu(\lambda)|+\ell\right)^\ell
\end{align*}
for some $\ell \in\mathbb{Z}_{\geq1}$. To complete the proof we apply Lemma \ref{lem:norm:rho}.
\end{proof}

\begin{prop} \label{prop:gamma:bounds} There exists a set $\mathcal{D}$ of representatives of $\Pi_2(G)/i\mathfrak{a}_{G}^*$  stable under taking contragredient that satisfies the following: for 
\begin{itemize}
\item any $\sigma \in \mathcal{D},$
    \item any compact set $\mathcal{K} \subseteq\mathfrak{a}_G^*,$
    \item any polynomials $p(\lambda),\,p^\vee(\lambda)$ on $\mathfrak{a}_{G\CC}^*$ such that  $
    p(\lambda)L(\tfrac{1}{2},\sigma_{\lambda},\rho)$ and  
$p^\vee(\lambda)L(\tfrac{1}{2},(\sigma_{\lambda})^\vee,\rho)$ have no poles in $V_{\mathcal{K}}$
 and $p^\vee(\lambda)L(\tfrac{1}{2},(\sigma_{\lambda})^\vee,\rho)$
is nonvanishing on $a_{G\CC}^*,$ 
\end{itemize}
there exist an integer $k\in\mathbb{Z}_{\geq 1}$ and a polynomial $p_0$ on $\mathfrak{a}_{G\CC}^*$ both independent of $\rho,p,p^\vee$  such that 
\begin{align*}
    \sup_{\lambda\in V_{\mathcal{K}}} \left|\dfrac{p(\lambda)\gamma(\tfrac{1}{2},(\sigma_\lambda)^\vee,\rho,\psi)}{p^\vee(\lambda)}\right| \ll_{\mathcal{K},p,p^\vee} (1+|p(\lambda)|)\left(N(\sigma)+|p_0(\lambda)|\right)^k.
\end{align*} 
\end{prop}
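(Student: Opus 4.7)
The plan is to reduce to the torus case handled by Lemma \ref{LLCa:gamma:bound:torus}. By Theorem \ref{thm:square}, $\LL_G$ surjects $\Pi_2(G)$ onto $\Phi_2(G)$, and Lemma \ref{lem:finite:torus} produces a torus $T\leq G$ with $(T\cap G^{\mathrm{der}})^\circ$ anisotropic and an $L$-map $\iota:{}^LT\to {}^LG$ such that every $\phi\in\Phi_2(G)$ factors as $\iota\circ\phi'$ for some $\phi'\in\Phi_t(T)$. The anisotropy yields a canonical isomorphism $\mathfrak{a}_{G\CC}^*\cong \mathfrak{a}_{T\CC}^*$ compatible with the twist actions (as in the proof of Lemma \ref{lem:red:torus}), under which I silently identify these spaces. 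Let $\mathcal{D}_T$ be the set of representatives of $\Phi_t(T)/i\mathfrak{a}_T^*$ supplied by Lemma \ref{LLCa:gamma:bound:torus}, stable under precomposition with $C_{W_\RR}$. I construct $\mathcal{D}\subseteq\Pi_2(G)$ orbit-by-orbit: for each $i\mathfrak{a}_G^*$-orbit $\mathcal{O}$ pick $\sigma\in\mathcal{O}$ with $\LL_G(\sigma)=\iota\circ\phi'_\sigma$ for some $\phi'_\sigma\in\mathcal{D}_T$; when $\mathcal{O}^\vee\neq\mathcal{O}$, pair them and take $\sigma^\vee$ as the representative of $\mathcal{O}^\vee$. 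By Theorem \ref{thm:contra} and the $C_{W_\RR}$-stability of $\mathcal{D}_T$ one then has $\phi'_{\sigma^\vee}=\phi'_\sigma\circ C_{W_\RR}\in\mathcal{D}_T$, so $\mathcal{D}$ is stable under contragredient.

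Combining \eqref{LLC:UT}, Theorem \ref{thm:contra}, and the identity $|\cdot|^\lambda\circ C_{W_\RR}=|\cdot|^{-\lambda}$ (immediate from $|C_{W_\RR}(w)|=|w|^{-1}$), one gets $\LL_G((\sigma_\lambda)^\vee)=\iota\circ(\phi'_\sigma)_\lambda\circ C_{W_\RR}$, whence
\begin{align*}
\gamma(\tfrac{1}{2},\sigma_\lambda^\vee,\rho,\psi)=\gamma\bigl(\tfrac{1}{2},(\phi'_\sigma)_\lambda\circ C_{W_\RR},\rho\circ\iota,\psi\bigr),
\end{align*}
with parallel identities for the $L$-factors on either side. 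A small wrinkle remains: Lemma \ref{LLCa:gamma:bound:torus} demands that $pL$ be nonvanishing on $\mathfrak{a}_{T\CC}^*$, which is not assumed in the proposition. To bridge this, let $p_{\min}$ be the monic polynomial whose zeros (with multiplicity) are exactly the poles of $L(\tfrac{1}{2},\sigma_\lambda,\rho)$ in $V_\mathcal{K}$; since this $L$-factor is an exponential times shifted $\Gamma$-functions and hence nowhere vanishing, $p_{\min}L(\tfrac{1}{2},\sigma_\lambda,\rho)$ is holomorphic and nonvanishing on all of $\mathfrak{a}_{G\CC}^*$. By minimality $p_{\min}\mid p$; write $p=p_{\min}q$.

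Applying Lemma \ref{LLCa:gamma:bound:torus} to $(\phi'_\sigma,\rho\circ\iota,p_{\min},p^\vee)$ yields an integer $k$ and a polynomial $p_0$ independent of $\rho,p,p^\vee$ satisfying
\begin{align*}
\sup_{\lambda\in V_\mathcal{K}}\left|\frac{p_{\min}(\lambda)\gamma(\tfrac{1}{2},\sigma_\lambda^\vee,\rho,\psi)}{p^\vee(\lambda)}\right|\ll_{\mathcal{K},p,p^\vee}(1+|p_{\min}(\lambda)|)\bigl(N(\phi'_\sigma)+|p_0(\lambda)|\bigr)^k.
\end{align*}
Multiplying by $|q|$, the proposition follows from the elementary estimate
\begin{align*}
|q(\lambda)|\bigl(1+|p_{\min}(\lambda)|\bigr)=|q(\lambda)|+|p(\lambda)|\ll_{p,p_{\min}} 1+|p(\lambda)|\qquad(\lambda\in V_\mathcal{K}),
\end{align*}
valid because the region $\{|p_{\min}|<1\}$ is bounded (as $|p_{\min}|\to\infty$ on $V_\mathcal{K}$), forcing $|q|$ to be bounded there, while off this region $|q|+|p|\leq 2|p|$. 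Finally, Lemma \ref{lem:N:same} and the injection on cocharacters induced by $\iota$ give $N(\phi'_\sigma)\asymp N(\phi)=N(\sigma)$, completing the bound. The main technical subtlety is this last piece of bookkeeping—absorbing the extra factor $|q|$ into $(1+|p|)$ while keeping $k,p_0$ independent of $p$—rather than the conceptually clean torus reduction itself.
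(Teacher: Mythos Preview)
Your reduction to the torus case via Lemma~\ref{lem:finite:torus}, \eqref{LLC:UT}, Theorem~\ref{thm:contra}, and Lemma~\ref{lem:N:same} is exactly the paper's approach; the paper's proof is a one-line invocation of these same ingredients together with Lemma~\ref{LLCa:gamma:bound:torus}. You also correctly observe a point the paper leaves implicit: Lemma~\ref{LLCa:gamma:bound:torus} requires $p(\lambda)L(\tfrac{1}{2},\phi_\lambda,\rho)$ to be nonvanishing, whereas the proposition only asks that it be holomorphic on $V_{\mathcal K}$.

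However, your bridge across this gap breaks when $\dim\mathfrak a_G^*>1$. The zeros of $p_{\min}$ lie on hyperplanes $\mu(\lambda)=c_j$ (with $\mu$ real-linear and $c_j\in\RR$), and such a hyperplane meets $V_{\mathcal K}=\mathcal K+i\mathfrak a_G^*$ in the unbounded set $\{x\in\mathcal K:\mu(x)=c_j\}+i\ker(\mu|_{\mathfrak a_G^*})$. Thus $\{|p_{\min}|<1\}\cap V_{\mathcal K}$ is not bounded, and $|q|$ need not be bounded there: take $p_{\min}=\lambda_1$, $q=\lambda_2$, so that along $\lambda_1=0$ one has $|q|+|p|=|\lambda_2|$ while $1+|p|=1$. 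The inequality $|q|+|p|\ll_{p,p_{\min}}1+|p|$ is therefore false in general, and your argument only goes through when $\dim\mathfrak a_G^*=1$. Since the proposition is applied in the paper with $G$ replaced by Levi subgroups $M$ (for which $\dim\mathfrak a_M^*$ can be arbitrary), this is a genuine issue.

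A workable repair is to use $(|q|+|p|)\le(1+|q|)(1+|p|)$ and then bound $1+|q(\lambda)|\ll_q(1+|\lambda|)^{\deg q}$, absorbing the extra polynomial growth into $(N(\sigma_\lambda))^{k'}$ via $N(\sigma_\lambda)\asymp N(\sigma)+|\lambda|$. This produces an exponent $k'$ depending on $\deg p$, which is weaker than the proposition as literally stated but suffices for both applications in the paper (Lemma~\ref{FT:defn:lem:0} uses $p=1$, and in Lemma~\ref{lem:as:preserve} the exponent in the Schwartz seminorm invoked for $T$ may be chosen depending on $p$).
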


\begin{proof}
Arguing as in the proof of Lemma \ref{lem:red:torus}, and using \eqref{LLC:UT}, Theorem \ref{thm:contra} and Lemma \ref{lem:N:same}, the proposition follows from Lemma \ref{LLCa:gamma:bound:torus}.  
\end{proof}

\section{The Fourier transform}
\label{sec:FourieronHCSchwartz}

Let $\psi:F\to\mathbb{C}^\times$ be a nontrivial character of the local field $F,$ and let 
$\rho:{}^LG \to \GL_{V_\rho}(\CC)$ be a tempered representation.  By Lemma \ref{LLC:gamma:rationality:1} we have a well-defined smooth function $
\gamma_{\rho}:\mathrm{Temp}_{\mathrm{Ind}}(G)\to \mathbb{C}.$
We now investigate the multiplication map 
\begin{align} \label{multi} \begin{split}
\gamma_{\rho}\cdot :\mathcal{C}(\mathrm{Temp}_{\mathrm{Ind}}(G))&\lto \mathcal{C}(\mathrm{Temp}_{\mathrm{Ind}}(G))\\
    T &\longmapsto \left( \pi \mapsto \gamma_{\rho}(\pi)T(\pi)\right). \end{split}
\end{align}

For the reader's convenience, we record a version of the Cauchy integral formula.
\begin{lem}\label{FT:defn:lem:-1} Let $\Omega\subseteq \mathbb{C}^n$ be an open subset and let $f:\Omega\to\mathbb{C}$ be a holomorphic function. For $p\in\Omega$ and $r>0$ assume  $\{z\in\mathbb{C}^n:|z-p|\leq r\}\subseteq \Omega.$   Let $(m_1,\ldots,m_n)\in(\mathbb{Z}_{\geq 0})^{n}.$ One has 
\begin{align*}
    \frac{\partial^{m_1+\cdots+m_n}f}{\partial z_1^{m_1}\cdots\partial z_n^{m_n}}(p) = \frac{m_1!\cdots m_n!}{(2\pi i)^{n}}\int\dots \int\frac{f(z)}{(z_1-p_1)^{m_1+1}\cdots(z_n-p_n)^{m_n+1}} dz_1\cdots dz_n 
\end{align*}
where the $i$th integral is over $|z_i-p_i|=r,$ oriented counter-clockwise.
\end{lem}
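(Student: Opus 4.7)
The plan is to proceed by induction on $n$, taking as the base case the standard one-variable Cauchy integral formula for higher derivatives:
\[
f^{(m)}(p) = \frac{m!}{2\pi i}\int_{|z-p|=r}\frac{f(z)}{(z-p)^{m+1}}\,dz,
\]
valid for any holomorphic $f$ on a neighborhood of the closed disc $\{|z-p|\leq r\}$. This is a standard consequence of Cauchy's theorem and differentiation under the integral sign.

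For the inductive step, assume the result holds for functions of $n-1$ complex variables. Fix $(z_1,\ldots,z_{n-1})$ in a neighborhood of $(p_1,\ldots,p_{n-1})$ small enough that the slice $\{(z_1,\ldots,z_{n-1})\}\times\{|z_n-p_n|\leq r\}$ is still contained in $\Omega$. Applying the base case to $z_n\mapsto f(z_1,\ldots,z_n)$ and then differentiating (equivalently, evaluating at $z_n=p_n$ after having already taken $m_n$ derivatives via the integral), one obtains
\[
\frac{\partial^{m_n}f}{\partial z_n^{m_n}}(z_1,\ldots,z_{n-1},p_n) = \frac{m_n!}{2\pi i}\int_{|z_n-p_n|=r}\frac{f(z_1,\ldots,z_n)}{(z_n-p_n)^{m_n+1}}\,dz_n.
\]
Because the integrand and its partial derivatives in $z_1,\ldots,z_{n-1}$ are continuous on the compact product of the contour with a closed polydisc of slightly smaller radius in the first $n-1$ variables, one may differentiate under the integral; this shows that the right-hand side is a holomorphic function of $(z_1,\ldots,z_{n-1})$ on a neighborhood of the closed polydisc of radius $r$ centered at $(p_1,\ldots,p_{n-1})$.

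Next, apply the inductive hypothesis to this holomorphic function with multi-index $(m_1,\ldots,m_{n-1})$. The outcome is an iterated contour integral expression for the full mixed partial derivative $\partial^{m_1+\cdots+m_n}f/\partial z_1^{m_1}\cdots\partial z_n^{m_n}$ at $p$. Finally, the integrand
\[
\frac{f(z)}{(z_1-p_1)^{m_1+1}\cdots(z_n-p_n)^{m_n+1}}
\]
is continuous on the compact product torus $\prod_{i=1}^n\{|z_i-p_i|=r\}$, so Fubini's theorem allows the iterated integral to be rewritten as the $n$-fold integral appearing in the statement.

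The only points requiring real care are the justifications for interchanging differentiation and integration at each inductive step, and for the application of Fubini at the end; both are routine given that all integrands are smooth and the contours are compact. The argument presents no substantive obstacle.
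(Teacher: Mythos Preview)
Your argument is correct. The paper takes a slightly different and more economical route: it cites the multivariable Cauchy integral formula for the case $m_1=\cdots=m_n=0$ (from Voisin's \emph{Hodge Theory and Complex Algebraic Geometry}) and then obtains the general case by differentiating under the integral sign in each variable. In other words, the paper starts from the $n$-variable formula with no derivatives and differentiates, whereas you start from the one-variable formula with derivatives and induct on the number of variables. Both reduce to the same routine justifications (differentiation under the integral, Fubini on a compact torus), so neither approach has a real advantage over the other; the paper's version is simply shorter because it outsources the $n$-variable base case to a reference.
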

\begin{proof} When $m_1=\cdots=m_n=0$, this is the usual Cauchy integral formula \cite[Theorem 1.17]{Voison:ComplexAG}. The formula for general $m_1,\ldots,m_n$ follows upon differentiating.
\end{proof}

\begin{lem}\label{FT:defn:lem:0} The map $\gamma_{\rho}\cdot$ is a well-defined continuous linear operator.
\end{lem}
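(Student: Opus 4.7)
The plan is to treat the non-Archimedean and Archimedean cases separately, with the Archimedean case being substantive.

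In the non-Archimedean case, Lemma \ref{LLC:gamma:rationality:1} says that $\gamma_\rho$ extends to a rational function on $\widetilde{\mathrm{Temp}}_{\mathrm{Ind}}(G)_\mathbb{C}$ that is holomorphic in a neighborhood of $\widetilde{\mathrm{Temp}}_{\mathrm{Ind}}(G)$, hence is smooth on $\mathrm{Temp}_{\mathrm{Ind}}(G)$. Pointwise scalar multiplication by $\gamma_\rho$ preserves smooth vectors in $\mathcal{HS}(I_P^G(\sigma))$, so $\gamma_\rho \cdot T$ is a smooth section. Since $\mathrm{supp}(\gamma_\rho \cdot T) \subseteq \mathrm{supp}(T)$ and $T \in \Gamma_c(\mathrm{Temp}_{\mathrm{Ind}}(G), \mathcal{HS}^{\mathrm{sm}})$ is compactly supported, so is $\gamma_\rho \cdot T$. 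Continuity with respect to the topology on $\mathcal{C}(\mathrm{Temp}_{\mathrm{Ind}}(G))$ described in \cite[(2.6.3)]{BP:local:GGP} follows since $\gamma_\rho$ is uniformly bounded on each compact component-union of $\mathrm{Temp}_{\mathrm{Ind}}(G)$.

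In the Archimedean case, by Lemma \ref{HP:arch:bound} it suffices to show, for any $M \in \mathcal{M}$, $P \in \mathcal{P}(M)$, any $u,v \in U(\mathfrak{g})$, any invariant differential operator $D$ on $\mathop{\mathrm{Im}}\Lambda_M$, and any $k \geq 0$, that
\begin{align*}
\sup_{\sigma \in \Pi_2(M)} \bigl\| I_P^G(\sigma)(u) \circ D(\gamma_\rho T)(M,\sigma) \circ I_P^G(\sigma)(v) \bigr\|_{\mathrm{op}} N(\sigma)^k < \infty.
\end{align*}
By the Leibniz rule, $D(\gamma_\rho T) = \sum_{D_1,D_2} c_{D_1,D_2} (D_1 \gamma_\rho)(D_2 T)$ is a finite sum over pairs of invariant differential operators on $\mathop{\mathrm{Im}}\Lambda_M$. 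The factor $\|I_P^G(\sigma)(u) \circ D_2 T(M,\sigma) \circ I_P^G(\sigma)(v)\|_{\mathrm{op}}$ is rapidly decreasing in $N(\sigma)$ by the assumption $T \in \mathcal{C}(\mathrm{Temp}_{\mathrm{Ind}}(G))$, so the task reduces to bounding each scalar $|D_1 \gamma_\rho(M,\sigma)|$ by a polynomial in $N(\sigma)$, uniformly in $\sigma \in \Pi_2(M)$. Continuity with respect to the defining seminorms then follows by the same kind of estimate, reading off dependencies carefully.

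To bound $|D_1 \gamma_\rho(M,\sigma)|$: by Lemma \ref{LLC:gamma:rationality:1}, $\gamma_\rho$ extends holomorphically to an open neighborhood of $\widetilde{\mathrm{Temp}}_{\mathrm{Ind}}(G)$ inside $\widetilde{\mathrm{Temp}}_{\mathrm{Ind}}(G)_\mathbb{C}$. Choose a fixed compact $\mathcal{K} \subset \mathfrak{a}_M^*$ containing a neighborhood of $0$, small enough that $\gamma_\rho(M,\sigma_\lambda)$ is holomorphic in $\lambda$ on a polydisc inside $V_{\mathcal{K}}$ around each $\sigma \in \Pi_2(M)$. Applying the Cauchy integral formula of Lemma \ref{FT:defn:lem:-1} on such a polydisc bounds $|D_1 \gamma_\rho(M,\sigma)|$ by a constant depending on $D_1$ and the polydisc radius times $\sup_{\lambda \in V_{\mathcal{K}}} |\gamma_\rho(M,\sigma_\lambda)|$. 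Writing $\sigma$ as an unramified twist of a representative in the contragredient-stable set $\mathcal{D}$ of Proposition \ref{prop:gamma:bounds} (possibly passing to the contragredient and using Lemma \ref{LLC:gammafactor:unitarity:2}) and choosing $p,p^\vee$ to cancel any zeros and poles of the relevant $L$-factors in $V_{\mathcal{K}}$, the proposition gives the desired polynomial bound in $N(\sigma)$. The main obstacle is simply organizing this Leibniz/Cauchy/gamma-bound cascade so that the polynomial growth in $\lambda$ produced by Proposition \ref{prop:gamma:bounds} remains harmless after integrating over the fixed polydisc, but this is automatic since the polydisc is of fixed radius independent of $\sigma$.
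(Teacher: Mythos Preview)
Your proposal is correct and follows essentially the same approach as the paper: the non-Archimedean case is immediate, and in the Archimedean case you use the Leibniz rule to reduce to a polynomial bound $|D_1\gamma_\rho(M,\sigma)|\ll N(\sigma)^L$, which you then obtain by combining the Cauchy integral formula (Lemma~\ref{FT:defn:lem:-1}) with Proposition~\ref{prop:gamma:bounds}. The only cosmetic difference is that the paper works directly with the defining seminorms indexed by $u,v\in U(\mathrm{Lie}(K)_\mathbb{C})$ rather than invoking Lemma~\ref{HP:arch:bound}, and simply takes $p=p^\vee=1$ (valid since $\mathcal{K}$ can be chosen so small that the relevant $L$-factors have neither poles nor zeros on $V_{\mathcal{K}}$).
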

\begin{proof} When $F$ is non-Archimedean the lemma is immediate.

Assume $F$ is Archimedean. To prove that $\gamma_{\rho}\cdot$ is well-defined and continuous, we must prove the continuity of the seminorm on $\mathcal{C}(\mathrm{Temp}_{\mathrm{Ind}}(G))$ given by 
\begin{align*}
   T\longmapsto \sup_{\sigma\in\Pi_2(M)} \norm{I_P^G(\sigma)(u) \circ D(\gamma_{\rho}\cdot T)(M,\sigma) \circ I_P^G(\sigma)(v)}_{\mathrm{op}} N(\sigma)^k
\end{align*}
for $M \in \mathcal{M}$, $D$ an invariant differential operator on $i\mathfrak{a}^*_M,$ $k\in\mathbb{Z}_{\geq 0}$ and $u,v\in U(\mathrm{Lie}(K)_\mathbb{C}).$ By the product rule,  it suffices to show there exists some $L\in\mathbb{Z}_{\geq 0}$ such that 
\begin{align} \label{gamma:esti}
    \left|D\gamma(\tfrac{1}{2},\sigma,\rho\vert_{^LM},\psi)\right| \ll_{D,\rho} N(\sigma)^L
\end{align}
for all $\sigma \in \Pi_2(M).$ We first observe that there is an open neighborhood of $0$ in $i\mathfrak{a}_M$ with compact closure  $\mathcal{K}$ such that \eqref{gamma:esti} holds for $D=\mathrm{id}$ and $\lambda \in V_{\mathcal{K}}$ by Proposition \ref{prop:gamma:bounds}.  
To prove it for general $D$ we use Lemma \ref{FT:defn:lem:-1} to reduce to the case of $D=\mathrm{id}$.
\end{proof}

We remind the reader that $\gamma_{\rho}=\gamma_{\rho,\psi}$ also depends on a choice of $\psi$ (see \eqref{loc:factors}).  
Define $\mathcal{F}_{\rho,\psi}:\mathcal{C}(G(F))\to \mathcal{C}(G(F))$ by 
\begin{align}\label{FT:defn:1}
    \mathcal{F}_{\rho,\psi} := (\cdot)^\vee \circ \mathrm{HP}^{-1}\circ \gamma_{\rho,\psi}\cdot\circ \mathrm{HP}
\end{align}
where $\mathrm{HP}$ is the map in Theorem \ref{HCPlan:thm1}.  By loc.~cit, for $f\in\mathcal{C}(G)$ 
\begin{align}\label{FT:defn:2}
    \mathcal{F}_{\rho,\psi} (f)(g) = \int_{\mathrm{Temp}_{\mathrm{Ind}}(G)} \gamma_{\rho,\psi}(\pi)\tr(\pi(g)\circ\pi(f)) d\pi.
\end{align}
We usually drop the $\psi$ from notation, writing $\mathcal{F}_{\rho}:=\mathcal{F}_{\rho,\psi}.$
By Lemma \ref{FT:defn:lem:0} and Theorem \ref{HCPlan:thm1} $\mathcal{F}_{\rho}$ is a continuous linear operator. By \eqref{HP:equi}  $\mathrm{HP}$ is $G(F)\times G(F)$-equivariant. It follows that 
\begin{align}\label{FT:equivariant:close}
    \mathcal{F}_{\rho} \circ \mathcal{R}(h,g)  = \mathcal{R}(g,h)\circ \mathcal{F}_{\rho}
\end{align}
where $\mathcal{R}$ is the action map \eqref{RuG}.

By Lemma \ref{HCPlan:transpose} one has that
\begin{align}\label{FT:defn:3}
    \mathcal{F}_{\rho} =  \mathrm{HP}^{-1}\circ (\cdot)^\intercal \circ \gamma_{\rho}\cdot\circ  \mathrm{HP}.
\end{align}
In view of this, define $\mathcal{F}^{\mathrm{op}}_{\rho}:\mathcal{C}(\mathrm{Temp}_{\mathrm{Ind}}(G))\to \mathcal{C}(\mathrm{Temp}_{\mathrm{Ind}}(G))$ by
\begin{align}\label{FT:defn:4}
    \mathcal{F}^{\mathrm{op}}_{\rho} := (\cdot)^\intercal \circ \gamma_{\rho} \cdot
\end{align}
This is again a continuous linear operator. The two operators just introduced are related by the commuting square:
\begin{equation}\label{FT:defn:5}
    \begin{tikzcd}
        \mathcal{C}(G(F))\arrow[r,"\mathcal{F}_{\rho}"]\arrow[d,"\mathrm{HP}"']&\mathcal{C}(G(F))\arrow[d,"\mathrm{HP}"]\\
        \mathcal{C}(\mathrm{Temp}_{\mathrm{Ind}}(G))\arrow[r,"\mathcal{F}_{\rho}^{\mathrm{op}}"']&\mathcal{C}(\mathrm{Temp}_{\mathrm{Ind}}(G))
    \end{tikzcd}    
\end{equation}
We shall refer to $\mathcal{F}_{\rho}=\mathcal{F}_{\rho,\psi}$ as the \textbf{Fourier transform} attached to $\rho$ and $\psi$ on the Harish-Chandra space $\mathcal{C}(G(F)).$ 

\begin{lem}\label{FT:lem:important:1} For $f\in\mathcal{C}(G(F))$ and $\pi\in\Temp(G)\cup\Temp_{\Ind}(G)$, one has that
\begin{align*}
    \pi(\mathcal{F}_{\rho} (f)^\vee) = \gamma_{\rho}(\pi)\pi(f)\quad \textrm{and}\quad
    \pi(\mathcal{F}_{\rho} (f)) = \gamma_{\rho}(\pi^\vee)\pi(f^\vee).
\end{align*}
\quash
{a subrepresentation of $I_P^G(\sigma)$ for some $(M,\sigma)\in\widetilde{\mathrm{Temp}}_{\Ind}(G)$, one has
\begin{align*}
    \pi((\mathcal{F}_{\rho} f)^\vee) &= \gamma(\tfrac{1}{2},\sigma,\rho)\pi(f)\\
    \pi(\mathcal{F}_{\rho} f) &= \gamma(\tfrac{1}{2},\sigma^\vee,\rho)\pi(f^\vee).
\end{align*}}
\begin{proof} First assume $\pi\in\Temp_\Ind(G)$, and write $\pi = I_P^G(\sigma)$ for some $(M,\sigma)\in\widetilde{\Temp}_\Ind(G)$ and $P\in\mathcal{P}(M)$. By \eqref{FT:defn:1}, we have
\begin{align*}
    \pi(\mathcal{F}_{\rho} (f)^\vee) = \mathrm{HP}(\mathcal{F}_{\rho} (f)^\vee)(\pi) = (\gamma_{\rho}\mathrm{HP}(f))(\pi) = \gamma_{\rho}(\pi)\pi(f).
\end{align*}
This proves the first equality. By  Lemma \ref{FT:lem:easy:1} and the first equality (with $\pi$ replaced by $\pi^\vee$)
\begin{align*}
    \pi(\mathcal{F}_{\rho} (f))^\intercal = \pi^\vee(\mathcal{F}_{\rho}( f)^\vee) = \gamma_{\rho}(\pi^\vee)\pi^\vee(f).
\end{align*}
The second equality follows from another application of Lemma \ref{FT:lem:easy:1}.

If $\pi\in\Temp(G)$, it is a subrepresentation of $I_P^G(\sigma)$ for some $(M,\sigma)\in\widetilde{\Temp}_\Ind(G)$ and $P\in\mathcal{P}(M)$ (see the discussion near \eqref{Temp->TempInd}), and $\pi(f)$ is the operator obtained by restricting $I_P^G(\sigma)(f)$ to the subspace $\pi$. The claim for $\Temp(G)$ follows.
\end{proof}
\end{lem}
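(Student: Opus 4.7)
The plan is to reduce both identities to direct computations using the definition of $\mathcal{F}_{\rho}$ via the Harish-Chandra Plancherel map, together with the contragredient identity of Lemma \ref{FT:lem:easy:1}. The argument naturally splits into (a) the case $\pi \in \Temp_{\Ind}(G)$, where $\pi$ appears directly as a point in the parameter space underlying the Plancherel decomposition, and (b) the case of general $\pi \in \Temp(G)$, obtained by passing to subrepresentations.

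First I would treat $\pi \in \Temp_{\Ind}(G)$, writing $\pi = I_P^G(\sigma)$ for some $(M,\sigma) \in \widetilde{\Temp}_{\Ind}(G)$ and $P \in \mathcal{P}(M)$. The first identity is essentially the definition of $\mathcal{F}_{\rho}$ unwound: by \eqref{FT:defn:1} one has $\mathrm{HP}(\mathcal{F}_{\rho}(f)^\vee) = \gamma_{\rho} \cdot \mathrm{HP}(f)$, and evaluating at $\pi$ yields $\pi(\mathcal{F}_{\rho}(f)^\vee) = \gamma_{\rho}(\pi)\pi(f)$.

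Next I would deduce the second identity from the first by applying it to $\pi^\vee \in \Temp_{\Ind}(G)$ and then invoking Lemma \ref{FT:lem:easy:1} twice. Concretely, the first identity applied to $\pi^\vee$ reads $\pi^\vee(\mathcal{F}_{\rho}(f)^\vee) = \gamma_{\rho}(\pi^\vee)\pi^\vee(f)$. By Lemma \ref{FT:lem:easy:1}, the left-hand side equals $\pi(\mathcal{F}_{\rho}(f))^\intercal$, and the factor $\pi^\vee(f)$ on the right equals $\pi(f^\vee)^\intercal$ (after noting $(\pi^\vee)^\vee = \pi$). Taking transposes yields the second identity.

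Finally I would extend to arbitrary $\pi \in \Temp(G)$. Such a $\pi$ is a subrepresentation of some $I_P^G(\sigma)$ with $(M,\sigma) \in \widetilde{\Temp}_{\Ind}(G)$ under the map \eqref{Temp->TempInd}, so $\pi(h)$ for $h \in \mathcal{C}(G(F))$ is simply the restriction of $I_P^G(\sigma)(h)$ to the subspace. Since by convention $\gamma_{\rho}(\pi) = \gamma_{\rho}(I_P^G(\sigma))$, both identities descend from the $I_P^G(\sigma)$-case by restriction. I expect no substantive obstacle: the lemma is a formal consequence of the definition of $\mathcal{F}_{\rho}$, Lemma \ref{FT:lem:easy:1}, and the fact that $\mathcal{C}(G(F))$ is stable under $h \mapsto h^\vee$.
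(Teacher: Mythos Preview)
Your proposal is correct and follows essentially the same approach as the paper's proof: first handle $\pi \in \Temp_{\Ind}(G)$ by unwinding the definition \eqref{FT:defn:1}, then derive the second identity from the first via Lemma \ref{FT:lem:easy:1} applied to $\pi^\vee$, and finally descend to $\pi \in \Temp(G)$ by restriction to a subrepresentation. The only cosmetic difference is that the paper makes the intermediate step $\pi(\mathcal{F}_\rho(f))^\intercal = \pi^\vee(\mathcal{F}_\rho(f)^\vee)$ explicit before applying the first identity to $\pi^\vee$, whereas you state the argument in the reverse order; the content is the same.
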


\begin{lem} \label{lem:FT:unit} The Fourier transform $\mathcal{F}_{\rho}:\mathcal{C}(G(F))\to \mathcal{C}(G(F))$ is unitary.
\begin{proof}We first observe that Corollary \ref{HCPlan:enhanced}  implies that 
\begin{align*}
    \norm{\mathcal{F}_{\rho} (f)}^2_{L^2}&= \int_{\mathrm{Temp}_{\mathrm{Ind}}(G)}\mathrm{tr}\,(\pi(\mathcal{F}_{\rho} (f))^*\circ \pi(\mathcal{F}_{\rho} (f))) d\pi\\
    &=\int_{\mathrm{Temp}_{\mathrm{Ind}}(G)}|\gamma(\tfrac{1}{2},\pi^\vee,\rho,\psi)|^2\mathrm{tr}\,(\pi(f^\vee)^*\circ \pi(f^\vee)) d\pi 
 \quad \textrm{(by Lemma \ref{FT:lem:important:1})}\\
&=\int_{\mathrm{Temp}_{\mathrm{Ind}}(G)}\mathrm{tr}\,(\pi(f^\vee)^*\circ \pi(f^\vee)) d\pi \quad \textrm{(by Lemma \ref{LLC:gammafactor:unitarity:2}).}
\end{align*}
Changing variables $\pi \mapsto \pi^\vee$ and using Lemma \ref{pi->pivee:preservemeasure} this is 
\begin{align*}    \int_{\mathrm{Temp}_{\mathrm{Ind}}(G)}&\mathrm{tr}\,(\pi^\vee(f^\vee)^*\circ \pi^\vee(f^\vee)) d\pi\\
     &=\int_{\mathrm{Temp}_{\mathrm{Ind}}(G)}\mathrm{tr}\,((\pi(f)^\intercal)^*\circ \pi(f)^\intercal) d\pi\quad \text{(by Lemma \ref{FT:lem:easy:1})}\\
   &=\int_{\mathrm{Temp}_{\mathrm{Ind}}(G)}\mathrm{tr}\,(\pi(f)^*\circ \pi(f)) d\pi \quad \textrm{(by \eqref{FT:lem:easy:2})}\\
    &=\norm{f}^2_{L^2} \quad \textrm{(by Corollary \ref{HCPlan:enhanced}).}
\end{align*}
\end{proof}
\end{lem}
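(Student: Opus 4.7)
The plan is to apply the Harish-Chandra Plancherel inner product formula of Corollary \ref{HCPlan:enhanced} to both $f$ and $\mathcal{F}_\rho f$, and then use the spectral description of $\mathcal{F}_\rho$ from Lemma \ref{FT:lem:important:1} to convert one expression into the other. Since $\mathcal{F}_\rho$ is linear, unitarity is equivalent to the norm identity $\|\mathcal{F}_\rho f\|_{L^2}=\|f\|_{L^2}$ for $f\in\mathcal{C}(G(F))$, so this is what I would aim at.

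First I would expand
\begin{align*}
    \|\mathcal{F}_\rho f\|_{L^2}^2 = \int_{\Temp_\Ind(G)} \tr\bigl(\pi(\mathcal{F}_\rho f)^*\circ\pi(\mathcal{F}_\rho f)\bigr)\,d\pi
\end{align*}
and substitute $\pi(\mathcal{F}_\rho f)=\gamma_\rho(\pi^\vee)\pi(f^\vee)$ using Lemma \ref{FT:lem:important:1}. The scalar factor $|\gamma_\rho(\pi^\vee)|^2$ equals $1$ by Lemma \ref{LLC:gammafactor:unitarity:2}, so the integrand simplifies to $\tr(\pi(f^\vee)^*\circ\pi(f^\vee))$.

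Next, I would perform the change of variables $\pi\mapsto\pi^\vee$, which is a measure-preserving involution of $\Temp_\Ind(G)$ by Lemma \ref{pi->pivee:preservemeasure}. This turns the integrand into $\tr(\pi^\vee(f^\vee)^*\circ\pi^\vee(f^\vee))$. Applying Lemma \ref{FT:lem:easy:1} rewrites $\pi^\vee(f^\vee)$ as the transpose operator $\pi(f)^\intercal$, and then identity \eqref{FT:lem:easy:2} gives $\tr((\pi(f)^\intercal)^*\circ\pi(f)^\intercal)=\tr(\pi(f)^*\circ\pi(f))$. A final invocation of Corollary \ref{HCPlan:enhanced} rewrites the resulting integral as $\|f\|_{L^2}^2$, completing the identity.

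There is no substantive obstacle: the argument is a chain of invocations of earlier bookkeeping lemmas. The only thing to watch carefully is the interplay of the three distinct notions of ``dual'' present in the setup — the contragredient representation $\pi^\vee$, the function $f^\vee(g)=f(g^{-1})$, and the Hilbert-Schmidt transpose $S^\intercal$ — so that the scalar identity $|\gamma_\rho|=1$, the measure-preservation of $\pi\mapsto\pi^\vee$, and the trace identity for $S^\intercal$ each cancel the extra structure introduced at the previous step.
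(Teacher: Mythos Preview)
Your proposal is correct and follows essentially the same approach as the paper's proof: apply Corollary \ref{HCPlan:enhanced}, substitute via Lemma \ref{FT:lem:important:1}, drop the $\gamma$-factor using Lemma \ref{LLC:gammafactor:unitarity:2}, change variables $\pi\mapsto\pi^\vee$ via Lemma \ref{pi->pivee:preservemeasure}, then use Lemma \ref{FT:lem:easy:1} and \eqref{FT:lem:easy:2} to return to $\|f\|_{L^2}^2$. The sequence of lemmas invoked is identical.
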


\begin{lem}\label{lem:FT:inv} One has that $\mathcal{F}_{\rho,\overline{\psi}}\circ \mathcal{F}_{\rho,\psi} = \mathrm{id}_{\mathcal{C}(G(F))}$ and $\mathcal{F}_{\rho,\psi}^4 = \mathrm{id}_{\mathcal{C}(G(F))}$.  In particular $\mathcal{F}_{\rho,\psi}$ is a topological isomorphism.
\end{lem}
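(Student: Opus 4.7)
The plan is to push both identities through the Harish-Chandra Plancherel isomorphism by way of the commuting square \eqref{FT:defn:5}, reducing everything to a pointwise statement about the spectral-side operator $\mathcal{F}_\rho^{\mathrm{op}}$ on $\mathcal{C}(\mathrm{Temp}_{\mathrm{Ind}}(G))$.  Unwinding \eqref{FT:defn:4} and the definition of $(\cdot)^\intercal$ on sections from Lemma \ref{HCPlan:transpose}, one obtains the concrete formula
\[
\mathcal{F}_{\rho,\psi}^{\mathrm{op}}(T)(\pi)=\gamma_{\rho,\psi}(\pi^\vee)\,T(\pi^\vee)^\intercal.
\]
Iterating this (allowing possibly different additive characters $\psi_1,\psi_2$) and using $S^{\intercal\intercal}=S$ for Hilbert-Schmidt operators $S$ yields the central identity
\[
\bigl(\mathcal{F}_{\rho,\psi_2}^{\mathrm{op}}\circ \mathcal{F}_{\rho,\psi_1}^{\mathrm{op}}\bigr)(T)(\pi)
=\gamma_{\rho,\psi_2}(\pi^\vee)\,\gamma_{\rho,\psi_1}(\pi)\,T(\pi).
\]

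For the first claim, take $\psi_1=\psi$ and $\psi_2=\overline{\psi}$.  Lemma \ref{LLC:gammafactor:unitarity:2} gives $\gamma_{\rho,\overline{\psi}}(\pi^\vee)=\overline{\gamma_{\rho,\psi}(\pi)}$ and $|\gamma_{\rho,\psi}(\pi)|=1$, so the scalar in the display above equals $1$ pointwise.  Hence $\mathcal{F}_{\rho,\overline{\psi}}^{\mathrm{op}}\circ \mathcal{F}_{\rho,\psi}^{\mathrm{op}}=\mathrm{id}$ on $\mathcal{C}(\mathrm{Temp}_{\mathrm{Ind}}(G))$, and \eqref{FT:defn:5} then forces $\mathcal{F}_{\rho,\overline{\psi}}\circ \mathcal{F}_{\rho,\psi}=\mathrm{id}_{\mathcal{C}(G(F))}$.

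For the second claim, set $\psi_1=\psi_2=\psi$ in the iterated formula.  Lemma \ref{lem:chis} (with $s=\tfrac{1}{2}$) asserts that $\gamma_{\rho,\psi}(\pi)\gamma_{\rho,\psi}(\pi^\vee)=\chi_\sigma(-1)$, where $\sigma=\mathrm{LL}_{\GL_n}^{-1}(\rho\circ \mathrm{LL}_G(\pi))$; in particular this is a well-defined $\pm 1$-valued function on $\mathrm{Temp}_{\mathrm{Ind}}(G)$ (cf.~the discussion in \S\ref{LLC:gamma:invariant}).  Thus $(\mathcal{F}_{\rho,\psi}^{\mathrm{op}})^2$ is multiplication by $\pi\mapsto \chi_\sigma(-1)$, and squaring once more gives $(\mathcal{F}_{\rho,\psi}^{\mathrm{op}})^4=\mathrm{id}$, which transports back to $\mathcal{F}_{\rho,\psi}^4=\mathrm{id}_{\mathcal{C}(G(F))}$ via \eqref{FT:defn:5}.

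Finally, the first identity exhibits $\mathcal{F}_{\rho,\overline{\psi}}$ as a continuous left inverse to $\mathcal{F}_{\rho,\psi}$; interchanging the roles of $\psi$ and $\overline{\psi}$ yields a continuous right inverse as well.  Since $\mathcal{F}_{\rho,\psi}$ is continuous by Lemma \ref{FT:defn:lem:0} and Theorem \ref{HCPlan:thm1}, it is therefore a topological isomorphism.  The one point that requires mild care is verifying that the scalar function $\pi\mapsto \chi_\sigma(-1)$ is genuinely well-defined on $\mathrm{Temp}_{\mathrm{Ind}}(G)$ (rather than merely on $\widetilde{\mathrm{Temp}}_{\mathrm{Ind}}(G)$), but this is handled by the same invariance argument used for $\gamma_\rho$ in \S\ref{LLC:gamma:invariant}; no other obstacle arises.
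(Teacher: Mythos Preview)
Your proof is correct and follows essentially the same approach as the paper. Both arguments reduce the identities to the scalar computations $\gamma_{\rho,\overline{\psi}}(\pi^\vee)\gamma_{\rho,\psi}(\pi)=1$ (via Lemma~\ref{LLC:gammafactor:unitarity:2}) and $\bigl(\gamma_{\rho,\psi}(\pi^\vee)\gamma_{\rho,\psi}(\pi)\bigr)^2=\chi_\sigma(-1)^2=1$ (via Lemma~\ref{lem:chis}), then invoke the Plancherel isomorphism; the only cosmetic difference is that the paper computes $\pi(\mathcal{F}_\rho^k(f))$ using Lemma~\ref{FT:lem:important:1}, whereas you iterate $\mathcal{F}_\rho^{\mathrm{op}}$ directly on sections and transport through \eqref{FT:defn:5}.
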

\begin{proof} Let $f\in \mathcal{C}(G(F))$. For $\pi \in\Temp_\Ind(G)$ with $\pi = I_P^G(\sigma)$ for some $(M,\sigma)\in\widetilde{\Temp}_\Ind(G)$,  Lemma \ref{FT:lem:important:1} and Lemma \ref{LLC:gammafactor:unitarity:2} imply
\begin{align*}
\pi(\mathcal{F}_{\rho,\overline{\psi}}\circ \mathcal{F}_{\rho,\psi}( f)) &= \gamma(\tfrac{1}{2},\sigma^\vee,\rho\vert_{^L{M}},\overline{\psi})\pi(\mathcal{F}_{\rho,\psi} (f)^\vee)\\
    &=\gamma(\tfrac{1}{2},\sigma^\vee,\rho\vert_{^L{M}},\overline{\psi})\gamma(\tfrac{1}{2},\sigma,\rho\vert_{^LM},\psi) \pi(f)\\
    &=\overline{\gamma(\tfrac{1}{2},\sigma,\rho\vert_{^L{M}},\psi)} \gamma(\tfrac{1}{2},\sigma,\rho\vert_{^LM},\psi)\pi(f)=\pi(f). 
\end{align*}
Similarly, by Lemma \ref{FT:lem:important:1} and Lemma \ref{lem:chis},
\begin{align*} 
\pi(\mathcal{F}_{\rho,\psi}^4 (f)) &= \gamma(\tfrac{1}{2},\sigma^\vee,\rho\vert_{^LM},\psi)\pi(\mathcal{F}_{\rho,\psi}^3(f)^\vee)\\
    &=\gamma(\tfrac{1}{2},\sigma^\vee,\rho\vert_{^LM},\psi)\gamma(\tfrac{1}{2},\sigma,\rho\vert_{^LM},\psi)\pi(\mathcal{F}_{\rho,\psi}^2(f))\\
    &=(\gamma(\tfrac{1}{2},\sigma^\vee,\rho\vert_{^LM},\psi)\gamma(\tfrac{1}{2},\sigma,\rho\vert_{^LM},\psi))^2\pi(f)= \pi(f).
\end{align*}
Since $\pi$ was arbitrary, the lemma now follows from the Plancherel formula (Theorem \ref{HCPlan:thm1}).
\end{proof}

\begin{lem} Let $M\in\mathcal{M}$ and $P\in \mathcal{P}(M)$. The following diagrams are commutative:
\begin{center}
    \begin{tikzcd}
        \mathcal{C}(\Temp_\Ind(G))\arrow[d,"(-)^{P}"]\arrow[rr,"\mathcal{F}_\rho^{\mathrm{op}}"]&&\mathcal{C}(\Temp_\Ind(G))\arrow[d,"(-)^{P}"]\\
        \mathcal{C}(\Temp_\Ind(M))\arrow[rr,"\mathcal{F}_{\rho\vert_{^LM}}^{\mathrm{op}}"']&&\mathcal{C}(\Temp_\Ind(M)),
    \end{tikzcd}
    \begin{tikzcd}
        \mathcal{C}(G(F))\arrow[rr,"\mathcal{F}_\rho"]\arrow[d,"(-)^{P}"]&&\mathcal{C}(G(F))\arrow[d,"(-)^{P}"]\\
        \mathcal{C}(M(F))\arrow[rr,"\mathcal{F}_{\rho\vert_{^LM}}"']&&\mathcal{C}(M(F))
    \end{tikzcd}
\end{center}
\end{lem}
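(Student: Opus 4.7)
The plan is to prove the spectral diagram first and then to deduce the geometric one from it by a short diagram chase.

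\textbf{Diagram 1 (spectral).} I would evaluate both compositions at an arbitrary point $(M',\sigma')\in\widetilde{\Temp}_\Ind(M)$ with $M'\leq M$ and $\sigma'\in\Pi_2(M')$, after fixing some $P'\in\mathcal{P}(M')$ contained in $P$. Unfolding \eqref{FT:defn:4}, \eqref{constant:map:spectral} and the definition of $T^\intercal$ from Lemma \ref{HCPlan:transpose}, one computes
\begin{align*}
\bigl(\mathcal{F}_\rho^{\mathrm{op}}(T)\bigr)^P(M',\sigma') &= \gamma_\rho(M',(\sigma')^\vee)\,\mathrm{ev}_{P',P}\!\left(T(M',(\sigma')^\vee)^\intercal\right),\\
\mathcal{F}_{\rho\vert_{{}^LM}}^{\mathrm{op}}(T^P)(M',\sigma') &= \gamma_{\rho\vert_{{}^LM}}(M',(\sigma')^\vee)\,\mathrm{ev}_{P',P}\!\left(T(M',(\sigma')^\vee)\right)^\intercal.
\end{align*}
Hence it suffices to verify two compatibilities: (i) the two $\gamma$-factors coincide, and (ii) the map $\mathrm{ev}_{P',P}$ commutes with the operator transpose $\intercal$ of \eqref{Transp}. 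Claim (i) is immediate because each factor equals $\gamma(\tfrac{1}{2},(\sigma')^\vee,\rho\vert_{{}^LM'},\psi)$ by \eqref{LLC:para} applied to the chain $M'\leq M\leq G$, combined with the definition of the local factors in \S\ref{LLC:gamma:invariant}. For (ii), recall that $\mathrm{ev}_{P',P}$ is induced by the $K$-equivariant isomorphism $\iota_{P',P}$; under the identification $\mathcal{HS}(H)=H^\vee\widehat{\otimes}H$ of \eqref{HS:identification}, the operator transpose is the flip of tensor factors, while $\mathrm{ev}_{P',P}$ acts as $\iota_{P',P}\otimes\iota_{P',P}$ followed by evaluation at the identity. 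These two operations commute once one identifies $\iota_{P',P}$ for the representation $\sigma'$ with its counterpart for $(\sigma')^\vee$ via the natural contragredient pairing.

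\textbf{Diagram 2 (geometric).} This follows by a diagram chase from Diagram 1, the commuting square \eqref{FT:defn:5}, and Lemma \ref{constant:map:HP:nice}. By \eqref{FT:defn:5} one has $\mathcal{F}_\rho=\mathrm{HP}_G^{-1}\circ\mathcal{F}_\rho^{\mathrm{op}}\circ\mathrm{HP}_G$ and likewise for $\mathcal{F}_{\rho\vert_{{}^LM}}$, while Lemma \ref{constant:map:HP:nice} says that $(-)^P$ intertwines $\mathrm{HP}_G$ with $\mathrm{HP}_M$. Applying the spectral diagram to $T=\mathrm{HP}_G(f)$ and sandwiching the result between two instances of Lemma \ref{constant:map:HP:nice} yields $(\mathcal{F}_\rho(f))^P=\mathcal{F}_{\rho\vert_{{}^LM}}(f^P)$ for every $f\in\mathcal{C}(G(F))$.

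\textbf{Main obstacle.} The only non-formal point is compatibility (ii) above. Although it is essentially a naturality statement, one must carefully track the identifications between $\mathcal{HS}(I_{P'(F)\cap K}^K(\sigma'\vert_{M'(F)\cap K}))$ and $\mathcal{HS}(I_{P'(F)\cap K}^K((\sigma')^\vee\vert_{M'(F)\cap K}))$ induced by contragredience, and, in the Archimedean case, verify that the identity at the level of Hilbert–Schmidt bundles descends to the smooth sections defining $\mathcal{C}(\Temp_\Ind(G))$ (so that the subsequent bound of Lemma \ref{HP:arch:bound} is preserved). The $\gamma$-factor compatibility (i) and the geometric diagram chase are routine given the results already collected.
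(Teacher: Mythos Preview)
Your approach is correct and coincides with the paper's. The paper's own proof is a one-liner: for the spectral diagram it simply invokes the identity $\gamma_\rho\vert_{\widetilde{\Temp}_\Ind(M)} = \gamma_{\rho\vert_{{}^LM}}$ (your compatibility (i)), and for the geometric diagram it cites Lemma \ref{constant:map:HP:nice}, exactly as you do. Your compatibility (ii) --- that $\mathrm{ev}_{P',P}$ intertwines the operator transpose --- is left entirely implicit in the paper; you are right that it is needed and that it is the formal naturality of the tensor flip with respect to $\iota_{P',P}\otimes\iota_{P',P}$ and evaluation at $(1,1)$, so your additional care here is warranted but does not constitute a different argument.
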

\begin{proof}  The commutativity of the left diagram follows from the fact that $\gamma_\rho\vert_{\widetilde{\Temp}_\Ind(M)} = \gamma_{\rho\vert_{^LM}}$.  The commutativity of the right diagram follows from the commutativity of the left diagram and Lemma \ref{constant:map:HP:nice}.
\end{proof}

The following is a restatement of Theorem \ref{FirstThmIntro}:
\begin{thm}\label{thm:1}
Assume the desiderata on the local Langlands correspondence explained in \S\ref{sec:LLC}.   Then Conjecture \ref{conj:Sch} is true; we may take $\mathcal{S}_{\rho}=\mathcal{C}(G(F)).$
\end{thm}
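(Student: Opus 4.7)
The plan is to assemble Theorem \ref{thm:1} from the work already done; nearly every clause of Conjecture \ref{conj:Sch} is the content of a lemma proven above, so the proof is a verification exercise. First I would check that $\mathcal{C}(G(F))$ is an admissible candidate for $\mathcal{S}_\rho$: it is contained in $C^\infty(G(F))$ by definition, it sits inside $L^2(G(F))$ because $\Xi^G \in L^{2+\varepsilon}$-style bounds together with finiteness of the seminorms $p_d$ for all $d$ force square-integrability (use \cite[Proposition 1.5.1]{BP:local:GGP} applied to $|f|^2 = (|f|\Xi^{G}\sigma_G^{-d})\cdot \Xi^G \sigma_G^d$ for $d$ large enough), and it is $G(F)\times G(F)$-invariant because the seminorms defining it are visibly compatible with the $\mathcal{R}$-action, the Archimedean case being covered by the use of all $u,v\in U(\mathfrak{g})$. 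Density of $\mathcal{C}(G(F))$ in $L^2(G(F))$ follows from the containment $C_c^\infty(G(F))\subset \mathcal{C}(G(F))$.

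Next I would set $\mathcal{F}_{\rho,\psi}$ equal to the operator constructed in \eqref{FT:defn:1}–\eqref{FT:defn:2}. Its key analytic properties have already been established: it is a continuous linear endomorphism of $\mathcal{C}(G(F))$ by Lemma \ref{FT:defn:lem:0} combined with Theorem \ref{HCPlan:thm1}, it is unitary (as an operator on the ambient $L^2$) by Lemma \ref{lem:FT:unit}, the bi-equivariance $\mathcal{F}_{\rho,\psi}\circ \mathcal{R}(g_1,g_2) = \mathcal{R}(g_2,g_1)\circ \mathcal{F}_{\rho,\psi}$ is exactly \eqref{FT:equivariant:close}, the involution identity $\mathcal{F}_{\rho,\psi}\circ \mathcal{F}_{\rho,\overline{\psi}} = \mathrm{id}$ is Lemma \ref{lem:FT:inv}, and the gamma-factor identity \eqref{operator:bound} is the first assertion of Lemma \ref{FT:lem:important:1}.

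Finally, one must ensure that the pointwise evaluation $\pi(\mathcal{F}_{\rho,\psi}(f)^\vee)$ makes sense as a well-defined operator for all tempered $\pi$, not only for those lying in the image of the map $\mathrm{Temp}(G) \to \mathrm{Temp}_{\mathrm{Ind}}(G)$ on which the Plancherel decomposition directly places mass. This is immediate from the discussion preceding \eqref{Temp->TempInd}: every $\pi \in \mathrm{Temp}(G)$ is a subrepresentation of some $I_P^G(\sigma)$, so $\pi(f)$ is a restriction of the standard operator attached to $I_P^G(\sigma)$, and the final clause of Lemma \ref{FT:lem:important:1} then yields the required identity.

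The proof, then, is essentially a bookkeeping exercise, and there is no genuine obstacle left; if any step deserves emphasis it is the unitarity (Lemma \ref{lem:FT:unit}), which relies on $|\gamma_\rho(\pi)| = 1$ for tempered $\pi$ (Lemma \ref{LLC:gammafactor:unitarity:2}) together with the measure-preservation of $\pi \mapsto \pi^\vee$ (Lemma \ref{pi->pivee:preservemeasure}); these are the points where the assumed compatibilities \eqref{LLC:Cg} and the $\varepsilon$-factor symmetry of Lemma \ref{lem:chis} are genuinely used.
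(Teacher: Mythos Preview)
Your proposal is correct and follows essentially the same approach as the paper: both proofs assemble the theorem by citing the construction \eqref{FT:defn:1}, unitarity from Lemma \ref{lem:FT:unit}, twisted equivariance from \eqref{FT:equivariant:close}, the inversion identity from Lemma \ref{lem:FT:inv}, and the $\gamma$-factor relation from Lemma \ref{FT:lem:important:1}. You are somewhat more thorough in explicitly verifying that $\mathcal{C}(G(F)) \subset C^\infty(G(F)) \cap L^2(G(F))$ and is dense, which the paper leaves implicit, but otherwise the arguments are the same.
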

\begin{proof}
    The Fourier transform is constructed in \eqref{FT:defn:1}.  It is unitary by Lemma \ref{lem:FT:unit}, the twisted equivariance property is \eqref{FT:equivariant:close}, and the identity $\mathcal{F}_{\rho,\overline{\psi}} \circ \mathcal{F}_{\rho,\psi}=\mathrm{id}_{\mathcal{C}(G(F))}$ is part of Lemma \ref{lem:FT:inv}.  The fact that $\pi(f)$ is well-defined for tempered $\pi$ and $f \in \mathcal{C}(G(F))$ is well-known, and is implicit in Theorem \ref{HCPlan:thm1}.  The identity \eqref{operator:bound} is part of Lemma \ref{FT:lem:important:1}.
\end{proof}

\subsection{The kernel} \label{ssec:kernel} By a \textbf{distribution on $G(F)$} we mean  a continuous linear functional $\ell:\mathcal{C}(G(F))\to\mathbb{C}.$
If $\ell$ is a distribution on $G(F)$ and $f \in \mathcal{C}(G(F))$ define $\ell\ast f:G(F)\to\mathbb{C}$ by the formula
\begin{align*}
    \ell \ast f(x) := \ell(\mathcal{R}(x,1)f^\vee) = \ell((\mathcal{R}(1,x)f)^\vee).
\end{align*}
We say that $\ell$ is \textbf{invariant }if $\ell\circ \mathcal{R}(g,g) = \ell$ for $g\in G(F)$.

\begin{lem}\label{TempDist:inv:end} If $\Phi:\mathcal{C}(G(F))\to\mathcal{C}(G(F))$ is a $G(F)\times G(F)$-equivariant continuous linear operator, then there exists an invariant distribution $\ell$ on $G(F)$ such that $\Phi(f) = \ell \ast f$ for $f\in \mathcal{C}(G(F))$.
\end{lem}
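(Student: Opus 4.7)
The plan is to produce $\ell$ directly by composing $\Phi$ with the natural ``evaluation at the identity'' map. Concretely, I would define
\[
\ell(f):=\Phi(f^{\vee})(e)
\]
for $f\in\mathcal{C}(G(F))$, and verify the three required properties: $\ell$ is a distribution, $\ell$ is invariant, and $\Phi(f)=\ell\ast f$.

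For continuity of $\ell$, I would argue in three steps. First, the map $f\mapsto f^{\vee}$ is a continuous involution of $\mathcal{C}(G(F))$; this is analogous to the continuity of $f\mapsto f^{*}$ noted in the paragraph containing \eqref{pd}, since $\Xi^{G}(g^{-1})=\Xi^{G}(g)$ and $\sigma_{G}(g)\asymp\sigma_{G}(g^{-1})$, and in the Archimedean case $\mathcal{R}(u,v)(f^{\vee})=(\mathcal{R}(v^{\iota},u^{\iota})f)^{\vee}$ where $u\mapsto u^{\iota}$ is the principal anti-automorphism of $U(\mathfrak{g})$. Second, $\Phi$ is continuous by hypothesis. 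Third, evaluation at the identity $f\mapsto f(e)$ is continuous on $\mathcal{C}(G(F))$: in the non-Archimedean case $|f(e)|\leq\Xi^{G}(e)\sigma_{G}(e)^{-d}p_{d}(f)$ is finite for any $d$ on each $\mathcal{C}(G(F)/\!/K')$, and in the Archimedean case the analogous bound holds with $p_{1,1,d}$. Composing the three gives continuity of $\ell$.

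To check invariance, I compute $(\mathcal{R}(g,g)f)^{\vee}=\mathcal{R}(g,g)(f^{\vee})$ directly from the definitions. Then
\[
\ell(\mathcal{R}(g,g)f)=\Phi(\mathcal{R}(g,g)f^{\vee})(e)=\mathcal{R}(g,g)\Phi(f^{\vee})(e)=\Phi(f^{\vee})(g^{-1}eg)=\ell(f),
\]
using the $G(F)\times G(F)$-equivariance of $\Phi$ in the middle equality. For the convolution identity, set $h:=\mathcal{R}(x,1)f^{\vee}$ and compute $h^{\vee}(y)=h(y^{-1})=f^{\vee}(x^{-1}y^{-1})=f(yx)=\mathcal{R}(1,x)f(y)$, so $h^{\vee}=\mathcal{R}(1,x)f$. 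Hence
\[
\ell\ast f(x)=\ell(\mathcal{R}(x,1)f^{\vee})=\Phi(\mathcal{R}(1,x)f)(e)=\mathcal{R}(1,x)\Phi(f)(e)=\Phi(f)(x),
\]
again by equivariance.

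I do not foresee a serious obstacle. The only point requiring any care is the continuity of evaluation at $e$ on the LF (resp. nuclear Fréchet) space $\mathcal{C}(G(F))$, which is immediate from the definition of the seminorms $p_{d}$ (resp. $p_{u,v,d}$). Everything else is a routine unwinding of the definitions of $\mathcal{R}$, $(\cdot)^{\vee}$ and $\ell\ast(\cdot)$, together with the $G(F)\times G(F)$-equivariance of $\Phi$.
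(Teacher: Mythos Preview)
Your proposal is correct and is exactly the paper's approach: the paper defines $\ell(f):=\Phi(f^{\vee})(1)$ and leaves the verifications implicit. Your added detail (continuity of $(\cdot)^{\vee}$, of evaluation at $e$, and the unwindings of $\mathcal{R}$ to check invariance and $\Phi(f)=\ell\ast f$) is all fine.
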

\begin{proof}
The distribution $\ell$ is given by $\ell(f):= \Phi(f^\vee)(1).$
\end{proof}

By \eqref{FT:equivariant:close} we see that 
\begin{align*}
    \mathcal{F}_\rho\circ(-)^\vee:\mathcal{C}(G(F))\lto \mathcal{C}(G(F))
\end{align*}
is a $G(F) \times G(F)$-equivariant continuous linear operator. Therefore Lemma \ref{TempDist:inv:end} implies there exists an invariant distribution
\begin{align*}
    J_\rho:\mathcal{C}(G(F))\lto\mathbb{C}
\end{align*}
satisfying $J_\rho\ast f = \mathcal{F}_\rho(f^\vee)$ for $f\in \mathcal{C}(G(F)).$  Equivalently
\begin{align}\label{FT:kernel:formula}
    \mathcal{F}_\rho(f) = J_\rho\ast f^\vee.
\end{align}
We refer to $J_{\rho}$ as the \textbf{kernel of $\mathcal{F}_{\rho}.$}

For the rest of this subsection assume $F$ is non-Archimedean. For any  compact open subgroup $K'\leq G(F)$, the Fourier transform $\mathcal{F}_\rho$ restricts to an isomorphism
\begin{align*}
    \mathcal{F}_\rho:\mathcal{C}(G(F)/\!/K')\tilde{\lto}\mathcal{C}(G(F)/\!/K').
\end{align*}
Let 
$e_{K'}:=\mathrm{meas}_{dg}(K')^{-1}\one_{K'}.$
For compact open subgroups $K'\leq G(F)$ let $\Gamma_{K'}\in\mathcal{C}(G(F)/\!/K')$ be the unique function such that
\begin{align*}
    \HP(\Gamma_{K'}) = \gamma_\rho\HP(e_{K'}).
\end{align*}

\begin{lem}\label{FT:kernel:onG//K'} Let $K'\leq G(F)$ be a compact open subgroup. For $f\in \mathcal{C}(G(F)/\!/K')$, one has
\begin{align*}
    \mathcal{F}_\rho(f) = \Gamma_{K'}^\vee\ast f^\vee = f^\vee\ast\Gamma_{K'}^\vee.
\end{align*}
\end{lem}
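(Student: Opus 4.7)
The plan is to apply the Harish-Chandra Plancherel isomorphism $\HP$ of Theorem \ref{HCPlan:thm1} to each side of the asserted identities and invoke its injectivity. Three ingredients suffice. First, $\HP$ is multiplicative for convolution: for $h_1, h_2 \in \mathcal{C}(G(F))$,
\[
\HP(h_1 \ast h_2)(\pi) = \pi(h_1 \ast h_2) = \pi(h_1) \circ \pi(h_2) = (\HP(h_1)\HP(h_2))(\pi),
\]
where the product on the right is the pointwise composition making $\mathcal{C}(\Temp_{\Ind}(G))$ an algebra. Second, the definition \eqref{FT:defn:1} of $\mathcal{F}_\rho$ rearranges to $\HP(\mathcal{F}_\rho(f)^\vee) = \gamma_\rho \cdot \HP(f)$. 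Third, for $f \in \mathcal{C}(G(F)/\!/K')$ one has $e_{K'} \ast f = f \ast e_{K'} = f$, so that $\HP(e_{K'}) \HP(f) = \HP(f) \HP(e_{K'}) = \HP(f)$.

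For the second equality $\mathcal{F}_\rho(f) = f^\vee \ast \Gamma_{K'}^\vee$, I would use the identity $(a \ast b)^\vee = b^\vee \ast a^\vee$, valid by unimodularity of $G(F)$, to recast the claim as $\mathcal{F}_\rho(f)^\vee = \Gamma_{K'} \ast f$. Applying $\HP$ to the right-hand side and using the defining relation $\HP(\Gamma_{K'}) = \gamma_\rho \HP(e_{K'})$,
\[
\HP(\Gamma_{K'} \ast f) = \HP(\Gamma_{K'})\HP(f) = \gamma_\rho \HP(e_{K'})\HP(f) = \gamma_\rho \HP(e_{K'} \ast f) = \gamma_\rho \HP(f) = \HP(\mathcal{F}_\rho(f)^\vee).
\]
Injectivity of $\HP$ yields the desired equality.

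For the first equality $\mathcal{F}_\rho(f) = \Gamma_{K'}^\vee \ast f^\vee$, I would transpose to $\mathcal{F}_\rho(f)^\vee = f \ast \Gamma_{K'}$ and compute
\[
\HP(f \ast \Gamma_{K'}) = \HP(f)\HP(\Gamma_{K'}) = \HP(f)\bigl(\gamma_\rho \HP(e_{K'})\bigr) = \gamma_\rho \bigl(\HP(f)\HP(e_{K'})\bigr) = \gamma_\rho \HP(f \ast e_{K'}) = \gamma_\rho \HP(f).
\]
The middle manipulation exploits the centrality of the scalar function $\gamma_\rho$ in $\mathcal{C}(\Temp_{\Ind}(G))$: at each $\pi$ it acts as multiplication by the scalar $\gamma_\rho(\pi)$, which commutes with every endomorphism of the fiber. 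There is no substantial obstacle here; the lemma is essentially an identity in an operator algebra transported back to $G(F)$ via $\HP$.
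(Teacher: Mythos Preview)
Your proof is correct and follows essentially the same approach as the paper: both reduce to showing $f \ast \Gamma_{K'} = \mathcal{F}_\rho(f)^\vee = \Gamma_{K'} \ast f$ via $(a \ast b)^\vee = b^\vee \ast a^\vee$, then apply $\HP$ using its multiplicativity, the identity $\HP(\mathcal{F}_\rho(f)^\vee) = \gamma_\rho \HP(f)$, the scalar nature of $\gamma_\rho$, and the fact that $e_{K'}$ acts trivially on $K'$-bi-invariant functions.
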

\begin{proof} Since $(f\ast g)^\vee = g^\vee\ast f^\vee$, it suffices to show
$    f\ast \Gamma_{K'}=\mathcal{F}_\rho(f)^\vee = \Gamma_{K'}\ast f.$
One has that
\begin{align*}
    \HP(\mathcal{F}_\rho(f)^\vee) = \gamma_\rho \HP(f).
\end{align*}
Since $f$ is $K'$ bi-invariant, we see $f\ast e_{K'} = e_{K'}\ast f = f.$ It follows that
\begin{align*}
 \HP(f\ast\Gamma_{K'})= \gamma_\rho \HP(f)\HP(e_{K'})=   \gamma_\rho \HP(f) = \gamma_\rho\HP(e_{K'})\HP(f) = \HP(\Gamma_{K'}\ast f).
\end{align*}
\end{proof}
\quash{
The kernel $J_{\rho}$ is represented by a uniformly smooth function on $C_c^\infty(G(F)).$  More precisely:
\begin{lem}\label{FT:kernel:L2} Assume $F$ is non-Archimedean. There is a $w \in C^\infty(G(F))$ such that 
$$
w*f^\vee(1)=J_{\rho}*f^\vee(1)
$$ for all $f \in C_c^\infty(G(F)).$
\end{lem}

\begin{proof} Let $K'\leq G(F)$ be a compact open subgroup; then $\{gK'\}_{g\in G(F)}$ is an open cover of $G(F)$. The problem is local.  Thus it suffices to show that there exists an $w\in C^\infty_u(G(F))$ such that $J_\rho*f^\vee(1) = w*f^\vee(1)$ for all $f\in C^\infty(gK')$. Note that $C^\infty(gK')\subseteq\mathcal{C}(G(F)/\!/(K'\cap gK'g^{-1}))$, so by \eqref{FT:kernel:formula} and Lemma \ref{FT:kernel:onG//K'}  we see that
\begin{align*}
     J_\rho\ast f^\vee(1) = \mathcal{F}_\rho(f)(1) = \Gamma_{K'\cap gK'g^{-1}}^\vee\ast f^\vee(1). 
\end{align*}
Since $\Gamma_{K'\cap gK'g^{-1}}^\vee\in \mathcal{C}(G(F))\subseteq C^\infty(G(F))$ the result follows.
\end{proof}}

\begin{rem}
Lemma \ref{FT:kernel:onG//K'} implies a version of \cite[Conjecture 2.1.1(2)]{Luo:Ngo} in the non-Archimedean case.
\end{rem}

\subsection{The basic function}\label{Section:Basic:Function} 
In the rest of this subsection when $F$ is non-Archimedean we assume $G$ is unramified, $K$ is hyperspecial and $\psi$ is unramified. Let $P_0$ be a minimal parabolic subgroup containing $A_0.$ Thus $A_0$ is a maximal split torus in $P_0$, $M_0 = C_G(A_0)$ is a Levi subgroup of $P_0$, and $A_{M_0}=A_0$. \quash{ Since $A_0$ is maximal, it follows that the quotient $M_0/A_0$ is anisotropic, so $(M_0/A_0)(F)=M_0(F)/A_0(F)$ is compact. In particular, the trivial representation $\mathrm{triv}_{M_0}$ of $M_0(F)$ is square integrable; by abuse of notation }We write $\Im\Lambda_{M_0}$ for the connected component of $\Temp_\Ind(G)$ containing $(M_0,\mathrm{triv}_{M_0})$. When $F$ is non-Archimedean, $G$ is quasi-split, so $M_0$ is a maximal torus in $G.$

\begin{lem} \label{lem:unram:func} For $T\in\mathcal{C}(\mathrm{Temp}_{\mathrm{ind}}(G))^{K\times K}$, one has that $\mathrm{supp}(T)\subseteq\Im\Lambda_{A_0}$.
\end{lem}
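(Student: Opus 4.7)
The plan is to exploit the $K \times K$-invariance of $T$ to force its support into the unramified principal series component. In the first step, I unpack the condition $\square(k_1,k_2)T = T$ for $(k_1,k_2)\in K\times K$: at a point $\pi = I_P^G(\sigma)$ this asserts $\pi(k_1)\circ T(\pi)\circ \pi(k_2)^{-1} = T(\pi)$. Averaging over $K\times K$ against the normalized Haar measure yields $T(\pi) = p_K\circ T(\pi)\circ p_K$, where $p_K$ is the orthogonal projection onto the $K$-fixed subspace $\pi^K$. Consequently $T(\pi) = 0$ whenever $\pi^K = 0$, and the problem reduces to identifying the locus in $\mathrm{Temp}_{\mathrm{Ind}}(G)$ where $\pi^K$ is nonzero.

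In the second step, the Iwasawa decomposition $G(F) = P(F)K$ produces a $K$-equivariant isomorphism $I_P^G(\sigma)|_K \cong \mathrm{Ind}_{P(F)\cap K}^K(\sigma|_{M(F)\cap K})$. Applying Frobenius reciprocity gives a canonical identification $\pi^K \cong \sigma^{M(F)\cap K}$. So the problem becomes: if $\sigma \in \Pi_2(M)$ admits a nonzero $M(F)\cap K$-fixed vector, then $(M,\sigma)$ lies in the inertial orbit of $(M_0,\mathrm{triv}_{M_0})$, and in particular in the connected component $\mathop{\mathrm{Im}}\Lambda_{M_0}$.

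The third step is the main obstacle. In the non-Archimedean (unramified) setting it follows from the Satake parametrization of spherical representations combined with the Langlands correspondence: a spherical irreducible $\sigma$ of $M(F)$ has an $L$-parameter that factors through ${}^LT_M$ for a maximal torus $T_M \subseteq M$, and the square-integrability condition forces this parameter to be discrete. Since the image already lies in the torus ${}^LT_M$, this is only possible when ${}^LM = {}^LT_M$, i.e., $M$ itself is a torus. Because $G$ is quasi-split in this setting, $M_0 = Z_G(A_0)$ is a maximal torus, and any torus semi-standard Levi containing $A_0$ must equal $M_0$; then $\sigma$ is an unramified character, so $\sigma\in \mathop{\mathrm{Im}}\Lambda_{M_0}$. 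The Archimedean case is handled analogously, using the classification of spherical tempered representations of real reductive groups: an $M(F)\cap K$-spherical discrete series of $M(F)$ forces $M$ to be essentially abelian, which in turn forces $M = M_0$ and $\sigma$ to be an unramified character.

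The delicate point in step three is ensuring that the implication ``spherical and square-integrable $\Rightarrow$ $M$ is a torus'' is applied correctly to an irreducible constituent of the minimal induction data $(M,\sigma)$ rather than to the (possibly reducible) induced representation $I_P^G(\sigma)$; this is where the choice of $\mathcal{M}$ as a set of representatives of conjugacy classes of semi-standard Levis and the unique Langlands data for tempered representations enter.
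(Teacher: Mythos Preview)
Your approach is essentially the same as the paper's: reduce from $K\times K$-invariance to $\pi^K\neq 0$, then to $\sigma^{M(F)\cap K}\neq 0$ via Frobenius reciprocity, and finally argue that a square-integrable spherical representation forces $M=M_0$. The paper executes step three by direct citation---Wallach's \emph{Real Reductive Groups II}, Theorem 13.8.1, in the Archimedean case and Borel's Corvallis article, \S 10.4, in the non-Archimedean case---rather than sketching an argument.

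One caution on your non-Archimedean sketch: you invoke ``square-integrable $\Rightarrow$ discrete $L$-parameter,'' but this implication is not among the LLC desiderata the paper actually assumes (it is stated only for Archimedean $F$, as Theorem~\ref{thm:square}). Borel's argument avoids LLC entirely: the Satake isomorphism already tells you that any $K$-spherical irreducible is the unique spherical subquotient of an unramified principal series, and one checks directly (e.g.\ via asymptotics of the zonal spherical function) that such a subquotient is never square-integrable unless the group is a torus. That is the cleaner route here. Your final paragraph about the ``delicate point'' is unnecessary: the reduction is directly to the pair $(M,\sigma)$ with $\sigma\in\Pi_2(M)$, and no ambiguity about constituents of $I_P^G(\sigma)$ arises.
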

\begin{proof} Let $M\in\mathcal{M}$, $P\in\mathcal{P}(M)$ and $\sigma\in \Pi_2(M)$. Put $\pi = I_P^G(\sigma)$. 
It is enough to prove that if $\pi^K \neq 0$ then 
$M=M_0$ and $\sigma \in \Im \Lambda_{M_0}.$
 If $\pi^K \neq 0$ then $\sigma^{K \cap M(F)} \neq 0.$  Using this observation we are reduced to proving that if $\pi \in \Pi_2(G)$ and $\pi^K \neq 0$ then $G=M_0.$
If $F$ is Archimedean this follows from \cite[Theorem 13.8.1]{Wallach:RGII} and if $F$ is non-Archimedean it follows from \cite[Remark, \S 10.4]{Borel:Corvallis}.
\end{proof}

\quash{

Let $d\chi$ denote the restriction of $d\pi$ to $\Im\Lambda_{A_0}$. Let 
\begin{align} \label{zonal}
    \mathfrak{S}_\chi(g)=\langle I_{P_0}^G(\chi)(g) \varphi,\varphi^\vee\rangle
\end{align}
where $(\varphi,\varphi^{\vee})\in I_{P_0}^G(\chi)^K \times (I_{P_0}^G(\chi)^\vee)^K$ is the unique element such that $
\varphi|_K=1 \textrm{ and }\langle \varphi,\varphi^\vee\rangle=1.$
Thus $\mathfrak{S}_{\chi}$ is the zonal spherical function for $I_{P_0}^G(\chi).$

\begin{cor} \label{spherical:inversion} For $f\in\mathcal{C}(G(F)/\!/K)$, one has
\begin{align*}
    f(g) = \int_{\Im\Lambda_{A_0}}\left(\int_{G(F)}f(x)\mathfrak{S}_\chi(x)dx\right)\mathfrak{S}_{\chi^{-1}}(g)d\chi.
\end{align*}
\end{cor}
\begin{proof} By 
Theorem \ref{HCPlan:thm1} and Lemma \ref{lem:unram:func}, we must show
\begin{align*}
    \tr(I_{P_0}^G(\chi)(g^{-1})\circ I_{P_0}^G(\chi)(f)) = \mathfrak{S}_{\chi^{-1}}(g)\int_{G(F)}f(x)\mathfrak{S}_\chi(x)dx.
\end{align*}
For $\varphi$ as in \eqref{zonal} one has $\mathrm{HP}(\one_K)(\chi)=\varphi\otimes\varphi^\vee \in I_{P_0}^G(\chi) \otimes I_{P_0}^G(\chi)^\vee.$ Let $p_\chi:I_{P_0}^G(\chi)\to I_{P_0}^G(\chi)^K$ be  the orthogonal projection onto $I_{P_0}^G(\chi)^K.$  Then 
\begin{align*}
    I_{P_0}^G(\chi)(f) = \langle I_{P_0}^G(\chi)(f)\varphi,\varphi^\vee\rangle p_{\chi} = \left(\int_{G(F)} f(x)\mathfrak{S}_\chi(x) dx\right)p_\chi
\end{align*}
so that
\begin{align*}
    \tr\left(I_{P_0}^G(\chi)(g^{-1})\circ I_{P_0}^G(\chi)(f)\right) &= \tr\left(I_{P_0}^G(\chi)(g^{-1})\circ p_\chi\right)\int_{G(F)} f(x)\mathfrak{S}_\chi(x) dx\\
    &=\langle I_{P_0}^G(\chi)(g^{-1})\varphi,\varphi^\vee\rangle\int_{G(F)} f(x)\mathfrak{S}_\chi(x) dx\\
    &=\mathfrak{S}_{\chi^{-1}}(g)\int_{G(F)} f(x)\mathfrak{S}_\chi(x) dx.
\end{align*}    
\end{proof}}

\begin{defn}\label{basicfunct:defn} Assume $F$ is non-Archimedean.  The \textbf{basic function} $b_{\rho}$ is the unique element in $\mathcal{C}(G(F)/\!/K)$ such that
\begin{align}\label{basicfunction:defn}
    \mathrm{HP}(b_{\rho}) = L_{\rho}\mathrm{HP}(\one_K) \in \mathcal{C}(\Temp_\Ind(G))^{K\times K}.
\end{align}
\end{defn}
\noindent Here $L_{\rho}$ is defined as in \eqref{loc:factors}.
\quash{

By Corollary \ref{spherical:inversion}, one then has
\begin{align} \label{basic:inversion:L}
    b_{\rho}(g)=\int_{\Im\Lambda_{M_0}}L\left( \tfrac{1}{2},\chi,\rho|_{{}^LM_0}\right)\mathfrak{S}_{\chi^{-1}}(g)d\chi.
\end{align}
Previously in the literature the basic function has been defined using the spherical Fourier transform \cite{WWLiSat,Luo:basicfunction}. In view of \eqref{basic:inversion:L}, this agrees with our definition.}

\begin{lem}\label{Basic:Function:Fixed:Fourier} If $\psi$ is unramified then $\mathcal{F}_{\rho}(b_{\rho}) = b_{\rho}$.
\end{lem}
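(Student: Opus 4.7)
The plan is to apply $\HP$ to both sides and invoke the injectivity of $\HP$ from Theorem \ref{HCPlan:thm1}. By \eqref{FT:defn:3} and \eqref{basicfunction:defn},
\begin{align*}
\HP(\mathcal{F}_\rho(b_\rho)) = (\gamma_\rho\cdot \HP(b_\rho))^\intercal = (\gamma_\rho\cdot L_\rho\cdot \HP(\one_K))^\intercal.
\end{align*}
By Lemma \ref{HCPlan:transpose}, evaluating this at $\pi\in\Temp_\Ind(G)$ gives
\begin{align*}
\HP(\mathcal{F}_\rho(b_\rho))(\pi) = \gamma_\rho(\pi^\vee)\,L_\rho(\pi^\vee)\,\HP(\one_K)(\pi^\vee)^\intercal,
\end{align*}
so the task reduces to simplifying the scalar factor and the operator factor separately.

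For the operator, Lemma \ref{HCPlan:transpose} together with $\one_K^\vee=\one_K$ yields $\HP(\one_K)(\pi^\vee)^\intercal=\HP(\one_K^\vee)(\pi)=\HP(\one_K)(\pi)$. For the scalar, the definition of $\gamma_\rho$ via the usual functional equation $L\cdot \gamma = \varepsilon\cdot L^\vee$ (as in \S\ref{sec:LLC}, cf.\ \eqref{temp:factors}) gives $\gamma_\rho(\pi^\vee)L_\rho(\pi^\vee)=\varepsilon_\rho(\pi^\vee)L_\rho(\pi)$. Combining these,
\begin{align*}
\HP(\mathcal{F}_\rho(b_\rho))(\pi) = \varepsilon_\rho(\pi^\vee)\,L_\rho(\pi)\,\HP(\one_K)(\pi).
\end{align*}

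The final step, which I expect to be the only substantive point, is to verify that $\varepsilon_\rho(\pi^\vee)=1$ whenever $\HP(\one_K)(\pi)\neq 0$. By Lemma \ref{lem:unram:func} the support of $\HP(\one_K)$ is contained in the unramified component $\Im\Lambda_{M_0}$, so every such $\pi$ is an unramified subquotient of some $I_{P_0}^G(\chi)$ with $\chi\in\Im\Lambda_{M_0}$. By \eqref{LLC:para} applied to $(M_0,\chi)$ together with \eqref{LLC:LCT}, the parameter $\LL_G(\pi)$ factors through the unramified quotient of $W_F'$; by \eqref{LLC:Cg} the same holds for $\LL_G(\pi^\vee)$. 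Hence $\rho\circ\LL_G(\pi^\vee)$ is an unramified representation of $W_F'$, and its $\varepsilon$-factor against the unramified character $\psi$ is $1$ by the standard properties of local $\varepsilon$-factors \cite{Tate_NT}. Therefore
\begin{align*}
\HP(\mathcal{F}_\rho(b_\rho))(\pi) = L_\rho(\pi)\,\HP(\one_K)(\pi) = \HP(b_\rho)(\pi)
\end{align*}
for all $\pi$ (the identity also holding trivially outside the support of $\HP(\one_K)$), and injectivity of $\HP$ closes out the argument.
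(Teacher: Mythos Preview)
Your proof is correct and follows essentially the same route as the paper's. The paper applies Lemma \ref{FT:lem:important:1} to compute $I_{P_0}^G(\chi)(\mathcal{F}_\rho(b_\rho)^\vee)$ for $\chi\in\Im\Lambda_{M_0}$ and reduces to $\varepsilon(\tfrac{1}{2},\chi,\rho|_{{}^LM_0},\psi)=1$; you instead unwind \eqref{FT:defn:3} and the transpose formalism of Lemma \ref{HCPlan:transpose} to compute $\HP(\mathcal{F}_\rho(b_\rho))(\pi)$ directly, reducing to $\varepsilon_\rho(\pi^\vee)=1$ on the same unramified component. Both arguments invoke Lemma \ref{lem:unram:func} to localize to $\Im\Lambda_{M_0}$ and both conclude via the triviality of the $\varepsilon$-factor for unramified data and unramified $\psi$, followed by Plancherel injectivity.
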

\begin{proof} By Lemma \ref{FT:lem:important:1}, for $\chi \in \Im\Lambda_{M_0}$ we have
\begin{align*}
    I_{P_0}^G(\chi)(\mathcal{F}_{\rho}(b_{\rho})^\vee) &= \gamma(\tfrac{1}{2},\chi,\rho\vert_{^LM_0},\psi)I_{P_0}^G(\chi)(b_{\rho})\\
    &=\gamma(\tfrac{1}{2},\chi,\rho\vert_{^LM_0},\psi)L(\tfrac{1}{2},\chi,\rho\vert_{^LM_0},\psi)\mathrm{HP}(\one_{K})(\chi)\\
    &=\varepsilon(\tfrac{1}{2},\chi,\rho\vert_{^LM_0},\psi)L(\tfrac{1}{2},\chi^{-1},\rho\vert_{^LM_0},\psi)\mathrm{HP}(\one_K)(\chi)\\
    &=\varepsilon(\tfrac{1}{2},\chi,\rho\vert_{^LM_0},\psi)I_{P_0}^G(\chi)(b_{\rho}^\vee)\qquad \text{(by \eqref{basicfunction:defn})}.
\end{align*}
\quash{\textcolor{red}{Should we cite Lemma 2.8 too? because   $I_{P_0}^G(\chi)^{\vee}(b_{\rho}) = I_{P_0}^G(\chi)(b_{\rho}^{\vee})^{\intercal}$ by Lemma 2.8, the definition of $I_{P_0}^G(\chi)(b_{\rho}^{\vee})^{\intercal}$ and then (4.2.1) give the result}}Since $\psi$ and $\chi$ are unramified $
    \varepsilon(\tfrac{1}{2},\chi,\rho\vert_{^LM_0},\psi)=1.$  Combining Lemma \ref{lem:unram:func} with the  Plancherel formula (Theorem \ref{HCPlan:thm1}) we deduce that $\mathcal{F}_{\rho}(b_\rho)^\vee=b_{\rho}^\vee$ and hence $\mathcal{F}_{\rho}(b_\rho)=b_{\rho}.$
\end{proof}

\subsection{Polynomial and holomorphic sections} \label{ssec:growth} 
For any semi-standard parabolic $P$ of $G$ we choose a continuous function $\mathbf{a} \times \mathbf{k}:G(F) \to P(F) \times K$ such that for $g \in G(F)$ one has
\begin{align} \label{ak}
  g=\mathbf{a}(g)\mathbf{k}(g).
\end{align}
When $F$ is non-Archimedean we choose $\mathbf{a}$ to be locally constant.

\subsubsection{The non-Archimedean case} Assume $F$ is non-Archimedean. 

\begin{defn} \label{defn:poly0} A section $T \in C^\infty(\Temp_\Ind(G))$ is  \textbf{polynomial} if for all $(M,\sigma)\in\widetilde{\mathrm{Temp}}_{\Ind}(G)$, the map 
\begin{align*}
    \Im\Lambda_M &\lto \mathcal{HS}(I_P^G(\sigma))^{\mathrm{sm}}\\
    \chi &\longmapsto T(M,\sigma\otimes\chi)
\end{align*} 
has image contained in a finite-dimensional space $W$
and the induced map $\Im\Lambda_M\to W$ is the restriction of an algebraic map $\Lambda_M\to W.$
\end{defn}
\noindent Here we give $\Lambda_M$ the structure of the $\CC$-points of a torus over $\CC$ as discussed above \eqref{alg:torus}. \quash{Denote by
\begin{align}\label{defn:polycpt}
    \mathrm{Poly}(\Temp_\Ind(G)) \leq \mathcal{C}(\Temp_\Ind(G))
\end{align}
denote the subspace of polynomial sections with compact supports.}

To clarify the role of the assumptions in the definition we prove the following lemma:
\begin{lem}
If  $T \in C^\infty(\Temp_{\mathrm{Ind}}(G))$ then $T(\Im\Lambda_M)$ is contained in a finite-dimensional subspace.
\end{lem}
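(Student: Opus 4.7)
The plan is to combine compactness of $\Im\Lambda_M$ with the observation that smooth Hilbert--Schmidt operators on an admissible representation factor through finite-dimensional ``level'' subspaces.

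I would first fix $(M,\sigma)\in \widetilde{\Temp}_\Ind(G)$ and set $V := I^K_{P(F)\cap K}(\sigma|_{M(F)\cap K})$. The $K$-equivariant isomorphism $I_P^G(\sigma_\chi)|_K \cong V$ used to trivialize $\mathcal{HS}^{\mathrm{sm}}$ identifies every fiber $\mathcal{HS}(I_P^G(\sigma_\chi))^{\mathrm{sm}}$ with the single space $\mathcal{HS}(V)^{\mathrm{sm}}$, and under this trivialization $T$ becomes a $C^\infty$ map $\Im\Lambda_M \to \mathcal{HS}(V)^{\mathrm{sm}}$. Since $V$ is admissible as a $K$-representation, one has
\begin{align*}
\mathcal{HS}(V)^{\mathrm{sm}} \;=\; \bigcup_{K'\leq K}\End(V^{K'}),
\end{align*}
where $K'$ runs over compact open subgroups of $K$ and each step $\End(V^{K'})$ is finite-dimensional.

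Next I would localize. At every point $\chi_0 \in \Im\Lambda_M$, smoothness of the operator $T(M,\sigma_{\chi_0})$ furnishes a compact open $K'_{\chi_0}\leq K$ with $T(M,\sigma_{\chi_0})\in \End(V^{K'_{\chi_0}})$, and continuity of $T$ as a section allows one to choose an open neighborhood $U_{\chi_0}\ni \chi_0$ on which the same $K'_{\chi_0}$ works. Since $F$ is non-Archimedean, $i\mathfrak{a}_{M}^\vee$ is a full-rank lattice in $i\mathfrak{a}_M^*$, so $\Im\Lambda_M$ is a compact torus and can be covered by finitely many such $U_{\chi_1},\ldots,U_{\chi_n}$. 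Setting $K':=\bigcap_{i=1}^n K'_{\chi_i}$ yields a compact open subgroup of $K$ with $T(\Im\Lambda_M)\subseteq \End(V^{K'})$, a finite-dimensional subspace.

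The main obstacle is the local factorization step above, i.e.\ promoting pointwise smoothness of $T(M,\sigma_{\chi_0})$ to local constancy of the biinvariance level $K'_{\chi_0}$. This should be built into what it means to be a $C^\infty$ section of the bundle $\mathcal{HS}^{\mathrm{sm}}$; alternatively, one may topologize $\mathcal{HS}(V)^{\mathrm{sm}}$ as the strict LF inductive limit of its finite-dimensional steps $\End(V^{K'})$ and invoke the Dieudonn\'e--Schwartz theorem, by which any compact subset of such a strict LF space lies in a single step.
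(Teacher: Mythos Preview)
Your alternative at the end --- topologize $\mathcal{HS}(V)^{\mathrm{sm}}$ as a strict LF limit of the finite-dimensional steps $\End(V^{K'})$ and invoke that a compact (or bounded) subset of a strict LF space lies in a single step --- is exactly the paper's argument. The paper phrases it as: bounded subsets of the strict colimit are contained in some $((I_P^G(\sigma)\otimes I_P^G(\sigma^\vee))^{\mathrm{sm}})^{K'\times K'}$ (citing Schaefer--Wolff), and then appeals to \cite[Proposition A.3.1(ii)]{BP:local:GGP} to certify that $T(\Im\Lambda_M)$ is bounded. So your proposal lands on the same mechanism, just with ``compact'' in place of ``bounded''.

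Your primary route (local constancy of the level $K'_{\chi_0}$ plus compactness of $\Im\Lambda_M$) is not wrong in spirit, but the obstacle you flag is real and does not dissolve by itself: continuity of $T$ into the LF space does not \emph{a priori} give that a neighborhood of $\chi_0$ maps into the same step, unless you already know something like the Dieudonn\'e--Schwartz fact for the image of a small compact neighborhood. In other words, filling the gap in the first argument requires the second argument anyway, so the detour through local factorization buys nothing. One small point where the paper is more careful than your alternative: it does not simply assert ``$T$ is continuous, hence the image is compact'', because the notion of smoothness for sections of $\mathcal{HS}^{\mathrm{sm}}$ in the sense of \cite{BP:local:GGP} is defined via seminorms and needs a reference (their A.3.1(ii)) to convert into boundedness in the LF topology. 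If you rely on continuity into the LF topology, you should justify that step.
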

\begin{proof}
One has 
\begin{align*}
(I_{P}^G(\sigma) \otimes I_{P}^G(\sigma^\vee))^{\mathrm{sm}}=\mathrm{colim}_{K'}((I_{P}^G(\sigma) \otimes I_{P}^G(\sigma^\vee))^{\mathrm{sm}})^{K' \times K'}
\end{align*}
where the colimit is over compact open subgroups $K' \leq G(F).$
Each of the spaces $((I_{P}^G(\sigma) \otimes I_{P}^G(\sigma^\vee))^{\mathrm{sm}})^{K' \times K'}$ is endowed with the canonical topology on a finite dimensional vector space and $(I_{P}^G(\sigma) \otimes I_{P}^G(\sigma^\vee))^{\mathrm{sm}}$ is given the strict colimit topology.  Bounded subsets with respect to this colimit topology are contained in $((I_{P}^G(\sigma) \otimes I_{P}^G(\sigma^\vee))^{\mathrm{sm}})^{K' \times K'}$ for some $K'$ \cite[\S 6.5]{Schaefer:Wolff}.

On the other hand, by definition of $C^\infty(\mathrm{Temp}_{\mathrm{Ind}}(G))$ and \cite[Proposition A.3.1(ii)]{BP:local:GGP}, for any $T \in C^\infty(\Temp_{\mathrm{Ind}}(G))$, the image $T(\Im\Lambda_M)$ is contained in a bounded subset.   
\end{proof}

The extension of a polynomial section to a morphism from $\Lambda_M $ is unique and we will identify polynomial sections and their extensions when convenient.  Thus a polynomial section may be regarded as a section of $\mathcal{HS}^{\mathrm{sm}} \to \widetilde{\mathrm{Temp}}_{\Ind}(G)_{\CC}.$

For polynomial sections $T \in C^\infty(\Temp_{\Ind}(G))$ and $(\chi ,(M,\sigma))\in \Lambda_G \times \widetilde{\Temp}_{\Ind}(G)$ define
\begin{align}
T_{\chi}(M,\sigma):=T(M,\sigma \otimes \chi|_M).
\end{align}

\begin{lem}\label{polysec:properties} Let $T\in C^\infty(\Temp_{\Ind}(G))$ be a polynomial section.
\begin{enumerate}[label=(\alph*)]
    \item For $\chi\in\Lambda_G$, $T_{\chi}$ is a polynomial section.
    
    \item One has $ T \in \mathcal{C}(\mathrm{Temp}_{\mathrm{Ind}}(G))$ if and only if the support of $T$ is contained in  a finite set of connected components of $\mathrm{Temp}_{\mathrm{Ind}}(G).$
    \item If $T\in \mathcal{C}(\Temp_{\Ind}(G))$, then $T_\chi\in \mathcal{C}(\Temp_{\Ind}(G))$ for all $\chi\in\Lambda_G$.
    \item \label{transpose:nA} The section $T^\intercal$ defined in Lemma \ref{HCPlan:transpose} is a polynomial section.
    \item \label{poly:mult} If $T$ and $S$ are  polynomial sections then $TS$ is  a polynomial section. 
\end{enumerate}\qed
\end{lem}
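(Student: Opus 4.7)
The plan is to verify each part separately. All five parts reduce to elementary observations once one recognizes how twisting, contragredients, transposition, and composition interact with the algebraic structure of $\Lambda_M$ and with the fact that the image of a polynomial section on a single inertial component lands in a finite-dimensional space. I would use throughout the identifications \eqref{alg:torus} and the fact that $\Lambda_M$ is a complex algebraic torus.

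For (a), given $\chi \in \Lambda_G$, restriction gives $\chi|_M \in \Lambda_M$, and translation by $\chi|_M$ is an algebraic automorphism of $\Lambda_M$. Since $T(M, \sigma \otimes \chi|_M \otimes \chi')$ is obtained from $T(M, \sigma \otimes (\chi|_M\cdot \chi'))$ by precomposing the polynomial map $\Lambda_M \to W$ attached to $T$ with the translation $\chi' \mapsto \chi|_M \cdot \chi'$, $T_\chi$ is again polynomial and lands in the same finite-dimensional $W$. For (c), I would observe that $\sigma \otimes \chi|_M$ lies in the same inertial orbit as $\sigma$, so twisting by $\chi \in \Lambda_G$ does not move between connected components of $\Temp_\Ind(G)$; in particular $T_\chi$ has the same support-components as $T$, hence is compactly supported if $T$ is.

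For (b), the restriction of $T$ to any connected component $\mathcal{O} \subseteq \Temp_\Ind(G)$ is (after identifying $\mathcal{O}$ with a quotient of $\Lambda_M$ by a finite group) the restriction to $\Im\Lambda_M/i\mathfrak{a}_{M,\sigma}^\vee$ of a morphism from an algebraic torus into a finite-dimensional vector space. If this morphism is not identically zero on $\mathcal{O}$, then its zero locus is a proper Zariski-closed subset, so the support of $T|_\mathcal{O}$ equals $\mathcal{O}$, which is compact (as $\Im\Lambda_M$ is compact when $F$ is non-Archimedean). Hence the support of $T$ is compact iff it meets only finitely many components. Conversely any polynomial section supported on finitely many components is automatically compactly supported, and smoothness together with the bundle structure show it lies in $\mathcal{C}(\Temp_\Ind(G))$.

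For (d), I would note that the map $\sigma \mapsto \sigma^\vee$ corresponds on each inertial orbit to $\chi \mapsto \chi^{-1}$ (composed with the fixed involution $\sigma_0 \mapsto \sigma_0^\vee$ on base points), which is algebraic on $\Lambda_M$; moreover for any $\sigma$ the linear map $X \mapsto X^\intercal$ identifies $\mathcal{HS}(I_P^G(\sigma^\vee))^{\mathrm{sm}}$ with $\mathcal{HS}(I_P^G(\sigma))^{\mathrm{sm}}$ algebraically (both identified with $K$-finite endomorphisms via the standard trivialization). Composing these two algebraic operations with the polynomial map defining $T$ produces a polynomial map for $T^\intercal$. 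Finally, for (e), pointwise composition of operators is a bilinear operation, and if $T$ and $S$ take values in finite-dimensional subspaces $W_T, W_S$ of $\mathcal{HS}(I_P^G(\sigma))^{\mathrm{sm}}$ via polynomial maps, then $TS$ takes values in the finite-dimensional subspace spanned by $\{w_T \circ w_S : w_T \in W_T, w_S \in W_S\}$ via a polynomial (indeed bilinear in the two polynomial factors) map. The only point requiring any care is (b), where one must rule out a polynomial section being supported on a proper subset of a component; this is the routine consequence of the density of nonzero polynomials alluded to above.
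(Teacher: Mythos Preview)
Your proposal is correct; the paper itself gives no proof at all (the lemma ends with a bare \qed), so you have supplied precisely the routine verifications the authors regard as immediate. Each of your arguments is sound, including the one point you flag in (b): a nonzero regular function on an irreducible variety has Zariski-dense nonvanishing locus, so a polynomial section that is not identically zero on a component is supported on the whole (compact) component.
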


\begin{lem}\label{polysec:cptsupp} If $f\in C_c^\infty(G(F))$ then  $\mathrm{HP}(f)$ is a polynomial section.
\end{lem}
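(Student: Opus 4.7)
The plan is to use the compact picture of parabolic induction and work directly from the integral defining $\mathrm{HP}(f)(M,\sigma_\chi) = I_P^G(\sigma_\chi)(f)$, reducing to finite sums whose $\chi$-dependence is controlled by Lemma \ref{chi(g):polynomial}.

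Fix $(M,\sigma) \in \widetilde{\Temp}_\Ind(G)$ and $P \in \mathcal{P}(M)$. First I would choose a compact open subgroup $K' \leq K$ such that $f$ is bi-invariant under $K'$, so $f = e_{K'} * f * e_{K'}$, where $e_{K'}$ is the normalized characteristic function of $K'$. Then $I_P^G(\sigma_\chi)(f) = I_P^G(\sigma_\chi)(e_{K'}) \circ I_P^G(\sigma_\chi)(f) \circ I_P^G(\sigma_\chi)(e_{K'})$ factors through the space of $K'$-fixed vectors. The $K$-equivariant isomorphism $I_P^G(\sigma_\chi)|_K \cong I_{P(F) \cap K}^K(\sigma|_{M(F) \cap K})$ is independent of $\chi \in \Lambda_M$, so as a $\chi$ varies all these operators live in the common finite-dimensional space $\mathrm{End}(W) \subseteq \mathcal{HS}(I_P^G(\sigma))^{\mathrm{sm}}$ where $W := I_{P(F) \cap K}^K(\sigma|_{M(F) \cap K})^{K'}$. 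This handles the finite-dimensionality requirement of Definition \ref{defn:poly0}.

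Next I would establish algebraicity by writing down the operator explicitly. In the compact picture, for $\phi \in W$ and $k \in K$,
\begin{align*}
(I_P^G(\sigma_\chi)(f)\phi)(k) = \int_{G(F)} f(g) \, \delta_P^{1/2}(\mathbf{a}(kg))\, \chi(\mathbf{a}(kg))\, \sigma(\mathbf{a}(kg))\, \phi(\mathbf{k}(kg)) \, dg,
\end{align*}
where $\mathbf{a},\mathbf{k}$ are the Iwasawa components from \eqref{ak}, chosen locally constant since $F$ is non-Archimedean. Because $f$ has compact support, $\mathbf{a}$ is locally constant, and $\phi$ is $K'$-fixed, the integrand is a locally constant function of $(g,k)$ supported on a compact set. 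Thus as $(g,k)$ ranges, the element $\mathbf{a}(kg) \in P(F)$ takes only finitely many values, and the integral becomes a \emph{finite} linear combination
\begin{align*}
I_P^G(\sigma_\chi)(f) = \sum_{i} \chi(m_i)\, A_i
\end{align*}
where $m_i \in M(F)$ and $A_i \in \mathrm{End}(W)$ are independent of $\chi$. By Lemma \ref{chi(g):polynomial}, each $\chi \mapsto \chi(m_i)$ extends to an algebraic map $\Lambda_M \to \CC$, so the whole sum extends to an algebraic map $\Lambda_M \to \mathrm{End}(W)$, as required.

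There is no real obstacle here beyond careful bookkeeping: the only subtle point is making sure one can genuinely reduce the integral to a finite sum before invoking Lemma \ref{chi(g):polynomial}, which follows from local constancy of $\mathbf{a}$ and $\phi$ together with the compactness of $\mathrm{supp}(f)$ and of $K$.
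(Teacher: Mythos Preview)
Your proposal is correct and follows essentially the same approach as the paper: both use the compact picture, reduce to a finite sum via local constancy of $\mathbf{a}$ on the compact set $\mathrm{supp}(f)\times K$, and invoke Lemma \ref{chi(g):polynomial}. The paper phrases the algebraicity check in terms of matrix coefficients $\chi \mapsto \langle I_P^G(\sigma\otimes\chi)(g)\varphi,\varphi^\vee\rangle$ after a ``standard $K$-finiteness argument,'' which is exactly the reduction to $\mathrm{End}(W)$ that you spell out explicitly.
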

\begin{proof} \quash{Let $(M,\sigma)\in\widetilde{\mathrm{Temp}}_{\Ind}(G)$ and $f\in C_c^\infty(G(F))$ be fixed.  Let $K'$ be an open compact subgroup by which $f$ is bi-invariant.  Then $I_{P}^G(\sigma\otimes\chi)(f)$ is $K'\times K'$-invariant for all $\chi \in \Lambda_M.$ 
We must show that the map
\begin{align*}
    \Lambda_M &\lto \mathcal{HS}(I_P^G(\sigma))^{K'\times K'}\\
   \chi&\longmapsto I_P^G(\sigma\otimes\chi)(f)
\end{align*}
is algebraic. }
Let $(M,\sigma) \in \Temp_{\Ind}(G)$ and $P \in \mathcal{P}(M).$
A standard argument using $K$-finiteness reduces the proof to showing that for any  $(\varphi,\varphi^\vee)\in I_P^G(\sigma)^{\mathrm{sm}}\times I_P^G(\sigma^\vee)^{\mathrm{sm}}$ and $g\in G(F)$ the map
\begin{align} \label{is:alg} \begin{split}
\Lambda_M &\lto \CC\\
    \chi &\longmapsto \langle I_P^G(\sigma\otimes\chi)(g)\varphi,\varphi^\vee\rangle \end{split}
\end{align}
is algebraic. We have 
\begin{align*}
\langle I_P^G(\sigma\otimes\chi)(g)\varphi,\varphi^\vee\rangle &= \int_K \langle I_P^G(\sigma \otimes \chi)(g)\varphi(k),\varphi^\vee (k)\rangle dk\\
    &=\int_K (\delta_P^{1/2}\chi)(\mathbf{a}(kg))\langle \varphi(\mathbf{k}(kg)),\varphi^\vee(k)\rangle dk.
\end{align*}
As noted after the definition of $\mathbf{a}$ and $\mathbf{k}$ in \eqref{ak}, we can take $\mathbf{a}$ to be locally constant. Hence, by Lemma \ref{chi(g):polynomial}, we conclude that \eqref{is:alg} is algebraic.
\quash{As we remarked after the definition of $\mathbf{a}$ and $\mathbf{k}$ in \eqref{ak}, we can take $\mathbf{a}$ to be locally constant, so the we deduce that \eqref{is:alg} is algebraic from Lemma \ref{chi(g):polynomial}.}
\end{proof}

\subsubsection{Archimedean case} Assume $F$ is Archimedean.
Let $V$ be a quasi-complete locally convex vector space over $\CC.$  A fairly comprehensive discussion of the notion of a holomorphic function $\CC^n \to V$ is given in \cite{Kruse}.  In what follows we will use the fact that holomorphy is equivalent to separate holomorphy (holomorphy in each variable separately).  Moreover holomorphy is equivalent to weak holomorphy. 

\begin{defn} An element $T\in C^\infty(\Temp_{\Ind}(G))$ is called a \textbf{holomorphic} section if for all $(M,\sigma)\in\widetilde{\mathrm{Temp}}_{\Ind}(G)$, the map
\begin{align*}
    \Im\Lambda_M&\lto\mathcal{HS}(I_P^G(\sigma))\\
    \chi&\longmapsto T(M,\sigma\otimes\chi)
\end{align*}
extends to a holomorphic map $\Lambda_M\to \mathcal{HS}(I_P^G(\sigma))$.
\end{defn}
\noindent If $T$ is a holomorphic section we continue to denote by $T$ its extension to 
$\widetilde{\Temp}_\Ind(G)_\mathbb{C}.$  It is a holomorphic section of the bundle $\mathcal{HS}\to \widetilde{\Temp}_\Ind(G)_\mathbb{C}.$

For any holomorphic section $T$ and $\chi \in \Lambda_G$ let
\begin{align}
T_{\chi}(M,\sigma):=T(M,\sigma \otimes \chi|_{M(F)})
\end{align}
for $(M,\sigma)\in\widetilde{\Temp}_\Ind(G)$. 

\begin{lem}\label{holosec:transpose} Let $T\in C^\infty(\Temp_{\Ind}(G))$ be a holomorphic section.
\begin{enumerate}[label=(\alph*)]
    \item If $\chi\in\Lambda_G$ then $T_{\chi}$ is a holomorphic section.
    \item \label{transposearch} The section $T^\intercal$ is a holomorphic section.
\end{enumerate} \qed
\end{lem}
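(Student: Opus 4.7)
The strategy for both parts is to realize each operation, after invoking the $K$-restriction trivialization of the bundle $\mathcal{HS}$, as a composition of the holomorphic extension of $T$ with maps that preserve holomorphy: biholomorphisms of $\Lambda_M$ in one case, and continuous $\CC$-linear maps between fixed Hilbert spaces in the other. The only real content is checking that inversion, translation, and transpose behave well with respect to these trivializations.

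For (a), fix $(M,\sigma) \in \widetilde{\Temp}_\Ind(G)$. The restriction $\chi \mapsto \chi|_{M(F)}$ sends $\Lambda_G$ into $\Lambda_M$, since $X^*(G) \leq X^*(M)$ implies $M(F)^1 \leq G(F)^1 \cap M(F)$. Then
\begin{align*}
\chi' \longmapsto T_{\chi}(M, \sigma \otimes \chi') = T\bigl(M, \sigma \otimes (\chi|_{M(F)}\cdot \chi')\bigr)
\end{align*}
is the composition of the translation $\chi' \mapsto \chi|_{M(F)} \cdot \chi'$, a biholomorphism of $\Lambda_M$, with the holomorphic extension of $\chi' \mapsto T(M, \sigma \otimes \chi')$ guaranteed by the hypothesis on $T$. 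Hence $T_\chi$ is a holomorphic section.

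For (b), we have $T^\intercal(M, \sigma \otimes \chi') = T\bigl(M, \sigma^\vee \otimes (\chi')^{-1}\bigr)^\intercal$. Inversion $\chi' \mapsto (\chi')^{-1}$ is a biholomorphism of $\Lambda_M$: under the exponential identification \eqref{a->>UrChar}, which in the Archimedean setting is an isomorphism $\mathfrak{a}_{M\CC}^* \tilde{\to} \Lambda_M$, it corresponds to $\lambda \mapsto -\lambda$. Therefore $\chi' \mapsto T(M, \sigma^\vee \otimes (\chi')^{-1})$ extends holomorphically from $\Im\Lambda_M$ to $\Lambda_M$. The transpose $S \mapsto S^\intercal$ is a continuous $\CC$-linear (isometric) isomorphism $\mathcal{HS}(H) \to \mathcal{HS}(H^\vee)$ for any Hilbert space $H$, so postcomposing preserves holomorphy.

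The main obstacle to making this rigorous is coherence of the bundle identifications: the transpose $T(M, \sigma^\vee \otimes (\chi')^{-1})^\intercal$ a priori lives in $\mathcal{HS}(I_P^G(\sigma^\vee \otimes (\chi')^{-1})^\vee)$, not manifestly in $\mathcal{HS}(I_P^G(\sigma \otimes \chi'))$, and one must verify that the natural identification of these spaces is independent of $\chi'$ in the trivialization. This is handled by combining the canonical $K$-restriction isomorphisms $I_P^G(\sigma \otimes \chi')|_K \cong I_{P \cap K}^K(\sigma|_{M \cap K})$ (and likewise for $\sigma^\vee$) with the pairing identifying $I_{P \cap K}^K(\sigma^\vee|_{M \cap K})$ as the continuous dual of $I_{P \cap K}^K(\sigma|_{M \cap K})$; under these fixed identifications, the transpose becomes a single $\chi'$-independent $\CC$-linear isomorphism of trivialization fibers. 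Once this is in place, the composition argument above proves that $\chi' \mapsto T^\intercal(M, \sigma \otimes \chi')$ extends holomorphically to $\Lambda_M$, establishing (b).
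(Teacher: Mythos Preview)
Your argument is correct and is exactly the natural one; the paper itself gives no proof at all (the lemma ends with \qed), treating both assertions as immediate from the definitions. Your careful handling of the bundle trivialization via $K$-restriction is the right way to make the transpose step precise, and is implicitly what the paper is relying on.
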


\begin{lem}\label{holosec:composition} If $T,S$ are holomorphic sections then $TS$ is a holomorphic section. 
\end{lem}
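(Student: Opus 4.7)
The strategy is to reduce the assertion to the standard fact that composing a continuous bilinear map with a pair of holomorphic Banach-valued functions yields a holomorphic function, once one uses the $K$-trivialization to view $T$ and $S$ as taking values in a fixed Hilbert space.

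Fix $(M,\sigma)\in\widetilde{\Temp}_\Ind(G)$ and $P\in\mathcal{P}(M)$. The $K$-equivariant isomorphisms
\begin{align*}
    I_P^G(\sigma\otimes\chi)\vert_K \;\cong\; I^K_{P(F)\cap K}(\sigma\vert_{M(F)\cap K}) \qquad (\chi\in\Lambda_M)
\end{align*}
trivialize the underlying Hilbert-space bundle and identify $\mathcal{HS}(I_P^G(\sigma\otimes\chi))$ with $\mathcal{HS}(H)$ for the fixed Hilbert space $H:=I^K_{P(F)\cap K}(\sigma\vert_{M(F)\cap K})$, uniformly in $\chi$. Under this identification, the holomorphy hypothesis on $T$ and $S$ says that
\begin{align*}
    \Lambda_M &\lto \mathcal{HS}(H),\\
    \chi &\longmapsto T(M,\sigma\otimes\chi) \qquad (\text{resp.\ } S(M,\sigma\otimes\chi))
\end{align*}
are holomorphic in the usual Banach-valued sense.

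The composition map $\mathcal{HS}(H)\times\mathcal{HS}(H)\to\mathcal{HS}(H)$, $(A,B)\mapsto A\circ B$, is continuous bilinear because $\|AB\|_{\mathrm{HS}}\leq\|A\|_{\mathrm{op}}\|B\|_{\mathrm{HS}}\leq\|A\|_{\mathrm{HS}}\|B\|_{\mathrm{HS}}$. It is a standard fact (verifiable either by checking separate holomorphy in each complex coordinate of $\Lambda_M$ and invoking Hartogs, or directly from the Fréchet chain rule applied to the bilinear map) that a continuous bilinear map applied to two holomorphic maps into a Banach space is itself holomorphic. Applying this to $T$ and $S$, the map $\chi\mapsto T(M,\sigma\otimes\chi)\circ S(M,\sigma\otimes\chi)$ is holomorphic $\Lambda_M\to\mathcal{HS}(H)$. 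This is exactly the condition for $TS$ to be a holomorphic section at $(M,\sigma)$.

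The only bookkeeping point is that a section must be defined on $\Temp_\Ind(G)_\CC$, not merely on $\widetilde{\Temp}_\Ind(G)_\CC$: this amounts to checking compatibility with the intertwining isomorphisms \eqref{HS:isom:intertwining}. Since both $T$ and $S$ are compatible by hypothesis and operator composition commutes with conjugation by the intertwining isomorphisms, $TS$ descends. There is no real obstacle here; the main conceptual content is simply the continuous bilinearity of Hilbert-Schmidt composition, and this is the step I would highlight as the essential ingredient.
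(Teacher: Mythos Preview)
Your proposal is correct and follows essentially the same approach as the paper: both reduce to the fact that composition on $\mathcal{HS}(H)$ is continuous bilinear and that a pair of holomorphic Banach-valued maps composed with a continuous bilinear map remains holomorphic. The paper phrases this last step via the equivalence of weak holomorphy, separate holomorphy, and holomorphy, while you invoke it as a standard fact (with the same Hartogs justification in parentheses); your final descent remark is harmless but unnecessary, since the paper has already noted that $C^\infty(\Temp_\Ind(G))$ is an algebra under pointwise composition.
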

\begin{proof} The section $TS$ is the composition
\begin{align*}
    \Im\Lambda_M\to \Im\Lambda_M\times \Im\Lambda_M&\xrightarrow{\ T\times S\ } \mathcal{HS}(I_P^G(\sigma))\times \mathcal{HS}(I_P^G(\sigma))\to \mathcal{HS}(I_P^G(\sigma))\\
   (\chi,\chi')&\longmapsto (T(M,\sigma\otimes\chi), S(M,\sigma\otimes\chi'))
\end{align*}
where the first arrow is the diagonal embedding and the last arrow is composition. 

Using the fact that weak holomorphy is equivalent to holomorphy, we see that the composite of the last two arrows is separately holomorphic.  Since separate holomorphy is equivalent to holomorphy in the setting at hand we deduce that $TS$ is holomorphic.
\end{proof}

Recall the definition of the Fr\'echet algebra $\mathcal{S}(G(F))$ given in \cite{Casselman:SchwartzSpace}. By Weil restriction we may assume $F=\mathbb{R}$. Define the norm $\norm{\,\cdot\,}$ on $G(F)$ as in \eqref{norms}. For $f\in C^\infty(G(F))$ and $n\geq 1,\,u,v\in U(\mathfrak{g})$, let
\begin{align*}
    \norm{f}_{u,v,n} := \sup_{g\in G(F)}\norm{g}^n|\mathcal{R}(u,v)f(g)|.
\end{align*}
Then
\begin{align}\label{S(G(F)):Casselman:Frechet}
    \mathcal{S}(G(F)) := \left\{f\in C^\infty(G(F)): \norm{f}_{u,v,n}<\infty\text{ for all $n\geq 1,\,u,v\in U(\mathfrak{g})$}\right\}.
\end{align}

\begin{lem}\label{holosec:cptsupp} For $f\in \mathcal{S}(G(F))$, the section $\mathrm{HP}(f)$ is a holomorphic section and 
\begin{align*}
        \sup_{\sigma\in\Pi_2(M)/i\mathfrak{a}_M^*}\sup_{\lambda\in V_{\mathcal{K}}} \norm{I_P^G(\sigma_{\lambda})(u)\circ \mathrm{HP}(f)(M,\sigma_{\lambda})\circ I_P^G(\sigma_{\lambda})(v)}_{\mathrm{op}}N(\sigma_{\lambda})^k<\infty
\end{align*}
for all $M\in\mathcal{M},\,u,v\in U(\mathrm{Lie}(K)_\mathbb{C})$, $k\geq 0$ and compact subsets $\mathcal{K} \subset \mathfrak{a}_M^*$. 
\end{lem}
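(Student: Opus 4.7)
The plan is to prove holomorphy and the uniform bound in that order, treating them as two largely independent steps.

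For holomorphy, I would fix $(M,\sigma)\in\widetilde{\Temp}_\Ind(G)$ and use the $K$-equivariant identification $I_P^G(\sigma_\lambda)\vert_K \cong I_{P(F)\cap K}^K(\sigma\vert_{M(F)\cap K})$ to realize $\pi_\lambda(f) := I_P^G(\sigma_\lambda)(f)$ as an operator on a single Hilbert space $V$ independent of $\lambda$. The action of $I_P^G(\sigma_\lambda)(g)$ on $V$ only differs from that of $I_P^G(\sigma)(g)$ by the scalar factor $e^{\langle \lambda, H_P(\mathbf{a}(kg))\rangle}$, which is entire in $\lambda \in \mathfrak{a}_{M\CC}^*$. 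Since $f\in\mathcal{S}(G(F))$ decays faster than any polynomial and the above exponential factor is majorized by a polynomial in $\|g\|$ uniformly as $\lambda$ varies over compact subsets of $\mathfrak{a}_{M\CC}^*$, the integral defining $\pi_\lambda(f)\varphi$ converges absolutely and locally uniformly in $\lambda$. Applying Morera in each coordinate, together with the fact that weak holomorphy implies holomorphy for maps into a Hilbert space, yields that $\lambda\mapsto \pi_\lambda(f)$ is holomorphic as a $\mathcal{HS}(V)$-valued map; descending through $i\mathfrak{a}_{G,\sigma}^\vee$ (trivial in the Archimedean case) produces the desired holomorphic extension of $\mathrm{HP}(f)$.

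For the bound, I would first absorb the enveloping algebra elements into $f$. Using the identities $I_P^G(\sigma_\lambda)(X)\circ \pi_\lambda(f) = \pi_\lambda(\mathcal{R}(X,1)f)$ and $\pi_\lambda(f)\circ I_P^G(\sigma_\lambda)(Y) = \pi_\lambda(\mathcal{R}(1,Y)f)$ for $X, Y\in\mathfrak{g}$ (extended to $U(\mathfrak{g})$), together with the fact that $\mathcal{R}(u,v)$ preserves $\mathcal{S}(G(F))$ by \eqref{S(G(F)):Casselman:Frechet}, reduces the claim to proving
\[
\sup_{\sigma\in\Pi_2(M)/i\mathfrak{a}_M^*}\sup_{\lambda\in V_{\mathcal{K}}} \|\pi_\lambda(f)\|_{\mathrm{op}} N(\sigma_\lambda)^k < \infty
\]
for all $f\in\mathcal{S}(G(F))$ and all $k\geq 0$. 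Since $N(\sigma_\lambda)\asymp_{\mathcal{K}} N(\sigma)$ for $\lambda\in V_{\mathcal{K}}$, it suffices to bound $\|\pi_\lambda(f)\|_{\mathrm{op}} N(\sigma)^k$.

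The key is the standard $Z(\mathfrak{g})$-trick: there exists a finite collection $Z_1,\ldots,Z_N\in Z(\mathfrak{g})$ such that $\sum_j |\chi_\sigma(Z_j)|\gg N(\sigma)^{2k}$ uniformly in $\sigma\in\Pi_2(M)$, since $N(\sigma)$ is the norm of the infinitesimal character and the Harish-Chandra isomorphism identifies $Z(\mathfrak{g})$ with polynomials on the dual of a Cartan (with the $|\cdot|$ in $N(\sigma)$ coercing the necessary polynomial growth). Since $\pi_\lambda(Z_j)$ acts as the scalar $\chi_{\sigma_\lambda}(Z_j)$, one obtains $\|\pi_\lambda(f)\|_{\mathrm{op}} \leq |\chi_{\sigma_\lambda}(Z_j)|^{-1}\|\pi_\lambda(Z_j\cdot f)\|_{\mathrm{op}}$, reducing to the case $k = 0$. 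For $k=0$, one uses $\|\pi_\lambda(f)\|_{\mathrm{op}}\leq \int_{G(F)} |f(g)|\,\|\pi_\lambda(g)\|_{\mathrm{op}}\,dg$ and the fact that $\|\pi_\lambda(g)\|_{\mathrm{op}}$ is dominated by $\|g\|^{C_\mathcal{K}}$ for a constant depending only on $\mathcal{K}$, with Schwartz decay of $f$ handling the integral uniformly. The hard part will be verifying the $Z(\mathfrak{g})$-trick with the necessary uniformity over all inertial orbits; this amounts to a Nullstellensatz-style coercivity estimate on the Harish-Chandra parameter space, which is classical but requires care.
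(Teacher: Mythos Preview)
Your approach is correct and mirrors the paper's: both reduce holomorphy to matrix coefficients via weak holomorphy and the explicit Iwasawa formula for $I_P^G(\sigma_\lambda)(g)$, and both handle the bound by absorbing $u,v$ into $f$, using $\|I_P^G(\sigma_\lambda)(g)\|_{\mathrm{op}}\le\|g\|^{|\mathrm{Re}(\lambda)|}$, and then invoking a $Z(\mathfrak{g})$-trick (the paper simply cites \cite[Lemme~1]{Delorme:PaleyWiener} for what you spell out). One caution: your claim $N(\sigma_\lambda)\asymp_{\mathcal{K}} N(\sigma)$ is false as written, since $\mathrm{Im}(\lambda)$ is unbounded in $V_{\mathcal{K}}$; the fix is to reparametrize so that $\sigma':=\sigma_{i\,\mathrm{Im}(\lambda)}$ ranges over all of $\Pi_2(M)$ and only the compact real shift $\mathrm{Re}(\lambda)\in\mathcal{K}$ remains, after which your $Z(\mathfrak{g})$-argument runs with $\chi_{\sigma_\lambda}$ throughout.
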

\begin{proof} 
 Let $(M,\sigma)\in\widetilde{\mathrm{Temp}}_{\Ind}(G)$ and $f\in \mathcal{S}(G(F))$. 
 Since the representation $I_P^G(\sigma_{\lambda})^{\mathrm{sm}}$ has moderate growth,
 the operator $\mathrm{HP}(f)(M,\sigma_{\lambda})$ is given by the absolutely convergent integral
\begin{align*}
    I_P^G(\sigma_{\lambda})(f)=\int_{G(F)} f(g) I_P^G(\sigma_{\lambda})(g)dg
\end{align*}
for all $\lambda \in \mathfrak{a}_{M\CC}^*.$

We first prove the bound.  By \cite[(1.25)]{Delorme:PaleyWiener}, we have 
$\norm{I_P^G(\sigma_{\lambda})(g)}_{\mathrm{op}} \leq \norm{g}^{|\mathrm{Re}(\lambda)|}.
$
 Then
\begin{align*}
    \norm{I_P^G(\sigma_{\lambda})(\mathcal{R}(u,v)f)}_{\mathrm{op}} \ll \int_{G(F)} |\mathcal{R}(u,v)f(g)|\norm{g}^{|\mathrm{Re}(\lambda)|} dg \ll \norm{f}_{u,v,k} 
\end{align*}
for some $k\gg_\mathcal{K} 1.$  Now argue as in \cite[Lemme 1]{Delorme:PaleyWiener} to obtain the bound.

We now prove that $\mathrm{HP}(f)$ is a holomorphic section.
The continuous dual of $\mathcal{HS}(I_P^G(\sigma))$ is isomorphic to $\mathcal{HS}(I_P^G(\sigma^\vee)).$ In this space smooth finite rank operators are dense.  Employing \cite[\S2, Remarque 1]{Grothendieck:holomorphic:I}, to prove that $\mathrm{HP}(f)$ is a holomorphic section it suffices to check that for each $\varphi\in I_P^G(\sigma)^{\mathrm{sm}}$ and $\varphi^\vee \in I_P^G(\sigma^\vee)^{\mathrm{sm}}$ the function
\begin{align*}
\mathfrak{a}_M^* &\lto \CC\\
\lambda &\longmapsto  \langle I_P^G(\sigma_\lambda)(f)\varphi,\varphi^\vee\rangle
\end{align*}
is holomorphic.  We have that 
\begin{align*}
    \langle I_P^G(\sigma_{\lambda})(f)\varphi,\varphi^\vee\rangle &= \int_{G(F)} f(g) \int_K \langle I_P^G(\sigma_{\lambda})(g)\varphi(k),\varphi^\vee(k)\rangle dk dg\\
    &=\int_{G(F)} f(g) \int_K e^{\langle \rho_P+\lambda,H_M(\mathbf{a}(g))\rangle}\langle \varphi(\mathbf{k}(kg)),\varphi^\vee(k)\rangle dk dg
\end{align*}
where $\rho_P$ is the half-sum of the positive roots. Differentiating  we deduce the result. 
\end{proof}
\quash{
Apply the change of variable $g\mapsto k^{-1}g$ to see the above integral is 
\begin{align*}
    &\int_{G(F)} f(k^{-1}g) \int_K e^{\langle \rho_P+\lambda,H_M(\mathbf{a}(g))\rangle}\langle \varphi(\mathbf{k}(g)),w(k)\rangle_\sigma dk dg\\
    &=\int_{P(F)\times K}f(k^{-1}pk') \int_K e^{\langle \rho_P+\lambda,H_M(\mathbf{a}(g))\rangle}\langle v(k'),w(k)\rangle dk d_\ell pdk'\\
    &=\int_{P(F)} e^{\langle \rho_P+\lambda,H_M(\mathbf{a}(g))\rangle} \left(\int_{K\times K}f(k^{-1}pk')\langle v(k'),w(k)\rangle dkdk'\right) d_\ell p
\end{align*}
Here we we have continued to write $H_M$ for the pullback of $H_M$ along the canonical quotient map $P(F) \to M(F).$  
We point out that 
\begin{align*}
    \int_{K\times K}f(k^{-1}pk')\langle v(k'),w(k)\rangle dkdk' \ll \int_{K\times }
\end{align*}

Fix $p\in P(F)$, and write $\textbf{m}(p)\in M(F)$ for its projection; we must compute the derivatives of $\chi\mapsto \chi(p)$. Under the identification $\Lambda_M\cong\mathfrak{a}_M^*(\mathbb{C})$, we must calculate that of $\chi\mapsto e^{\langle \chi,H_M(\textbf{m}(p))\rangle}$. By chain rule, it is clear that $\chi\mapsto e^{\langle \chi,H_M(\textbf{m}(p))\rangle}$ is holomorphic, and if $D$ is an invariant differential operator with constant coefficient on $\mathfrak{a}_M^*(\mathbb{C})$, then $|D(e^{\langle \chi,H_M(\textbf{m}(p))\rangle})|\ll_{D} \max\{1,|H_M(\textbf{m}(p))|\}^{\deg D}|e^{\langle \chi,H_M(\textbf{m}(p))\rangle}|$, where $\deg D$ is the degree of $D$. 

Since $\max\{1,|H_M(\textbf{m}(p))|\}\ll \sigma_M(\textbf{m}(p))\ll\sigma_G(\textbf{m}(p))$ \textcolor{red}{ref?}, it follows that
\begin{align*}
    |D(\chi(\textbf{m}(p)))|\ll_{D} \sigma_G(\textbf{m}(p))^{\deg D}|\chi|(p)
\end{align*}
if $D$ is an invariant differential operator with constant coefficient on $\Lambda_M$. For such $D$, we have
\begin{align*}
    &\int_{G(F)} f(g) \int_K D((\delta_P^{\frac{1}{2}}\chi)(\mathbf{a}(kg)))\langle v(\mathbf{k}(kg)),w(k)\rangle_\sigma dk dg\\
    &\ll\int_{G(F)} |f|(g) \int_K \sigma_G(\mathbf{a}(kg))^{\deg D}|\delta_P^{\frac{1}{2}}\chi|(\mathbf{a}(kg))\left|\langle v(\mathbf{k}(kg)),w(k)\rangle_\sigma\right| dk dg\\
    &\ll\int_{G(F)} \sigma_G(g)^{\deg D} |f|(g) \int_K |\delta_P^{\frac{1}{2}}\chi|(\mathbf{a}(kg))\left|\langle v(\mathbf{k}(kg)),w(k)\rangle_\sigma\right| dk dg\\
    &\ll\int_{G(F)} \norm{g}^{\deg D} |f|(g) \int_K |\delta_P^{\frac{1}{2}}\chi|(\mathbf{a}(kg))\left|\langle v(\mathbf{k}(kg)),w(k)\rangle_\sigma\right| dk dg < \infty
\end{align*}
In view of the computation, it is easy to verify that the integral is holomorphic in $\chi$. 

\textcolor{red}{Check the second last inequality. Not sure about the relation of $\sigma_G(p)$ and $\sigma_G(\textbf{m}(p))$.}

Here we've used that $\sigma_G(\textbf{a}(kg)) = \sigma_G(kg)\asymp_K \sigma_G(g)$ and $\sigma_G(g)\ll\norm{g}$. \textcolor{red}{check. I think these will imply we can commute integral and differentiation}.}

\section{The asymptotic $\rho$-Schwartz space} \label{sec:asymp}

In this section we give a definition of a Schwartz space attached to a tempered representation $\rho:{}^LG \to \GL_{V_\rho}(\CC).$ We assume the weak form of the local Langlands conjecture discussed in  \S\ref{sec:LLC}. 
When $F$ is Archimedean, for $M \in \mathcal{M}$ denote by 
\begin{align}
\mathcal{D}(M) \subseteq \Pi_2(M)
\end{align}
a set satisfying the conclusion of   Corollary \ref{LLC:Lfactor:pole:cor} with $(G,\rho)$ replaced by $(M,\rho|_{{}^LM}).$

If $F$ is non-Archimedean let
\begin{align*}
    \mathcal{S}_\rho^{\mathrm{as}}(\mathrm{Temp}_{\mathrm{Ind}}(G)):=\{T \in \mathcal{C}(\mathrm{Temp}_{\mathrm{Ind}}(G)):L_{\rho}^{-1}T \textrm{ is a polynomial section}\}.
\end{align*}
If $F$ is Archimedean, we let $\mathcal{S}_\rho^{\mathrm{as}}(\mathrm{Temp}_{\mathrm{Ind}}(G))$ be the set of $T \in \mathcal{C}(\mathrm{Temp}_{\mathrm{Ind}}(G))$ such that 
\begin{enumerate}
    \item $L_{\rho}^{-1}T$ 
    is a holomorphic section, and 
    \item \label{seminorm} for any $M\in\mathcal{M}$, any compact set $\mathcal{K} \subset \mathfrak{a}_M^*$, any polynomial $p:\mathfrak{a}^*_{M\CC} \to \CC$ such that $p(\lambda)L(\frac{1}{2},\sigma_\lambda,\rho)$ has no pole in $V_{\mathcal{K}}$ for all $\sigma \in \mathcal{D}(M),$ and any $u,v\in (\mathrm{Lie}\, K)_\mathbb{C}$, the operator
    \begin{align*}
        p(\lambda)I_P^G(\sigma_\lambda)(u)\circ T(M,\sigma_\lambda)\circ I_P^G(\sigma_\lambda)(v),
    \end{align*}
    originally defined for $\lambda \in i\mathfrak{a}_M^*,$
    extends to a bounded operator on $I_P^G(\sigma_\lambda)$ for all $\lambda\in V_{\mathcal{K}}.$ Moreover for any $k\in\mathbb{Z}_{\geq 0}$ we have
    \begin{align*}
    \norm{T}_{u,v,k,\mathcal{K},p}:=\sup_{\sigma\in \mathcal{D}(M)}\sup_{\lambda\in V_{\mathcal{K}}} \norm{p(\lambda)I_P^G(\sigma_\lambda)(u)\circ T(M,\sigma_\lambda)\circ I_P^G(\sigma_\lambda)(v)}_{\mathrm{op}}N(\sigma_\lambda)^k<\infty.
    \end{align*}
\end{enumerate}
Corollary \ref{LLC:Lfactor:pole:cor} states that $p$ as in \eqref{seminorm} exist.

\begin{lem}\label{rhoSchwartz:G2Fmodule} $\mathcal{S}_\rho^{\mathrm{as}}(\mathrm{Temp}_{\mathrm{Ind}}(G))$ is a $G(F)\times G(F)$-invariant subspace of $\mathcal{C}(\mathrm{Temp}_{\mathrm{Ind}}(G))$.
\end{lem}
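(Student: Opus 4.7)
The statement splits into two assertions: that $\mathcal{S}_\rho^{\mathrm{as}}(\Temp_\Ind(G))$ is a linear subspace, and that it is closed under the $\square$-action. Linearity is immediate, since each of the defining conditions---membership in $\mathcal{C}(\Temp_\Ind(G))$, polynomiality or holomorphy of $L_\rho^{-1}T$, and finiteness of the seminorms $\norm{\cdot}_{u,v,k,\mathcal{K},p}$ in the Archimedean case---is preserved under linear combinations, the last by subadditivity.

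For $\square$-invariance, $\mathcal{C}(\Temp_\Ind(G))$ is $\square$-invariant by Lemma \ref{HP:Ginvariant}, and since $L_\rho^{-1}$ is a scalar-valued function on $\widetilde{\Temp}_\Ind(G)_\CC$ it commutes with the pre- and post-composition defining $\square$, giving $L_\rho^{-1}\square(g,h)T = \square(g,h)(L_\rho^{-1}T)$. It thus suffices to show that for each $(g,h) \in G(F)^2$, the operator $\square(g,h)$ preserves polynomial (resp.\ holomorphic) sections, and in the Archimedean case preserves the seminorm finiteness. The first claim follows by observing that $(M,\sigma\otimes\chi) \mapsto I_P^G(\sigma\otimes\chi)(g)$ is itself a polynomial (non-Archimedean) or holomorphic (Archimedean) section of the bundle of bounded operators on the $K$-trivialized fiber; the argument mimics the proof of Lemma \ref{polysec:cptsupp} (resp.\ Lemma \ref{holosec:cptsupp}), using Lemma \ref{chi(g):polynomial} and the Iwasawa decomposition applied to the single group element $g$ in place of an integration. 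Combining with Lemma \ref{polysec:properties} (resp.\ Lemma \ref{holosec:composition}) on composition of such sections gives the conclusion.

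It remains to verify the Archimedean seminorm bounds. Using $\pi(X)\pi(g) = \pi(g)\pi(\mathrm{Ad}(g^{-1})X)$ one writes
\begin{align*}
&I_P^G(\sigma_\lambda)(u) \circ \square(g,h)T(M,\sigma_\lambda) \circ I_P^G(\sigma_\lambda)(v) \\
&\qquad = I_P^G(\sigma_\lambda)(g) \circ I_P^G(\sigma_\lambda)(\mathrm{Ad}(g^{-1})u) \circ T(M,\sigma_\lambda) \circ I_P^G(\sigma_\lambda)(\mathrm{Ad}(h)v) \circ I_P^G(\sigma_\lambda)(h)^{-1}.
\end{align*}
The outer factors satisfy $\norm{I_P^G(\sigma_\lambda)(g)}_{\mathrm{op}} \leq \norm{g}^{|\mathrm{Re}(\lambda)|}$ and similarly for $h^{-1}$, as used in the proof of Lemma \ref{holosec:cptsupp}; these are uniform on the compact set $V_\mathcal{K}$. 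However $\mathrm{Ad}(g^{-1})u$ and $\mathrm{Ad}(h)v$ lie in $U(\mathfrak{g})$ rather than $U(\mathrm{Lie}(K))$, so the principal remaining step---and the place I expect the real work to lie---is to establish the analog of Lemma \ref{HP:arch:bound} for $\mathcal{S}_\rho^{\mathrm{as}}(\Temp_\Ind(G))$: namely that the defining seminorms can equivalently be taken over $u, v \in U(\mathfrak{g})$. The argument should parallel that of Lemma \ref{HP:arch:bound}, reducing to powers of the Casimir $C_\mathfrak{g}$ whose scalar action on $I_P^G(\sigma_\lambda)$ is controlled by $N(\sigma)$ uniformly on $V_\mathcal{K}$.
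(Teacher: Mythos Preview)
Your approach is essentially the same as the paper's. The paper's proof is two sentences: it asserts that the argument of Lemma~\ref{HP:arch:bound} carries over to show the seminorms in condition~(2) can be taken with $u,v\in U(\mathfrak{g})$, and then invokes Lemma~\ref{HP:Ginvariant} verbatim. You have unpacked precisely these two ingredients---the $\mathrm{Ad}$ identity and the $U(\mathfrak{g})$-upgrade of the seminorms---and correctly flagged the latter as the substantive step. Your additional remarks on the polynomial/holomorphic preservation under $\square$ and the uniform bound $\norm{I_P^G(\sigma_\lambda)(g)}_{\mathrm{op}}\le\norm{g}^{|\mathrm{Re}(\lambda)|}$ on $V_{\mathcal{K}}$ fill in details the paper leaves implicit (the first is subsumed under ``all other claims are clear'' in Lemma~\ref{HP:Ginvariant}; the second is indeed a new wrinkle here since $\lambda$ is no longer purely imaginary), but they do not change the overall structure.
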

\begin{proof} As in the proof of Lemma \ref{HP:arch:bound} the bound in (2) implies the analogous bound for all $u,v\in U(\mathfrak{g})$. Thus the proof is the same as that of Lemma \ref{HP:Ginvariant}.
\end{proof}

\begin{defn}\label{Asymp:Schwartz:Space} The \textbf{asymptotic $\rho$-Schwartz space} $\mathcal{S}_\rho^{\mathrm{as}}(G(F))\leq \mathcal{C}(G(F))$ is defined as 
\begin{align*}
    \mathcal{S}_\rho^{\mathrm{as}}(G(F)) := \mathrm{HP}^{-1}(\mathcal{S}_\rho^{\mathrm{as}}(\Temp_\Ind(G))).
\end{align*} 
By Lemma \ref{rhoSchwartz:G2Fmodule} this is a $G(F)\times G(F)$-invariant subspace of $\mathcal{C}(G(F)).$
\end{defn}

We call this the asymptotic Schwartz space for two reasons.  First, it is a subspace of $\mathcal{C}(G(F))$ specified by stipulating the asymptotics of functions after applying  $\mathrm{HP}.$  Second, it is formally similar to the asymptotic Hecke algebra considered in \cite{BK:asymp}.

\begin{lem} \label{lem:as:preserve} The space $\mathcal{S}_\rho^{\mathrm{as}}(G(F))$ is preserved by $\mathcal{F}_{\rho}$.
\end{lem}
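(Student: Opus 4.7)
The plan is to reduce the problem to its spectral side and then use the functional equation for $\gamma$-factors to strip off the $L$-function normalization. By the commutative square \eqref{FT:defn:5}, it suffices to show that $\mathcal{F}_\rho^{\mathrm{op}}$ preserves $\mathcal{S}_\rho^{\mathrm{as}}(\Temp_\Ind(G))$. The key computation is the identity
\begin{align*}
    L_\rho^{-1}\cdot \mathcal{F}_\rho^{\mathrm{op}}(T) \;=\; (\varepsilon_\rho \circ (\cdot)^\vee)\cdot (L_\rho^{-1}T)^\intercal,
\end{align*}
which follows from the functional equation $\gamma_\rho(\pi^\vee) = \varepsilon_\rho(\pi^\vee)\,L_\rho(\pi)/L_\rho(\pi^\vee)$ together with the fact that the scalar $L$-factor commutes with taking transpose.

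With this identity in hand, I would verify the defining conditions of $\mathcal{S}_\rho^{\mathrm{as}}(\Temp_\Ind(G))$ for $\mathcal{F}_\rho^{\mathrm{op}}(T)$ as follows. First, by Lemma~\ref{polysec:properties}\ref{transpose:nA} (non-Archimedean) or Lemma~\ref{holosec:transpose}\ref{transposearch} (Archimedean), the section $(L_\rho^{-1}T)^\intercal$ is polynomial or holomorphic whenever $L_\rho^{-1}T$ is. Next, by Lemma~\ref{LLC:gamma:rationality:1} the function $\varepsilon_\rho$ is regular in the non-Archimedean case and entire in the Archimedean case, and precomposing with $\pi\mapsto \pi^\vee$ (which on an inertial orbit amounts to $\lambda\mapsto -\lambda$ after identifying orbits) preserves regularity/entirety. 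Multiplying a polynomial (resp.\ holomorphic) section by a scalar regular (resp.\ entire) function yields a section of the same type, using Lemma~\ref{polysec:properties}\ref{poly:mult} and Lemma~\ref{holosec:composition}. Since $\mathcal{F}_\rho^{\mathrm{op}}$ maps $\mathcal{C}(\Temp_\Ind(G))$ to itself, the non-Archimedean case of the lemma is complete.

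The main obstacle is verifying the growth condition (2) of the definition in the Archimedean case. Fix $M, \mathcal{K}, u, v, k$ and a polynomial $p$ such that $p(\lambda)L(\tfrac{1}{2},\sigma_\lambda,\rho)$ has no pole on $V_\mathcal{K}$ for $\sigma\in\mathcal{D}(M)$. I would bound
\begin{align*}
    \|p(\lambda)\,I_P^G(\sigma_\lambda)(u)\circ \mathcal{F}_\rho^{\mathrm{op}}(T)(M,\sigma_\lambda)\circ I_P^G(\sigma_\lambda)(v)\|_{\mathrm{op}} = |p(\lambda)\gamma_\rho(\sigma_\lambda^\vee)|\cdot \|I_P^G(\sigma_\lambda)(u)\,T(\sigma_\lambda^\vee)^\intercal\,I_P^G(\sigma_\lambda)(v)\|_{\mathrm{op}}
\end{align*}
by first using the transpose relation $I_P^G(\sigma_\lambda)(w)^\intercal = I_P^G(\sigma_\lambda^\vee)(w^*)$ (for $w^*$ the image of $w$ under the standard anti-involution of $U(\mathfrak{g})$) to rewrite the operator norm as $\|I_P^G(\sigma_\lambda^\vee)(v^*)\,T(\sigma_\lambda^\vee)\,I_P^G(\sigma_\lambda^\vee)(u^*)\|_{\mathrm{op}}$. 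Since $\mathcal{D}(M)$ is stable under contragredient and $(\sigma_\lambda)^\vee = (\sigma^\vee)_{-\lambda}$, choose via Corollary~\ref{LLC:Lfactor:pole:cor} a polynomial $p^\vee$ such that $p^\vee(\lambda)L(\tfrac{1}{2},(\sigma^\vee)_{-\lambda},\rho)$ has no pole on $V_\mathcal{K}$; applying the hypothesis on $T$ (with $\mathcal{K}$ replaced by $-\mathcal{K}$, which is still compact) gives a bound on the second factor of the form $|p^\vee(\lambda)|^{-1}N(\sigma_\lambda)^{-K}$ for arbitrarily large $K$. Finally, Proposition~\ref{prop:gamma:bounds} bounds the quotient $|p(\lambda)\gamma_\rho(\sigma_\lambda^\vee)/p^\vee(\lambda)|$ by $(1+|p(\lambda)|)(N(\sigma)+|p_0(\lambda)|)^{k_0}$ for some $k_0$ and some polynomial $p_0$. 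Choosing $K$ sufficiently large absorbs this growth against $N(\sigma_\lambda)^{-K}$ while leaving enough room for the $N(\sigma_\lambda)^k$ factor in the seminorm, and the resulting bound is finite uniformly in $\sigma\in\mathcal{D}(M)$ and $\lambda\in V_\mathcal{K}$.

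The hard part is precisely the Archimedean growth control: the $\gamma$-factor is unbounded in both the spectral parameter $N(\sigma)$ and the vertical direction $|{\rm Im}(\lambda)|$, and without the uniform polynomial bounds provided by Proposition~\ref{prop:gamma:bounds} (which in turn relied on the detailed analysis of Archimedean $\Gamma$-functions in \S\ref{LLCa:gamma:bound}) the argument would collapse. Everything else---the functional equation trick, the transpose compatibility, and the non-Archimedean case---is essentially formal once the algebraic/holomorphic behavior of $\varepsilon_\rho$ and the transpose operation is established.
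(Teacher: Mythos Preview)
Your proposal is correct and follows essentially the same approach as the paper: reduce to the spectral side via \eqref{FT:defn:5}, use the identity $L_\rho^{-1}\mathcal{F}_\rho^{\mathrm{op}}(T)=\varepsilon_\rho((\cdot)^\vee)(L_\rho^{-1}T)^\intercal$ together with Lemmas~\ref{polysec:properties}\ref{transpose:nA}, \ref{holosec:transpose}\ref{transposearch}, and \ref{LLC:gamma:rationality:1} for the polynomial/holomorphic condition, and then handle the Archimedean growth seminorms by inserting a polynomial $p^\vee$ from Corollary~\ref{LLC:Lfactor:pole:cor} and invoking Proposition~\ref{prop:gamma:bounds}. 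You spell out a few details the paper leaves implicit (the transpose relation on the $U(\mathfrak{g})$-actions, the passage from $\mathcal{K}$ to $-\mathcal{K}$, and absorbing the polynomial growth of the $\gamma$-factor into a larger power of $N(\sigma_\lambda)$), but the argument is the same.
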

\begin{proof}
By commutativity of the diagram \eqref{FT:defn:5} and the fact that $\mathrm{HP}$ is an isomorphism by Theorem \ref{HCPlan:thm1}, it suffices to prove that $\mathcal{S}_\rho^{\mathrm{as}}(\mathrm{Temp}_{\mathrm{Ind}}(G))$ is stable under the operator $\mathcal{F}_{\rho}^{\mathrm{op}}$  of \eqref{FT:defn:4}.

Let $T\in \mathcal{S}_\rho^{\mathrm{as}}(\mathrm{Temp}_{\mathrm{Ind}}(G)).$  By the definition of $\intercal$ in Lemma \ref{HCPlan:transpose},  we have
\begin{align}\label{FopT:1}
    \frac{\mathcal{F}_{\rho}^{\mathrm{op}}(T)(\pi)}{L_\rho(\pi)} = \frac{\gamma_{\rho}(\pi^\vee)T(\pi^\vee)^\intercal}{L_\rho(\pi)} = \varepsilon_{\rho}(\pi^\vee)\frac{T(\pi^\vee)^\intercal}{L_\rho(\pi^\vee)}=\varepsilon_{\rho}(\pi^\vee)\left(\frac{T}{L_\rho}\right)^\intercal(\pi).
\end{align} 
By lemmas \ref{polysec:properties}\ref{transpose:nA}, \ref{holosec:transpose}\ref{transposearch} and \ref{LLC:gamma:rationality:1}, this proves that $L^{-1}_\rho\mathcal{F}_{\rho}^{\mathrm{op}}(T)$ is polynomial in the non-Archimedean case and holomorphic in the Archimedean case.

This completes the proof in the non-Archimedean case.  In the Archimedean case, we must additionally show $\norm{\mathcal{F}_{\rho}^{\mathrm{op}}(T)}_{u,v,k,\mathcal{K},p}<\infty$, where $u,v,k,\mathcal{K},p$ are as in \eqref{seminorm}. Let $p^\vee$ be another polynomial such that $p^\vee(\lambda)L(\tfrac{1}{2},(\sigma_\lambda)^\vee,\rho)$ is nonvanishing and holomorphic in $V_{\mathcal{K}}$. By 
Corollary \ref{LLC:Lfactor:pole:cor} we can choose $p^\vee$ from a finite set depending on $\mathcal{K}.$
Write
\begin{align*}
    &\norm{p(\lambda)I_P^G(\sigma_\lambda)(u)\circ \mathcal{F}_{\rho}^{\mathrm{op}}(T)(M,\sigma_\lambda)\circ I_P^G(\sigma_\lambda)(v)}_{\mathrm{op}}\\
    &=\norm{\dfrac{p(\lambda)\gamma(\tfrac{1}{2},(\sigma_\lambda)^\vee,\rho)}{p^\vee(\lambda)} p^\vee(\lambda)I_P^G(\sigma_\lambda)(u)\circ T(M,(\sigma_\lambda)^\vee)^{\intercal}\circ I_P^G(\sigma_\lambda)(v)}_{\mathrm{op}}.
\end{align*}
In view of Proposition \ref{prop:gamma:bounds}, it follows that $\norm{\mathcal{F}_{\rho}^{\mathrm{op}}(T)}_{u,v,k,\mathcal{K},p}<\infty$.
\end{proof}

\begin{lem}\label{lem:Sas:moduleCcinfty} One has $\mathcal{S}(G(F))\leq \mathcal{S}_\rho^{\mathrm{as}}(G(F))$ and
\begin{align*}
    \mathcal{S}(G(F))\ast \mathcal{S}_\rho^{\mathrm{as}}(G(F))\leq \mathcal{S}_\rho^{\mathrm{as}}(G(F)),\quad
    \mathcal{S}_\rho^{\mathrm{as}}(G(F))\ast \mathcal{S}(G(F))\leq \mathcal{S}_\rho^{\mathrm{as}}(G(F)).
\end{align*}
\end{lem}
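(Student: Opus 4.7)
For the inclusion $\mathcal{S}(G(F)) \leq \mathcal{S}_\rho^{\mathrm{as}}(G(F))$, the plan is to fix $f \in \mathcal{S}(G(F))$ and verify that $\mathrm{HP}(f)$ satisfies the defining conditions on the spectral side. The section $\mathrm{HP}(f)$ is polynomial when $F$ is non-Archimedean by Lemma \ref{polysec:cptsupp} and holomorphic when $F$ is Archimedean by Lemma \ref{holosec:cptsupp}. Since $L_\rho^{-1}$ extends to a regular (resp.\ entire) function by Lemma \ref{LLC:gamma:rationality:1}, the scalar-times-operator product $L_\rho^{-1}\mathrm{HP}(f)$ inherits the same property, by the same arguments underlying Lemma \ref{polysec:properties}\ref{poly:mult} and Lemma \ref{holosec:composition}. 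In the Archimedean case, the growth condition $(2)$ amounts to absorbing $|p(\lambda)|$ into a power of $N(\sigma_\lambda)^{-1}$: Lemma \ref{holosec:cptsupp} gives decay of any polynomial order uniformly on $V_\mathcal{K}$ (for $u,v \in U(\mathrm{Lie}(K)_\mathbb{C})$, and hence for $u,v\in U(\mathfrak{g})$ via the Casimir argument of Lemma \ref{HP:arch:bound}), and since $N(\sigma_\lambda) \gg 1+|\lambda|$ on $V_\mathcal{K}$ this suffices.

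For the convolution closure, let $f \in \mathcal{S}(G(F))$ and $h \in \mathcal{S}_\rho^{\mathrm{as}}(G(F))$. Since $\pi(f * h) = \pi(f) \circ \pi(h)$ for every tempered $\pi$, we have $\mathrm{HP}(f * h) = \mathrm{HP}(f) \cdot \mathrm{HP}(h)$ (pointwise composition of sections). The scalar-valued $L_\rho^{-1}$ commutes with everything, so
\[
L_\rho^{-1}\mathrm{HP}(f * h) = \mathrm{HP}(f) \cdot \bigl(L_\rho^{-1} \mathrm{HP}(h)\bigr),
\]
and both factors are polynomial (resp.\ holomorphic) by the previous paragraph and the hypothesis on $h$. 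The product is then of the same type by Lemma \ref{polysec:properties}\ref{poly:mult} (resp.\ Lemma \ref{holosec:composition}). The case $h * f$ is symmetric.

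The Archimedean seminorm bound for $f * h$ splits cleanly via submultiplicativity of the operator norm and the scalarity of $p(\lambda)$:
\begin{multline*}
\norm{p(\lambda)\, I_P^G(\sigma_\lambda)(u)\,\mathrm{HP}(f)(M,\sigma_\lambda)\,\mathrm{HP}(h)(M,\sigma_\lambda)\,I_P^G(\sigma_\lambda)(v)}_{\mathrm{op}} \\
\leq \norm{I_P^G(\sigma_\lambda)(u)\,\mathrm{HP}(f)(M,\sigma_\lambda)}_{\mathrm{op}} \cdot \norm{p(\lambda)\,\mathrm{HP}(h)(M,\sigma_\lambda)\,I_P^G(\sigma_\lambda)(v)}_{\mathrm{op}}.
\end{multline*}
The first factor is bounded by $C_1 N(\sigma_\lambda)^{-k_1}$ for any $k_1$ by Lemma \ref{holosec:cptsupp} applied to $f$ with the right-hand operator taken to be $1$, and the second is bounded by $C_2 N(\sigma_\lambda)^{-k_2}$ for any $k_2$ by the finiteness of the seminorm $\norm{h}_{1,v,k_2,\mathcal{K},p}$; choosing $k_1 + k_2 \geq k$ delivers the required bound, and $h * f$ is analogous. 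The main obstacle is this Archimedean seminorm verification: the two critical points are that $p(\lambda)$ being scalar allows it to be assigned to either factor without disturbing the factorization, and that both the smoothing property of $\mathrm{HP}(f)$ for $f \in \mathcal{S}(G(F))$ (Lemma \ref{holosec:cptsupp}) and the seminorms controlling $\mathrm{HP}(h)$ (from the hypothesis $h \in \mathcal{S}_\rho^{\mathrm{as}}$) produce arbitrarily fast polynomial decay in $N(\sigma_\lambda)$ after applying derivatives.
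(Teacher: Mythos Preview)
Your proof is correct and follows the same strategy as the paper: invoke Lemmas \ref{polysec:cptsupp} and \ref{holosec:cptsupp} for the first inclusion, then use $\mathrm{HP}(f\ast h)=\mathrm{HP}(f)\mathrm{HP}(h)$ together with Lemma \ref{polysec:properties}\ref{poly:mult} (resp.\ Lemma \ref{holosec:composition}) for the convolution statements, reducing the Archimedean case to the bound of Lemma \ref{holosec:cptsupp}. Your treatment is more explicit than the paper's in two places: you spell out why $L_\rho^{-1}$ preserves polynomial/holomorphic sections, and for the convolution seminorm you give the submultiplicativity splitting that routes $p(\lambda)$ entirely onto the $\mathrm{HP}(h)$ factor, which is a clean way to avoid re-absorbing $p(\lambda)$ via $N(\sigma_\lambda)$. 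The one step you assert without full justification, namely $N(\sigma_\lambda)\gg 1+|\lambda|$ on $V_{\mathcal{K}}$ for $\sigma\in\mathcal{D}(M)$, is correct for representatives whose central $\mathfrak{a}_M^*$-component of the infinitesimal character has been normalized to zero (as is natural for representatives modulo $i\mathfrak{a}_M^*$), but the paper's proof is equally terse on this point.
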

\begin{proof} The first assertion follows from Lemma \ref{polysec:cptsupp} and Lemma \ref{holosec:cptsupp}. To prove the left containment, let $\varphi\in \mathcal{S}(G(F))$ and $f\in\mathcal{S}_\rho^{\mathrm{as}}(G(F))$. Then
\begin{align*}
    \frac{\mathrm{HP}(\varphi\ast f)}{L_\rho} = \mathrm{HP}(\varphi)\frac{\mathrm{HP}(f)}{L_\rho}.
\end{align*}
In view of Lemma \ref{polysec:properties}\ref{poly:mult} and Lemma \ref{holosec:composition}, we are left with proving that the seminorms defined in \eqref{seminorm} are bounded in the Archimedean case. This follows from the bound in Lemma \ref{holosec:cptsupp}. The proof of the second containment one is essentially the same.
\end{proof}

\begin{lem}\label{constant:map:asSchwartz:preserve} Let $M\in\mathcal{M}$ and $P\in \mathcal{P}(M)$. The map \eqref{constant:map:spectral} induces a map
\begin{align*}
    (-)^{P}:\mathcal{S}_\rho^{\mathrm{as}}(\mathrm{Temp}_{\mathrm{Ind}}(G))\lto \mathcal{S}_{\rho\vert_{^LM}}^{\mathrm{as}}(\mathrm{Temp}_{\mathrm{Ind}}(M)).
\end{align*}
Consequently, the constant term map $f\mapsto f^P$ induces a map
\begin{align*}
    (-)^{P}:\mathcal{S}_\rho^{\mathrm{as}}(G(F))\lto \mathcal{S}_{\rho\vert_{^LM}}^{\mathrm{as}}(M(F)).
\end{align*}
\end{lem}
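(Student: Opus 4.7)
The plan is to verify that the spectral constant-term map \eqref{constant:map:spectral} restricts to a well-defined map
$\mathcal{S}_\rho^{\mathrm{as}}(\Temp_\Ind(G))\to \mathcal{S}_{\rho|_{^LM}}^{\mathrm{as}}(\Temp_\Ind(M))$, and then to transport this to the group-level statement via Definition \ref{Asymp:Schwartz:Space} and Lemma \ref{constant:map:HP:nice}. The essential algebraic observation at the outset is that for any Levi $M'\leq M$ and $\sigma'\in\Pi_2(M')$,
\begin{align*}
    L_\rho(M',\sigma') = L(\tfrac{1}{2},\sigma',\rho|_{^LM'}) = L_{\rho|_{^LM}}(M',\sigma'),
\end{align*}
so $L_\rho$ and $L_{\rho|_{^LM}}$ agree on the image of $\widetilde{\Temp}_\Ind(M)\hookrightarrow \widetilde{\Temp}_\Ind(G)$. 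Since $\mathrm{ev}_{P',P}$ is $\CC$-linear in its operator argument, this yields
\begin{align*}
    \frac{T^P(M',\sigma')}{L_{\rho|_{^LM}}(M',\sigma')}=\mathrm{ev}_{P',P}\!\left(\frac{T(M',\sigma')}{L_\rho(M',\sigma')}\right).
\end{align*}

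First I would use this identity to transfer the polynomial, resp.\ holomorphic, condition. The map $\mathrm{ev}_{P',P}$ is $\CC$-linear and continuous: in the non-Archimedean case it sends the finite-dimensional target of a polynomial section into another finite-dimensional space, while in the Archimedean case continuity is Lemma \ref{Ind:evaluation:continuous}. Postcomposing a polynomial, resp.\ holomorphic, section of $\mathcal{HS}^{\mathrm{sm}}\to \widetilde{\Temp}_\Ind(G)_\CC$ with this continuous linear map again yields a polynomial, resp.\ holomorphic, section of $\mathcal{HS}^{\mathrm{sm}}\to \widetilde{\Temp}_\Ind(M)_\CC$; applied to $L_\rho^{-1}T$ and combined with the displayed identity, this handles the regularity axiom for $T^P$ in both settings, and completes the argument in the non-Archimedean case.

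The main obstacle will be the Archimedean seminorm estimate. Fix a Levi $M'\leq M$, a compact $\mathcal{K}\subset \mathfrak{a}_{M'}^*$, a polynomial $p$ admissible for $(M',\rho|_{^LM'})$ in the sense of \eqref{seminorm}, and $u,v\in U(\mathrm{Lie}(K\cap M(F))_\CC)\subseteq U(\mathrm{Lie}(K)_\CC)$. The equality $L_\rho=L_{\rho|_{^LM}}$ on $\widetilde{\Temp}_\Ind(M)$ shows that the same $p$ is admissible for $(M',\rho)$ in the definition of $\mathcal{S}_\rho^{\mathrm{as}}(\Temp_\Ind(G))$, and the same choice of $\mathcal{D}(M')$ may be used in both contexts. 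I would then adapt the proof of Lemma \ref{constant:map:spectral:welldefined}: bound the operator norm by the Hilbert--Schmidt norm, use the equivariance
$\mathrm{ev}_{P',P}\circ I_{P'}^G(\sigma'_\lambda)(w)=I_{P'\cap M}^M(\sigma'_\lambda)(w)\circ\mathrm{ev}_{P',P}$
for $w\in U(\mathrm{Lie}(K\cap M(F))_\CC)$, expand via Lemma \ref{Ind:evaluation:continuous} into a finite sum of Hilbert--Schmidt norms involving multi-derivatives of fixed bounded length in a basis of $\mathfrak{g}$, return to the operator norm through Lemma \ref{CTemp:HS:op:equiv:bound}, and finally invoke the hypothesis $\norm{T}_{u',v',k,\mathcal{K},p}<\infty$ on each summand. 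This yields $\norm{T^P}^M_{u,v,k,\mathcal{K},p}<\infty$, so $T^P\in \mathcal{S}_{\rho|_{^LM}}^{\mathrm{as}}(\Temp_\Ind(M))$. The second map in the statement is then immediate: by Lemma \ref{constant:map:HP:nice} the diagram $\mathrm{HP}_M\circ(-)^P=(-)^P\circ \mathrm{HP}_G$ intertwines the two constant-term maps, and Definition \ref{Asymp:Schwartz:Space} transports the conclusion from sections to functions.
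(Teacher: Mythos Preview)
Your proposal is correct and follows essentially the same approach as the paper: you identify the key equality $L_\rho|_{\widetilde{\Temp}_\Ind(M)}=L_{\rho|_{^LM}}$, use the continuity of $\mathrm{ev}_{P',P}$ (Lemma~\ref{Ind:evaluation:continuous}) to propagate the polynomial/holomorphic condition, adapt the argument of Lemma~\ref{constant:map:spectral:welldefined} for the Archimedean seminorm bound, and then transport the result to the group level via Lemma~\ref{constant:map:HP:nice}. The paper's proof is terser but follows exactly this outline; your version simply spells out the steps of the seminorm argument more explicitly.
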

\begin{proof} 
By Lemma \ref{constant:map:HP:nice} it suffices to prove the first statement. 
Observe that $L_\rho\vert_{\widetilde{\Temp}_\Ind(M)} = L_{\rho\vert_{^LM}}.$  Using the last statement of Lemma \ref{Ind:evaluation:continuous} in the Archimedean case we deduce that if $T\in \mathcal{S}_\rho^{\mathrm{as}}(\mathrm{Temp}_{\mathrm{Ind}}(G))$ then $L_{\rho\vert_{^LM}}^{-1}T^{P}$ is polynomial (resp.~holomorphic). 
The argument proving Lemma \ref{constant:map:spectral:welldefined} implies the
 required bound for $T^{P}$ in the Archimedean case.
\end{proof}

\begin{lem}\label{Basic:Function:Asymptotic} If $F$ is non-Archimedean then  $b_{\rho} \in \mathcal{S}_\rho^{\mathrm{as}}(G(F))$. \qed
\end{lem}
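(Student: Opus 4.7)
The proof is very short and reduces to invoking earlier lemmas; the key observation is that the definition of $b_\rho$ is essentially tailored so that $L_\rho^{-1}\mathrm{HP}(b_\rho)$ equals $\mathrm{HP}(\one_K)$, and Lemma \ref{polysec:cptsupp} tells us the latter is a polynomial section.

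The plan is as follows. By Definition \ref{basicfunct:defn}, $\mathrm{HP}(b_\rho) = L_\rho \cdot \mathrm{HP}(\one_K)$ in $\mathcal{C}(\Temp_\Ind(G))^{K\times K}$, so in the sense required by the definition of $\mathcal{S}_\rho^{\mathrm{as}}(\Temp_\Ind(G))$ we have $L_\rho^{-1}\mathrm{HP}(b_\rho) = \mathrm{HP}(\one_K)$. Since $K$ is a compact open subgroup of $G(F)$, the function $\one_K$ lies in $C_c^\infty(G(F))$, so Lemma \ref{polysec:cptsupp} implies that $\mathrm{HP}(\one_K)$ is a polynomial section. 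Consequently $L_\rho^{-1}\mathrm{HP}(b_\rho)$ is a polynomial section, which means $\mathrm{HP}(b_\rho) \in \mathcal{S}_\rho^{\mathrm{as}}(\Temp_\Ind(G))$, and hence by Definition \ref{Asymp:Schwartz:Space} we conclude $b_\rho \in \mathcal{S}_\rho^{\mathrm{as}}(G(F))$.

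There is essentially no obstacle: the only thing one might want to double-check is that the support of $\mathrm{HP}(b_\rho)$ (which by Lemma \ref{lem:unram:func} is contained in the component $\Im\Lambda_{A_0}$) does not cause any issue with interpreting ``$L_\rho^{-1}\mathrm{HP}(b_\rho)$ is polynomial''. But this is automatic since we are just asserting the existence of a polynomial section $S$ with $\mathrm{HP}(b_\rho) = L_\rho\cdot S$, and $S = \mathrm{HP}(\one_K)$ works by construction. No further analytic estimates are needed in the non-Archimedean case.
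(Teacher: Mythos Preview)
Your argument is correct and is exactly the reasoning the paper has in mind: the lemma is stated with an immediate \qed, and the one-line justification is precisely that $L_\rho^{-1}\mathrm{HP}(b_\rho)=\mathrm{HP}(\one_K)$ is a polynomial section by Lemma~\ref{polysec:cptsupp}.
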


\subsection{Tempered local zeta integrals}

For a representation $\pi$ of $G(F)$ and $(\varphi,\varphi^\vee) \in \pi^{\mathrm{sm}} \times (\pi^\vee)^{\mathrm{sm}}$, let
$
c_{\varphi,\varphi^\vee}(g):=\langle \pi(g)\varphi,\varphi^\vee\rangle
$
be the corresponding smooth matrix coefficient.  We let
\begin{align}
\mathcal{C}(\pi):=\langle c_{\varphi,\varphi^\vee}:(\varphi,\varphi^\vee) \in \pi^{\mathrm{sm}} \times (\pi^\vee)^{\mathrm{sm}} \rangle \leq C^\infty(G(F))
\end{align}
be the $\CC$-span of these matrix coefficients in $C^\infty(G(F))$. Note that $\{c^\vee:c\in\mathcal{C}(\pi)\} = \mathcal{C}(\pi^\vee)$, where we recall that for a function $c\in C^\infty(G(F))$ we have set $c^\vee(g):=c(g^{-1})$.

\quash{For $\varphi \in \mathcal{C}(\pi),$ $\chi \in \Lambda_G,$ and $\lambda \in \mathfrak{a}_{G\CC}^*$  we let
\begin{align} \label{char:twist} \begin{split}
\varphi_{\chi}(g):=\varphi(g)\chi(g), \quad 
\varphi_{\lambda}(g):=\varphi(g)e^{\langle \lambda,H_G(g)\rangle} \end{split}
\end{align}}

\begin{defn} For a representation $\pi$ of $G(F)$ and $(\lambda,f,c) \in \mathfrak{a}_{G\CC}^* \times  C^\infty(G(F)) \times \mathcal{C}(\pi),$ set
\begin{align*}
    Z(\lambda ,f,c) := \int_{G(F)} e^{\langle \lambda,H_G(g)\rangle}f(g)c(g) dg
\end{align*}
whenever the integral is absolutely convergent\quash{or analytically continued in $\lambda$ from a region of absolutely convergence}.  We refer to $Z(\lambda,f,c)$ as a \textbf{local zeta integral.}
\end{defn}

We point out that for any $f \in \mathcal{C}(G(F)),$ $\pi \in \mathrm{Temp}(G)$, $(\varphi,\varphi^\vee) \in \pi^{\mathrm{sm}} \times (\pi^\vee)^{\mathrm{sm}}$ and $\lambda \in i\mathfrak{a}_G$ one has that
\begin{align}\label{zeta:pairing}
Z(\lambda,f,c_{\varphi,\varphi^\vee})=\langle \pi_{\lambda}(f)\varphi,\varphi^\vee \rangle.
\end{align}
In particular, the integral defining $Z(\lambda,f,c)$ is absolutely convergent.

\begin{lem}\label{prop:loc:zeta:lem} For $(\lambda,f,c) \in i\mathfrak{a}_G^* \times \mathcal{C}(G(F)) \times \mathcal{C}(\pi)$ one obtains that
\begin{align*}
    Z(-\lambda,\mathcal{F}_{\rho}(f),c^\vee) = \gamma(\tfrac{1}{2},\pi_\lambda,\rho)Z(\lambda,f,c).
\end{align*}    
\end{lem}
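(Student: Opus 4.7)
The plan is to reduce to pure matrix coefficients, rewrite both zeta integrals as pairings of operators, and then apply Lemma~\ref{FT:lem:important:1} to the twisted representation $\pi_\lambda$. Concretely, by linearity we may assume $c = c_{\varphi,\varphi^\vee}$ for some smooth vectors $(\varphi,\varphi^\vee) \in \pi^{\mathrm{sm}} \times (\pi^\vee)^{\mathrm{sm}}$. A direct unwinding of the contragredient pairing shows that $c^\vee = c_{\varphi^\vee,\varphi}$, viewed as a matrix coefficient of $\pi^\vee$. Using \eqref{zeta:pairing} (applied to $(\pi^\vee)_{-\lambda} = (\pi_\lambda)^\vee$), we then obtain
\begin{align*}
Z(-\lambda, \mathcal{F}_\rho(f), c^\vee) = \langle (\pi_\lambda)^\vee(\mathcal{F}_\rho(f))\varphi^\vee, \varphi\rangle.
\end{align*}

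Next, I would apply the second identity of Lemma~\ref{FT:lem:important:1} to the tempered representation $(\pi_\lambda)^\vee$ (note that $\pi_\lambda$ is tempered since $\lambda \in i\mathfrak{a}_G^*$). This yields
\begin{align*}
(\pi_\lambda)^\vee(\mathcal{F}_\rho(f)) = \gamma_\rho(\pi_\lambda)\, (\pi_\lambda)^\vee(f^\vee),
\end{align*}
since $((\pi_\lambda)^\vee)^\vee = \pi_\lambda$. Substituting gives $Z(-\lambda, \mathcal{F}_\rho(f), c^\vee) = \gamma_\rho(\pi_\lambda) \langle (\pi_\lambda)^\vee(f^\vee)\varphi^\vee, \varphi\rangle$.

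It then remains to identify the residual factor $\langle (\pi_\lambda)^\vee(f^\vee)\varphi^\vee, \varphi\rangle$ with $Z(\lambda, f, c)$. Expanding the operator as an absolutely convergent integral and changing variables $g \mapsto g^{-1}$ (using unimodularity of the reductive group $G$), together with the defining property of the contragredient pairing $\langle (\pi_\lambda)^\vee(g^{-1})\varphi^\vee,\varphi\rangle = \langle \pi_\lambda(g)\varphi,\varphi^\vee\rangle$, converts this pairing into $\langle \pi_\lambda(f)\varphi,\varphi^\vee\rangle$, which is $Z(\lambda,f,c)$ by \eqref{zeta:pairing}. Combined with the identification $\gamma_\rho(\pi_\lambda) = \gamma(\tfrac{1}{2},\pi_\lambda,\rho,\psi)$ from \eqref{loc:factors}, this yields the desired functional equation.

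The absolute convergence of every integral appearing along the way is automatic because $f, \mathcal{F}_\rho(f) \in \mathcal{C}(G(F))$, $\lambda \in i\mathfrak{a}_G^*$, and smooth matrix coefficients of tempered representations lie in an appropriate weak Harish-Chandra space, so no analytic continuation argument is required. The only subtle point is keeping the duality conventions straight, particularly distinguishing the dual operator $\intercal$ of \eqref{Transp} from the contragredient representation; I would handle this by consistently writing all pairings explicitly as $\langle \cdot,\cdot\rangle:V\times V^\vee\to\CC$ rather than invoking $\intercal$ directly.
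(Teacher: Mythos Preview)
Your proof is correct and follows essentially the same route as the paper, which simply cites \eqref{zeta:pairing} and Lemma~\ref{FT:lem:important:1}. The only cosmetic difference is that you work on the contragredient side and invoke the second identity of Lemma~\ref{FT:lem:important:1} for $(\pi_\lambda)^\vee$, whereas the paper's (suppressed) detailed version applies the first identity directly to $\pi_\lambda$ and then unwinds the pairing; these are equivalent bookkeeping choices.
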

\begin{proof} In view of \eqref{zeta:pairing}, this follows from Lemma \ref{FT:lem:important:1}.\quash{Then using 
 Lemma \ref{FT:lem:important:1},
for $(\varphi,\varphi^\vee) \in \pi^{\mathrm{sm}} \times (\pi^\vee)^{\mathrm{sm}}$
we have
\begin{align*}
\gamma(\tfrac{1}{2},\pi_\lambda,\rho)\langle I_P^G(\sigma)_{\lambda}(f)\varphi,\varphi^\vee\rangle&=
    \langle I_P^G(\sigma)_{\lambda}(\mathcal{F}_{\rho} (f)^\vee)\varphi,\varphi^\vee\rangle \\&= \int_{G(F)} (\mathcal{F}_{\rho} (f))^\vee(x)\langle \pi_{\lambda}(x)\varphi,\varphi^\vee\rangle dx\\
    &=\int_{G(F)} \mathcal{F}_{\rho} (f)(x)\langle \pi_{\lambda}(x^{-1})\varphi,\varphi^{\vee}\rangle dx\\
    &=Z(-\lambda,\mathcal{F}_{\rho} f(x),c_{\varphi,\varphi^\vee}^\vee).
\end{align*}}
\end{proof}

\begin{prop} \label{prop:loc:zeta} Let $\pi\in \Temp(G)$. For $(f,c)\in \mathcal{S}_\rho^{\mathrm{as}}(G(F))\times \mathcal{C}(\pi)$, the local zeta integral $Z(\lambda,f,c)$ extends to a meromorphic function of $\lambda\in \mathfrak{a}_{G\CC}^*$, and is a holomorphic (resp. polynomial) multiple of $L(\tfrac{1}{2},\pi_{\lambda},\rho)$ when $F$ is Archimedean (resp. non-Archimedean). It satisfies the functional equation
\begin{align}\label{prop:loc:zeta:fe}
    Z(-\lambda,\mathcal{F}_{\rho}(f),c^\vee) = \gamma(\tfrac{1}{2},\pi_\lambda,\rho)Z(\lambda,f,c).
\end{align}
\end{prop}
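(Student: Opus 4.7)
The plan is to reduce to the case $c=c_{\varphi,\varphi^\vee}$ with $(\varphi,\varphi^\vee)\in\pi^{\mathrm{sm}}\times(\pi^\vee)^{\mathrm{sm}}$ and translate $Z(\lambda,f,c)$ into a pairing against $\mathrm{HP}(f)$ along the inertial orbit of $\pi$; the asserted regularity then drops out of the defining condition for $\mathcal{S}_\rho^{\mathrm{as}}$. The functional equation \eqref{prop:loc:zeta:fe} on $i\mathfrak{a}_G^*$ is already the content of Lemma \ref{prop:loc:zeta:lem}; once meromorphic continuation of both sides to $\mathfrak{a}_{G\mathbb{C}}^*$ is established, the identity propagates by analytic continuation, since meromorphic functions on $\mathfrak{a}_{G\mathbb{C}}^*$ are determined by their restriction to the totally real subspace $i\mathfrak{a}_G^*$. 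Note that the meromorphic continuation of the right-hand side follows from the main argument applied to $(\mathcal{F}_{\rho}(f),\pi^\vee)$ in place of $(f,\pi)$, using Lemma \ref{lem:as:preserve} and the fact (from the discussion above \eqref{Temp->TempInd}) that $c^\vee \in \mathcal{C}(\pi^\vee)$.

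By the Langlands classification recalled around \eqref{Temp->TempInd}, realize $\pi$ as a subrepresentation of $I_P^G(\sigma)$ for some $(M,\sigma)\in\widetilde{\Temp}_{\Ind}(G)$ and $P\in\mathcal{P}(M)$, and extend $\varphi^\vee$ by Hahn--Banach to some $\tilde\varphi^\vee\in I_P^G(\sigma^\vee)^{\mathrm{sm}}$ restricting to $\varphi^\vee$ on $\pi^{\mathrm{sm}}$. In the compact realization $I_P^G(\sigma_\lambda)\vert_K\cong I_P^G(\sigma)\vert_K$ underlying the bundle $\mathcal{HS}^{\mathrm{sm}}$, the vectors $\varphi$ and $\tilde\varphi^\vee$ correspond to families $\varphi_\lambda\in I_P^G(\sigma_\lambda)^{\mathrm{sm}}$ and $\tilde\varphi^\vee_\lambda\in I_P^G((\sigma_\lambda)^\vee)^{\mathrm{sm}}$ that are \emph{constant} in the compact picture, hence trivially holomorphic in $\lambda\in\mathfrak{a}_{M\mathbb{C}}^*$. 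Viewing $\lambda\in\mathfrak{a}_{G\mathbb{C}}^*$ inside $\mathfrak{a}_{M\mathbb{C}}^*$ via the restriction $X^*(G)\hookrightarrow X^*(M)$ and using that $H_G(kg)=H_G(g)$ and $\langle\lambda,H_M(\mathbf{a}(g))\rangle=\langle\lambda,H_G(g)\rangle$ for $\lambda\in\mathfrak{a}_{G\mathbb{C}}^*$, a direct computation yields the identity
\begin{align*}
\langle I_P^G(\sigma_\lambda)(g)\varphi_\lambda,\tilde\varphi^\vee_\lambda\rangle \;=\; e^{\langle\lambda,H_G(g)\rangle}\langle I_P^G(\sigma)(g)\varphi,\tilde\varphi^\vee\rangle \;=\; e^{\langle\lambda,H_G(g)\rangle}c_{\varphi,\varphi^\vee}(g).
\end{align*}
Integrating against $f$ (and combining with \eqref{zeta:pairing}) gives for $\lambda\in i\mathfrak{a}_G^*$
\begin{align*}
    Z(\lambda,f,c_{\varphi,\varphi^\vee})\;=\;\langle I_P^G(\sigma_\lambda)(f)\,\varphi_\lambda,\,\tilde\varphi^\vee_\lambda\rangle\;=\;\langle \mathrm{HP}(f)(M,\sigma_\lambda)\,\varphi_\lambda,\,\tilde\varphi^\vee_\lambda\rangle.
\end{align*}

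Since $f\in\mathcal{S}_\rho^{\mathrm{as}}(G(F))$, the section $\mathrm{HP}(f)/L_\rho$ is polynomial (resp.\ holomorphic) on the inertial orbit of $(M,\sigma)$; pairing against the constant families $\varphi_\lambda,\tilde\varphi^\vee_\lambda$ (using condition \eqref{seminorm} together with Lemma \ref{Ind:evaluation:continuous} in the Archimedean case to control the pairing on arbitrary cylinders $V_\mathcal{K}$) shows that
$$\lambda \;\longmapsto\; Z(\lambda,f,c_{\varphi,\varphi^\vee})\big/L(\tfrac12,\sigma_\lambda,\rho\vert_{{}^LM})$$
extends polynomially (resp.\ holomorphically) from $i\mathfrak{a}_G^*$ to $\mathfrak{a}_{G\mathbb{C}}^*$. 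The equality $L(\tfrac12,\sigma_\lambda,\rho\vert_{{}^LM})=L(\tfrac12,\pi_\lambda,\rho)$ from \eqref{LLC:para} and the discussion in \S\ref{LLC:gamma:invariant} then finishes the proof. The main obstacle is the identification in the previous paragraph: one must reconcile the two parametrizations of the inertial orbit of $\pi$ (by $\lambda\in i\mathfrak{a}_G^*$ via twisting $\pi$, and by $\lambda\in i\mathfrak{a}_M^*$ via the bundle structure on $\mathcal{HS}^{\mathrm{sm}}$), choose the Hahn--Banach extension $\tilde\varphi^\vee$ compatibly with the compact picture, and, in the Archimedean case, verify that the pairing retains the growth structure encoded in \eqref{seminorm}. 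Once this identification is in place, the remaining steps are formal.
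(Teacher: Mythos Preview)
Your argument is correct and follows essentially the same route as the paper: express $Z(\lambda,f,c_{\varphi,\varphi^\vee})$ as a pairing $\langle \mathrm{HP}(f)(M,\sigma_\lambda)\varphi,\varphi^\vee\rangle$, factor out $L_\rho(M,\sigma_\lambda)$, and use the defining polynomiality/holomorphy of $L_\rho^{-1}\mathrm{HP}(f)$ together with Lemma~\ref{prop:loc:zeta:lem} and the identity principle. One simplification: since $I_P^G(\sigma)$ is a finite direct sum of irreducible tempered representations (see the paragraph following \eqref{Temp->TempInd}), $\pi^\vee$ is already a direct summand of $I_P^G(\sigma^\vee)$, so the Hahn--Banach extension of $\varphi^\vee$ is unnecessary---you may take $\tilde\varphi^\vee=\varphi^\vee$ extended by zero on the complement. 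Likewise, condition~(1) in the definition of $\mathcal{S}_\rho^{\mathrm{as}}(\mathrm{Temp}_{\mathrm{Ind}}(G))$ already asserts that $L_\rho^{-1}\mathrm{HP}(f)$ is a holomorphic section into $\mathcal{HS}(I_P^G(\sigma))$, so pairing against the fixed vectors $\varphi,\varphi^\vee$ is automatically holomorphic; the appeal to \eqref{seminorm} and Lemma~\ref{Ind:evaluation:continuous} is not needed here.
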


\noindent In the non-Archimedean case the statement that $Z(\lambda,f,c)$ is a polynomial multiple of $L(\tfrac{1}{2},\pi_\lambda,\rho)$ means there exists $p\in \mathbb{C}[\Lambda_G]$ such that $Z(\lambda,f,c) = p(e^{\langle\lambda,H_G(\cdot)\rangle})L(\tfrac{1}{2},\pi_\lambda,\rho)$.
 
\begin{proof} For $\lambda\in i\mathfrak{a}_G^*$, we rewrite \eqref{zeta:pairing} as 
\begin{align*}
Z(\lambda,f,c_{\varphi,\varphi^\vee}) = L(\tfrac{1}{2},\pi_\lambda,\rho)\left\langle \frac{\pi_\lambda(f)}{L(\tfrac{1}{2},\pi_\lambda,\rho)}\varphi,\varphi^\vee \right\rangle.
\end{align*}
Realizing $\pi$ as a subrepresentation of $I_P^G(\sigma)$ for some $(M,\sigma) \in \widetilde{\Temp}_\Ind(G)$, we see that
\begin{align*}
    Z(\lambda,f,c_{\varphi,\varphi^\vee}) = L_\rho(M,\sigma_\lambda)\left\langle \frac{\mathrm{HP}(M,\sigma_\lambda)(f)}{L_\rho(M,\sigma_\lambda)}\varphi,\varphi^\vee \right\rangle.
\end{align*}
Since $f\in \mathcal{S}^{\mathrm{as}}_{\rho}(G(F))$, the right hand side extends to a meromorphic function in $\lambda\in \mathfrak{a}_{G\CC}^*$. This provides a meromorphic continuation of $Z(\lambda,f,c_{\varphi,\varphi^\vee}).$ Moreover, it shows that $Z(\lambda,f,c_{\varphi,\varphi^\vee})$  is a holomorphic (resp. polynomial) multiple of $L(\tfrac{1}{2},\pi_{\lambda},\rho)$ when $F$ is Archimedean (resp. non-Archimedean). Finally, since both sides of \eqref{prop:loc:zeta:fe} are meromorphic functions in $\lambda$, the equality holds by Lemma \ref{prop:loc:zeta:lem} and the identity principle.
\end{proof}

In fact the local zeta integrals converge for a larger range of $\lambda$. Write $\rho=\rho_1\oplus\cdots\oplus\rho_m$ where each $\rho_i$ is irreducible. By irreducibility and \eqref{LLC:UT}, there exist linear functionals $\mu_i:\mathfrak{a}_{G}^*\to\mathbb{R}$ for all $1 \leq i \leq m$ such that 
\begin{align*}
    L_\rho(M,\sigma_\lambda) = \prod_{i=1}^mL(\tfrac{1}{2} + \mu_i(\lambda),\sigma,\rho_i\vert_{^L{M}})
\end{align*}
for all $(M,\sigma)\in\widetilde{\Temp}_\Ind(G)$. Let
\begin{align*}
    C := \bigcap_{i=1}^m \left\{\lambda\in \mathfrak{a}_{G}^*\mid \mu_i(\lambda)>-\tfrac{1}{2}\right\}
\end{align*}
and form the cylinder $V_C\subseteq \mathfrak{a}_{G\CC}^*$ as in \eqref{cylinder}.

\begin{lem}
Let $f \in \mathcal{S}_{\rho}^{\mathrm{as}}(G(F)).$  Then $fe^{\langle \lambda,H_G(\cdot) \rangle} \in \mathcal{C}(G(F))$ for $\lambda\in V_C$. \quash{ all $\lambda \in \mathfrak{a}_{G\CC}^*$ such that $\mathrm{Re}(\mu_\rho(\lambda))>-\tfrac{1}{2}.$}
\end{lem}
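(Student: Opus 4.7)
\textbf{The plan is} to prove this by spectral analysis. Set $T := \HP(f)$ and write $T = L_\rho S$, where $S := L_\rho^{-1}T$ is a polynomial (resp.\ holomorphic) section by the definition of $\mathcal{S}_\rho^{\mathrm{as}}(G(F))$. For $\lambda \in V_C$, define the twisted section
\[
T_\lambda(\pi) := L_\rho(\pi_\lambda)\,S(\pi_\lambda),
\]
viewed as an element of $\mathcal{HS}(I_P^G(\sigma_\lambda)) \cong \mathcal{HS}(I_P^G(\sigma))$ via the canonical $K$-equivariant identification of induced representations (which uses that $e^{\langle\lambda,H_M(\cdot)\rangle}$ is trivial on $M(F)\cap K$). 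The defining condition $\mathrm{Re}(\tfrac{1}{2}+\mu_i(\lambda))>0$ for all $i$, combined with the fact that tempered $L$-factors have all their poles in $\mathrm{Re}(s)\leq 0$, ensures that $L_\rho(\pi_\lambda)$ is regular on all of $\Temp_\Ind(G)$ and hence that $T_\lambda$ is well-defined. I will show that $T_\lambda \in \mathcal{C}(\Temp_\Ind(G))$ and then identify $\tilde f_\lambda := fe^{\langle\lambda,H_G(\cdot)\rangle}$ with $\HP^{-1}(T_\lambda) \in \mathcal{C}(G(F))$, which gives the lemma.

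\textbf{Step 1}: verify $T_\lambda \in \mathcal{C}(\Temp_\Ind(G))$. The non-Archimedean case is essentially formal, since $T_\lambda$ inherits compact support from $T$ (twisting preserves connected components) and smoothness from the rationality of $L_\rho$ and the polynomiality of $S$. In the Archimedean case, write any $\sigma\in \Pi_2(M)$ as $\sigma=(\sigma_0)_\mu$ with $\sigma_0 \in \mathcal{D}(M)$ and $\mu \in i\mathfrak{a}_M^*$, so that $\sigma_\lambda = (\sigma_0)_{\mu+\lambda}$. Choosing a small compact neighborhood $\mathcal{K}\subseteq C$ of $\mathrm{Re}(\lambda)$, one may take the pole-canceling polynomial $p=1$ in condition~(2) of the definition of $\mathcal{S}_\rho^{\mathrm{as}}(\Temp_\Ind(G))$. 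The bound $\|T\|_{u,v,k,\mathcal{K},1}<\infty$, together with the $K$-identification and the comparison $N((\sigma_0)_\mu) \asymp_\lambda N((\sigma_0)_{\mu+\lambda})$, then yields the required Schwartz-type bounds for $T_\lambda$. Bounds involving invariant differential operators on $\Im\Lambda_M$ are obtained in the same way after differentiating in the shifted variable.

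\textbf{Step 2}: identify $\tilde f_\lambda$ with $\HP^{-1}(T_\lambda)$. For $\lambda\in i\mathfrak{a}_G^*$ this is immediate: $\tilde f_\lambda \in \mathcal{C}(G(F))$ trivially since $|e^{\langle\lambda,H_G(g)\rangle}|=1$, and $\pi(\tilde f_\lambda) = \pi_\lambda(f) = T(\pi_\lambda) = T_\lambda(\pi)$ since $\pi_\lambda$ is again tempered unitary. To extend the identification to all $\lambda\in V_C$, I argue by analytic continuation. At each fixed $g\in G(F)$, the function $\lambda \mapsto \tilde f_\lambda(g)$ is manifestly holomorphic on $V_C$, while $\lambda\mapsto \HP^{-1}(T_\lambda)(g)$ is holomorphic by holomorphy of $\lambda\mapsto T_\lambda$ (as a section, from Step 1), continuity of $\HP^{-1}$ (Theorem \ref{HCPlan:thm1}), and continuity of point evaluation on $\mathcal{C}(G(F))$. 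Since they agree on the totally real subspace $i\mathfrak{a}_G^*$ of the tube domain $V_C$ over the open convex set $C\ni 0$, they must agree on all of $V_C$, giving $\tilde f_\lambda = \HP^{-1}(T_\lambda)\in\mathcal{C}(G(F))$.

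\textbf{The hard part} is the Archimedean estimate in Step 1: one must carefully propagate the $p$-dependence in condition~(2) of the asymptotic Schwartz space definition under the complex shift $\sigma\mapsto \sigma_\lambda$, and verify that the pole-canceling polynomial can indeed be chosen trivial when $\lambda\in V_C$. The remainder of the argument is essentially formal, relying only on the fact that $L_\rho(\pi_\lambda)$ has no poles throughout $V_C$, which is precisely how the cone $C$ was designed.
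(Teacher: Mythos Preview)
Your proof is correct and follows essentially the same approach as the paper's: define the twisted section $T_\lambda$, show it lies in $\mathcal{C}(\Temp_\Ind(G))$, and identify $\HP^{-1}(T_\lambda)$ with $fe^{\langle\lambda,H_G(\cdot)\rangle}$ by analytic continuation from the unitary axis. Two small points of precision worth noting: the compact set $\mathcal{K}$ in condition~(2) of the definition of $\mathcal{S}_\rho^{\mathrm{as}}$ lives in $\mathfrak{a}_M^*$ rather than in $C\subset\mathfrak{a}_G^*$, so one should take $\mathcal{K}$ to be a small neighborhood of the origin in $\mathfrak{a}_M^*$ and apply the seminorm with compact set $\mathrm{Re}(\lambda)+\mathcal{K}$ (the pole-free condition for $p=1$ then holds uniformly in $\sigma$ because each irreducible constituent of $\rho|_{{}^LM}$ restricts on $\mathfrak{a}_G^*$ to one of the $\mu_i$); and for the derivative bounds, the paper makes your phrase ``differentiating in the shifted variable'' precise by invoking the Cauchy integral formula (Lemma~\ref{FT:defn:lem:-1}) to bound $DT_\lambda$ by a sup of $T_{\lambda'}$ over $\lambda'$ in a complex neighborhood, which is exactly what the open interior of $\mathcal{K}$ provides.
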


\begin{proof} 
For $\lambda\in V_C$ we have a well-defined operator
\begin{align*}
\mathrm{HP}(f)_{\lambda}(M,\sigma)
:= (L_\rho^{-1}\mathrm{HP}(f))(M,\sigma_\lambda) \cdot L_{\rho}(M,\sigma_{\lambda}) \in\mathcal{HS}(I_P^G(\sigma)).
\end{align*}
Indeed, the quotient is a polynomial section in the non-Archimedean case and a holomorphic section in the Archimedean case by definition of $\mathcal{S}_{\rho}^{\mathrm{as}}(G(F)).$ 
It is clear that $\mathrm{HP}(f)_{\lambda} \in \mathcal{C}(\mathrm{Temp}_{\mathrm{Ind}}(G))$ in the non-Archimedean case.  

We claim the same is true in the Archimedean case.  
We must show 
\begin{align}\label{zeta:lem:1:bound}
        \sup_{\sigma\in \Pi_2(M)} \norm{I_P^G(\sigma)(u) \circ D\mathrm{HP}(f)_{\lambda}(M,\sigma) \circ I_P^G(\sigma)(v)}_{\mathrm{op}} N(\sigma)^k<\infty
\end{align}
where all unexplained symbols are as in \S\ref{sec:HPmap}. Let $\mathcal{K} \subset \mathfrak{a}^*_{M\CC}$ be the closure of a bounded neighborhood of the origin. Choose $\mathcal{K}$ small enough that 
 $L(\frac{1}{2},\sigma_{\lambda'},\rho)$ has no pole in $V_{\mathcal{K}}$ for all $\sigma\in\Pi_2(M)$ and all $\lambda' \in \lambda+V_{\mathcal{K}}.$ Noticing $\lambda + V_\mathcal{K} = V_{\mathrm{Re}(\lambda)+\mathcal{K}}$, by definition of $\mathcal{S}_{\rho}^{\mathrm{as}}(G(F))$ we have
    \begin{align*}
\sup_{\sigma\in\Pi_2(M)/i\mathfrak{a}_M^*}\sup_{\lambda'\in \lambda+V_{\mathcal{K}}} \norm{p(\lambda')I_P^G(\sigma_{\lambda'})(u)\circ T(M,\sigma_{\lambda'})\circ I_P^G(\sigma_{\lambda'})(v)}_{\mathrm{op}} N(\sigma)^k <\infty
    \end{align*}
for any polynomial $p$ on $\mathfrak{a}_{M\CC}^*.$
Restricting the bound to $i\mathfrak{a}_M^*\subseteq V_{\mathcal{K}}$ proves \eqref{zeta:lem:1:bound} when $D$ is constant one (by taking $p(\lambda)=1$). Since we chose $\mathcal{K}$ to be the closure of an open set, bounds for general $D$ follow from Lemma \ref{FT:defn:lem:-1}.

To conclude the proof, we contend that for $(g,\lambda)\in G(F) \times V_C$ one has that 
\begin{align} \label{analyt}
   f(g)e^{\langle \lambda,H_G(g)\rangle} = \mathrm{HP}^{-1}(\mathrm{HP}(f)_{\lambda})(g).
\end{align}
This is clear when $\mathrm{Re}(\lambda)=0$, so by the identity principle it suffices to show that both sides of \eqref{analyt} are holomorphic in $\lambda$.  This is immediate for the left hand side. For the right hand side this follows from the fact that $\mathrm{HP}^{-1}$ is a continuous linear map.  
\end{proof}

\begin{cor} \label{zeta:integral:convergence:cone}
For $(f,\pi) \in \mathcal{S}_{\rho}^{\mathrm{as}}(G(F)) \times \mathrm{Temp}(G)$ and $c \in \mathcal{C}(\pi)$ the local zeta integral $Z(\lambda,f,c)$ converges absolutely for $\lambda\in V_C$. \qed
\end{cor}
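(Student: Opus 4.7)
The plan is to deduce absolute convergence directly from the preceding lemma, which tells us that $fe^{\langle \lambda, H_G(\cdot)\rangle}$ lies in the Harish-Chandra space $\mathcal{C}(G(F))$ for $\lambda \in V_C$. Since integration against a smooth matrix coefficient of a tempered representation is well-behaved on $\mathcal{C}(G(F))$, the result should follow from standard bounds.

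More precisely, the argument I would carry out is as follows. First, recall the weak inequality: for any $c \in \mathcal{C}(\pi)$ with $\pi \in \Temp(G)$, there exists $d_0 \geq 0$ (depending on $c$) such that $|c(g)| \ll \Xi^G(g)\sigma_G(g)^{d_0}$ for all $g \in G(F)$; this is a well-known consequence of the fact that $\pi$ is tempered, see \cite[Proposition 1.5.1]{BP:local:GGP}. Second, the previous lemma asserts $fe^{\langle \lambda,H_G(\cdot)\rangle} \in \mathcal{C}(G(F))$ for $\lambda \in V_C$, so by definition of the seminorms \eqref{pd} on the Harish-Chandra space, for every $d > 0$ we have
\begin{align*}
    |f(g)|e^{\langle \mathrm{Re}(\lambda),H_G(g)\rangle} \ll_d \Xi^G(g)\sigma_G(g)^{-d}.
\end{align*}

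Multiplying the two estimates together yields
\begin{align*}
    \left|e^{\langle \lambda,H_G(g)\rangle}f(g)c(g)\right| \ll_d \Xi^G(g)^2\sigma_G(g)^{d_0 - d}.
\end{align*}
The final step is to invoke the standard integrability estimate
\begin{align*}
    \int_{G(F)} \Xi^G(g)^2 \sigma_G(g)^{-e} dg < \infty
\end{align*}
valid for $e$ sufficiently large (see \cite[Proposition 1.5.1]{BP:local:GGP}). Choosing $d$ so that $d - d_0$ exceeds this threshold produces the desired absolute convergence of $Z(\lambda,f,c)$ for $\lambda \in V_C$.

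There is no real obstacle here: the previous lemma has already done the hard work of providing spectral control over the twisted function $fe^{\langle \lambda, H_G(\cdot)\rangle}$ throughout the cylinder $V_C$, and the remaining estimate is purely a matter of combining the weak inequality for tempered matrix coefficients with standard integrability bounds on the Harish-Chandra $\Xi$-function. The only mild subtlety is to be sure that the implicit constants in the weak inequality are uniform enough in $c$ for our purposes, but since $c$ is fixed in the statement this is automatic.
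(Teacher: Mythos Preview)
Your proposal is correct and follows essentially the same route as the paper. The paper marks the corollary with \qed because it views it as immediate from the preceding lemma together with the earlier remark (just after \eqref{zeta:pairing}) that $Z(\lambda,f,c)$ converges absolutely for $f \in \mathcal{C}(G(F))$ and $\lambda \in i\mathfrak{a}_G^*$; you have simply unpacked the latter observation via the weak inequality and the integrability of $\Xi^G(g)^2\sigma_G(g)^{-e}$.
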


\section{The $\rho$-Schwartz space} 

Throughout this section  $F$ is a non-Archimedean local field.

\begin{defn} \label{defn:ac}
The space $C_{ac}^{\infty}(G(F))$ of \textbf{almost compactly supported smooth functions} is the space of uniformly smooth functions $f: G(F) \to \CC$ such that for all compact sets $\Omega \subseteq \mathfrak{a}_G$ the function $g\mapsto f(g)\one_{\Omega}(H_G(g))$ is compactly supported. 
\end{defn}

Elements of the asymptotic Hecke algebra $\mathcal{S}^{\mathrm{as}}_{\rho}(G(F))$ need not be of almost compact support:
\begin{example} \label{BP:example}
 Let $G=\GL_2$ and let $\rho:\GL_2 \to \GL_2$ be the identity (i.e.~the standard representation).  Let $I < \GL_2(\OO_F)$ be the Iwahori subgroup. 
Let $\mathrm{St}$ denote the Steinberg representation; then $\mathrm{St}^{I}$ is $1$-dimensional \cite[\S 14.4]{Bushnell:Henniart}. Pick any $\varphi\in\mathrm{St}^I$ with $\langle\varphi,\varphi\rangle=1$.

Let $f \in \mathcal{C}(\GL_2(F))$ be the unique function such that $\mathrm{HP}(f)$ is zero outside of $\mathrm{St} \otimes i\mathfrak{a}_{\GL_2}^*,$ and the restriction of $\mathrm{HP}(f)$ to $\mathrm{St} \otimes i\mathfrak{a}_{\GL_2}^*$ is the orthogonal projection to $\mathrm{St}^I \otimes i\mathfrak{a}_{\GL_2}^*$. Clearly $\HP(f)\in\mathcal{S}_\rho^{\mathrm{as}}(\Temp_\Ind(\GL_2))$, so that $f\in\mathcal{S}_{\rho}^{\mathrm{as}}(\GL_2(F))$. One computes
\begin{align*}
    f(g)=\one_{\OO_F^\times}(\det g)\langle \mathrm{St}(g^{-1})\varphi,\varphi\rangle.
\end{align*}
Then $\mathrm{supp}(f)\subseteq \{g\in\GL_2(F): |\det g|=1\} = \{g\in M_2(F):|\det g|=1\}$. Note that $\mathrm{supp}(f)$ cannot be compact in $M_2(F)$ since this would imply the Steinberg representation is supercuspidal.
\end{example}

Motivated by this example we isolate a proper subspace of $\mathcal{S}_{\rho}^{\mathrm{ac}}(G(F))$ in \S \ref{ssec:rho:Sch} below.

\subsection{The Paley-Wiener theorem for $C_c^\infty(G(F))$} \label{ssec:PW:BH}
We recall the Paley-Wiener theorem proved independently by Bernstein and Heiermann. 
Let 
\begin{align*}
\Pi_{0}(G) \subseteq \Pi_2(G)
\end{align*}
be the subset of unitary supercuspidal representations, equipped with subspace topology, and let
\begin{align*}
\mathrm{Temp}_{\mathrm{Ind},0}(G) \subset \mathrm{Temp}_{\mathrm{Ind}}(G)
\end{align*}
be the subset of $I_P^G(\sigma)$ with $\sigma \in \Pi_0(M)$ for some $M \in \mathcal{M}$. This is a union of components of $\mathrm{Temp}_{\Ind}(G)$ and is naturally a quotient of 
\begin{align*}
    \widetilde{\Temp}_{\Ind,0}(G) := \bigsqcup_{M\in\mathcal{M}}\Pi_0(M).
\end{align*}

We continue to denote by $\mathcal{HS}^{\mathrm{sm}} \to \mathrm{Temp}_{\Ind,0}(G)$ the pullback of the bundle $\mathcal{HS}^{\mathrm{sm}} \to \mathrm{Temp}_{\mathrm{Ind}}(G).$
We say a section $T\in \Gamma(\mathrm{Temp}_{\mathrm{Ind},0}(G),\mathcal{HS}^{\mathrm{sm}})$ is polynomial if $T$ satisfies the condition in Definition \ref{defn:poly0} but only for all $M\in\mathcal{M}$ and $\sigma\in\Pi_0(M)$. Let
\begin{align}\label{defn:polycpt:BH}
\mathrm{Poly}(\mathrm{Temp}_{\Ind,0}(G))<\Gamma_c(\mathrm{Temp}_{\mathrm{Ind},0}(G),\mathcal{HS}^{\mathrm{sm}})
\end{align}
be the subspace of polynomial sections with compact support.  The following is a rephrasing of \cite[Th\'eor\`eme (0.1)]{Heiermann:HeckePlan} and unpublished work of Bernstein:

\begin{thm} \label{thm:BH}
    The map 
    \begin{align*}
        \begin{split}
            \BH:C_c^\infty(G(F)) &\lto \mathrm{Poly}(\mathrm{Temp}_{\mathrm{Ind},0}(G))\\
        f&\longmapsto \mathrm{HP}(f)|_{\mathrm{Temp}_{\mathrm{Ind},0}(G)}
        \end{split}
    \end{align*}
    is an isomorphism. 
    \qed 
\end{thm}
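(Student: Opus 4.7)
The plan is to deduce the theorem from the Harish-Chandra Plancherel isomorphism (Theorem \ref{HCPlan:thm1}) together with Bernstein's structural results on the compactly supported Hecke algebra. For well-definedness, polynomiality of $\mathrm{HP}(f)$ for $f\in C_c^\infty(G(F))$ is Lemma \ref{polysec:cptsupp}. For compactness of the support after restricting to $\mathrm{Temp}_{\mathrm{Ind},0}(G)$, fix a compact open $K'\leq G(F)$ with $f$ bi-invariant under $K'$. Since $I_P^G(\sigma_\chi)|_K$ is independent of $\chi \in \Lambda_M$, the locus in $\widetilde{\mathrm{Temp}}_{\mathrm{Ind},0}(G)$ supporting $K'$-fixed vectors is a union of entire unitary inertial orbits, each of which is compact, and Bernstein's finiteness theorem implies that only finitely many inertial orbits carry $K'$-fixed vectors.

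For injectivity, suppose $\BH(f)=0$. Then $I_P^G(\sigma)(f)=0$ for every $(M,\sigma)\in\widetilde{\mathrm{Temp}}_{\mathrm{Ind},0}(G)$. By Lemma \ref{polysec:cptsupp}, the map $\chi\mapsto I_P^G(\sigma_\chi)(f)$ is the restriction to $\Im\Lambda_M$ of an algebraic map into a fixed finite-dimensional subspace of $\mathcal{HS}(I_P^G(\sigma))^{\mathrm{sm}}$, so Zariski density of $\Im\Lambda_M$ in $\Lambda_M$ forces it to vanish identically. Hence $I_P^G(\sigma')(f)=0$ for every supercuspidal $\sigma'$, unitary or not. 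For any irreducible tempered $\pi$ of $G(F)$, the classification of tempered representations embeds $\pi$ as a direct summand of some $I_{Q}^G(\sigma')$ with $\sigma'$ a unitary supercuspidal representation of some Levi subgroup (via a discrete series on an intermediate Levi and transitivity of parabolic induction); hence $\pi(f)=0$. This gives $\mathrm{HP}(f)=0$ and, by Theorem \ref{HCPlan:thm1}, $f=0$.

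The main obstacle is surjectivity. Given $T\in\mathrm{Poly}(\mathrm{Temp}_{\mathrm{Ind},0}(G))$, polynomiality and compact support of $T$ yield a compact open $K'\leq G(F)$ such that $T(M,\sigma)$ factors through $K'$-fixed subspaces on both sides for all $(M,\sigma)\in\mathrm{supp}(T)$. The Bernstein decomposition writes
\begin{align*}
    C_c^\infty(G(F)/\!/K') \;=\; \bigoplus_{\mathfrak{s}} \mathcal{H}(G(F)/\!/K')_\mathfrak{s}
\end{align*}
as a finite direct sum over the Bernstein blocks $\mathfrak{s}=[M_\mathfrak{s},\sigma_\mathfrak{s}]_G$ meeting $\mathrm{supp}(T)$, with each summand a finitely generated module over the Bernstein center $\mathfrak{Z}_\mathfrak{s}$ of regular functions on the variety $\Omega_\mathfrak{s}$ of cuspidal supports. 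The plan is to identify $\BH$ block-by-block with the canonical evaluation map from $\mathcal{H}(G(F)/\!/K')_\mathfrak{s}$ to polynomial sections of the endomorphism bundle of $K'$-fixed vectors over $\Omega_\mathfrak{s}$, and to verify that this identification is an isomorphism via a rank comparison over $\mathfrak{Z}_\mathfrak{s}$ combined with a generic fibre calculation. The key ingredients, as carried out in \cite{Heiermann:HeckePlan}, are the second adjointness theorem and the surjectivity of $\mathcal{H}(G(F)/\!/K')_\mathfrak{s}$ onto $\mathrm{End}_\CC(I_P^G(\sigma)^{K'})$ at a generic point of $\Omega_\mathfrak{s}$; the hard part is to make this generic statement global by controlling the behavior at the finitely many reducibility hyperplanes where ordinary evaluation can degenerate.
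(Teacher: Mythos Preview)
The paper does not prove this theorem: it is stated as a citation of \cite[Th\'eor\`eme (0.1)]{Heiermann:HeckePlan} and unpublished work of Bernstein, with a \qed in lieu of proof. So there is no proof in the paper to compare against; the authors import this result as a black box.

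Your sketch is a reasonable outline of how one would approach the statement. Well-definedness via Lemma \ref{polysec:cptsupp} and Bernstein's finiteness is correct. Your injectivity argument is fine: the key point that vanishing on $\mathrm{Temp}_{\mathrm{Ind},0}(G)$ propagates to all supercuspidal inductions by Zariski density, and then to all tempered representations via subquotients, is sound. You correctly identify surjectivity as the substantive part and defer to \cite{Heiermann:HeckePlan}; this is honest, but it means your proposal is really a reduction-plus-citation rather than a self-contained proof, which is also exactly what the paper does (minus the reduction). If you intend this as an actual proof rather than an exposition, the surjectivity paragraph would need to be fleshed out considerably---Heiermann's argument does not proceed quite the way you describe (it uses an explicit inversion formula via wave packets and intertwining operators rather than a rank comparison over $\mathfrak{Z}_\mathfrak{s}$), so your roadmap for that part is somewhat off the mark.
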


\quash{
Let $\Delta$ be the set of simple roots of $A_0$ in $G$ with respect to $P_0.$  
For all semi-standard Levi subgroups $M$ let $\Sigma(P)$ be the subset of roots of $A_M$ in $N_P.$   For $1>\epsilon>0$ let
\begin{align} \begin{split}
A_M(\epsilon):&=\{a \in A_M(F) \cap A_{A_0}(1): |\alpha(a)| \leq \epsilon \textrm{ for all }\alpha \in \Sigma(P)\}\\
&=\{a \in A_M(F) \cap A_{A_0}(1): 
\langle \alpha, H_M(a) \rangle \leq  \log \epsilon \textrm{ for all }\alpha \in \Sigma(P)\} \end{split}
\end{align}}

\quash{

Let us describe the inverse.
 Let $(M,\sigma)\in\widetilde{\Temp}_{\Ind,0}(G)$ and let $\mathcal{O}\subseteq \Pi_0(M)_\mathbb{C}$ the $\Lambda_M$-orbit of $\sigma.$   We have a canonical (infinite-dimensional) vector bundle $(I_{P^{\mathrm{op}}}^G \otimes I_P^G)^{\mathrm{sm}}$ over $\mathcal{O}$ whose fiber over $\sigma_{\lambda}$ is $(I_{P^{\mathrm{op}}}^G(\sigma_{\lambda}) \otimes I_P^G((\sigma_\lambda)^\vee))^{\mathrm{sm}}.$ 
 Here we identify all the spaces $(I_{P^{\mathrm{op}}}^G(\sigma_{\lambda}) \otimes I_P^G((\sigma_\lambda)^\vee))^{\mathrm{sm}}$ with $I_P^G(\sigma) \otimes I_{P}^G(\sigma^\vee)$ by restriction of functions to $K \times K$ in the usual manner. A section $\xi_{P,\OO}$ of this bundle is \textbf{polynomial} (resp.~\textbf{rational}) if it has image in a finite-dimensional subspace $W \leq (I_{P^{\mathrm{op}}}^G(\sigma) \otimes I_P^G(\sigma^\vee))^{\mathrm{sm}}$
 and the map
 $  \lambda \mapsto \xi(\sigma_{\lambda})$
is the restriction of an algebraic (resp.~rational) map $\Lambda_M \to W.$
 
Let
\begin{align}
E:I_P^G(\sigma)^{\mathrm{sm}} \otimes I_P^G(\sigma^\vee)^{\mathrm{sm}} \lto C^\infty(G(F))
\end{align}
be the unique $\CC$-linear map given on pure tensors by $v \otimes v^\vee \mapsto (g \mapsto \langle I_P^G(\sigma)(g)v,v^\vee \rangle).$ 
\begin{defn} \label{defn:good1}
For $(M,\sigma) \in \mathrm{Temp}_{\mathrm{Ind},0}(G)$ and a parabolic subgroup $P$ with Levi subgroup $M$ we say that  $x \in \mathfrak{a}_M^*$ is \textbf{$(P,\sigma)$-good} if it lies in the intersection of the positive Weyl chamber with respect to $P$ and an open cone in $P$ such that the integral defining $J_{P^{\mathrm{op}}|P}(\sigma_{\lambda})^{-1}:I_{P^{\mathrm{op}}}^G(\sigma_{\lambda})^{\mathrm{sm}} \to I_{P}^G(\sigma_{\lambda})^{\mathrm{sm}}$ is absolutely convergent for $\mathrm{Re}(\lambda)=x.$ 
\end{defn}
For a rational $\xi_{P,\sigma}$ as above, define
\begin{align} \label{fxi}
    f_{\xi_{P,\OO}}(g):=f_{\xi_{P,\OO},x}(g) := \int_{i\mathfrak{a}_{M}^*/i\mathfrak{a}_{M,F}^\vee+x} E((J_{P^{\mathrm{op}}\mid P}(\sigma_\lambda)   
    ^{-1}\otimes \mathrm{id})\xi_{P,\OO}(\sigma_{\lambda}))(g^{-1})  d\lambda
\end{align}
where $x \in \mathfrak{a}_{M}^*$ is $(P,\sigma)$-good and $\xi_{P,\OO}$ is holomorphic on $i\mathfrak{a}_M^*+x.$  If $\xi_{P,\OO}$ is polynomial, this last condition is always satisfied, and the integral is independent of the choice of $x.$  

We recall that for $w \in W(G,T_0)(F)$ we have operators
\begin{align*}
w:I_P^G(\sigma) &\lto I_{wPw^{-1}}^G(w\sigma)\\
\varphi &\longmapsto (g \mapsto \varphi(w^{-1}g))
\end{align*}
For $M \in \mathcal{M}$ let $\OO \subset \Pi_0(M)_\CC$ be an $\mathfrak{a}_{M\CC}^*$-orbit and let $[\OO]$ be its $W(G,A_0)$-orbit.  Let 
\begin{align} \label{orbit:notation} \begin{split}
    \mathrm{Orb}(M):&=\{[\OO]:\OO \subset \Pi_0(M)_\CC/\mathfrak{a}_{M\CC}^*\},\\
    \mathcal{P}([\OO]):&=\{(P',\OO'):\OO'=w\OO \textrm{ and } P' \in \mathcal{P}(wMw^{-1}) \textrm{ for some }w \in W(G,A_0)\}. \end{split}
\end{align}
Define the constant $c([\OO])$ as in \cite[\S 3.2]{Heiermann:HeckePlan}.

The following theorem is a combination of \cite[Proposition 0.2]{Heiermann:HeckePlan} and the discussion that follows it in loc.~cit.
\begin{thm} \label{thm:inv}
For a section $T\in\mathrm{Poly}(\Temp_{\Ind,0}(G))$ there exists polynomial sections $\xi_{\OO,P}:\OO\to (I_{P^{\mathrm{op}}}^G\otimes I_{P}^G)^{\mathrm{sm}}$ such that
\begin{align*}
    T(M,\sigma) &= \sum_{\substack{w \in W(G,T_0)\\w\OO = \OO}} (J_{P\mid (wP)^{\mathrm{op}}}(\sigma) \circ w)\otimes (J_{P\mid wP}(\sigma^\vee)\circ w) \xi_{P,\OO}(w^{-1}\sigma).
\end{align*}
For any such choice of section one has 
\begin{align*}
\mathrm{BH}^{-1}(T)&=\sum_{M \in \mathcal{M}} \sum_{[\OO]\in \mathrm{Orb}(M)}c([\mathcal{O}])\sum_{(P',\mathcal{O}')\in \mathcal{P}([\OO])}f_{\xi_{P',\mathcal{O}'}}.
\end{align*}
 \qed
\end{thm}
}

\subsection{The Paley-Wiener theorem for $C_{ac}^\infty(G(F))$}

In this subsection we extend Theorem \ref{thm:BH} to $C_{ac}^{\infty}(G(F))$.  We warn the reader at the outset that $C_{ac}^\infty(G(F))$ is not an algebra under convolution, but it does admit the structure of a left and right module over  $C_c^\infty(G(F)),$ either by convolution or multiplication.

Throughout this section 
\begin{align*}
\alpha\in \Hom_\mathbb{Z}(X^*(G),\mathbb{Z}).
\end{align*}
Sums and products over $\alpha$ are taken to be over $\alpha \in \Hom_\mathbb{Z}(X^*(G),\mathbb{Z}).$
The map $H_G:G(F)\to\mathfrak{a}_G$ factors through $\Hom_\mathbb{Z}(X^*(G),(\log q)\mathbb{Z})$.  Here as above $q$ is the order of the residue field of $F.$ 
Set
\begin{align} \label{onealpha}
    \one_{\alpha} = \one_{H_G^{-1}(-(\log q)\alpha)}.
\end{align}
 One has that
\begin{align} \label{Cac=prodCcalpha:1}
    C_{ac}^\infty(G(F)) = \left\{f\in C_u^\infty(G(F))
    : f\one_\alpha\in C_c^\infty(G(F))\textrm{ for all }\alpha\in \Hom_\mathbb{Z}(X^*(G),\mathbb{Z})\right\} .
\end{align}

\quash{
\begin{proof} The containment $\subseteq$ is clear from the definition. For the other way around, let $\Omega\subseteq\mathfrak{a}_G$ be a compact set. Since $\Hom_\mathbb{Z}(X^*(G),(\log q)\mathbb{Z})$ is discrete in $\mathfrak{a}_G$, the intersection $S := \Omega\cap \Hom_\mathbb{Z}(X^*(G),(\log q)\mathbb{Z})$ is a finite set. One checks $\one_\Omega\circ H_G = \sum\limits_{\alpha\in S}\one_{(\log q)^{-1}\alpha}$. This implies the containment $\supseteq$. 
\end{proof}}

Let
\begin{align*}
    C_c^\infty(G(F))_\alpha := \{f\in C_c^\infty(G(F)): f\one_\alpha = f\} = C_c^\infty(G(F))\one_\alpha.
\end{align*}
The following is an immediate corollary of \eqref{Cac=prodCcalpha:1}:

\begin{cor}\label{Cac=prodCcalpha:3} One has an isomorphism of $\CC$-vector spaces
\begin{align*}
    C_{ac}^\infty(G(F)) &\stackrel{\sim}{\lto} \left(\prod_{\alpha} C_c^\infty(G(F))_\alpha\right)^{\mathrm{sm}}\\
    f &\longmapsto (f\one_{\alpha})
\end{align*}
where the superscript ${}^{\mathrm{sm}}$ indicates tuples that are $K'$ bi-invariant for some compact open subgroup $K' \leq G(F).$ \qed
\end{cor}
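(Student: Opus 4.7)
The plan is to write down the natural inverse map explicitly and verify each direction using \eqref{Cac=prodCcalpha:1}, treating the corollary as essentially a bookkeeping exercise.

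First, I would verify that the forward map $f\mapsto (f\one_\alpha)_\alpha$ is well-defined. Since $H_G:G(F)\to \mathfrak{a}_G$ is a continuous homomorphism and any compact subgroup of $G(F)$ maps to a bounded subgroup of the vector space $\mathfrak{a}_G,$ hence to $\{0\},$ every compact open subgroup $K'\leq G(F)$ is contained in $G(F)^1.$ Thus each $\one_\alpha$ is bi-invariant under every compact open subgroup of $G(F).$ If $f\in C_{ac}^\infty(G(F))$ is bi-invariant under $K',$ then so is $f\one_\alpha,$ and by \eqref{Cac=prodCcalpha:1} we have $f\one_\alpha\in C_c^\infty(G(F)).$ Since $\one_\alpha^2=\one_\alpha,$ we see $f\one_\alpha\in C_c^\infty(G(F))_\alpha.$ The single subgroup $K'$ witnesses the required smoothness of the tuple.

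Next I would construct the inverse. Because $H_G(G(F))\subseteq \Hom_\mathbb{Z}(X^*(G),(\log q)\mathbb{Z}),$ the functions $\{\one_\alpha\}$ form a partition of unity on $G(F):$ the supports are pairwise disjoint and their union is all of $G(F).$ Given a tuple $(f_\alpha)_\alpha$ in $\bigl(\prod_\alpha C_c^\infty(G(F))_\alpha\bigr)^{\mathrm{sm}}$ bi-invariant under a common $K',$ define
\begin{align*}
    f(g):=\sum_{\alpha} f_\alpha(g).
\end{align*}
For each fixed $g\in G(F),$ there is a unique $\alpha$ with $g\in H_G^{-1}(-(\log q)\alpha),$ so the sum reduces to a single term and is well-defined. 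The resulting function $f$ is bi-invariant under $K',$ hence uniformly smooth. Moreover $f\one_\alpha=f_\alpha\in C_c^\infty(G(F))$ for all $\alpha,$ so by \eqref{Cac=prodCcalpha:1} we conclude $f\in C_{ac}^\infty(G(F)).$

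Finally I would check that the two maps are mutually inverse, which is immediate from the disjointness of the supports of $\{\one_\alpha\}$ and linearity. Both the forward map and the inverse are clearly $\CC$-linear, so we obtain the claimed isomorphism of $\CC$-vector spaces. There is no real obstacle here; the only subtle point is ensuring that the smoothness parameter $K'$ can be chosen uniformly in $\alpha,$ which is exactly what the superscript ${}^{\mathrm{sm}}$ encodes on the product side and what uniform smoothness encodes for elements of $C_{ac}^\infty(G(F)).$
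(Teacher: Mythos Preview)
Your proposal is correct and is exactly the natural unpacking of what the paper intends: the corollary is stated with a \qed and introduced as ``an immediate corollary of \eqref{Cac=prodCcalpha:1},'' so there is no separate proof in the paper beyond that remark. Your argument spells out the obvious inverse and the one mildly nontrivial point (that compact open subgroups lie in $G(F)^1$, so each $\one_\alpha$ is bi-invariant under any $K'$), which is precisely the content the paper leaves implicit.
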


Let 
\begin{align}\label{alpha:iso} \begin{split}
\mathrm{Poly}&(\mathrm{Temp}_{\mathrm{Ind},0}(G))_\alpha\\&:=\left\{T \in \mathrm{Poly}(\mathrm{Temp}_{\mathrm{Ind},0}(G)): T(M,\sigma_\lambda)=q^{-\langle \lambda,\alpha \rangle }T(M,\sigma) \textrm{ for }\lambda \in i\mathfrak{a}_{G}^*\right\}  \end{split}
\end{align}
be the $\alpha$-isotypic component under the action of $i\mathfrak{a}_{G}^*$.
Then
\begin{align} \label{alpha:decomp}
\mathrm{Poly}(\mathrm{Temp}_{\mathrm{Ind},0}(G))=\bigoplus_{\alpha} \mathrm{Poly}(\mathrm{Temp}_{\mathrm{Ind},0}(G))_\alpha.
\end{align}

We define
\begin{align} \label{FL}
\mathrm{FL}(\mathrm{Temp}_{\mathrm{Ind},0}(G)):=\left(\prod_{\alpha}\mathrm{Poly}(\mathrm{Temp}_{\mathrm{Ind},0}(G))_\alpha\right)^{\mathrm{sm}}
\end{align}
where the superscript $\mathrm{sm}$ denotes tuples $(T_\alpha)_\alpha$ that are fixed under $K' \times K'$ for a compact open subgroup $K' \leq G(F)$ (independent of $\alpha$).  The $\mathrm{FL}$ stands for ``formal Laurent.''  
Let
\begin{align} \label{ealpha}
e_{\alpha}:\mathrm{FL}(\mathrm{Temp}_{\mathrm{Ind},0}(G)) \lto \mathrm{Poly}(\mathrm{Temp}_{\mathrm{Ind},0}(G))_{\alpha}
\end{align}
be the projection.

\begin{lem} \label{lem:ac} One has that
$
    \BH(C_c^\infty(G(F))_\alpha) = \mathrm{Poly}(\mathrm{Temp}_{\mathrm{Ind},0}(G))_\alpha. 
$
The isomorphism $\BH$ of Theorem \ref{thm:BH} extends to an isomorphism 
\begin{align*}
    \BH:C_{ac}^\infty(G(F))&\stackrel{\sim}{\lto} \mathrm{FL}(\mathrm{Temp}_{\mathrm{Ind},0}(G))\\
    f &\longmapsto (\BH(f\one_{\alpha}))_\alpha.
\end{align*}
\end{lem}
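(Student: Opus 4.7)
The first assertion $\BH(C_c^\infty(G(F))_\alpha) = \mathrm{Poly}(\mathrm{Temp}_{\mathrm{Ind},0}(G))_\alpha$ is the substantive content; the extension statement then follows formally. For the forward containment, if $f \in C_c^\infty(G(F))_\alpha$ then $f = f\one_\alpha$ is supported where $H_G(g) = -(\log q)\alpha$, so for any $\lambda \in i\mathfrak{a}_G^*$ the character $g \mapsto e^{\langle \lambda, H_G(g)\rangle}$ takes the constant value $q^{-\langle \lambda,\alpha\rangle}$ on $\mathrm{supp}(f)$. Hence for $(M,\sigma) \in \widetilde{\Temp}_{\Ind,0}(G)$ and $\pi = I_P^G(\sigma)$,
\begin{align*}
\pi_\lambda(f) = \int_{G(F)} f(g)\, e^{\langle \lambda,H_G(g)\rangle}\pi(g)\, dg = q^{-\langle \lambda,\alpha\rangle}\pi(f),
\end{align*}
which is precisely the defining condition \eqref{alpha:iso} for $\BH(f) \in \mathrm{Poly}(\mathrm{Temp}_{\mathrm{Ind},0}(G))_\alpha$.

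For the reverse containment, given $T \in \mathrm{Poly}(\mathrm{Temp}_{\mathrm{Ind},0}(G))_\alpha$, let $f := \BH^{-1}(T) \in C_c^\infty(G(F))$ via Theorem \ref{thm:BH}. Since $f$ has compact support, only finitely many of the $f\one_\beta$ are nonzero, and $f = \sum_\beta f\one_\beta$ with each summand in $C_c^\infty(G(F))_\beta$. The forward containment gives $\BH(f\one_\beta) \in \mathrm{Poly}(\mathrm{Temp}_{\mathrm{Ind},0}(G))_\beta$, so the identity $T = \sum_\beta \BH(f\one_\beta)$ together with the direct sum decomposition \eqref{alpha:decomp} forces $\BH(f\one_\beta) = 0$ for $\beta \neq \alpha$. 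Injectivity of $\BH$ then yields $f\one_\beta = 0$ for $\beta \neq \alpha$, so $f = f\one_\alpha \in C_c^\infty(G(F))_\alpha$.

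The extension statement follows by combining Corollary \ref{Cac=prodCcalpha:3} with the first assertion componentwise. Given $f \in C_{ac}^\infty(G(F))$, Corollary \ref{Cac=prodCcalpha:3} expresses $f$ as a tuple $(f\one_\alpha)_\alpha$ with $f\one_\alpha \in C_c^\infty(G(F))_\alpha$, uniformly bi-invariant under some compact open $K' \leq G(F)$ independent of $\alpha$. Because $\BH$ is $G(F) \times G(F)$-equivariant (via \eqref{HP:equi}) and matches $K'$-bi-invariance on both sides, the tuple $(\BH(f\one_\alpha))_\alpha$ lies in $\mathrm{FL}(\mathrm{Temp}_{\mathrm{Ind},0}(G))$ with the same uniform $K'$-invariance. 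This defines the map; its inverse is furnished by running Corollary \ref{Cac=prodCcalpha:3} and Theorem \ref{thm:BH} in reverse componentwise.

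The main technical point is verifying the direct sum decomposition \eqref{alpha:decomp}. On each inertial orbit $\mathcal{O}$ in $\widetilde{\Temp}_{\Ind,0}(G)$, the complex torus $\Lambda_M$ acts on $\mathcal{O}$ and a polynomial section is, by \cref{defn:poly0}, the pullback of an algebraic section on a finite-dimensional bundle. The coordinate ring $\mathbb{C}[\Lambda_M]$ decomposes into weight spaces under the subtorus $\Lambda_G \hookrightarrow \Lambda_M$, with weights parameterized by $\Hom_\mathbb{Z}(X^*(G),\mathbb{Z})$ via the identification of $\Im\Lambda_G$-characters implicit in \eqref{onealpha}; compact support ensures only finitely many orbits contribute, so the decomposition is a finite direct sum within each $K'$-bi-invariant component.
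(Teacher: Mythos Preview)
Your proof is correct and follows essentially the same approach as the paper. The paper's version is terser---deducing the reverse containment in one line from Theorem~\ref{thm:BH} together with \eqref{alpha:decomp} (which it states without proof), and obtaining the extension immediately from Corollary~\ref{Cac=prodCcalpha:3}---but your added detail, including the explicit justification of \eqref{alpha:decomp} via the $\Lambda_G$-weight decomposition of $\mathbb{C}[\Lambda_M]$, is sound.
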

\begin{proof} For $((M,\sigma),\lambda,f)\in\widetilde{\Temp}_{\Ind}(G) \times  i\mathfrak{a}_G^* \times C_c^\infty(G(F))_\alpha$, one has
\begin{align} \label{HP:equiv} \begin{split}
    \HP(f)(M,\sigma_\lambda) &= \int_{G(F)} f(g)I_P^G(\sigma_\lambda)(g)dg\\
    &= \int_{G(F)} f(g)I_P^G(\sigma)(g) e^{\langle \lambda,H_G(g)\rangle}\one_\alpha(g)dg\\
    &=q^{-\langle\lambda,\alpha\rangle} \mathrm{HP}(f)(M,\sigma). \end{split}
\end{align}
This shows $
    \BH(C_c^\infty(G(F))_\alpha) \leq \mathrm{Poly}(\mathrm{Temp}_{\mathrm{Ind},0}(G))_\alpha.$
It follows from Theorem \ref{thm:BH} that the inclusion is an equality.
 The explicit extension is now clear from Corollary \ref{Cac=prodCcalpha:3}.
\end{proof}

\subsection{The extended Bernstein center}\label{subsec:extnBC}
Let
\begin{align}  \label{eBC} \begin{split}
\mathfrak{Z}(G):=\mathrm{End}_{G(F) \times G(F)}(C_c^\infty(G(F))), \quad
\mathfrak{Z}^1(G):=\mathrm{End}_{G(F)^1 \times G(F)^1}(C_c^\infty(G(F))). \end{split}
\end{align}
Thus $\mathfrak{Z}(G)$ is the Bernstein center.  We refer to $\mathfrak{Z}^1(G)$ as the \textbf{extended Bernstein center}.
\quash{

To motivate our approach we give the following example:
\begin{example}
    The transform 
    \begin{align*}\mathcal{F} \circ ()^\vee:C_c^\infty(\GL_n(F)) &\lto C_c^\infty(M_n(F)) \cap C^\infty(\GL_n(F))\\
    f &\longmapsto \left(y \mapsto \int_{M_n(F)}f^\vee(x)\psi(\mathrm{tr}(yx))dx\right)
    \end{align*}
    is $\GL_n(F) \times \GL_n(F)$-equivariant.  One has 
    $$
    \mathcal{F}(f^\vee)=f*\psi(\mathrm{tr}(\cdot))|\det|^n.
    $$
    This operator is not in $\mathfrak{Z}(G)$ because it does not preserve $C_c^\infty(G(F)).$  It can be written as an infinite linear combination of operators of the form
    \begin{align}
    f \mapsto f*(\one_{|\det g|=q^{-k}}\psi(\mathrm{tr}(\cdot))).
    \end{align}
    These operators are in $\mathfrak{Z}(G).$
    One can also write $\mathcal{F} \circ (\cdot)^\vee$ as an infinite linear combination of the operators
      $f \mapsto \one_{|\det g|=q^{-k}} (\mathcal{F}(f^\vee)).$  These operators are not in the Bernstein center.  They are only $\GL_n(F)^1 \times \GL_n(F)^1$-equivariant.  
\end{example}}

\begin{defn} \label{defn:poly} A function $T : \Temp_{\Ind,0}(G) \to \CC$ is  \textbf{polynomial} if for all $(M,\sigma)\in\widetilde{\mathrm{Temp}}_{\Ind,0}(G)$, the map 
\begin{align*}
    \Im\Lambda_M &\lto \CC\\
    \chi &\longmapsto T(M,\sigma\otimes\chi)
\end{align*}  is the restriction of an algebraic map $\Lambda_M\to \CC.$
\end{defn}
\noindent Here we give $\Lambda_M$ the structure of the $\CC$-points of a torus over $\CC$ as discussed above \eqref{alg:torus}. 
We denote by
\begin{align} \label{polyc}
    \mathrm{Poly}(\mathrm{Temp}_{\Ind,0}(G),\CC)
\end{align}
 the space of polynomial functions on $\mathrm{Temp}_{\Ind,0}(G).$  Define $\mathrm{Poly}(\mathrm{Temp}_{\Ind,0}(G),\CC)_{\alpha}$ and $\mathrm{FL}(\mathrm{Temp}_{\Ind,0}(G),\CC)$  via the natural analogues of \eqref{alpha:iso} and \eqref{FL}, respectively.  
Then the natural analogue of \eqref{alpha:decomp} holds.
  
For 
$
(f,T) \in C_c^\infty(G(F))  \times \mathrm{FL}(\mathrm{Temp}_{\mathrm{Ind},0}(G),\CC)
$
define 
\begin{align}
m_{\alpha}T(f):=\mathrm{BH}^{-1}(e_{\alpha}(T \mathrm{BH}(f))).
\end{align}

\begin{prop} \label{prop:eBC}
One has that $m_\alpha T \in \mathfrak{Z}^1(G).$
\end{prop}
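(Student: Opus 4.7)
The plan is to first verify that $m_\alpha T$ maps $C_c^\infty(G(F))$ to itself, and then establish $G(F)^1 \times G(F)^1$-equivariance. Given $f \in C_c^\infty(G(F))$, Theorem \ref{thm:BH} produces a compactly supported polynomial section $\BH(f)$, and under the analogue of \eqref{alpha:decomp} we may write $\BH(f) = \sum_\gamma \BH(f)_\gamma$ as a finite sum. Writing $T = (T_\beta)_\beta$, for $\lambda \in i\mathfrak{a}_G^*$ one has $T_\beta(M,\sigma_\lambda)\BH(f)_\gamma(M,\sigma_\lambda) = q^{-\langle\lambda,\beta+\gamma\rangle}T_\beta(M,\sigma)\BH(f)_\gamma(M,\sigma)$, and the product of polynomial sections is polynomial. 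Hence $T_\beta \cdot \BH(f)_\gamma \in \mathrm{Poly}(\Temp_{\Ind,0}(G))_{\beta+\gamma}$, and the $\alpha$-isotypic component
\begin{align*}
    e_\alpha(T \cdot \BH(f)) = \sum_\gamma T_{\alpha-\gamma} \cdot \BH(f)_\gamma \in \mathrm{Poly}(\Temp_{\Ind,0}(G))_\alpha
\end{align*}
is a well-defined finite sum. Lemma \ref{lem:ac} then yields $m_\alpha T(f) \in C_c^\infty(G(F))_\alpha$.

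For equivariance, let $g_1,g_2 \in G(F)^1$. By \eqref{HP:equi}, restricted to $\Temp_{\Ind,0}(G)$, we have $\BH(\mathcal{R}(g_1,g_2)f) = \square(g_1,g_2)\BH(f)$. Since $T$ is scalar-valued, multiplication by $T$ commutes with the fiberwise $\square$-action, so the key point to check is that $e_\alpha$ commutes with $\square(g_1,g_2)$ on $\mathrm{Poly}(\Temp_{\Ind,0}(G))$ whenever $g_1,g_2 \in G(F)^1$. For this we use the canonical identification $I_P^G(\sigma_\lambda) \cong I_P^G(\sigma) \otimes \chi_\lambda$ for $\lambda \in i\mathfrak{a}_G^*$, where $\chi_\lambda := e^{\langle\lambda, H_G(\cdot)\rangle}$, under which $I_P^G(\sigma_\lambda)(g) = \chi_\lambda(g) I_P^G(\sigma)(g)$. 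For $g \in G(F)^1$ we have $H_G(g) = 0$, so $\chi_\lambda(g) = 1$, and for $S \in \mathrm{Poly}(\Temp_{\Ind,0}(G))_\alpha$ a direct calculation gives
\begin{align*}
    (\square(g_1,g_2)S)(M,\sigma_\lambda) &= I_P^G(\sigma)(g_1) \circ q^{-\langle\lambda,\alpha\rangle}S(M,\sigma) \circ I_P^G(\sigma)(g_2)^{-1}\\
    &= q^{-\langle\lambda,\alpha\rangle}(\square(g_1,g_2)S)(M,\sigma).
\end{align*}
Polynomiality is preserved by $\square$ in general via the expansion $I_P^G(\sigma_\chi)(g) = \chi(g) I_P^G(\sigma)(g)$ and Lemma \ref{chi(g):polynomial}. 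Hence $\square(g_1,g_2)S$ lies in $\mathrm{Poly}(\Temp_{\Ind,0}(G))_\alpha$, and the desired commutation $e_\alpha \circ \square(g_1,g_2) = \square(g_1,g_2) \circ e_\alpha$ follows.

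Chaining these commutations and using the $G(F)\times G(F)$-equivariance of $\BH^{-1}$ (the inverse of a restriction of the equivariant $\HP$), one obtains
\begin{align*}
    m_\alpha T(\mathcal{R}(g_1,g_2)f) &= \BH^{-1}\bigl(e_\alpha\bigl(T \cdot \square(g_1,g_2)\BH(f)\bigr)\bigr) \\
    &= \BH^{-1}\bigl(\square(g_1,g_2)e_\alpha(T \cdot \BH(f))\bigr) = \mathcal{R}(g_1,g_2)(m_\alpha T(f)).
\end{align*}
The main obstacle is precisely the $\alpha$-preservation of $\square(g_1,g_2)$, which for general $g_1,g_2 \in G(F)$ fails by the factor $\chi_\lambda(g_1 g_2^{-1})$; restricting to $G(F)^1$ is exactly what trivializes this factor, and this is the structural reason $m_\alpha T$ naturally lands in the extended Bernstein center $\mathfrak{Z}^1(G)$ rather than in $\mathfrak{Z}(G)$.
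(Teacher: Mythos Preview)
Your proof is correct and follows the same approach as the paper, which simply asserts that $e_\alpha(T\,\BH(f)) \in \mathrm{Poly}(\Temp_{\Ind,0}(G))_\alpha$ and then says ``one checks directly'' for the $G(F)^1\times G(F)^1$-equivariance; you have supplied those details. One small imprecision: the identity $I_P^G(\sigma_\chi)(g) = \chi(g)\,I_P^G(\sigma)(g)$ holds only for $\chi \in \Lambda_G$, not for general $\chi \in \Lambda_M$ (which is where polynomiality is tested), but this side remark is unnecessary anyway since polynomiality of $\square(g_1,g_2)S$ already follows from the $G(F)\times G(F)$-equivariance of $\BH$ (Theorem~\ref{thm:BH} together with \eqref{HP:equi}) that you invoke at the end.
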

\begin{proof}
If $f\in C^{\infty}_c(G(F))$ then $
 e_{\alpha}(T\mathrm{BH}(f))\in \mathrm{Poly}(\mathrm{Temp}_{\mathrm{Ind},0}(G))_{\alpha}.$
    Therefore, by Lemma \ref{lem:ac}
    \[m_{\alpha}T(f)\in C_c^\infty(G(F))_\alpha\leq  C_c^\infty(G(F)),\] so $m_{\alpha}T\in \mathrm{End}_{\mathbb{C}}(C_c^{\infty}(G(F)))$.  One checks directly that $m_{\alpha}T$ is $G(F)^1 \times G(F)^1$-invariant.
\end{proof}
\quash{
Let $T=(T_\alpha)_\alpha\in \mathrm{FL}(\mathrm{Temp}_{\mathrm{Ind},0}(G),\CC)$. Suppose for each $(M,\sigma)\in\widetilde{\Temp}_{\Ind,0}(G)$ the sum
\begin{align*}
    \sum_{\alpha} T_\alpha(M,\sigma_\lambda)
\end{align*}
is absolutely convergent for $\lambda\in i\mathfrak{a}^*_M$ and extends to a rational function on $\mathfrak{a}^*_M$. Then for each $f\in C_c^\infty(G(F))$ and $(M,\sigma)\in\widetilde{\Temp}_{\Ind,0}(G)$ the equality
\begin{align*}
    I_P^G(\sigma_\lambda)(m_\alpha T(f)) = e_\alpha( T I_P^G(\sigma_\lambda)(f))
\end{align*}
originally holds for $\lambda\in i\mathfrak{a}^*_M$, extends to an equality of meromorphic functions on $\mathfrak{a}^*_M$. Since both $I_P^G(\sigma_\lambda)(f)$ and $I_P^G(\sigma_\lambda)(m_\alpha T(f))$ are defined by absolutely convergent integrals for all $\lambda\in \mathfrak{a}^*_M$, it follows that if $\pi$ is a subquotient of $I_P^G(\sigma_\lambda)$, we have
\begin{align}\label{eBC:subquotient}
    \pi(m_\alpha T(f)) = e_\alpha(T(M,\sigma_\lambda)\pi(f))
\end{align}}

\subsection{The $X^*(G)$-eigendecomposition of $\mathcal{C}(\Temp_\Ind(G))$}

\begin{lem} \label{prop:rho:isom:lem} Let $K'\leq G(F)$ be a compact open subgroup. If $f\in \mathcal{C}(G(F)/\!/K')$ then $f\one_\alpha\in \mathcal{C}(G(F)/\!/K')$. Moreover, 
$
    f = \sum_{\alpha} f\one_\alpha
$
where the sum is convergent in the Fr\'echet topology of $\mathcal{C}(G(F)/\!/K')$.
\end{lem}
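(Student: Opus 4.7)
My plan is as follows. The first assertion, that $f\one_\alpha\in\mathcal{C}(G(F)/\!/K')$, is essentially bookkeeping: since $H_G$ vanishes on $G(F)^1$ and every compact subgroup of $G(F)$ lies in $G(F)^1$, the indicator $\one_\alpha$ is bi-$K'$-invariant. Moreover $H_G$ is continuous with discrete image in $\mathfrak{a}_G$, so each level set $H_G^{-1}(-(\log q)\alpha)$ is clopen; hence $f\one_\alpha$ is smooth on $G(F)$. Finally $|f\one_\alpha|\leq|f|$ pointwise, so $p_d(f\one_\alpha)\leq p_d(f)<\infty$ for every $d>0$.

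For the second assertion, note that the sets $H_G^{-1}(-(\log q)\alpha)$ partition $G(F)$, so at every $g\in G(F)$ exactly one of the functions $f\one_\alpha$ is nonzero and equal to $f(g)$; thus $f=\sum_\alpha f\one_\alpha$ pointwise. The task is therefore to show that for each $d>0$, the net of partial sums converges to $f$ in the seminorm $p_d$. Given a finite $S\subset\Hom_\mathbb{Z}(X^*(G),\mathbb{Z})$ the remainder satisfies
\begin{align*}
    p_d\!\left(f-\sum_{\alpha\in S}f\one_\alpha\right) = \sup_{\substack{g\in G(F)\\ H_G(g)\notin -(\log q)S}}|f(g)|\,\Xi^G(g)^{-1}\sigma_G(g)^d.
\end{align*}
Using $p_{d+1}(f)<\infty$ I would bound this by $p_{d+1}(f)\cdot\sup\{\sigma_G(g)^{-1}:H_G(g)\notin -(\log q)S\}$, so it remains to show that $\sigma_G(g)^{-1}$ becomes uniformly small on the locus $\{H_G(g)\notin -(\log q)S\}$ as $S$ exhausts $\Hom_\mathbb{Z}(X^*(G),\mathbb{Z})$.

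The key estimate I need is a linear lower bound of the form $\sigma_G(g)\gg |H_G(g)|$ (for any fixed norm on $\mathfrak{a}_G$). This I would extract from the definitions \eqref{HG} and \eqref{norms}: for any $\chi\in X^*(G)$, the function $\chi$ and its inverse are polynomial in the matrix entries of $\iota(g)$ and $\iota^{-t}(g)$, so there is a constant $C_\chi$ with $|\log|\chi(g)||\leq C_\chi \sigma_G(g)$. Applying this to a finite $\mathbb{Z}$-basis of $X^*(G)$ modulo torsion yields $|H_G(g)|\ll\sigma_G(g)$ uniformly in $g$. Therefore choosing $S$ to contain $\{\alpha:|\alpha|\leq R\}$ forces $\sigma_G(g)\gg R$ on the complementary locus, making the remainder seminorm $O(R^{-1})$ and giving unconditional convergence in each $p_d$.

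The only non-routine point is the estimate $|H_G(g)|\ll\sigma_G(g)$, which is where all the content lies; once that is in hand the rest is formal manipulation. I anticipate no serious obstacle, but one should be careful that the dependence on $\chi$ in the bound $|\log|\chi(g)||\leq C_\chi\sigma_G(g)$ is genuinely uniform in $g$, which indeed follows because $\chi$ extends to a regular function on $\GL_n$ via $\iota$ and hence is bounded polynomially in the operator norm of $\iota(g)$ and $\iota(g^{-1})$.
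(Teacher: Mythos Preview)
Your proof is correct and follows the same overall strategy as the paper: both arguments bound the remainder seminorm $p_d$ by $p_{d+1}(f)\cdot\sup\sigma_G(g)^{-1}$ over the complementary locus, then show this supremum tends to zero as $S$ exhausts $\Hom_{\mathbb Z}(X^*(G),\mathbb Z)$.

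The only difference is how the key estimate is obtained. The paper reduces via the Cartan decomposition to $\lambda\in X_*(A_0)$ and invokes $\sigma_G(\lambda(\varpi))\asymp 1+\norm{\lambda}$, together with the observation that only finitely many $\lambda$ can map to a given $\alpha$. You instead prove the inequality $|H_G(g)|\ll\sigma_G(g)$ directly, by bounding each $|\log|\chi(g)||$ in terms of the matrix norm via the polynomial expression of $\chi$ in the entries of $\iota(g)$ and $\iota(g)^{-1}$. Your route is slightly more elementary in that it avoids the Cartan decomposition and the auxiliary estimate on cocharacters; the paper's route has the mild advantage that the estimate $\sigma_G(\lambda(\varpi))\asymp 1+\norm{\lambda}$ is a standard fact one can cite without further comment. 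Both arrive at the same conclusion with the same dependence on $f$.
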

\begin{proof} If $\mathrm{Hom}_{\ZZ}(X^*(G),\ZZ)=0$ then the lemma is trivial, so assume $\mathrm{Hom}_{\ZZ}(X^*(G),\ZZ) \neq 0.$  

The first assertion follows directly from the definition of $\mathcal{C}(G(F)/\!/K').$
To prove the second assertion, let $\varepsilon>0$ and $d>0$. Let $S$ be a finite subset of $\Hom_{\mathbb{Z}}(X^*(G),\mathbb{Z})$. If $\alpha \neq \alpha'$ then $f\one_\alpha$ and $f\one_{\alpha'}$ have disjoint support.  Thus
\begin{align*}
    p_d\left(\sum_{\alpha\not\in S}f\one_\alpha\right) = \sup_{\alpha\not\in S}\sup_{x\in G(F)} |f\one_\alpha(x)|\Xi_G(x)^{-1}\sigma_G(x)^d
\end{align*}
where $p_d$ is the seminorm \eqref{pd}.
Trivially $p_{d+1}(f\one_\alpha)\leq p_{d+1}(f)$, so 
\begin{align*}
    |f\one_\alpha(x)| \leq p_{d+1}(f)\one_\alpha(x)\Xi(x)\sigma_G(x)^{-d-1}
\end{align*}
for all $x\in G(F)$. Hence
\begin{align*}
    p_d\left(\sum_{\alpha\not\in S}f\one_\alpha\right) \leq p_{d+1}(f)\sup_{\alpha\not\in S}\sup_{x\in G(F)} \one_\alpha(x)\sigma_G(x)^{-1}.
\end{align*}
By the Cartan decomposition, this is dominated by
\begin{align*}
    p_{d+1}(f)\sup_{\alpha\not\in S}\sup_{\lambda\in X_*(A_0)} \one_\alpha(\lambda(\varpi))\sigma_G(\lambda(\varpi))^{-1}.
\end{align*}
Choose a norm $\norm{\,\cdot\,}$ on $X_*(A_0)_{\RR}.$  
 Then
$\sigma_G(\lambda(\varpi)) \asymp_G (1+\norm{\lambda}).$
We can choose $S=S_\varepsilon$ large enough that if $\one_\alpha(\lambda(\varpi))=1$ for some $\alpha\not\in S$ then $\norm{\lambda} \geq \varepsilon^{-1}.$ Then
$
    p_d\left(\sum_{\alpha\not\in S_\varepsilon}f\one_\alpha\right) \ll_{G,f} \varepsilon.$
\end{proof}

Let $
    \mathcal{C}(G(F))_\alpha := \mathcal{C}(G(F))\one_\alpha.$  By Lemma 
   \ref{prop:rho:isom:lem} one has
   \begin{align}
    \label{closure}
\overline{\bigoplus_{\alpha}\mathcal{C}(G(F))_{\alpha}}=\mathcal{C}(G(F)).
\end{align}

Let
\begin{align*}
    \mathcal{C}(\Temp_\Ind(G))_\alpha := \{T\in\mathcal{C}(\Temp_\Ind(G)): T(M,\sigma_\lambda) = q^{-\langle\lambda,\alpha\rangle}T(M,\sigma)\text{ for }\lambda\in i\mathfrak{a}_G^*\}.
\end{align*}
Denote by 
\begin{align}\label{ealpha2}
    e_\alpha:\mathcal{C}(\Temp_\Ind(G)) \lto \mathcal{C}(\Temp_\Ind(G))_\alpha
\end{align}
the (continuous) projection.
For $f\in \mathcal{C}(G(F))$ one has that
\begin{align}\label{ealpha:HP}
    \HP(\one_\alpha f) = e_\alpha\HP(f).
\end{align}
This is compatible with the notation \eqref{ealpha} in the sense that for $f\in C_c^\infty(G(F))$ one has
\begin{align}\label{ealpha:BHP}
    e_\alpha\BH(f) = (e_\alpha\HP(f))\vert_{\Temp_{\Ind,0}(G)}.
\end{align}
\quash{
\begin{lem}\label{HP:alpha:decomp} The map $\HP$ restricts to an isomorphism $
    \HP:\mathcal{C}(G(F))_\alpha\,\tilde{\to} \,\mathcal{C}(\Temp_\Ind(G))_\alpha.$
Moreover 
\begin{align} \label{alpha:decomp:2}
\overline{\bigoplus_{\alpha}\mathcal{C}(\Temp_{\Ind}(G))_\alpha}=\mathcal{C}(\Temp_\Ind(G) ).
\end{align}
\end{lem}
\begin{proof} 
By \eqref{ealpha:HP} we have $\HP(\mathcal{C}(G(F))_\alpha)\leq \mathcal{C}(\Temp_\Ind(G))_\alpha$. For $T \in \mathcal{C}(\Temp_\Ind(G))_{\alpha'}$ write $f=\mathrm{HP}^{-1}(T).$ Lemma \ref{prop:rho:isom:lem} implies that
 $$
 T=e_{\alpha}(T)=e_{\alpha}\mathrm{HP}(T)=e_{\alpha}\mathrm{HP}\left(\sum_{\alpha'}f\one_{\alpha'}\right)=\sum_{\alpha' }e_{\alpha}\mathrm{HP}(f\one_{\alpha'})=\mathrm{HP}(f\one_{\alpha})
 $$
where the sums are over $\alpha' \in \mathrm{Hom}_{\ZZ}(X^*(G),\ZZ).$ We deduce that the inclusion is an equality.  The last assertion now follows from \eqref{closure}.
\end{proof}}

\subsection{The $\rho$-Schwartz space} \label{ssec:rho:Sch}

To proceed we must impose some additional assumptions on our tempered representation $\rho:{}^LG \to \GL_{V_{\rho}}(\CC).$ We assume that one has
\begin{align} \label{rho:cent:decomp}
\rho|_{Z_{\widehat{G}}^\circ}=\chi_1 \oplus \cdots \oplus \chi_{t}
\end{align}
for some collection of characters $\chi_i \in X^*(Z_{\widehat{G}}^\circ),$ possibly occurring with multiplicity.  We assume moreover that 
\begin{align} \label{chi:assum}
    \chi_i \neq 1 \textrm{ for all }i \textrm{ and the }\chi_i \textrm{ generate a strongly convex cone }C_{Z_{\widehat{G}}^\circ}(\rho) \subset X^*(Z_{\widehat{G}}^\circ)_{\RR}.
\end{align}
Here a strongly convex cone is a convex cone containing no line.  The assumption \eqref{chi:assum} also appears in the construction of reductive monoids in \cite[\S 5]{Ngo:Hankel}.
\quash{
We have $X_*(G/G^{\mathrm{der}})= X_*(G_{\overline{F}}/G_{\overline{F}}^{\mathrm{der}})^{\Gal_F}.$  Hence the cone $C_{Z_{\widehat{G}}^{\circ}}(\rho) \subset X^*(Z_{\widehat{G}})_{\RR}$ defines a strongly convex cone 
$$
C_{Z^\circ_{\widehat{G}}}(\rho)^{\Gal_F} \subset X_*(G/G^{\mathrm{der}})_{\RR}.
$$
The canonical isomorphism $X_*(A_G)_{\RR} \tilde{\to} \mathrm{Hom}(X^*(G),\ZZ)_{\RR}=:\mathfrak{a}_G$ and the map $A_G \to G/G^{\mathrm{der}}$ induce an isomorphism
$$
\mathfrak{a}_G \tilde{\lto} X_*(A_G)_{\RR} \tilde{\lto} X_*(G/G^{\mathrm{der}}).
$$
We denote by
\begin{align} \label{OmegaAG}
    C_{A_G}(\rho) \subset \mathfrak{a}_G
\end{align}
the inverse image of $C_{Z^{\circ}_{\widehat{G}}(\rho)}^{\Gal_F}$ under this isomorphism. }

\begin{defn}
    The \textbf{$\rho$-Schwartz space} $\mathcal{S}_{\rho}(G(F))$ is the maximal $\CC$-vector subspace of $$
    \mathcal{S}_{\rho}^{\mathrm{as}}(G(F)) \cap C_{ac}^\infty(G(F))
    $$
    stable under $\mathcal{F}_{\rho}.$
\end{defn} 

\begin{rem}
The Schwartz space $\mathcal{S}_{\rho}(G(F))$ is still defined if $\rho$ does not satisfy \eqref{chi:assum}, but the resulting space has unexpected properties.  For example, if $G=\SL_2$ and $\rho:{}^L\SL_2 \to \GL_V$ is any nontrivial representation then
$
b_{\rho} \not \in C_{ac}^\infty(G(F))=C_c^\infty(\SL_2(F)).$
\end{rem}
\quash{
Let
\begin{align}
    \mathcal{S}_{\rho}(\mathrm{Temp}_{\mathrm{Ind},0}(G))
\end{align}
be the set of all sections $T:\mathrm{Temp}_{\mathrm{Ind},0}(G)\to \mathcal{HS}^{\mathrm{sm}}$ such that 
\begin{enumerate}
    \item \label{secs:are:ac} there exists $(T_\alpha)_\alpha\in \mathrm{FL}(\mathrm{Temp}_{\mathrm{Ind},0}(G))$ such that
    $
        T = \sum_{\alpha} T_\alpha$
    where the sum converges in $\mathcal{C}(\Temp_{\Ind}(G))$, and
    \item \label{sec:restr} the section $T$ is the restriction of an element of $\mathcal{S}^{\mathrm{as}}_{\rho}(\mathrm{Temp}_{\mathrm{Ind}}(G)).$ 
\end{enumerate}

\begin{lem} \label{prop:rho:isom} One has an injection of $\CC$-vector spaces
\begin{align*}
    \mathcal{S}_{\rho}^{\mathrm{as}}(G(F)) \cap C_{ac}^\infty(G(F)) &\lto \mathcal{S}_{\rho}(\mathrm{Temp}_{\mathrm{Ind},0}(G)) \\
    f &\longmapsto \sum_{\alpha} \mathrm{BH}(f\one_\alpha).
\end{align*}
\end{lem}
\begin{proof}  
By  Lemma \ref{prop:rho:isom:lem} and the continuity of $\HP$, one has 
$$
\sum_{\alpha}\mathrm{BH}(f\one_{\alpha})=\sum_{\alpha}\mathrm{HP}(f\one_{\alpha})|_{\mathrm{Temp}_{\Ind,0}(G)}=\mathrm{HP}(f)|_{\mathrm{Temp}_{\Ind,0}(G)}
$$
where the sums converge in $\mathcal{C}(G(F)).$  Applying Lemma \ref{lem:ac} we see that the map in the Lemma is well-defined.  Using the projections $e_{\alpha}$ from \eqref{ealpha2}
we can recover the $T_{\alpha}$ from $T$ in \eqref{secs:are:ac}.  Hence injectivity follows from Lemma \ref{lem:ac}.
\end{proof}}

One has a canonical isomorphism $
X^*(G) = X^*(G/G^{\mathrm{der}})  \tilde{\to} X_*(Z_{\widehat{G}}^{\circ})^{\Gal_F}.
$
Let 
\begin{align}
C_{A_G}(\rho) \subset \mathrm{Hom}(X_*(Z_{\widehat{G}}^\circ)^{\Gal_F},\RR)
\end{align}
be the image of $C_{Z_{\widehat{G}}^{\circ}} \subset X^*(Z_{\widehat{G}}^{\circ})_{\RR}$ under the restriction map.  

\begin{lem} \label{lem:sc} The cone
$C_{A_G}(\rho)$ is strongly convex.  
\end{lem}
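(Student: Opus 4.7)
The plan is to deduce the strong convexity of $C_{A_G}(\rho)$ from the strong convexity of $C_{Z_{\widehat{G}}^\circ}(\rho)$ by observing that, up to identifying the target with the $\Gal_F$-invariants of the source, the restriction map is nothing other than the Galois averaging operator.  Since $Z_{\widehat{G}}^\circ$ is a torus, the $\Gal_F$-action on $X^*(Z_{\widehat{G}}^\circ)$ factors through a finite quotient $\Gamma.$ The $\Gamma$-equivariant perfect pairing $X^*(Z_{\widehat{G}}^\circ)_{\RR}\times X_*(Z_{\widehat{G}}^\circ)_{\RR}\to \RR$ restricts to a perfect pairing
\[(X^*(Z_{\widehat{G}}^\circ)_{\RR})^{\Gamma}\times (X_*(Z_{\widehat{G}}^\circ)_{\RR})^{\Gamma}\lto \RR,\]
and hence induces a canonical $\RR$-linear isomorphism $(X^*(Z_{\widehat{G}}^\circ)_{\RR})^{\Gamma}\tilde{\lto}\Hom(X_*(Z_{\widehat{G}}^\circ)^{\Gal_F},\RR).$ Under this isomorphism, the restriction map $r:X^*(Z_{\widehat{G}}^\circ)_{\RR}\to \Hom(X_*(Z_{\widehat{G}}^\circ)^{\Gal_F},\RR)$ coincides with the averaging map $\chi\mapsto |\Gamma|^{-1}\sum_{\sigma\in\Gamma}\sigma(\chi)$, because any $\Gal_F$-fixed cocharacter $\mu$ satisfies $\chi(\mu)=\sigma(\chi)(\mu)$ for every $\sigma.$

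The second step is to check that the set $\{\chi_1,\ldots,\chi_t\}$ in \eqref{rho:cent:decomp} is permuted by $\Gamma.$ This is because $Z_{\widehat{G}}^\circ$ is normal in ${}^LG$ and, for $w\in W_F,$ conjugation by $(1,w)$ sends the $\chi_i$-eigenspace of $\rho|_{Z_{\widehat{G}}^\circ}$ to the $\chi_i^w$-eigenspace, so the multiset $\{\chi_i\}$ is $W_F$-stable and hence $\Gamma$-stable.  It follows that the cone $C_{Z_{\widehat{G}}^\circ}(\rho)$, being generated by a $\Gamma$-stable set, is itself $\Gamma$-stable.

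Combining these observations, the image $C_{A_G}(\rho)=r(C_{Z_{\widehat{G}}^\circ}(\rho))$ is identified via the isomorphism above with the set $\{|\Gamma|^{-1}\sum_{\sigma}\sigma(c):c\in C_{Z_{\widehat{G}}^\circ}(\rho)\}.$ Since $C_{Z_{\widehat{G}}^\circ}(\rho)$ is convex and $\Gamma$-stable, each such average lies in $C_{Z_{\widehat{G}}^\circ}(\rho)$, and conversely every $\Gamma$-fixed point of $C_{Z_{\widehat{G}}^\circ}(\rho)$ is its own average; thus the image is identified with $C_{Z_{\widehat{G}}^\circ}(\rho)\cap (X^*(Z_{\widehat{G}}^\circ)_{\RR})^{\Gamma}.$ This intersection is a convex cone and is contained in the strongly convex cone $C_{Z_{\widehat{G}}^\circ}(\rho)$, so it contains no line.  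Transporting this along the $\RR$-linear isomorphism above yields that $C_{A_G}(\rho)$ contains no line, i.e.\ is strongly convex.

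The entire argument is soft; no serious obstacle is expected beyond correctly setting up the averaging identification between $r$ and the projection onto $\Gamma$-invariants, which is the only place the finiteness of the effective Galois action is used.
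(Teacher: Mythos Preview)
Your proof is correct and rests on the same two ingredients as the paper's: the Galois-stability of $C_{Z_{\widehat{G}}^\circ}(\rho)$ (which you justify explicitly via the eigenspace permutation, while the paper simply asserts it) and an averaging argument over a finite Galois quotient. The only difference is where the averaging is applied. The paper averages on the cocharacter side: it picks a witness $\alpha\in X_*(Z_{\widehat{G}}^\circ)_{\RR}$ pairing positively with every nonzero cone element, averages it to obtain a $\Gal_F$-fixed $\alpha'$, and checks that $\alpha'$ still pairs positively with every nonzero element of the image cone. You instead average on the character side, identifying the restriction map with the $\Gamma$-averaging projection and thereby realizing $C_{A_G}(\rho)$ as the $\Gamma$-fixed subcone of $C_{Z_{\widehat{G}}^\circ}(\rho)$, which is then strongly convex as a subcone of a strongly convex cone. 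These are dual formulations of the same idea; your version is slightly more structural and makes the role of the averaging projection more transparent, while the paper's version is shorter because it bypasses the identification of invariants with coinvariants.
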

\begin{proof}
Choose $\alpha \in X_*(Z^\circ_{\widehat{G}})_{\RR}$ so that $\chi(\alpha) > 0$ for all $\chi \in \mathcal{C}_{Z_{\widehat{G}}^\circ}(\rho)-\{0\}.$ Choose a finite Galois extension $E/F$ splitting $G.$   The action of the Galois group on $X^*(Z_{\widehat{G}}^\circ)_{\RR}$ preserves the cone $C_{Z_{\widehat{G}}^{\circ}}.$ We have that
$$
\alpha':=\sum_{\sigma \in \Gal(E/F)} \sigma(\alpha) \in X_*(Z^{\circ}_{\widehat{G}})^{\Gal_F}_{\RR},
$$
and for all $\chi \in \mathcal{C}_{Z_{\widehat{G}}^\circ}(\rho)-\{0\}$
$$
\chi(\alpha')=\sum_{\sigma \in \Gal(E/F)}\sigma(\chi)(\alpha) > 0.
$$
Hence $\mathcal{C}_{A_G}(\rho)$ is strongly convex.
\end{proof}

\begin{lem} \label{lem:L:factor}
One has decompositions
\begin{align}\begin{split}
L_{\rho}|_{\mathrm{Temp}_{\mathrm{Ind},0}(G)}=\sum_{\substack{\alpha \in \Hom_{\ZZ}(X^*(G),\ZZ)\cap C_{A_G}(\rho)}}L_{\rho,\alpha}\\
L_{\rho}^\vee|_{\mathrm{Temp}_{\mathrm{Ind},0}(G)}=\sum_{\substack{\alpha \in \Hom_{\ZZ}(X^*(G),\ZZ)\cap -C_{A_G}(\rho)}}L_{\rho,\alpha}^\vee \end{split}\label{dual:expan}
\end{align}
with $L_{\rho,\alpha},L_{\rho,\alpha}^\vee \in \mathrm{Poly}(\mathrm{Temp}_{\mathrm{Ind},0}(G),\CC)_{\alpha}.$ Both sums converge absolutely and uniformly on $\mathrm{Temp}_{\Ind,0}(G)$. Moreover, $\varepsilon_{\rho},\varepsilon_\rho^\vee \in \mathrm{Poly}(\mathrm{Temp}_{\Ind,0}(G),\CC).$ 
\end{lem}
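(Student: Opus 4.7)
The plan is to expand $L_\rho(M, \sigma_\lambda)$ as a convergent series in $q^{-\langle \overline{\chi}_j, \lambda\rangle}$ and collect terms by the total exponent $\alpha$. Decompose $\rho|_{{}^LM} = \bigoplus_{j=1}^r \rho_j$ into ${}^LM$-irreducibles. Each $\rho_j|_{Z_{\widehat G}^\circ}$ consists of a single $W_F$-orbit of weights from $\{\chi_1,\ldots,\chi_t\}$, all sharing a common restriction $\overline{\chi}_j \in \Hom_{\ZZ}(X^*(G), \ZZ)$ under the identification $X^*(G) \simeq X_*(Z_{\widehat G}^\circ)^{\Gal_F}$ (since translations by $W_F$ permute weights but are constant on Galois-fixed cocharacters); by construction each $\overline{\chi}_j \in C_{A_G}(\rho)$. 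Equivariance \eqref{LLC:UT} of $\mathrm{LL}_M$ under unramified twists combined with the standard formula for the local $L$-factor of a tempered Weil-Deligne representation yields
\begin{align*}
L(\tfrac{1}{2} + \langle \overline{\chi}_j, \lambda\rangle,\, \sigma, \rho_j) = \sum_{m \geq 0} h_m(A_j)\, q^{-m(1/2 + \langle \overline{\chi}_j, \lambda\rangle)},
\end{align*}
where $A_j$ denotes the Frobenius acting on $(V_{\rho_j})^{\mathcal{I}, N = 0}$ and $h_m$ is the $m$th complete symmetric polynomial. Temperedness of the parameter of $\sigma \in \Pi_0(M)$ forces the eigenvalues of $A_j$ to have absolute value $1$, so $|h_m(A_j)|$ is bounded polynomially in $m$ uniformly in $\sigma$.

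Multiplying these expansions and collecting coefficients of $q^{-\langle \lambda, \alpha\rangle}$ produces
\begin{align*}
L_{\rho,\alpha}(M,\sigma) := \sum_{\substack{\mathbf{m} \in \ZZ_{\geq 0}^r \\ \sum_{j} m_j\, \overline{\chi}_j = \alpha}} \prod_{j=1}^r h_{m_j}(A_j)\, q^{-m_j/2}.
\end{align*}
Strong convexity of $C_{A_G}(\rho)$ (Lemma \ref{lem:sc}) furnishes an interior element $\alpha'$ of the dual cone; pairing $\sum_j m_j \overline{\chi}_j = \alpha$ with $\alpha'$ simultaneously forces the $\mathbf{m}$-sum to be finite, shows $L_{\rho,\alpha}\equiv 0$ unless $\alpha \in C_{A_G}(\rho)$, and gives a two-sided comparison $\sum_j m_j \asymp \alpha(\alpha') \gtrsim |\alpha|$ with constants depending only on $\rho$. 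The identity $h_m(A_j(\sigma_\mu)) = q^{-m\langle \overline{\chi}_j, \mu\rangle} h_m(A_j(\sigma))$ for $\mu \in i\mathfrak{a}_G^*$ places $L_{\rho,\alpha}$ in the $\alpha$-isotypic component, and algebraicity along each connected component of $\widetilde{\Temp}_{\Ind,0}(G)$ follows from Lemma \ref{chi(g):polynomial}; hence $L_{\rho,\alpha} \in \mathrm{Poly}(\Temp_{\Ind,0}(G), \CC)_\alpha$.

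The main technical point is uniform absolute convergence of $\sum_\alpha L_{\rho, \alpha}$ on $\Temp_{\Ind,0}(G)$. Combining the uniform polynomial bound on $|h_{m_j}|$, the polynomial bound $O(|\alpha|^r)$ on the cardinality of $\{\mathbf{m}:\sum_j m_j \overline{\chi}_j = \alpha\}$, and the geometric decay $\prod_j q^{-m_j/2} \leq q^{-c|\alpha|/2}$, one obtains $|L_{\rho,\alpha}(M,\sigma)| \ll_\rho (1 + |\alpha|)^N q^{-c|\alpha|/2}$ uniformly over $(M,\sigma)\in \Temp_{\Ind,0}(G)$, which is summable in $\alpha$. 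The decomposition for $L_\rho^\vee$ follows by running the same argument for the contragredient $\rho^\vee$, whose $Z_{\widehat G}^\circ$-weights are $\{-\chi_i\}$ and therefore generate $-C_{A_G}(\rho)$. For the epsilon factors, additivity over ${}^LM$-irreducibles together with the shape $\varepsilon(s, \sigma, \rho_j) = c(\sigma, j)\, q^{-n(\sigma, j)(s-1/2)}$ of a non-Archimedean tempered epsilon factor gives $\varepsilon_\rho(M,\sigma_\lambda) = C(\sigma)\, q^{-\langle \sum_j n(\sigma, j)\overline{\chi}_j,\, \lambda\rangle}$, a single monomial on each connected component, hence polynomial in the sense of Definition \ref{defn:poly}. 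The same argument applies to $\varepsilon_\rho^\vee$.
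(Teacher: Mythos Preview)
Your argument is correct and follows essentially the same strategy as the paper: expand each irreducible constituent's $L$-factor as a geometric series, collect by the $i\mathfrak{a}_G^*$-weight $\alpha$, and use strong convexity of $C_{A_G}(\rho)$ to make each $L_{\rho,\alpha}$ a finite sum supported in the cone. The paper decomposes $\rho$ into ${}^LG$-irreducibles (so the characters $\chi_i$ are fixed once and for all), whereas you decompose $\rho|_{{}^LM}$ into ${}^LM$-irreducibles; this is a harmless refinement since, as you observe, every resulting $\overline{\chi}_j$ is the restriction of some $\chi_i$ and hence lies in $C_{A_G}(\rho)$.

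Two small points. First, your claim that the Frobenius eigenvalues of $A_j$ have absolute value exactly $1$ is not quite right when the $\SL_2$-part of $\rho_j\circ\mathrm{LL}_M(\sigma)$ is nontrivial: passing to the Weil--Deligne picture introduces factors of $q^{-n/2}$, so the eigenvalues have absolute value $\leq 1$. This is harmless, since $|h_m(A_j)|\leq \binom{m+d-1}{d-1}$ still holds and your convergence estimate goes through unchanged. Second, you supply a genuine uniform-convergence argument (the $(1+|\alpha|)^N q^{-c|\alpha|/2}$ bound), which the paper asserts but does not spell out; this is a welcome addition.
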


Here we set $L_\rho^\vee(\pi):=L_\rho(\pi^\vee)$ and $\varepsilon_{\rho}^\vee(\pi):=\varepsilon_{\rho}(\pi^\vee).$

\begin{proof} 
Write $\rho=\bigoplus_{i=1}^{t}\rho_i$
where each $\rho_i:{}^LG \to \GL_{V_{\rho_i}}(\CC)$ is an irreducible tempered representation.  Then, after possibly reindexing, $\rho_i|_{(Z^\circ_{\widehat{G}}(\mathbb{C}))^{\Gal_F}}=\chi_i I_{V_{\rho_i}}$ for some character $\chi_i$ of $(Z^\circ_{\widehat{G}}(\mathbb{C}))^{\Gal_F}$ which we may and do view as an element in $\Hom_{\mathbb{Z}}(X^*(G),\mathbb{Z})$. In particular, for $\lambda \in i\mathfrak{a}_G^*$ we have
\begin{align} \label{rho:decomp}
    \rho \circ |\cdot|^{\lambda}=\bigoplus_{i=1}^{t} |\cdot|^{\langle \chi_i,\lambda \rangle } \otimes \rho_i,
\end{align}
where we view $\lambda$ as an element of $X_*(Z_{\widehat{G}}^\circ)_\CC$ as in \eqref{lambda:2:ways}.

Assume $\pi = I_{P}^G(\sigma)$ with $\sigma\in \Pi_0(M)$.    Using \eqref{LLC:UT}
we have
\begin{align}
    \rho_i|_{^LM}\circ \mathrm{LL}(\sigma_\lambda)=\rho_i|_{^LM}\circ \mathrm{LL}(\sigma)|\cdot|^{\lambda}=|\cdot|^{\langle \chi_i,\lambda\rangle} \otimes \left(\bigoplus_{n=0}^{m}\phi_{i,n}(\sigma)\otimes \mathrm{Sym}^n\right)
\end{align}
where each $\phi_{i,n}(\sigma):W_F \to \GL_{V_n}(\CC)$ is a representation of $W_F.$ Let $\mathrm{Fr}\in W_{F}$ be a lift of the Frobenius element and $\mathcal{I}_F<W_F$ the inertia subgroup.
Then 
\begin{align*}
    L_{\rho}(\pi_\lambda) &= \prod_{n = 0}^{m}\prod_{i=1}^{t}\mathrm{det}(1-q^{-(n+1)/2-\langle \chi_i,\lambda\rangle}\phi_{i,n}(\sigma)|_{V_n^{\mathcal{I}_F}}(\mathrm{Fr}))^{-1}\\
    &= \prod_{n = 0}^{m}\prod_{i=1}^{t}\sum_{k\geq 0}\mathrm{tr}(\mathrm{Sym}^{k}(\phi_{i,n}(\sigma)|_{V_n^{\mathcal{I}_F}}(\mathrm{Fr})))q^{-\tfrac{k(n+1)}{2}-k\langle \chi_i,\lambda\rangle}.
\end{align*}
We let
\begin{align*}
    L_{\rho,\alpha}(\pi_\lambda) := \sum_{\substack{(k_1,\ldots,k_{t})\in\mathbb{Z}_{\geq 0}^{t}\\
    \alpha = \sum_{i=1}^{t}k_i\chi_i}} \prod_{n = 0}^{m}\prod_{i=1}^{t}\mathrm{tr}(\mathrm{Sym}^{k_i}(\phi_{i,n}(\sigma)|_{V_n^{\mathcal{I}_F}}(\mathrm{Fr})))q^{-\tfrac{k_i(n+1)}{2}-k_i\langle \chi_i,\lambda\rangle}.
\end{align*}
Since $C_{A_G}(\rho)$ is strongly convex by Lemma \ref{lem:sc}, the sum is finite and $L_{\rho,\alpha}$ is well-defined. One has that $L_{\rho,\alpha}\in \mathrm{Poly}(\Temp_{\Ind,0}(G),\CC)_\alpha.$  This implies the first identity of the lemma.

\quash{
Thus for $(k_1,\dots,k_{t_0})\in \mathbb{Z}^{t_0}_{\geq 0}$, we define
\[
L_{\rho,\sum_{i = 1}^{t_0}k_i\chi_i}(\pi_{\lambda}) :=  \prod_{n = 0}^{m}\prod_{i=1}^{t_0}\mathrm{Tr}(\mathrm{Sym}^{k_i}(\phi_{i,n}(\sigma)|_{V_n^{\mathcal{I}_F}}(\mathrm{Fr})))q^{-\tfrac{k_i(n+1)}{2}-k_i\langle \chi_i,\lambda\rangle},
\]
and moreover define $L_{\rho,\alpha}=0$ if $\alpha$ is not of the form $\sum_{i = 1}^{t_0}k_i\chi_i,$ for $k_1,\ldots,k_{t_0}\in \mathbb{Z}_{\geq0}$. Since $C_{A_G}(\rho)$ is a convex cone, we then obtain the first identity of the lemma.}

\quash{

Let us factor $\phi_n(\sigma)|_{V_n^{\mathcal{I}_F}}$ into irreducibles as 
\[\phi_n(\sigma)|_{V_n^{\mathcal{I}_F}}= \bigoplus_{i = 1}^r\tilde{\phi}_{n,i}^{\sigma}.\]
Let $\lambda\in i\mathfrak{a}_G^*$. By \eqref{LLC:UT} one has $\mathrm{LL}(\sigma_{\lambda})\simeq \mathrm{LL}(\sigma)|\cdot|^{\lambda}$, therefore we have \begin{equation}\label{Auxiliary:Formal:Series:L}\tilde{\phi}_{n,i}^{\sigma_{\lambda}} = \tilde{\phi}_{n,i}^{\sigma}\otimes\chi_i\circ |\cdot|^{\lambda},\end{equation} where $\chi_i$ is an irreducible component of the representation $\rho|_{Z^{\circ}_{\hat{G}}}$ as in \eqref{rho:cent:decomp}. Note that it may occur that $\chi_i = \chi_j$ for $i\neq j$. We know that 
\[\mathrm{Sym}^k(\phi_n|_{V_n^{\mathcal{I}_F}}(\mathrm{Fr})) \simeq \bigoplus_{\substack{\alpha_1,\ldots,\alpha_r\in \mathbb{Z}_{\geq0}^r\\\alpha_1+\ldots+\alpha_r = k}}\bigotimes_{i = 1}^r\mathrm{Sym}^{\alpha_i}(\tilde{\phi}_{n,i}(\mathrm{Fr})).\]
Therefore \textcolor{red}{I do not know if this step is correct-JRG    }
\begin{align*}
L_{\rho}(\pi_{\lambda}) &= \prod_{n = 0}^{m}\sum_{k\geq 0}\sum_{\substack{\alpha_1,\ldots,\alpha_r\in \mathbb{Z}_{\geq0}^r\\\alpha_1+\ldots+\alpha_r = k}}\prod_{i = 1}^r\mathrm{Tr}(\mathrm{Sym}^k(\tilde{\phi}_{n,i}^{\sigma}\otimes\chi_i\circ|\cdot|^{\lambda}(\mathrm{Fr})))q^{-\tfrac{k(n-1)}{2}}\\ &= \prod_{n = 0}^{m}\sum_{k\geq 0}\sum_{\substack{\alpha_1,\ldots,\alpha_r\in \mathbb{Z}_{\geq0}^r\\\alpha_1+\ldots+\alpha_r = k}}\prod_{i = 1}^r\chi_i(|\mathrm{Fr}|^{\lambda})^{\alpha_i}\mathrm{Tr}(\mathrm{Sym}^{\alpha_i}(\tilde{\phi}_{n,i}^{\sigma}(\mathrm{Fr})))q^{-\tfrac{k(n-1)}{2}} \\ &=\sum_{k\geq 0}\sum_{\substack{\alpha_1,\ldots,\alpha_r\in \mathbb{Z}_{\geq0}^r\\\alpha_1+\ldots+\alpha_r = k}} q^{-\langle \sum_{i = j}^r\chi_j\alpha_j,\lambda\rangle}\prod_{n = 0}^{m}\prod_{ i = 1}^r\mathrm{Tr}(\mathrm{Sym}^{\alpha_i}(\tilde{\phi}_{n,i}^{\sigma}(\mathrm{Fr})))q^{-\tfrac{k(n-1)}{2}},
\end{align*}
where in the last equation we view $\lambda$ as an element of $X_*(Z_{\widehat{G}}^\circ)_\CC$ as in \eqref{lambda:2:ways}. Thus for $\alpha_1,\ldots,\alpha_r\in \mathbb{Z}^r_{\geq 0}$, we define
\[
L_{\rho,\sum_{j = 1}^r\chi_j\alpha_j}(\pi_{\lambda}) :=  q^{-\langle \sum_{j = 1}^r\chi_j\alpha_j,\lambda\rangle}\left(\prod_{n = 0}^{m}\prod_{ i = 1}^r\mathrm{Tr}(\mathrm{Sym}^{\alpha_i}(\tilde{\phi}_{n,i}^{\sigma}(\mathrm{Fr})))q^{-\tfrac{k(n-1)}{2}}\right).
\]
and moreover define $L_{\rho,\alpha}=0$ if $\alpha$ is not of the form $\sum_{i = j}^r\chi_i\alpha_i$ for $\alpha_i\in \mathbb{Z}_{\geq0}$. Since $C_{A_G}(\rho)$ is a convex cone, we then obtain the first identity of the Lemma.  }

 For $L_{\rho}(\pi^{\vee})$ we use \eqref{LLC:UT} and \eqref{LLC:Cg} to deduce the isomorphisms
\begin{align} \label{rho:isom}\rho_i|_{^LM}\circ \mathrm{LL}((\sigma_{\lambda})^{\vee})& \cong  \rho_i|_{^LM}\circ \mathrm{LL}(\sigma^{\vee})|\cdot|^{-\lambda}\cong (\rho_i|_{^LM}\circ \mathrm{LL}(\sigma))^{\vee}\otimes |\cdot|^{-\langle \chi_i,\lambda \rangle}.
\end{align}
Arguing as before we obtain the assertions involving $L^\vee_{\rho,\alpha}.$

Since $\varepsilon$-factors are additive, using \eqref{LLC:UT} we have
\begin{align}
\varepsilon_{\rho}(\pi_\lambda)=\prod_{i=1}^{t}\varepsilon(\tfrac{1}{2},\pi_{\lambda},\rho_i,\psi)=\prod_{i=1}^{t}\varepsilon(\tfrac{1}{2}+\langle \chi_i,\lambda \rangle,\pi,\rho_i,\psi).
\end{align}
We can now use \cite[(11)]{Gross:Reeder} to relate these $\varepsilon$-factors to $\varepsilon$-factors of representations of $W_F.$  The well-known behavior of $\varepsilon$-factors of representations of $W_F$  under twisting \cite[(3.4.5)]{Tate_NT} implies the assertions of the lemma involving $\varepsilon_{\rho}.$  Using \cite[(3.4.7)]{Tate_NT} we deduce the assertions involving $\varepsilon^\vee_\rho$ as well.
\end{proof}

Recall $\mathfrak{Z}^1(G)$ was defined in \eqref{eBC}.

\begin{lem} \label{lem:BC} There are elements $m_{\alpha}\varepsilon_{\rho}L_{\rho}^\vee,m_{\alpha}L_{\rho} \in \mathfrak{Z}^1(G)$ with the property that 
$$
\pi(m_{\alpha}\varepsilon_{\rho}L_{\rho}^\vee(f))=e_{\alpha} \varepsilon_{\rho}(\pi)L_{\rho}(\pi^\vee)\pi(f) \quad\textrm{and} \quad \pi(m_\alpha L_{\rho}(f))=e_{\alpha} L_{\rho}(\pi)\pi(f)
$$
for all $(\pi,f) \in \mathrm{Temp}_{\mathrm{Ind},0}(G) \times C_c^\infty(G(F)).$ 
Moreover, there are elements $m_\alpha\gamma_{\rho}$ and $m\gamma_{\rho}^\vee\gamma_{\rho}$ of $\mathfrak{Z}^1(G)$ such that 
\begin{align} \label{all:temp}
\pi(m_\alpha\gamma_{\rho}(f))=e_{\alpha}\gamma_{\rho}(\pi)\pi(f) \quad \textrm{ and }\quad \pi(m\gamma_{\rho}^{\vee}\gamma_{\rho}(f))=\gamma_{\rho}(\pi^\vee)\gamma_{\rho}(\pi)\pi(f)
\end{align}
for all $ (\pi,f) \in \mathrm{Temp}_{\mathrm{Ind}}(G) \times C_c^\infty(G(F)).$
\end{lem}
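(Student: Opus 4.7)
The plan is to treat the four elements separately, in each case realizing an appropriate function on $\Temp_{\Ind,0}(G)$ as an element of $\mathrm{FL}(\Temp_{\Ind,0}(G),\CC)$ and invoking Proposition \ref{prop:eBC}, except for the last one which requires a much shorter argument.

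First I would address $m_\alpha L_\rho$ and $m_\alpha\varepsilon_\rho L_\rho^\vee$. Lemma \ref{lem:L:factor} gives decompositions $L_\rho=\sum_\alpha L_{\rho,\alpha}$ and $L_\rho^\vee=\sum_\alpha L_{\rho,\alpha}^\vee$ into $\mathfrak{a}^*_{G\CC}$-isotypic pieces in $\mathrm{Poly}(\Temp_{\Ind,0}(G),\CC)_\alpha$, realizing both as elements of $\mathrm{FL}(\Temp_{\Ind,0}(G),\CC)$. For $\varepsilon_\rho L_\rho^\vee$ I would additionally observe that the twisting formula $\varepsilon(s,\pi_\lambda,\rho_i,\psi)=\varepsilon(s,\pi,\rho_i,\psi)q^{-c_i\langle\chi_i,\lambda\rangle}$ for the irreducible pieces $\rho_i$ of $\rho$ makes $\varepsilon_\rho$ a single $\mathfrak{a}^*_{G\CC}$-isotypic vector on each inertial orbit, so that $\varepsilon_\rho L_\rho^\vee\in\mathrm{FL}(\Temp_{\Ind,0}(G),\CC)$. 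Proposition \ref{prop:eBC} then produces $m_\alpha L_\rho$ and $m_\alpha\varepsilon_\rho L_\rho^\vee$ in $\mathfrak{Z}^1(G)$, and the identities for $\pi\in\Temp_{\Ind,0}(G)$ are tautological from the definition $\pi(m_\alpha T(f))=\BH(\BH^{-1}(e_\alpha(T\BH(f))))(\pi)=e_\alpha(T\BH(f))(\pi)$; the right side is precisely the stated $\alpha$-isotypic projection of $\pi\mapsto T(\pi)\pi(f)$.

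For $m_\alpha\gamma_\rho$ I would invert $L_\rho$ formally. The constant term of the cone expansion in Lemma \ref{lem:L:factor} is $L_{\rho,0}=1$, so writing $L_\rho=1-u$ with $u=-\sum_{\alpha\neq 0}L_{\rho,\alpha}$ supported on $C_{A_G}(\rho)\setminus\{0\}$, the strong convexity of $C_{A_G}(\rho)$ (Lemma \ref{lem:sc}) ensures that the geometric series $L_\rho^{-1}=\sum_{n\geq 0}u^n$ contributes only finitely many terms to each $\alpha$-isotypic component. Thus $\gamma_\rho:=\varepsilon_\rho L_\rho^\vee L_\rho^{-1}\in\mathrm{FL}(\Temp_{\Ind,0}(G),\CC)$, and Proposition \ref{prop:eBC} yields $m_\alpha\gamma_\rho\in\mathfrak{Z}^1(G)$, with the identity on $\Temp_{\Ind,0}(G)$ following as in the previous paragraph. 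To extend to arbitrary $\pi=I_P^G(\sigma)\in\Temp_\Ind(G)$, I would realize $\sigma\in\Pi_2(M)$ as a quotient of some $I_{P'\cap M}^M(\sigma'_{\lambda_0})$ with $\sigma'\in\Pi_0(M')$ and $\lambda_0\in\mathfrak{a}^*_{M'}$, so that by transitivity of induction $\pi$ is a subquotient of $\pi_0:=I_{P'}^G(\sigma'_{\lambda_0})$; since $m_\alpha\gamma_\rho(f)\in C_c^\infty(G(F))$, the operator $\pi(m_\alpha\gamma_\rho(f))$ is the restriction of $\pi_0(m_\alpha\gamma_\rho(f))$ to the subquotient, and Proposition \ref{prop:gamma:compat:temp} matches $\gamma_\rho(\pi)=\gamma_\rho(\pi_0)$, yielding the identity after verifying that the $\alpha$-isotypic projection along the unramified twist of $\sigma'$ restricts to that along the unramified twist of $\sigma$.

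For $m\gamma_\rho^\vee\gamma_\rho$ the argument is short and does not require Proposition \ref{prop:eBC}. Lemma \ref{lem:chis} gives $\gamma_\rho(\pi^\vee)\gamma_\rho(\pi)=\chi_\sigma(-1)$ with $\sigma=\mathrm{LL}_{\GL_n}^{-1}(\rho\circ\mathrm{LL}_G(\pi))$, and $\chi_\sigma(-1)\in\{\pm 1\}$ since $\chi_\sigma(-1)^2=1$. Because $|-1|=1$, an unramified twist of $\pi$ does not change $\chi_\sigma(-1)$, so the function $\pi\mapsto\chi_\sigma(-1)$ is locally constant on $\Temp_\Ind(G)$. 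Multiplication by this locally constant sign is a polynomial multiplier preserving compact support on $\BH(f)$, giving an element of the ordinary Bernstein center $\mathfrak{Z}(G)\leq\mathfrak{Z}^1(G)$ with the required identity for all $\pi\in\Temp_\Ind(G)$. The main obstacle I anticipate is the last step of the construction of $m_\alpha\gamma_\rho$: passing from $\Temp_{\Ind,0}(G)$ to all of $\Temp_\Ind(G)$ requires carefully tracking how the $\alpha$-isotypic decomposition of $\gamma_\rho$ along the inertial orbit of the cuspidal parent $\sigma'_\lambda$ restricts to the inertial orbit of the (non-cuspidal) $\sigma$, and this is precisely where the compatibility supplied by Proposition \ref{prop:gamma:compat:temp} is essential.
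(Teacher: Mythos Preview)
Your proposal is correct, but differs from the paper's proof in three places worth noting.

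First, for $m_\alpha\gamma_\rho$ you formally invert $L_\rho$ via a geometric series using strong convexity of the cone. The paper instead invokes Lemma~\ref{LLC:gamma:rationality:1}, which already says that $L_\rho^{-1}$ is regular on each component of $\widetilde{\Temp}_{\Ind}(G)_\CC$, hence is a Laurent polynomial on each $\Lambda_M$ and lies in $\mathrm{Poly}(\Temp_{\Ind,0}(G),\CC)$. Then $\gamma_\rho=\varepsilon_\rho L_\rho^\vee L_\rho^{-1}$ is (polynomial)$\times$(FL)$\times$(polynomial), hence in $\mathrm{FL}(\Temp_{\Ind,0}(G),\CC)$ without any formal inversion. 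Your route is correct but unnecessarily elaborate.

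Second, for the extension of \eqref{all:temp} from $\Temp_{\Ind,0}(G)$ to $\Temp_\Ind(G)$ you propose a direct subquotient argument and correctly flag the difficulty of matching $e_\alpha$-projections along the two inertial orbits. The paper avoids this entirely: it first checks the identity for \emph{irreducible} $\pi\in\Temp_\Ind(G)$ by embedding $\pi\hookrightarrow I_P^G(\sigma)$ with $\sigma$ a (possibly non-unitary) supercuspidal, applies Proposition~\ref{prop:gamma:compat:temp}, and then passes to arbitrary $\pi\in\Temp_\Ind(G)$ by the density of irreducibles (citing Silberger) together with continuity of both sides in $\pi$. This sidesteps the bookkeeping you anticipate.

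Third, for $m\gamma_\rho^\vee\gamma_\rho$ you invoke Lemma~\ref{lem:chis} to realize $\gamma_\rho^\vee\gamma_\rho$ as the locally constant sign $\chi_\sigma(-1)$, landing in $\mathfrak{Z}(G)$. The paper instead observes $\gamma_\rho^\vee\gamma_\rho=\varepsilon_\rho^\vee\varepsilon_\rho$, which is polynomial by Lemma~\ref{lem:L:factor}, and then uses the same density-of-irreducibles argument to extend from $\Temp_{\Ind,0}(G)$ to $\Temp_\Ind(G)$. Your observation is sharper (locally constant rather than merely polynomial) and gives a slightly cleaner construction, though the paper's uniform treatment of both $m_\alpha\gamma_\rho$ and $m\gamma_\rho^\vee\gamma_\rho$ via the same extension argument has its own economy.
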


\begin{proof} The first assertion follows from Proposition \ref{prop:eBC} and Lemma \ref{lem:L:factor}. Since $\gamma_{\rho} = \varepsilon_\rho L_\rho^\vee L_\rho^{-1}$ and $L_\rho^{-1}$ is a polynomial, by Proposition \ref{prop:eBC} there is an element $m_\alpha\gamma_{\rho}$ of $\mathfrak{Z}^1(G)$ such that \eqref{all:temp} holds for $\pi\in \Temp_{\Ind,0}(G)$.  On the other hand $\gamma_{\rho}^\vee\gamma_{\rho}=\varepsilon_{\rho}^\vee\varepsilon_\rho$ is a polynomial, so there exists $m\gamma_{\rho}^\vee \gamma_{\rho} \in \mathfrak{Z}^1(G)$ such that \eqref{all:temp} is valid for $\pi \in \Temp_{\Ind,0}(G).$
 We must show \eqref{all:temp} remains valid for $\pi \in \Temp_\Ind(G)$. 

Assume that $\pi \in \mathrm{Temp}_{\mathrm{Ind}}(G)$ is irreducible.     
We can choose a semi-standard parabolic subgroup $P$ with semi-standard Levi subgroup $M$ and a possibly non-unitary supercuspidal representation $\sigma$ of $M(F)$ such that $\pi \hookrightarrow I_P^G(\sigma).$  By Proposition \ref{prop:gamma:compat:temp} we then have
$\gamma_\rho(\pi)=\gamma_{\rho|_{{}^LM}}(\sigma),$  so we deduce \eqref{all:temp} for $\pi$.  Since the subset of irreducible representations in $\mathrm{Temp}_{\mathrm{Ind}}(G)$ is dense \cite[Theorem 2.5.9]{Silberger:Intro} the identity \eqref{all:temp} follows for general $\pi \in \mathrm{Temp}_{\mathrm{Ind}}(G)$ by continuity of both sides in $\pi.$
\end{proof}

\begin{lem} \label{lem:BH:rel} If $f\in C_c^\infty(G(F))$ then 
    $\mathcal{F}_\rho(f) \in \mathcal{S}_{\rho}^{\mathrm{as}}(G(F)) \cap C_{ac}^\infty(G(F))$ and $\mathcal{F}_{\rho} \circ  \mathcal{F}_{\rho}(f) \in C_c^\infty(G(F)).$
\end{lem}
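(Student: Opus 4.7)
My plan is to deduce all three statements from the matricial Plancherel formula (Theorem \ref{HCPlan:thm1}) combined with the extended Bernstein center elements constructed in Lemma \ref{lem:BC}. The first containment $\mathcal{F}_\rho(f)\in\mathcal{S}_\rho^{\mathrm{as}}(G(F))$ is immediate: since $f\in C_c^\infty(G(F))=\mathcal{S}(G(F))\leq \mathcal{S}_\rho^{\mathrm{as}}(G(F))$ by Lemma \ref{lem:Sas:moduleCcinfty}, and $\mathcal{S}_\rho^{\mathrm{as}}(G(F))$ is stable under $\mathcal{F}_\rho$ by Lemma \ref{lem:as:preserve}, we are done.

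For $\mathcal{F}_\rho(f)\in C_{ac}^\infty(G(F))$, I would first observe that $\mathcal{F}_\rho(f)\in \mathcal{C}(G(F))$ is uniformly smooth: choosing a compact open subgroup $K'\leq G(F)$ under which $f$ is bi-invariant, the twisted equivariance \eqref{FT:equivariant:close} implies $\mathcal{F}_\rho(f)\in \mathcal{C}(G(F)/\!/K')$, so $\mathcal{F}_\rho(f)^\vee$ is likewise uniformly smooth. Then for each $\alpha\in \Hom_\mathbb{Z}(X^*(G),\mathbb{Z})$ and each $\pi\in \Temp_\Ind(G)$, combining \eqref{ealpha:HP}, Lemma \ref{FT:lem:important:1} and Lemma \ref{lem:BC} gives
\begin{align*}
\pi(\mathcal{F}_\rho(f)^\vee\one_\alpha)\;=\;e_\alpha\,\pi(\mathcal{F}_\rho(f)^\vee)\;=\;e_\alpha\gamma_\rho(\pi)\pi(f)\;=\;\pi(m_\alpha\gamma_\rho(f)).
\end{align*}
By the injectivity of $\HP$ in Theorem \ref{HCPlan:thm1} we conclude $\mathcal{F}_\rho(f)^\vee\one_\alpha=m_\alpha\gamma_\rho(f)$, which lies in $C_c^\infty(G(F))$ because $m_\alpha\gamma_\rho\in \mathfrak{Z}^1(G)$ preserves $C_c^\infty(G(F))$. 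Corollary \ref{Cac=prodCcalpha:3} then yields $\mathcal{F}_\rho(f)^\vee\in C_{ac}^\infty(G(F))$, and hence $\mathcal{F}_\rho(f)\in C_{ac}^\infty(G(F))$, since $C_{ac}^\infty(G(F))$ is stable under $(\cdot)^\vee$ (noting $H_G(g^{-1})=-H_G(g)$).

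For $\mathcal{F}_\rho^2(f)\in C_c^\infty(G(F))$, I would apply Lemma \ref{FT:lem:important:1} twice together with Lemma \ref{lem:BC}: for every $\pi\in \Temp_\Ind(G)$,
\begin{align*}
\pi(\mathcal{F}_\rho^2(f))\;=\;\gamma_\rho(\pi^\vee)\pi(\mathcal{F}_\rho(f)^\vee)\;=\;\gamma_\rho(\pi^\vee)\gamma_\rho(\pi)\pi(f)\;=\;\pi(m\gamma_\rho^\vee\gamma_\rho(f)).
\end{align*}
Plancherel again gives $\mathcal{F}_\rho^2(f)=m\gamma_\rho^\vee\gamma_\rho(f)\in C_c^\infty(G(F))$, since $m\gamma_\rho^\vee\gamma_\rho\in\mathfrak{Z}^1(G)$.

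The only nontrivial part of the argument is the appeal to Lemma \ref{lem:BC}, which packages the work that went into the decomposition of $L_\rho$ and $\varepsilon_\rho$ via \eqref{rho:cent:decomp}, the strong convexity assumption \eqref{chi:assum}, and the compatibility statement of Proposition \ref{prop:gamma:compat:temp} that promotes the prescribed spectral identities from $\Temp_{\Ind,0}(G)$ to all of $\Temp_\Ind(G)$. Everything else here is bookkeeping between $\HP$ and the $\alpha$-eigendecomposition.
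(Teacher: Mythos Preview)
Your proof is correct and follows essentially the same route as the paper: both arguments use Lemma \ref{lem:Sas:moduleCcinfty} and Lemma \ref{lem:as:preserve} for the first containment, then identify $\one_\alpha\mathcal{F}_\rho(f)^\vee$ with $m_\alpha\gamma_\rho(f)$ and $\mathcal{F}_\rho^2(f)$ with $m\gamma_\rho^\vee\gamma_\rho(f)$ via Lemma \ref{FT:lem:important:1}, Lemma \ref{lem:BC}, and the injectivity of $\HP$. Your explicit check of uniform smoothness and of stability of $C_{ac}^\infty(G(F))$ under $(\cdot)^\vee$ are details the paper leaves implicit, but otherwise the two proofs coincide.
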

\begin{proof} 
We have $C_c^\infty(G(F)) <\mathcal{S}_{\rho}^{\mathrm{as}}(G(F))$ by Lemma \ref{lem:Sas:moduleCcinfty}  and $\mathcal{F}_{\rho}$ preserves $\mathcal{S}_{\rho}^{\mathrm{as}}(G(F))$ by Lemma \ref{lem:as:preserve}.

 By \eqref{ealpha:HP}, Lemma \ref{FT:lem:important:1},   and Lemma \ref{lem:BC} we have
\begin{align} \label{id:op}
\pi(\one_{\alpha}\mathcal{F}_{\rho}(f)^\vee)=e_{\alpha}\pi(\mathcal{F}_{\rho}(f)^\vee)=e_{\alpha} \gamma_\rho(\pi)\pi(f) =\pi(m_\alpha\gamma_\rho (f))
\end{align}
for all $\pi \in \Temp_{\Ind}(G)$. By the Plancherel formula (Theorem \ref{HCPlan:thm1}), this shows \begin{align} \label{basic:id2}
\one_{\alpha}\mathcal{F}_{\rho}(f)^\vee = m_\alpha\gamma_\rho (f)\in C_c^\infty(G(F)).
\end{align}
Varying $\alpha$ we deduce that $\mathcal{F}_{\rho}(f)\in C_{ac}^\infty(G(F))$.  Similarly
Lemma \ref{FT:lem:important:1},   and Lemma \ref{lem:BC} imply that 
\begin{align}
\pi(\mathcal{F}_{\rho} \circ \mathcal{F}_{\rho}(f))=\gamma_{\rho}^\vee(\pi)\pi(\mathcal{F}_{\rho}(f)^\vee)=\gamma_{\rho}^\vee(\pi)\gamma_\rho(\pi)\pi(f) =\pi(m\gamma_\rho^\vee\gamma_\rho( f))
\end{align}
for all $\pi \in \Temp_{\Ind}(G)$. By the Plancherel formula, this shows \begin{align} \label{basic:id3}
\mathcal{F}_{\rho} \circ \mathcal{F}_{\rho}(f) = m\gamma_\rho^\vee \gamma_\rho (f)\in C_c^\infty(G(F)).
\end{align}
\end{proof} 

\begin{cor} \label{cor:Ccin} One has $C_c^\infty(G(F)) \leq \mathcal{S}_{\rho}(G(F)).$ 
\end{cor}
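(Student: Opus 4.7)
The plan is to exhibit an explicit subspace of $\mathcal{S}_{\rho}^{\mathrm{as}}(G(F)) \cap C_{ac}^\infty(G(F))$ containing $C_c^\infty(G(F))$ and stable under $\mathcal{F}_{\rho}$; by the maximality definition of $\mathcal{S}_{\rho}(G(F))$, this subspace will then lie in $\mathcal{S}_{\rho}(G(F))$, yielding the desired inclusion.

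The natural candidate is
\begin{align*}
V := C_c^\infty(G(F)) + \mathcal{F}_{\rho}(C_c^\infty(G(F))) \leq \mathcal{C}(G(F)).
\end{align*}
First I would check that $V \leq \mathcal{S}_{\rho}^{\mathrm{as}}(G(F)) \cap C_{ac}^\infty(G(F))$. For the summand $C_c^\infty(G(F))$ this is clear: compactly supported smooth functions are both almost compactly supported and, by Lemma \ref{lem:Sas:moduleCcinfty} (which gives $\mathcal{S}(G(F))\leq \mathcal{S}_\rho^{\mathrm{as}}(G(F))$ and in particular $C_c^\infty(G(F))\leq \mathcal{S}_\rho^{\mathrm{as}}(G(F))$), lie in the asymptotic $\rho$-Schwartz space. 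For the second summand, Lemma \ref{lem:BH:rel} gives exactly $\mathcal{F}_{\rho}(C_c^\infty(G(F))) \leq \mathcal{S}_{\rho}^{\mathrm{as}}(G(F)) \cap C_{ac}^\infty(G(F))$.

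Next I would verify stability of $V$ under $\mathcal{F}_{\rho}$. Again Lemma \ref{lem:BH:rel} does the work: on the first summand, $\mathcal{F}_{\rho}(C_c^\infty(G(F))) \leq V$ by construction; on the second summand, $\mathcal{F}_{\rho}\circ\mathcal{F}_{\rho}(C_c^\infty(G(F))) \leq C_c^\infty(G(F)) \leq V$. Hence $\mathcal{F}_{\rho}(V)\leq V$. Since $V$ is a $\CC$-subspace of $\mathcal{S}_{\rho}^{\mathrm{as}}(G(F)) \cap C_{ac}^\infty(G(F))$ stable under $\mathcal{F}_{\rho}$, maximality of $\mathcal{S}_{\rho}(G(F))$ forces $V \leq \mathcal{S}_{\rho}(G(F))$, and in particular $C_c^\infty(G(F)) \leq \mathcal{S}_{\rho}(G(F))$.

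There is no real obstacle here: the entire argument is a formal bookkeeping step that packages the content of Lemma \ref{lem:BH:rel} into the language of the definition of $\mathcal{S}_{\rho}(G(F))$. The substantive work, namely that $\mathcal{F}_{\rho}$ maps $C_c^\infty$ into $\mathcal{S}_{\rho}^{\mathrm{as}} \cap C_{ac}^\infty$ and that $\mathcal{F}_{\rho}^2$ maps $C_c^\infty$ back into $C_c^\infty$, has already been carried out via the extended Bernstein center constructions of Lemma \ref{lem:BC}.
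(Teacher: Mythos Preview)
Your proof is correct and follows essentially the same approach as the paper: both exhibit the subspace $C_c^\infty(G(F)) + \mathcal{F}_{\rho}(C_c^\infty(G(F)))$, use Lemmas \ref{lem:Sas:moduleCcinfty} and \ref{lem:BH:rel} to show it lies in $\mathcal{S}_{\rho}^{\mathrm{as}}(G(F)) \cap C_{ac}^\infty(G(F))$, and invoke Lemma \ref{lem:BH:rel} again to check stability under $\mathcal{F}_{\rho}$.
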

\begin{proof} By lemmas \ref{lem:Sas:moduleCcinfty} and \ref{lem:BH:rel}, 
\begin{align} \label{sum}
C_c^\infty(G(F))+\mathcal{F}_{\rho}(C_c^\infty(G(F)))
\end{align} 
is a subspace of $\mathcal{S}_{\rho}^{\mathrm{as}}(G(F)) \cap C_{ac}^\infty(G(F)).$ Lemma \ref{lem:BH:rel} also states that $\mathcal{F}_{\rho} \circ \mathcal{F}_{\rho}(C_c^\infty(G(F))) \leq C_c^\infty(G(F))$ and hence \eqref{sum} is stable under $\mathcal{F}_{\rho}.$ 
\end{proof}

\begin{prop}\label{Basic:Function:Schwartz:Space}  If $K<G(F)$ is a hyperspecial subgroup then $b_{\rho} \in (\mathcal{S}_{\rho}^{\mathrm{as}}(G(F)) \cap C_{ac}^\infty(G(F)))^{K \times K}.$
\end{prop}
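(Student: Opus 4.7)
The plan is to combine three facts already established: $b_\rho \in \mathcal{S}_\rho^{\mathrm{as}}(G(F))$ by Lemma \ref{Basic:Function:Asymptotic}, the bi-$K$-invariance of $b_\rho$ by its very definition in \S \ref{Section:Basic:Function}, and that $b_\rho$ lies in $C_{ac}^\infty(G(F))$. Only the third statement requires work. By the characterization \eqref{Cac=prodCcalpha:1}, it suffices to verify that $b_\rho\one_\alpha\in C_c^\infty(G(F))$ for every $\alpha\in\mathrm{Hom}_\ZZ(X^*(G),\ZZ)$.

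The key tool will be Lemma \ref{lem:BC}, which produces $m_\alpha L_\rho\in\mathfrak{Z}^1(G)$ satisfying $\pi(m_\alpha L_\rho(f))=e_\alpha L_\rho(\pi)\pi(f)$ for all $f\in C_c^\infty(G(F))$ and $\pi\in\Temp_{\Ind,0}(G)$. Apply this operator to $\one_K$: the resulting function $m_\alpha L_\rho(\one_K)$ lies in $C_c^\infty(G(F))$ by the very definition of $\mathfrak{Z}^1(G)$, and it is bi-$K$-invariant because $m_\alpha L_\rho$ is $G(F)^1\times G(F)^1$-equivariant and $K\leq G(F)^1$. I propose to show that
\begin{align*}
    b_\rho\one_\alpha = m_\alpha L_\rho(\one_K)
\end{align*}
in $\mathcal{C}(G(F))$, which by the Plancherel theorem (Theorem \ref{HCPlan:thm1}) reduces to an identity of operators $\pi(b_\rho\one_\alpha)=\pi(m_\alpha L_\rho(\one_K))$ for every $\pi\in\Temp_\Ind(G)$.

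The comparison splits into two cases. For $\pi\in\Temp_{\Ind,0}(G)$, combining \eqref{ealpha:HP}, the defining identity $\mathrm{HP}(b_\rho)=L_\rho\mathrm{HP}(\one_K)$, and the defining property of $m_\alpha L_\rho$ from Lemma \ref{lem:BC} gives
\begin{align*}
    \pi(b_\rho\one_\alpha)=e_\alpha\pi(b_\rho)=e_\alpha L_\rho(\pi)\pi(\one_K)=\pi(m_\alpha L_\rho(\one_K)).
\end{align*}
For $\pi$ outside $\Temp_{\Ind,0}(G)$, both sides vanish: indeed both $b_\rho\one_\alpha$ and $m_\alpha L_\rho(\one_K)$ are bi-$K$-invariant, so their images under $\pi$ land in $\pi^K$, and by Lemma \ref{lem:unram:func} any tempered $\pi$ admitting a $K$-fixed vector necessarily has support contained in $\Im\Lambda_{M_0}\subseteq\Temp_{\Ind,0}(G)$, contradicting the hypothesis.

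I expect the proof to be short; the one subtlety that needs care is precisely the point that Lemma \ref{lem:BC} only asserts the defining spectral identity for $\pi\in\Temp_{\Ind,0}(G)$, whereas the Plancherel formula compares operators against all of $\Temp_\Ind(G)$. This is handled by the bi-$K$-invariance argument above, which uses Lemma \ref{lem:unram:func} to rule out any tempered representation outside $\Im\Lambda_{M_0}$ that could witness a discrepancy. Once the identity is established, $b_\rho\one_\alpha\in C_c^\infty(G(F))$ follows from $m_\alpha L_\rho(\one_K)\in C_c^\infty(G(F))$, so $b_\rho\in C_{ac}^\infty(G(F))$, completing the proof. (As a consistency check, for $\alpha\notin C_{A_G}(\rho)$, Lemma \ref{lem:L:factor} gives $L_{\rho,\alpha}=0$, so $m_\alpha L_\rho(\one_K)=0$, and our identity correctly predicts $b_\rho\one_\alpha=0$.)
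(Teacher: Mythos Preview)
Your proposal is correct and follows essentially the same approach as the paper: both reduce to the identity $b_\rho\one_\alpha = m_\alpha L_\rho(\one_K)$, verified via the Plancherel formula by checking operators on all $\pi\in\Temp_\Ind(G)$, with Lemma~\ref{lem:BC} handling $\pi\in\Temp_{\Ind,0}(G)$ and Lemma~\ref{lem:unram:func} together with bi-$K$-invariance eliminating the remaining $\pi$. Your case split (on membership in $\Temp_{\Ind,0}(G)$) is organized slightly differently from the paper's (which splits on whether $\pi$ is unramified and then observes that all unramified $\pi$ already lie in $\Temp_{\Ind,0}(G)$), but the content is the same.
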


\begin{proof}
By Lemma \ref{Basic:Function:Asymptotic}, $b_{\rho}\in \mathcal{S}^{\mathrm{as}}_{\rho}(G(F)).$  Thus we just have to prove that $b_{\rho}\in C^{\infty}_{ac}(G(F))$.  This is equivalent to the assertion that $\one_{\alpha}b_{\rho} \in C_c^\infty(G(F))$ for all $\alpha.$

The Haar measure on $G(F)$ is normalized so that $K$ has measure $1.$
Consider the operator $m_\alpha L_\rho \in \mathfrak{Z}^1(G)$ of Lemma \ref{lem:BC}.  We have
$$
\pi(m_\alpha L_{\rho}(\one_K))=\begin{cases} e_{\alpha}L_{\rho}(\pi) &\textrm{ if } \pi \in \Temp_{\Ind}(G) \textrm{ is unramified}\\
0 & \textrm{ otherwise.}\end{cases}
$$
Strictly speaking, Lemma \ref{lem:BC} only implies this for unramified elements of $\Temp_{\Ind,0}(G),$ but all unramified elements of $\Temp_{\Ind}(G)$  lie in $\Temp_{\Ind,0}(G)$ by the proof of Lemma \ref{lem:unram:func}.  Thus we conclude by the Plancherel formula (Theorem \ref{HCPlan:thm1}) that $\one_{\alpha}b_{\rho}=m_{\alpha} L_{\rho}(\one_K).$
Since $m_{\alpha}L_{\rho} \in \mathfrak{Z}^1(G)$ one has $m_\alpha L_{\rho}(\one_K) \in C_c^\infty(G(F)).$
\end{proof}

By Lemma \ref{Basic:Function:Fixed:Fourier}, $b_{\rho}$ is fixed under the Fourier transform if $\psi$ is unramified.  Thus we deduce the following corollary:
\begin{cor} \label{cor:brho} If $\psi$ is unramified and $K<G(F)$ is hyperspecial then $b_{\rho} \in \mathcal{S}_{\rho}(G(F)).$ \qed
\end{cor}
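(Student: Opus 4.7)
The plan is to observe that this corollary is essentially an immediate consequence of the two results that precede it in the text, combined with the definition of $\mathcal{S}_{\rho}(G(F))$ as the maximal subspace of $\mathcal{S}_{\rho}^{\mathrm{as}}(G(F)) \cap C_{ac}^\infty(G(F))$ that is stable under $\mathcal{F}_{\rho}$. First I would invoke Proposition \ref{Basic:Function:Schwartz:Space} to place $b_{\rho}$ inside $\mathcal{S}_{\rho}^{\mathrm{as}}(G(F)) \cap C_{ac}^\infty(G(F))$. Then, under the hypothesis that $\psi$ is unramified, Lemma \ref{Basic:Function:Fixed:Fourier} gives the eigenvector relation $\mathcal{F}_{\rho}(b_{\rho}) = b_{\rho}$.

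Consequently, the one-dimensional subspace $\CC \cdot b_{\rho}$ is a $\CC$-linear subspace of $\mathcal{S}_{\rho}^{\mathrm{as}}(G(F)) \cap C_{ac}^\infty(G(F))$ that is preserved by $\mathcal{F}_{\rho}$. By the maximality property defining $\mathcal{S}_{\rho}(G(F))$, this subspace is contained in $\mathcal{S}_{\rho}(G(F))$, and the corollary follows. There is no substantive obstacle to overcome here; all of the content has been absorbed into the preceding lemma and proposition. The only minor point worth noting is that the hyperspecial hypothesis on $K$ is used solely through Proposition \ref{Basic:Function:Schwartz:Space} (to know $b_\rho$ is almost compactly supported), while the unramifiedness of $\psi$ is used solely through Lemma \ref{Basic:Function:Fixed:Fourier} (to ensure $\mathcal{F}_\rho$-stability).
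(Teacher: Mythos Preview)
Your proof is correct and follows exactly the same reasoning as the paper: Proposition~\ref{Basic:Function:Schwartz:Space} places $b_\rho$ in $\mathcal{S}_\rho^{\mathrm{as}}(G(F)) \cap C_{ac}^\infty(G(F))$, Lemma~\ref{Basic:Function:Fixed:Fourier} gives $\mathcal{F}_\rho(b_\rho)=b_\rho$ when $\psi$ is unramified, and the maximality in the definition of $\mathcal{S}_\rho(G(F))$ then yields the conclusion.
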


Assume that $\rho$ factors through $\widehat{G}(\CC)\rtimes \Gal(E/F)$ for some finite Galois extension $E/F.$  We impose this assumption because we have been using the Weil form of the $L$-group.  Then in \cite[\S 5]{Ngo:Hankel} Ng\^o constructs a reductive monoid $M_{\rho}$ attached to $\rho$ with unit group $G.$  In particular, there is a canonical open immersion $G \to M_{\rho}.$

The following conjecture is important for global applications:

\begin{conj} \label{conj:compact:support}
     The support of any $f \in \mathcal{S}_{\rho}(G(F))$ is contained in a compact subset of $M_{\rho}(F).$
\end{conj}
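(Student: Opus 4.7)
The plan is to analyze $f$ through its $\alpha$-decomposition $f = \sum_{\alpha}f\one_{\alpha}$ provided by $f\in C_{ac}^\infty(G(F))$ and \eqref{Cac=prodCcalpha:1}, where each summand lies in $C_c^\infty(G(F))_{\alpha}$. The strategy is to show (i) $f\one_{\alpha}\neq 0$ only for $\alpha$ in a set of the form $S+(C_{A_G}(\rho)\cap\mathrm{Hom}_{\mathbb{Z}}(X^*(G),\mathbb{Z}))$ for a finite set $S$, and (ii) the resulting union of $G(F)$-supports has compact closure in $M_{\rho}(F)$.

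For (i), I would exploit $f\in \mathcal{S}_{\rho}^{\mathrm{as}}(G(F))$ to write $\mathrm{HP}(f)=L_{\rho}T$ with $T$ a polynomial section whose support is contained in finitely many connected components of $\Temp_{\Ind}(G)$ (by compact support of $\mathrm{HP}(f)$ and the fact that $L_{\rho}^{-1}$ extends to a regular function on $\widetilde{\Temp}_{\Ind}(G)_{\CC}$ via Lemma \ref{LLC:gamma:rationality:1}). Restricting to $\Temp_{\Ind,0}(G)$, the algebraicity condition of Definition \ref{defn:poly0} forces $T|_{\Temp_{\Ind,0}(G)}$ to have only finitely many nonzero weight components, say indexed by $\alpha\in S$. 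Lemma \ref{lem:L:factor} then writes $L_{\rho}|_{\Temp_{\Ind,0}(G)}=\sum_{\beta\in C_{A_G}(\rho)\cap\mathrm{Hom}_{\mathbb{Z}}(X^*(G),\mathbb{Z})}L_{\rho,\beta}$, so by matching weight components, $e_{\gamma}\mathrm{HP}(f)|_{\Temp_{\Ind,0}(G)}$ vanishes unless $\gamma\in S+(C_{A_G}(\rho)\cap\mathrm{Hom}_{\mathbb{Z}}(X^*(G),\mathbb{Z}))$. The Bernstein-Heiermann isomorphism extended to $C_{ac}^\infty(G(F))$ in Lemma \ref{lem:ac}, combined with \eqref{ealpha:BHP}, then yields $f\one_{\gamma}=0$ outside that set.

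For (ii), I would invoke the structure of the reductive monoid $M_{\rho}$ from \cite[\S 5]{Ngo:Hankel}. The key feature is that the cocharacters $\mu\in X_*(A_G)$ which pair non-negatively with every weight of $\rho|_{Z_{\widehat{G}}^\circ}$ — i.e., those in the dual cone of $C_{A_G}(\rho)$ — extend canonically to $\mathbb{A}^1\to M_{\rho}$. Dually, regular functions on $M_{\rho}$ restrict to characters of $A_G$ with support in $C_{A_G}(\rho)^\vee$. Thus, for any regular function $\phi$ on $M_{\rho}$ and any $g\in G(F)$ with $H_G(g)\in -(\log q)(S+C_{A_G}(\rho))$, the value $|\phi(g)|$ is bounded by a constant depending only on $S$ and $\phi$. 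Combined with the uniform smoothness $f\in C_u^\infty(G(F))$ — giving bi-$K'$-invariance for some compact open $K'\leq G(F)$ — and the Cartan decomposition of $G(F)$ adapted to $A_G$, this confines $\mathrm{supp}(f)=\bigcup_\gamma \mathrm{supp}(f\one_\gamma)$ to a set on which every regular function on $M_{\rho}$ is bounded. Since $M_{\rho}$ is affine and of finite type, boundedness of a generating set of regular functions on $\mathrm{supp}(f)$ yields compactness of its closure in $M_{\rho}(F)$.

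The main obstacle is step (ii): while the $\alpha$-support constraint controls the behavior in the $G/G^{\mathrm{der}}$-direction cleanly via $H_G$, one must separately control the $G^{\mathrm{der}}(F)$-component of $\mathrm{supp}(f)$, which is invisible to $H_G$. The bi-$K'$-invariance is not enough on its own because $K'\cdot\mathrm{(single\ }A_G\textrm{-slice})\cdot K'$ need not be compact. The resolution should come from combining the Cartan decomposition of $G(F)$ with respect to a maximal split torus $A_0\supseteq A_G$ with the fact that the dominant cocharacters of $A_0$ extending to $M_{\rho}$ are exactly those whose projection to $A_G/A_G\cap G^{\mathrm{der}}$ lies in the dual cone of $C_{A_G}(\rho)$ — so the "derived direction" is automatically compact modulo the center, and only the center-direction control (which we have) is required. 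Making this rigorous requires careful bookkeeping inside the Ng\^o construction and justifying that the closure of the $K\times K$-orbits on $A_G$-slices in $M_{\rho}(F)$ is well-behaved.
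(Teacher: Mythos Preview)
The statement you are attempting to prove is labeled \emph{Conjecture} in the paper and is left open there; the authors state it as ``important for global applications'' but provide no proof. There is therefore no argument in the paper to compare your proposal against.

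As for the proposal itself: step (i) is a reasonable reduction and appears to go through essentially as you outline, using the finite $i\mathfrak{a}_G^*$-weight decomposition of the polynomial section $T=L_\rho^{-1}\HP(f)$ together with Lemma~\ref{lem:L:factor} and the extended Bernstein--Heiermann isomorphism of Lemma~\ref{lem:ac}. However, step (ii) is where the real content of the conjecture lies, and your sketch does not close the gap you yourself identify. The assertion that ``the derived direction is automatically compact modulo the center'' is not correct as stated: the cocharacters of a maximal split torus $A_0$ that extend to $M_\rho$ are determined by positivity against \emph{all} weights of $\rho$ on $\widehat{T}$, not merely by the projection to $A_G$. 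In particular, nothing in your argument prevents the compact supports $\mathrm{supp}(f\one_\alpha)\subset G(F)$ from drifting in the $A_0/A_G$-direction as $\alpha$ ranges over $S+C_{A_G}(\rho)$, which would allow the closure of $\mathrm{supp}(f)$ in $M_\rho(F)$ to be non-compact. Controlling this would require genuinely new input --- presumably coming from the full condition $f\in\mathcal{S}_\rho^{\mathrm{as}}(G(F))$ on \emph{all} of $\Temp_\Ind(G)$ (not just the cuspidal part) together with the stability under $\mathcal{F}_\rho$ --- rather than only the $i\mathfrak{a}_G^*$-weight information that step (i) extracts.
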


If one wants to construct modulation groups as in \cite{DRS:Mod}, then one must prove that $\mathcal{S}_{\rho}(G(F))$ is local as defined in loc.~cit.  This means that the following conjecture is valid:
\begin{conj} If $(\varphi,f) \in C^\infty(M_{\rho}(F)) \times \mathcal{S}_{\rho}(G(F))$  then $\varphi f\in \mathcal{S}_{\rho}(G(F)).$ 
\end{conj}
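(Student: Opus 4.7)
The plan is to exploit the maximality clause in the definition of $\mathcal{S}_\rho(G(F))$: to conclude $\varphi f \in \mathcal{S}_\rho(G(F))$ it suffices to produce a subspace $W \leq \mathcal{S}_\rho^{\mathrm{as}}(G(F)) \cap C_{ac}^\infty(G(F))$ that contains $\varphi f$ and is stable under $\mathcal{F}_\rho$. The natural candidate is the $\mathcal{F}_\rho$-closure of the $C^\infty(M_\rho(F))$-submodule generated by $\mathcal{S}_\rho(G(F))$. This reduces the problem to three tasks: (a) $C^\infty(M_\rho(F)) \cdot \mathcal{S}_\rho(G(F)) \leq C_{ac}^\infty(G(F))$; (b) $C^\infty(M_\rho(F)) \cdot \mathcal{S}_\rho(G(F)) \leq \mathcal{S}_\rho^{\mathrm{as}}(G(F))$; and (c) the iterated $\mathcal{F}_\rho$-orbit of $C^\infty(M_\rho(F)) \cdot \mathcal{S}_\rho(G(F))$ remains within $\mathcal{S}_\rho^{\mathrm{as}}(G(F)) \cap C_{ac}^\infty(G(F))$.

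Task (a) should be tractable modulo Conjecture \ref{conj:compact:support}. Granting that conjecture, the support of any $g \in \mathcal{S}_\rho(G(F))$ lies in a compact subset $\Omega \subset M_\rho(F)$, on which every $\psi \in C^\infty(M_\rho(F))$ is locally constant, hence fixed under left and right multiplication by a single compact open subgroup $K_\Omega \leq G(F)$. Combined with the uniform smoothness of $g$, this yields uniform smoothness of $\psi g$. For each $\alpha \in \mathrm{Hom}_\mathbb{Z}(X^*(G),\mathbb{Z})$ one has $(\psi g)\one_\alpha = \psi \cdot (g\one_\alpha) \in C_c^\infty(G(F))$ because $g\one_\alpha$ is already compactly supported in $G(F)$, so $\psi g \in C_{ac}^\infty(G(F))$ by the characterization \eqref{Cac=prodCcalpha:1}.

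Task (b) is more delicate. I would first treat the subalgebra of $C^\infty(M_\rho(F))$ consisting of restrictions of regular functions on $M_\rho$, which arise as matrix coefficients of algebraic representations of $M_\rho$. Multiplication by such a $\psi$ can be analyzed on $C_c^\infty(G(F))$ via the Bernstein--Heiermann isomorphism of Theorem \ref{thm:BH}, using Lemma \ref{lem:L:factor} to track how the induced ``shift'' on polynomial sections interacts with $L_\rho^{-1}$; the strong convexity assumption \eqref{chi:assum} is exactly what ensures the shift preserves polynomiality of $L_\rho^{-1}\mathrm{HP}(\psi g)$. For general $\psi$ one restricts to a compact $\Omega$ as above; since locally constant functions on $\Omega$ are finite $\mathbb{C}$-linear combinations of indicator functions of clopen sets, the density step should collapse into the regular case.

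The hard part, and where I expect genuinely new ideas to be required, is task (c). Pointwise multiplication does not commute with $\mathcal{F}_\rho$, and it is not clear a priori that $\mathcal{F}_\rho(\psi g)$ lies in $C^\infty(M_\rho(F)) \cdot \mathcal{S}_\rho(G(F))$. A promising strategy is to combine the kernel formula $\mathcal{F}_\rho(h) = J_\rho \ast h^\vee$ of \S\ref{ssec:kernel} with a boundary analysis of $J_\rho$: if $J_\rho$ extends in a controlled way to a distribution on $M_\rho(F)$, which Ng\^o's theory of Hankel transforms on reductive monoids predicts, then convolution against $\psi g^\vee$ followed by restriction to $G(F)$ should produce an expression of the form $\psi' \cdot \mathcal{F}_\rho(g)$ for some $\psi' \in C^\infty(M_\rho(F))$, closing the iteration. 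Carrying this out would reduce the conjecture to a precise statement about the singularities of $J_\rho$ at the boundary of $G$ in $M_\rho$, which I regard as the main obstacle.
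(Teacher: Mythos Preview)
The statement you are attempting to prove is stated in the paper as a \emph{conjecture}, with no proof given. It is presented immediately after Conjecture~\ref{conj:compact:support} as a second open problem whose resolution would be needed to construct modulation groups. So there is no ``paper's own proof'' to compare against.

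Your proposal is not a proof but a strategy outline, and you essentially acknowledge this: task (c) is left as ``the main obstacle'' contingent on an unproved extension of the kernel $J_\rho$ to $M_\rho(F)$. Two further points deserve emphasis. First, your argument for task (a) already assumes Conjecture~\ref{conj:compact:support}, which is itself open; without it you have no control on $\mathrm{supp}(f)$ inside $M_\rho(F)$, and the uniform-smoothness step for $\varphi f$ breaks down. Second, your sketch for task (b) is too vague to assess: pointwise multiplication by a locally constant $\varphi$ does \emph{not} correspond to anything simple on the Plancherel side, and the claim that ``the shift preserves polynomiality of $L_\rho^{-1}\mathrm{HP}(\psi g)$'' via Lemma~\ref{lem:L:factor} and \eqref{chi:assum} is not justified; Lemma~\ref{lem:L:factor} controls the $i\mathfrak{a}_G^*$-weight decomposition of $L_\rho$, which is relevant to multiplication by characters $e^{\langle\lambda,H_G(\cdot)\rangle}$, not to multiplication by an arbitrary locally constant function on $M_\rho(F)$.

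In short, the paper does not prove this statement, and neither does your proposal; what you have written is a reasonable roadmap of the obstructions, with the honest admission that (c) requires new input, but (a) and (b) are also not settled by what you wrote.
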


\section{Proof of Theorem \ref{main:thm:intro}}
\label{BKN:statement}

In this section we collect our previous work to prove Theorem \ref{main:thm:intro}. 
Fix a nontrivial additive character $\psi$ of $F$ and a tempered representation $\rho:\;^{L}G\to \GL_V(\mathbb{C})$. 
We assume that we have an isomorphism $d:G/G^{\mathrm{der}} \tilde{\to} \GG_m$ such that 
$
\rho \circ d^\vee :\CC^\times \to \GL_{V_{\rho}}(\CC)$
satisfies $(\rho\circ d^\vee)(z) = z\cdot \mathrm{id}_{V_\rho}$ for all $z\in \CC^\times$.

In \S\ref{sec:FourieronHCSchwartz} we defined the $\rho$-Fourier transform 
\begin{align*}
    \mathcal{F}_{\rho,\psi}:\mathcal{C}(G(F))&\lto \mathcal{C}(G(F)),\\
    f&\longmapsto (\cdot)^\vee \circ \mathrm{HP}^{-1}\circ \gamma_{\rho,\psi}\cdot\circ \mathrm{HP}(f)
\end{align*}
which depends on both $\rho$ and $\psi$. Unless otherwise specified, we suppress the dependence on $\psi$ from the notation. 
We recall that 
Theorem \ref{FirstThmIntro} holds with this definition of the Fourier transform by Theorem \ref{thm:1}.  This theorem implies that any subspace of $\mathcal{C}(G(F))$ that is $G(F) \times G(F)$-invariant, stable under $\mathcal{F}_{\rho},$ and dense in $L^2(G(F))$ also satisfies Conjecture \ref{conj:Sch}.

We now show that any $G(F) \times G(F)$-invariant subspace of $\mathcal{S}_{\rho}^{\mathrm{as}}(G(F))<\mathcal{C}(G(F))$ stable under $\mathcal{F}_{\rho}$ and dense in $L^2(G(F))$ satisfies \eqref{Zetas}. For any tempered representation $\pi$ of $G(F)$, we denote by $\mathcal{C}(\pi)$ the space of smooth matrix coefficients of $\pi$. Given $f \in \mathcal{S}_{\rho}^{\mathrm{as}}(G(F))$ and $c\in \mathcal{C}(\pi)$, we consider the local zeta integral 
\[Z(s,f,c) := \int_{G(F)}f(g)|d(g)|^sc(g)dg\]
whenever the integral converges absolutely. Note that $|d(g)|^{s} = e^{\langle s\lambda, H_G(g)\rangle}$ for some $\lambda\in \mathfrak{a}^*_{G\mathbb{C}}$. Since $f\in \mathcal{S}^{\mathrm{as}}_{\rho}(G(F))$, Corollary \ref{zeta:integral:convergence:cone} implies that the zeta integral $Z(s,f,c)$ converges for $\mathrm{Re}(s)$ sufficiently large. Proposition \ref{prop:loc:zeta} shows that $Z(s,f,c)$ extends meromorphically to all $s$ and that 
$$\frac{Z(s,f,c)}{L(\frac{1}{2}+s\mu(\lambda),\pi,\rho)}
$$
is a holomorphic as a function of $s$, where $\mu:\mathfrak{a}_{G\mathbb{C}}^*\to \mathbb{C}$ is the linear functional characterized by the identity 
$
\rho\circ |\cdot|^{\lambda} = |\cdot|^{\mu(\lambda)}I_{V_{\rho}}.
$
Here $|\cdot|^{\lambda}:W_F'\to \;^{L}G$ is the $L$-parameter defined in \eqref{lambda}. Moreover, Proposition \ref{prop:loc:zeta} also shows that 
\[Z(-s,\mathcal{F}_{\rho}(f),c^\vee) = \gamma(\tfrac{1}{2}+s\mu(\lambda),\pi,\rho)Z(s,f,c).\]
By our assumption on $d$, it follows that $\mu(\lambda) = 1$, confirming \eqref{Zetas}.  
When $F$ is Archimedean, we take $\mathcal{S}_{\rho}=\mathcal{S}_{\rho}^{\mathrm{as}}(G(F)).$  It contains $\mathcal{S}(G(F))$ by Lemma \ref{lem:Sas:moduleCcinfty}, hence is dense in $L^2(G(F)).$  It is clearly $G(F) \times G(F)$-invariant, and it is stable under $\mathcal{F}_{\rho}$ by Lemma \ref{lem:as:preserve}.
Moreover \eqref{BKN:arch:BEV} follows immediately from the definition of $\mathcal{S}_{\rho}^{\mathrm{as}}(G(F)).$  This completes the proof when $F$ is Archimedean.

Assume for the remainder of the proof that $F$ is non-Archimedean.
We claim that we can take $\mathcal{S}_{\rho}=\mathcal{S}_{\rho}(G(F)),$ the largest subspace of 
\begin{align} \label{intersection}
\mathcal{S}^{\mathrm{as}}_{\rho}(G(F))\cap C^{\infty}_{ac}(G(F)),
\end{align}
that is stable under the Fourier transform.  Since \eqref{intersection} is $G(F) \times G(F)$-stable, the twisted equivariance of $\mathcal{F}_{\rho}$ under $G(F) \times G(F)$ implies $\mathcal{S}_{\rho}(G(F))$ is a $G(F) \times G(F)$-invariant subspace of $\mathcal{S}_{\rho}^{\mathrm{as}}(G(F)).$  It also contains $C_c^\infty(G(F))$ by Corollary \ref{cor:Ccin}.  Consequently, the space $\mathcal{S}_{\rho}(G(F))$ satisfies Conjecture \ref{conj:Sch} and \eqref{Zetas}, as explained in the previous paragraph.

When $F$ is non-Archimedean and $\psi$ is unramified Corollary \ref{cor:brho} shows that the basic function $b_{\rho}$, constructed in Definition \ref{basicfunct:defn}, belongs to $\mathcal{S}_{\rho}(G(F))^{K\times K}$. By Lemma \ref{Basic:Function:Fixed:Fourier}, it is invariant under the Fourier transform $\mathcal{F}_{\rho}$. Furthermore, the definition of $b_{\rho}$ implies that $Z(s,b_{\rho},c)$ is non-zero only when $\pi$ is unramified, and in this case 
$Z(s,b_{\rho},c^{\circ}) = L(s,\pi,\rho),$
when $c^{\circ}$ is the zonal spherical function. This establishes \eqref{BKN:basic} of Conjecture \ref{conj:Sch:des}. The condition \eqref{rapid} is valid by construction of $\mathcal{S}_{\rho}(G(F)).$
 This completes the proof of Theorem \ref{main:thm:intro}. \qed

\bibliography{refs.bib}{}
\bibliographystyle{alpha}

\end{document}